\def\XXint#1#2#3{{\setbox0=\hbox{$#1{#2#3}{\int}$}
    \vcenter{\hbox{$#2#3$}}\kern-.5\wd0}}
\pgfplotsset{compat=1.18}
\newtheorem{theorem}{Theorem}%[section]
\newtheorem{lemma}[theorem]{Lemma}
\newtheorem{proposition}[theorem]{Proposition}
\newtheorem*{theorem*}{Theorem}
\newtheorem{conjecture}{Conjecture}
\newtheorem*{conjecture*}{Conjecture}
\newtheorem{corollary}[theorem]{Corollary}
\newtheorem{claim}{Claim}
\newtheorem*{claim*}{Claim}
\theoremstyle{definition}
\newtheorem{definition}[theorem]{Definition}
\newtheorem{example}[theorem]{Example}
\newtheorem*{goal*}{Goal}
\newtheorem{theorem}{Theorem}[section]
\newtheorem{claim}[theorem]{Claim}
\newtheorem{lemma}[theorem]{Lemma}
\newtheorem{proposition}[theorem]{Proposition}
\theoremstyle{definition}
\newtheorem{remark}[theorem]{Remark}
\newtheorem{question}[theorem]{Question}
\newtheorem{conjecture}[theorem]{Conjecture}
\numberwithin{equation}{section}
\newtheorem{job}{JOB}
\def\eqalign#1{\null\,\vcenter{\openup\jot\mathsurround\dimen12
  \ialign{\strut\hfil$\textstyle{##}$&$\textstyle{{}##}$\hfil
      \crcr#1\crcr}}\,}
\def\supp{{\rm supp\,}}
\def\hess{{\rm Hess\,}}
\def\diam{{\rm diam\,}}
\def\dist{{\rm dist\,}}
\def\cvx{{\rm cvx\,}}
\def\ST{{\rm ST}}
\def\MT{{\rm MT}}
\title[A phase--space approach to weighted extension inequalities]{A phase-space approach to weighted Fourier extension inequalities}
\author[Bennett]{Jonathan Bennett}
\address[Jonathan Bennett]{School of Mathematics, The Watson Building, University of Birmingham, Edgbaston, Birmingham, B15 2TT, England.}
\email{J.Bennett@bham.ac.uk}
\author[Gutierrez]{Susana Guti\'errez}
\address[Susana Guti\'errez]{School of Mathematics, The Watson Building, University of Birmingham, Edgbaston, Birmingham, B15 2TT, England.}
\email{S.Gutierrez@bham.ac.uk}
\author[Nakamura]{Shohei Nakamura}
\address[Shohei Nakamura]{Department of Mathematics, Graduate School of Science, Osaka University, Toyonaka, Osaka 560-0043, Japan. School of Mathematics, The Watson Building, University of Birmingham, Edgbaston, Birmingham, B15 2TT, England.}
\email{s.nakamura@bham.ac.uk}
\author[Oliveira]{Itamar Oliveira}
\address[Itamar Oliveira]{School of Mathematics, The Watson Building, University of Birmingham, Edgbaston, Birmingham, B15 2TT, England.}
\email{i.oliveira@bham.ac.uk, oliveira.itamar.w@gmail.com}
\begin{document}

\date{\today}
\keywords{Fourier extension operators, weighted inequalities, X-ray transforms, Wigner distributions}

\subjclass[2020]{42B10, 44A12, 78A05}

\maketitle
\begin{abstract}
The purpose of this paper is to expose and investigate natural phase-space formulations of two longstanding problems in the restriction theory of the Fourier transform. These problems, often referred to as the Stein and Mizohata--Takeuchi conjectures, assert that Fourier extension operators associated with rather general (codimension 1) submanifolds of Euclidean space, may be effectively controlled by the classical X-ray transform via weighted $L^2$ inequalities. Our phase-space formulations, which have their origins in recent work of Dendrinos, Mustata and Vitturi, expose close connections with a conjecture of Flandrin from time-frequency analysis, and rest on the identification of an explicit ``geometric" Wigner transform associated with an arbitrary (smooth strictly convex) submanifold $S$ of $\mathbb{R}^n$. Our main results are certain natural ``Sobolev variants" of the Stein and Mizohata--Takeuchi conjectures, and involve estimating the Sobolev norms of such Wigner transforms by geometric forms of classical bilinear fractional integrals. Our broad geometric framework allows us to explore the role of the curvature of the submanifold in these problems, and in particular we obtain bounds that are independent of any lower bound on the curvature; a feature that is uncommon in the wider restriction theory of the Fourier transform.  Finally, we provide a further illustration of the effectiveness of our %Sobolev norm 
analysis by establishing a form of Flandrin's conjecture in the plane with an $\varepsilon$-loss.
While our perspective comes primarily from Euclidean harmonic analysis, the procedure used for constructing phase-space representations of extension operators is well-known in optics.

\end{abstract}
\tableofcontents
\section{Introduction}\label{Sect:intro}
\subsection{Background: the Stein and Mizohata--Takeuchi problems}
A central objective of modern harmonic analysis is to reach an effective quantitative understanding of Fourier
transforms of measures supported on submanifolds of Euclidean space, such as the sphere or paraboloid. Problems of this type
are usually formulated in terms of \textit{Fourier extension operators}:
to a smooth codimension-1 submanifold $S$ of $\mathbb{R}^n$, equipped with surface measure $\mathrm{d}\sigma$, we associate the extension operator
\begin{equation}\label{def:ext}
\widehat{g\mathrm{d}\sigma}(x):=\int_S g(u)e^{-2\pi ix\cdot u}\mathrm{d}\sigma(u);
\end{equation}
here $g\in L^1(\mathrm{d}\sigma)$ and $x\in\mathbb{R}^n$. The extension operator \eqref{def:ext} is often referred to as an \textit{adjoint restriction operator}, as its adjoint restricts the $n$-dimensional Fourier transform of a function to the submanifold $S$.
The estimation of extension operators in various settings is known as (Fourier) \textit{restriction theory}. A key instance of this is the celebrated \textit{restriction conjecture}, which concerns bounds of the form $\|\widehat{g\mathrm{d}\sigma}\|_q\lesssim\|g\|_p$. Surprisingly many problems from
across mathematics call for such an understanding, from dispersive PDE to analytic number theory; see \cite{Stovall} for a recent survey. Such connections are often quite intimate, as hopefully this paper serves to illustrate -- in this case with regard to optics, or optical field propagation.

In this paper we look to estimate extension operators in the setting of $L^2$ norms with respect to general weight functions $w$. This setting has been the subject of some attention since the influential work of Stein and others in the 1970s in the closely related context of Bochner--Riesz summability. At its centre is a variant of a question posed by Stein in the 1978 Williamstown conference on harmonic analysis \cite{S} (see \cite{BCSV} for further historical context). In its global form, for a given $S$, this asks whether there is a constant $C<\infty$ for which
\begin{equation}\label{Steinvgen}
\int_{\mathbb{R}^n}|\widehat{g\mathrm{d}\sigma}(x)|^2w(x)\mathrm{d}x\leq C\int_S|g(u)|^2\sup_{v\in T_uS}Xw(N(u),v)\mathrm{d}\sigma(u)
\end{equation}
for all nonnegative weight functions $w$. Here $N:S\rightarrow\mathbb{S}^{n-1}$ is the \textit{Gauss map}, and $X$ denotes the classical \textit{X-ray transform}
\begin{equation}\label{Defx}
Xw(\omega,v):=\int_{-\infty}^\infty w(v+t\omega)\mathrm{d}t,
\end{equation}
where $\omega\in\mathbb{S}^{n-1}$ and $v\in\langle\omega\rangle^\perp$ together parametrise the Grassmanian manifold of lines $\ell=\ell(\omega,v):=\langle\omega\rangle+\{v\}$ in $\mathbb{R}^n$; here $T_uS=\langle N(u)\rangle^\perp$ denotes the tangent space of $S$ at the point $u\in S$. This is a natural inequality for a number of reasons, and it is instructive to begin by considering the simple case where $g$ is the indicator function of a small cap (the intersection of $S$ with a small ball in $\mathbb{R}^n$). The key observation is that $|\widehat{g\mathrm{d}\sigma}|^2$ is then bounded below on a neighbourhood of a line segment with direction normal to $S$, so that the left hand side of \eqref{Steinvgen} computes a variant of the X-ray transform of the weight $w$. The inequality \eqref{Steinvgen} therefore proposes that $|\widehat{g\mathrm{d}\sigma}|^2$ concentrates on lines, or families of lines, rather more generally. An affirmative answer to this question is easily given in the case that $S$ is contained in a hyperplane -- a fact that follows quickly from Plancherel's theorem. More substantial results in support of \eqref{Steinvgen} have been obtained for restricted classes of weights, notably when $S$ is the sphere $\mathbb{S}^{n-1}$ and the weights are radial \cite{CRV, BRV, BBC}; see also \cite{BNS} and the references there. Inequalities of this general type, where an operator is estimated with respect to a general weight function, are often referred to as Fefferman--Stein inequalities -- see \cite{Beltran} for a recent example. We shall also be interested in the simpler \textit{Mizohata--Takeuchi} inequality
\begin{equation}\label{MTvgen}
    \int_{\mathbb{R}^n}|\widehat{g\mathrm{d}\sigma}(x)|^2w(x)\mathrm{d}x\leq C\|g\|_{L^2(\mathrm{d}\sigma)}^2\sup_{(u,v)\in TS}Xw(N(u),v),
\end{equation}
where 
the supremum is restricted to $u\in\supp(g)$, as suggested by \eqref{Steinvgen}, and $TS$ denotes the tangent bundle of $S$.
This emerged independently of \eqref{Steinvgen} through work of Mizohata and Takeuchi on the well-posedness of Schr\"odinger equations in the 1980s. We refer to \cite{BRV} and the references there for further context.
%\footnote{By a weight function we mean a nonnegative locally integrable function. In certain situations it will be convenient to restrict attention to nonnegative functions that are bounded and compactly supported, noting that in all cases the implicit constants in our statements are independent of $w$.}
\begin{remark}[The strength of \eqref{Steinvgen}]\label{Stein philosophy}
The original motive for establishing \eqref{Steinvgen}, or some appropriate variant of it, is that it would allow the restriction conjecture to follow (and almost immediately) from the \textit{Kakeya maximal function conjecture}, the Kakeya maximal function being a close relative of 
$$
\sup_{v\in T_uS}Xw(N(u),v),
$$
at least when $S$ is suitably curved.
We refer to \cite{BN} and the references there for further details and discussion. In the original setting proposed by Stein, this amounts to the implication of the Bochner--Riesz conjecture from the Nikodym (or Kakeya) maximal conjecture. There is a number of precedents for this sort of integro-geometric control of oscillatory integral operators -- see for example \cite{BCSV, BBen}.
\end{remark}
\begin{remark}[Failure of the global inequalities \eqref{Steinvgen} and \eqref{MTvgen}]\label{Rem:Cairo}
Very recently, and since earlier drafts of this paper, Cairo \cite{Cairo} has succeeded in constructing a counterexample to \eqref{MTvgen} (and thus \eqref{Steinvgen}) whenever $S$ is \textit{not} contained in a hyperplane. However, her subtle example does not exclude the possibility that the local variants
\begin{equation}\label{Steinvgenloc}
\int_{B(0,R)}|\widehat{g\mathrm{d}\sigma}(x)|^2w(x)\mathrm{d}x\lesssim R^\alpha\int_S|g(u)|^2\sup_{v\in T_uS}Xw(N(u),v)\mathrm{d}\sigma(u)
\end{equation}
and 
\begin{equation}\label{MTvgenloc}
    \int_{B(0,R)}|\widehat{g\mathrm{d}\sigma}(x)|^2w(x)\mathrm{d}x\lesssim R^\alpha\|g\|_{L^2(\mathrm{d}\sigma)}^2\sup_{(u,v)\in TS}Xw(N(u),v)
\end{equation}
of \eqref{Steinvgen} and \eqref{MTvgen} (resp.)
might hold for exponents $\alpha>0$; here $R$ denotes a large parameter. That \eqref{Steinvgenloc} (and thus \eqref{MTvgenloc}) holds for \textit{some} $\alpha>0$ is an elementary exercise, and we refer to \cite{CIW} for recent local results of this type. In order to be meaningful for general $w$ the inequalities \eqref{Steinvgen} and \eqref{MTvgen} may therefore be qualified with the additional assumption that $w$ is supported in a ball of fixed radius $R\gg 1$, accepting some growth in $R$ in the constant factors. We clarify that such considerations are not relevant to the results presented in this paper.
%We suppress this point in the discussion of \eqref{Steinvgen} and \eqref{MTvgen} that follows.
 \end{remark}

\begin{comment}\begin{remark}[The strength of \eqref{MTvgen}]\label{MTstrength}
    One of the motives for studying \eqref{MTvgen} is that it (with finite constant $\MT(S)$ for suitable $S$) is known to imply the endpoint multilinear restriction conjecture of \cite{BCT}; see \cite{CHV} for details. A similar, yet much more elementary observation is that the finiteness of $\MT(\mathbb{S}^2)$ implies the Stein--Tomas restriction theorem in three dimensions; see \cite{BN}. The origins of \eqref{MTvgen} lie in dispersive partial differential equations, where it may be seen to imply the well-posedness of the time-dependent Schr\"odinger equation with a rather general potential; see \cite{BRV}.
\end{remark}
\end{comment}
\begin{remark}[The role of curvature]\label{Remark:no curvature}
Somewhat unusually in the setting of Fourier extension estimates it appears that the above Stein and Mizohata--Takeuchi-type inequalities should not require that $S$ has nonvanishing curvature; we have already noted that \eqref{Steinvgen} is easily verified when $S$ is a \textit{hyperplane}. Related to this fact is the observation that \eqref{Steinvgen} and \eqref{MTvgen} are dilation invariant in the sense that their validity for a given $S$ and a given (dilation-invariant) class of weights, implies their validity for any isotropic dilate $kS$ of $S$, uniformly in $k>0$; this follows by a routine scaling argument. This scale-invariance is important in applications, as may be seen in the established setting of the sphere and radial weights -- see \cite{BRV}.
\end{remark}
\subsection{Phase-space formulations}
Recently in the setting of quadratic submanifolds, Dendrinos, Mustata and Vitturi \cite{DMV} observed that the Mizohata--Takeuchi inequality \eqref{MTvgen} may be reformulated in terms of the classical Wigner distribution, providing it with a natural phase-space interpretation.
The purpose of this paper is to establish and explore such phase-space formulations of the Stein and Mizohata--Takeuchi inequalities for quite general (codimension-1) submanifolds, exposing the role played by the underlying geometry. The starting point is the surprising observation that a rather general Fourier extension operator (in modulus square) has a natural and explicit \textit{phase-space representation}, 
\begin{comment}
from Section \ref{Sect:1.1} to weighted inequalities for extension operators associated with rather general submanifolds $S$. This continues to enjoy a very physical interpretation, as many of the underlying ideas and constructions (notably generalisations of the Wigner transform) are well known in optics -- see for example \cite{Al} for an accessible treatment. 
\end{comment}
namely,
\begin{equation}\label{PSrep}
|\widehat{g\mathrm{d}\sigma}|^2=X_S^*W_S(g,g);
\end{equation}
see the forthcoming Proposition \ref{Prop:genBGNOWigner}.
Here $W_S(g_1,g_2):TS\rightarrow\mathbb{R}$ is a certain geometric (or $S$-carried) Wigner transform, and $X_S$ is the pullback of the X-ray transform by the Gauss map; concretely, $$X_Sw(u,v):=Xw(N(u),v)$$ for $(u,v)\in TS$. Such phase-space representations have their origins in quantum mechanics in the case that $S$ is the paraboloid -- a perspective that we develop in Section \ref{Sect:para}. They are also well-known in optics, particularly when $S$ is the paraboloid or the sphere, and we develop this perspective in Section \ref{Sect:Helm}. As we shall see in the later sections, identifying a suitable Wigner transform $W_S$ \textit{explicitly} in terms of the geometry of a \textit{general} (strictly convex) submanifold $S$ requires some careful geometric analysis. This is one of the main achievements of this paper, and it is hoped that it will also find some interesting applications beyond harmonic analysis. From the point of view of harmonic analysis, our treatment of these surface-carried Wigner transforms naturally involves controlling associated surface-carried singular integral and maximal averaging operators, which we hope will be of some independent interest.

By duality the representation \eqref{PSrep} immediately gives rise to the phase-space integral formula
\begin{equation}\label{frep}
\int_{\mathbb{R}^n}|\widehat{g\mathrm{d}\sigma}(x)|^2w(x)\mathrm{d}x=\int_{TS}W_S(g,g)(u,v)X_Sw(u,v)\mathrm{d}v\mathrm{d}\sigma(u),
\end{equation}
leading to phase-space formulations of the Stein and Mizohata--Takeuchi problems.
\begin{comment}
\begin{equation}\label{PhaseSteinvgen}
\int_{TS}W_S(g,g)X_Sw \leq C\int_S|g(u)|^2\sup_{v\in T_uS}X_Sw(u,v)\mathrm{d}\sigma(u)
\end{equation}
and
\begin{equation}\label{PhaseMTvgen}
\int_{TS}W_S(g,g)X_Sw\leq C\|g\|_{L^2(\mathrm{d}\sigma)}^2\sup_{(u,v)\in TS}X_Sw(u,v),
\end{equation}
respectively. 
\end{comment}
Here the integral on the tangent bundle $TS$ is defined in the usual way, by first integrating with respect to Lebesgue measure on the tangent space $T_uS$, and then with respect to surface measure $\mathrm{d}\sigma(u)$ on $S$.
%A parametrised form of \eqref{PhaseMTvgen} in the case where $S$ is quadratic was formulated recently by Dendrinos, Mustata and Vitturi in \cite{DMV}, and we further clarify this in Section \ref{Sect:para}. 
\begin{remark}[Connections with Flandrin's conjecture]\label{remark:flan}
The phase-space formulation of the Mizohata--Takeuchi problem has striking similarities with a conjecture of Flandrin \cite{Flan} and its variants \cite{Lerner} in the setting of the classical Wigner transform $W$. A recent form of this conjecture states that
\begin{equation}\label{shortflan}
\iint_KW(g,g)\lesssim\|g\|_2^2
\end{equation}
uniformly over all convex subsets $K$ of phase-space; this was originally formulated by Flandrin with constant $1$, although a counterexample to this stronger statement was constructed recently in \cite{Duy}.
The methods of this paper are also effective here, and we illustrate this in Section \ref{Sect:Flandrin}, establishing a form of this conjecture in the plane involving an $\varepsilon$-loss in the measure of $K$, and by establishing that the Flandrin-type conjecture \eqref{shortflan} implies the parabolic Mizohata--Takeuchi inequality under a simple convexity assumption on the weight function $w$.
\end{remark}
\begin{remark}[Connections to maximally-modulated singular integrals]\label{remark:maxmod}
The Flandrin-type conjecture \eqref{shortflan} in the plane (and thus \eqref{Steinvgen} and \eqref{MTvgen}) is also intimately connected to boundedness questions for the \textit{maximally-modulated bilinear Hilbert transform}
$$
H_*(f_1,f_2)(x):=\sup_{\lambda\in\mathbb{R}}\left|\int_{\mathbb{R}}f_1\left(x+\frac{y}{2}\right)f_2\left(x-\frac{y}{2}\right)e^{i\lambda y}\frac{dy}{y}\right|.
$$
We refer to Section \ref{Sect:Flandrin} for details.
\end{remark}
Evidently the phase-space formula \eqref{frep} goes some way to motivate the original inequalities \eqref{Steinvgen} and \eqref{MTvgen}. 
The first remark to make is that the most naive use of \eqref{frep} is easily seen to fail for any $S$ through the observation that the $L^1$ estimate \begin{equation}\label{L1bad}\int_{T_uS}|W_S(g,g)(u,v)|\mathrm{d}v\lesssim |g(u)|^2,
\end{equation} fails, despite $W_S(g,g)$ satisfying the marginal property
\begin{equation}\label{marge}\int_{T_u S}W_S(g,g)(u,v)\mathrm{d}v=|g(u)|^2
\end{equation} (possibly under some additional minor regularity assumption on $S$); see Section \ref{app} for details, along with the sense in which such pointwise identities hold.
Of course if $X_Sw(u,v)$ is \textit{independent of $v$}, then the failure of \eqref{L1bad} is of no consequence, and \eqref{Steinvgen} follows quickly from an application of Fubini's theorem and \eqref{marge}. 
\begin{comment}
Stein's conjecture
asserts that this limitation on the ``correlation" between Wigner distributions and X-ray transforms of positive functions continues to hold in general.
Of course \eqref{PSrep}, along with the nonnegativity of the weight $w$, ensures that this correlation is at least \textit{nonnegative}. We refer to Remark \ref{Remark:quasi} for further discussion of this type.
\end{comment}

Our explicit phase-space representation \eqref{PSrep} requires rather little of the submanifold $S$. The main assumption is that $S$ is smooth and strictly convex in the sense that its shape operator is strictly positive definite at all points. On a technical level we also assume that its set of unit normals $N(S)$ is geodesically convex (that is, the intersection of $N(S)$ with any great circle is connected), along with a mild additional differentiability hypothesis (see Remark \ref{Remark:differentiability}), which we expect to be automatic from the smoothness of $S$.

For the purposes of our phase-space approach to the Stein \eqref{Steinvgen} and Mizohata--Takeuchi inequalities \eqref{MTvgen}, it will be convenient to restrict further to compact graphs.
%over each of their tangent spaces $T_u S$ (in other words, $S$ is contained in a section of the fibre bundle of the orthogonal projection $P_{T_u S}:\mathbb{R}^n\rightarrow T_u S$ for each $u\in S$). 
The assumption that $S$ is a graph is a very mild assumption as the Stein and Mizohata--Takeuchi inequalities (and their variants) behave well under partitioning a manifold $S$ into boundedly many pieces. This allows us to extend our results a posteriori to closed manifolds such as the sphere, for example. With this in mind we make the additional (technical) assumption that 
\begin{equation}\label{cone of normals} 
N(u)\cdot N(u')\geq \frac{1}{2}\;\;\mbox{ for all }u,u'\in S,
\end{equation}
meaning that the normals to $S$ lie in a cone of some fixed aperture. 

As indicated in Remark
\ref{Remark:no curvature}, it is not anticipated that the discussed Stein and Mizohata--Takeuchi inequalities have a quantitative dependence on any lower bound on the curvature of $S$, and our results in this paper reflect this. Identifying this feature is one of the reasons why we have insisted on making our analysis as geometric (or parametrisation-free) as possible. Curiously, while our bounds do not depend on the curvature of $S$ in absolute terms, as we shall see, certain dilation-invariant curvature functionals naturally emerge. For example, for curves in the plane our Stein-type inequality may be controlled by the quantity
\begin{equation}\label{rightthing}
\Lambda(S):=\sup_{u,u'\in S}\left(\frac{|u'-u''|K(u)}{|N(u')\wedge N(u'')|}\right)^{1/2},
\end{equation}
where $K(u)$ denotes the Gaussian curvature of $S$ at the point $u$, and $u''$ is a certain point on $S$ constructed geometrically from points $u,u'\in S$ (we 
refer to Section \ref{Sect:general submanifolds} for details).
However, in this paper we shall formulate our main results in terms of a relatively simple curvature functional related to the quasi-conformality of the shape operator of $S$. This has the advantage of being effective in both the Stein and Mizohata--Takeuchi settings, and in all dimensions. To describe this it is helpful to again begin with the case $n=2$, where we shall say that a strictly convex planar curve $S$ has \textit{bounded curvature quotient} if there exists a finite constant $c$ such that
\begin{equation}\label{fcq}
K(u)\leq cK(u')
\end{equation}
for all $u, u'\in S$. Let us denote by $Q(S)$ the least such $c$. We extend this to higher dimensions by defining $Q(S)$ to be the maximum ratio of the principal curvatures of $S$, namely the smallest constant $c$ such that
\begin{equation}\label{fcqn}
\lambda_j(u)\leq c\lambda_{k}(u')
\end{equation}
for all $u, u'\in S$ and $1\leq j,k\leq n-1$, where $\lambda_j(u)$ denotes the $j$th principal curvature of $S$ at the point $u$. Evidently $Q(kS)=Q(S)$ for all isotropic dilates $kS$ of $S$ -- a natural property in this setting as we have indicated in Remark \ref{Remark:no curvature}.
\begin{remark}[Relation to shape quasi-conformality]
    The finiteness of $Q(S)$ may be interpreted as a certain rather strong \textit{quasi-conformality} condition on the shape operator $\mathrm{d}N$ of $S$. Indeed it quickly implies that the shape operator is $Q(S)$-quasi-conformal, that is $$\|\mathrm{d}N_u\|^{n-1}\leq Q(S)^{n-2}K(u)\;\;\mbox{ for all }u\in S;$$ see for example \cite{Alf} for a treatment of quasi-conformal maps.  This simply follows from the fact that the principal curvatures of $S$ are the eigenvalues of the shape operator. Arguing very similarly we see that the finiteness of $Q(S)$ also implies the ``long range" quasi-conformality condition 
    \begin{equation}\label{longrange}
    \|\mathrm{d}N_u\|^{n-1}\leq Q(S)^{n-1}K(u')\;\;\mbox{ for all }u, u'\in S,
    \end{equation}
    which has the advantage of having content also when $n=2$, where it reduces to \eqref{fcq}. This latter condition is actually \textit{equivalent} to $S$ having bounded curvature quotient even in higher dimensions, since \eqref{longrange} $\implies$ \eqref{fcqn} with $c=Q(S)^{n-1}$. 
\end{remark}

Our main theorems are the following Sobolev variants of the Stein and Mizohata--Takeuchi inequalities (stated somewhat informally for the sake of exposition -- see the forthcoming Theorems \ref{Theorem:SobStein} and \ref{theorem:SMTgen} for clarification):
\begin{theorem}[Sobolev--Stein inequality]\label{Theorem:shortSobStein} Suppose that $S$ is a smooth strictly convex surface with curvature quotient $Q(S)$, and $s<\frac{n-1}{2}$.  Then there is a dimensional constant $c$ such that
    \begin{equation}\label{SobSt}
    \int_{\mathbb{R}^n}|\widehat{g\mathrm{d}\sigma}(x)|^2w(x)\mathrm{d}x\leq cQ(S)^{\frac{5n-8}{4}}\int_S I_{S,2s}(|g|^2,|g|^2)(u)^{1/2}\|X_Sw(u,\cdot)\|_{\dot{H}^s(T_u S)}\mathrm{d}\sigma(u),
    \end{equation}
    where 
   $I_{S,s}$  is a certain bilinear fractional integral on $S$ of order $s$, and $\dot{H}^s(T_u S)$ denotes the usual homogeneous $L^2$ Sobolev space on the tangent space $T_u S$.
    \end{theorem}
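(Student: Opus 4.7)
The plan is to combine the phase--space identity \eqref{PSrep} with a pointwise-in-$u$ Sobolev estimate for the geometric Wigner transform $W_S$ on each tangent space $T_uS$. First, \eqref{PSrep} together with the self-adjointness of $X_S$ rewrites the left-hand side of \eqref{SobSt} as
\begin{equation*}
\int_S\!\left(\int_{T_uS} W_S(g,g)(u,v)\,X_Sw(u,v)\,\mathrm{d}v\right)\!\mathrm{d}\sigma(u).
\end{equation*}
For each fixed $u\in S$ the inner integral is an $L^2$ pairing on the Euclidean space $T_uS\simeq\mathbb{R}^{n-1}$, which I would bound by the duality between $\dot{H}^{-s}(T_uS)$ and $\dot{H}^{s}(T_uS)$. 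Integrating the result in $u$, one sees that \eqref{SobSt} reduces to the pointwise Sobolev estimate
\begin{equation*}
\|W_S(g,g)(u,\cdot)\|_{\dot{H}^{-s}(T_uS)}^{2}\lesssim Q(S)^{(5n-8)/2}\,I_{S,2s}(|g|^2,|g|^2)(u),\qquad u\in S.
\end{equation*}

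To prove this Sobolev estimate I would apply Plancherel on $T_uS$. The construction of $W_S$ provided by Proposition \ref{Prop:genBGNOWigner} arranges that its partial Fourier transform in the fibre variable $v$ is an ambiguity-type object of the form $g(u_+)\overline{g(u_-)}\,A(u,y)$, where $u_\pm\in S$ depend on the base point $u$ and the Fourier variable $y\in T_uS$ through the Gauss map and its inverse, and $A$ is a geometric amplitude built from the shape operator. Plancherel then yields
\begin{equation*}
\|W_S(g,g)(u,\cdot)\|_{\dot{H}^{-s}(T_uS)}^{2}=c_{n,s}\int_{T_uS}|g(u_+(u,y))|^{2}|g(u_-(u,y))|^{2}\,|A(u,y)|^{2}\,|y|^{-2s}\,\mathrm{d}y.
\end{equation*}
Changing variables from $y\in T_uS$ to the pair $(u_+,u_-)\in S\times S$ (or, equivalently, to one of them with the other determined implicitly by $u$), the factor $|y|^{-2s}$ becomes, modulo curvature factors, a Riesz-type kernel comparable to $|u_+-u_-|^{-(n-1-2s)}$, and the right-hand side takes precisely the form of the bilinear fractional integral $I_{S,2s}(|g|^2,|g|^2)(u)$.

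The main obstacle will be the last step: for a general smooth strictly convex $S$, the map $y\leftrightarrow(u_+,u_-)$ is defined only implicitly through the Gauss map, and one has to control simultaneously the ratio $|y|/|u_+-u_-|$ and the Jacobian of this change of variables, uniformly in $u$. Both quantities are governed by the shape operator $\mathrm{d}N$, and it is at precisely this point that the finiteness of the curvature quotient enters via the long-range quasi-conformality bound \eqref{longrange}: it pins down the distortion between tangent lengths and chord lengths in terms of $Q(S)$. Carefully tracking these curvature factors through Plancherel, the change of variables, and the reinterpretation of $|y|^{-2s}$ in the surface parameters should produce the claimed exponent $Q(S)^{(5n-8)/4}$ after taking square roots and integrating in $u$. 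The restriction $s<(n-1)/2$ corresponds to the regime in which the Riesz-potential representation of $\dot{H}^{-s}$ on $T_uS\simeq\mathbb{R}^{n-1}$ is valid, so that the fractional-integral form is legitimate.
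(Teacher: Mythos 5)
Your overall plan is the paper's: use the phase-space representation to pull the weighted $L^2$ norm back to the tangent bundle, apply Cauchy--Schwarz with the $\dot H^s$/$\dot H^{-s}$ duality on each fibre $T_uS$, compute $\|W_S(g,g)(u,\cdot)\|_{\dot H^{-s}(T_uS)}$ by Plancherel after Fourier-transforming in the fibre variable, and then change variables from the Fourier variable $\xi\in T_uS$ to a point of $S$ via $\xi=u'-R_uu'$. This matches the proof of Theorem~\ref{Theorem:SobStein} step for step, so I will focus on two places where the proposal is off.

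First, a small but genuine slip: after changing variables, the factor $|y|^{-2s}$ does not become ``$|u_+-u_-|^{-(n-1-2s)}$, modulo curvature factors''. The change of variable $\xi=u'-R_uu'$ identifies the Fourier variable with the chord $u'-R_uu'$ itself, so $|y|^{-2s}=|u'-R_uu'|^{-2s}$ exactly; the dimension $n-1$ never enters the exponent. The bilinear fractional integral $I_{S,2s}$ in \eqref{Sfractional} indeed has kernel $|u'-R_uu'|^{-2s}J(u,u')$, not a Riesz kernel of order $n-1-2s$. The extra $(n-1)$-dependent factor you might have been anticipating is instead absorbed into the Jacobian $\widetilde J$ of the map $u'\mapsto\xi$, so that after Plancherel one lands on
\begin{equation*}
\|W_S(g,g)(u,\cdot)\|_{\dot H^{-s}(T_uS)}^2
=\int_S\frac{|g(u')|^2|g(R_uu')|^2}{|u'-R_uu'|^{2s}}\,\frac{J(u,u')^2}{\widetilde J(u,u')}\,\mathrm d\sigma(u'),
\end{equation*}
and comparison with $I_{S,2s}(|g|^2,|g|^2)(u)$ requires bounding $J/\widetilde J$, not reinterpreting $|y|^{-2s}$.

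Second, and more importantly, you correctly identify that the bound $J(u,u')/\widetilde J(u,u')\lesssim Q(S)^{(5n-8)/2}$ is the crux, but the route you propose through it --- invoke the long-range quasi-conformality \eqref{longrange} and ``carefully track curvature factors'' --- substantially underestimates what is required. The paper does not deduce this estimate from \eqref{longrange}. It needs: (i) a lower bound $\widetilde J(u,u')\ge(1+\Delta(u,u')^2)^{1/2}$ established via an exterior-algebra computation on $\Lambda^{n-1}(\mathbb R^n)$ together with a sign analysis of $\langle(\mathrm dR_u)_{u'}\dot c_{n-1}(0),\dot c_{n-1}(0)\rangle$ (Proposition~\ref{prop-Jtilde-09apr24}); (ii) the closed-form expressions \eqref{Jformula-prop-19apr24}--\eqref{deltaformula-prop-19apr24} for $J$ and $\Delta$ in terms of the shape operator, obtained by conjugating $u\mapsto R_uu'$ to an outward vector field on $T_{u'}S$ via the Gauss map composed with stereographic projection and exploiting the spectral structure of such fields (Lemma~\ref{detphilemma}); and (iii) the chord-length comparisons $Q(S)^{-1}|u-u'|\lesssim|u'-u''|\lesssim Q(S)^{1/2}|u-u'|$ of Proposition~\ref{metricestimate}. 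Without (i) one cannot even see that $\widetilde J\ge1$, and without the explicit formula in (ii) the cancellations between $J$, $\widetilde J$, and $\Delta$ that produce the specific power $(5n-8)/2$ are not visible. So the structure of your argument is right, but the content of the hard step is missing, and the hint you give for how to fill it does not lead there.
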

\begin{theorem}[Sobolev--Mizohata--Takeuchi inequality]\label{theorem:shortSMTgen}
Suppose that $S$ is a smooth strictly convex surface with curvature quotient $Q(S)$, and 
$s<\frac{n-1}{2}$. Then there is a constant $c$, depending on at most $n$, $s$, and the diameter of $S$, such that 
\begin{equation}\label{SMTgenprelim}
\int_{\mathbb{R}^n}|\widehat{g\mathrm{d}\sigma}(x)|^2w(x)\mathrm{d}x\leq cQ(S)^{\frac{9n-12}{4}}\|g\|_{L^2(S)}^2\sup_{u\in S}\|X_Sw(u,\cdot)\|_{\dot{H}^s(T_u S)}.
\end{equation}
\end{theorem}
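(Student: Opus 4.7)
The plan is to deduce Theorem \ref{theorem:shortSMTgen} as a fairly direct consequence of the Sobolev--Stein inequality (Theorem \ref{Theorem:shortSobStein}), by separating the weight-dependent factor $\|X_Sw(u,\cdot)\|_{\dot H^s(T_u S)}$ from the geometric factor $I_{S,2s}(|g|^2,|g|^2)(u)^{1/2}$ in the right-hand side of \eqref{SobSt}, and then controlling the $L^1(\mathrm{d}\sigma)$ average of the latter via a Hardy--Littlewood--Sobolev-type bilinear fractional integral estimate on $S$. This is natural because the supremum of $\|X_Sw(u,\cdot)\|_{\dot H^s(T_u S)}$ over $u\in S$ is precisely the factor appearing on the right of \eqref{SMTgenprelim}, so the only non-trivial task is to show that the residual integral costs no more than $Q(S)^{n-1}\|g\|_{L^2(S)}^2$, up to a constant depending on the diameter.

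Concretely, starting from Theorem \ref{Theorem:shortSobStein} I would first apply H\"older's inequality in the form
$$\int_S I_{S,2s}(|g|^2,|g|^2)(u)^{1/2}\|X_Sw(u,\cdot)\|_{\dot H^s(T_u S)}\,\mathrm{d}\sigma(u)\leq \Bigl(\sup_{u\in S}\|X_Sw(u,\cdot)\|_{\dot H^s(T_u S)}\Bigr)\int_S I_{S,2s}(|g|^2,|g|^2)(u)^{1/2}\,\mathrm{d}\sigma(u),$$
and then Cauchy--Schwarz in $u$ with respect to $\mathrm{d}\sigma$ to arrive at
$$\int_S I_{S,2s}(|g|^2,|g|^2)(u)^{1/2}\,\mathrm{d}\sigma(u)\leq \sigma(S)^{1/2}\bigl\|I_{S,2s}(|g|^2,|g|^2)\bigr\|_{L^1(S,\,\mathrm{d}\sigma)}^{1/2}.$$
It then remains to establish the surface bilinear fractional integral estimate
$$\bigl\|I_{S,2s}(|g|^2,|g|^2)\bigr\|_{L^1(S,\,\mathrm{d}\sigma)}\lesssim Q(S)^{2(n-1)}\|g\|_{L^2(S,\,\mathrm{d}\sigma)}^4.$$
By Fubini this reduces to first integrating the kernel of $I_{S,2s}$ in the outer variable $u$, producing a symmetric kernel of Hardy--Littlewood--Sobolev type of order $2s$ on $S\times S$; since the hypothesis $s<(n-1)/2$ is exactly the assumption $2s<\dim S$, the resulting trilinear form is controlled by $\|g\|_{L^2(S)}^4$ after flattening and applying the usual HLS inequality.

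The main obstacle will be tracking the $Q(S)$-dependence through the Hardy--Littlewood--Sobolev step, where passing between the surface measure $\mathrm{d}\sigma$ and a flat reference measure introduces metric distortion governed by the principal curvatures. Under \eqref{fcqn} these are trapped within a factor of $Q(S)$ of one another, and accumulating such distortions through each of the $n-1$ principal directions produces the claimed $Q(S)^{2(n-1)}$ in the $L^1$ estimate (equivalently, $Q(S)^{n-1}$ after taking square roots). Combining with the factor $Q(S)^{(5n-8)/4}$ from Theorem \ref{Theorem:shortSobStein}, and absorbing $\sigma(S)^{1/2}$ into a constant depending only on $n$, $s$ and the diameter of $S$, yields the target exponent
$$\frac{5n-8}{4}+(n-1)=\frac{9n-12}{4},$$
as required.
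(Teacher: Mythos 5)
Your first step --- inserting $\sup_{u\in S}\|X_Sw(u,\cdot)\|_{\dot H^s(T_uS)}$ outside the $\mathrm{d}\sigma(u)$-integral --- is exactly how the paper proceeds (the paper even tracks the smaller set $\supp^*(g)$, but the supremum over all of $S$ is what appears in \eqref{SMTgenprelim}). The gap lies in the two steps that follow, and it is genuine.

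The residual quantity $\int_S I_{S,2s}(|g|^2,|g|^2)(u)^{1/2}\,\mathrm{d}\sigma(u)$ is nothing other than $\|I_{S,2s}(|g|^2,|g|^2)\|_{L^{1/2}(S)}^{1/2}$, and the estimate actually needed is the endpoint bound
\begin{equation*}
\|I_{S,2s}(g_1,g_2)\|_{L^{1/2}(S)}\lesssim Q(S)^{2(n-1)}\|g_1\|_{L^1(S)}\|g_2\|_{L^1(S)},
\end{equation*}
namely Theorem \ref{l:EndpointKS} with $q=\tfrac12$, applied with $g_1=g_2=|g|^2\in L^1(S)$. This is a surface-carried Kenig--Stein endpoint; its proof in Section \ref{Sect:KS} proceeds by a dyadic decomposition in the separation scale $|u'-R_uu'|$, using compactness of $S$ and the quasi-distance properties from Proposition \ref{metricestimate} --- it is \emph{not} of Hardy--Littlewood--Sobolev type.

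Your Cauchy--Schwarz in $u$ instead reduces matters to the purported $L^1\times L^1\to L^1$ bound $\|I_{S,2s}(f,f)\|_{L^1(S)}\lesssim Q(S)^{2(n-1)}\|f\|_{L^1(S)}^2$ with $f=|g|^2$, and this is false. Your Fubini step, which is the identity \eqref{quickL1}, gives
\begin{equation*}
\|I_{S,2s}(f,f)\|_{L^1(S)}=\iint_{S\times S}\frac{f(u')f(u'')}{|u'-u''|^{2s}}\,\mathrm{d}\sigma(u')\,\mathrm{d}\sigma(u''),
\end{equation*}
and for $s>0$ this pairing is unbounded on $L^1\times L^1$: a nonnegative $f$ of unit mass supported in a ball of radius $\varepsilon$ makes the right-hand side $\gtrsim\varepsilon^{-2s}\to\infty$, since the kernel is locally integrable but not bounded. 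What Hardy--Littlewood--Sobolev actually controls, for $2s<n-1$, is $\|f\|_{L^p(S)}^2$ with $p=\tfrac{n-1}{n-1-s}>1$, hence $\|g\|_{L^{2p}(S)}^4$ with $2p>2$; on a finite-measure surface the embedding runs $L^{2p}(S)\subset L^2(S)$, not the reverse, so this quantity \emph{dominates} $\|g\|_{L^2(S)}^4$ rather than being controlled by it. (Similarly, the $L^2\times L^2\to L^1$ bound of \eqref{quickL1} produces $\|g\|_{L^4(S)}^4$, again the wrong norm.) In short, the Cauchy--Schwarz in $u$ discards precisely the sublinear $L^{1/2}$ structure that makes the argument close; delete it, recognise the $L^{1/2}$ quasi-norm, and apply Theorem \ref{l:EndpointKS} at $q=\tfrac12$, which yields $Q(S)^{n-1}\|g\|_{L^2(S)}^2$ after taking the square root and gives the same exponent arithmetic $\tfrac{5n-8}{4}+(n-1)=\tfrac{9n-12}{4}$.
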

\begin{remark}
   While the constant in \eqref{SobSt} does not depend on the Sobolev index $s$, the restriction $s<\frac{n-1}{2}$ is imposed in order to ensure that the kernel of the fractional integral operator $I_{S,2s}$ is a locally integrable function; see the statement of Theorem \ref{Theorem:SobStein} for clarification. We refer to Remark \ref{stren} for the optimality of the threshold $\frac{n-1}{2}$.
\end{remark}
\begin{remark}[Improved constants]
It is not expected that the particular powers of $Q(S)$ featuring in Theorems \ref{Theorem:shortSobStein} and \ref{theorem:shortSMTgen} are best-possible, at least in dimensions $n>2$. Moreover, and as we have already indicated,
the curvature quotient $Q(S)$ does not capture all of the relevant geometry of the surface $S$. For example, in the relatively simple two-dimensional setting our arguments reveal that the power of $Q(S)$ in Theorem \ref{Theorem:shortSobStein} may be replaced by the smaller quantity $\Lambda(S)$ in \eqref{rightthing}.
It is straightforward to see that $\Lambda(S)$ may be finite when $S$ has a point of vanishing curvature, such as in the case of the quartic curve $S=\{(t,t^4):|t|\leq 1\}$.
We refer to Section \ref{what is needed really} for more on this.
\end{remark}
\begin{remark}[Permissibility of signed weights]\label{Rem:sig}
    Our proofs of Theorems \ref{Theorem:shortSobStein} and \ref{theorem:shortSMTgen}  reveal that they continue to hold for \textit{signed weights} $w$. This marks an essential difference between these theorems and the original Stein and Mizohata--Takeuchi problems.
\end{remark}
\begin{remark}[The strength of Theorem \ref{Theorem:shortSobStein}]\label{stren}
As we clarify in Section \ref{Sect:general submanifolds}, Theorems \ref{Theorem:shortSobStein} and \ref{theorem:shortSMTgen} (when specialised to non-negative weights $w$) are easily seen to be formally weaker than the global Stein and Mizohata--Takeuchi inequalities \eqref{Steinvgen} and \eqref{MTvgen} respectively (as we have commented in Remark \ref{Rem:Cairo}, the latter were recently shown to fail as stated in \cite{Cairo}). This follows via a standard Sobolev embedding, and as may be expected, the range $s<\frac{n-1}{2}$ is best-possible in this respect. Despite its weakness relative to the Stein inequality, the Sobolev--Stein inequality \eqref{SobSt} continues to be effective in transferring estimates for the X-ray transform to Fourier extension estimates, particularly in two dimensions. 
To see this let $\theta\in\mathbb{R}$ and write
$$
\|(-\Delta)^{\frac{\theta}{2}}|Eg|^2\|_2^2=\int_{\mathbb{R}^n}|Eg|^2w,
$$
where $w=(-\Delta)^\theta|Eg|^2$. By Theorem \ref{Theorem:shortSobStein} (noting Remark \ref{Rem:sig}) and the Cauchy--Schwarz inequality,
$$
\|(-\Delta)^{\frac{\theta}{2}}|Eg|^2\|_2^2\lesssim\|I_{S,2s}(|g|^2,|g|^2)\|_{L^1(S)}^{\frac{1}{2}}\|(-\Delta)^{\frac{s}{2}}X_S((-\Delta)^{\theta}|Eg|^2)\|_{L^2(TS)}
$$
whenever $s<\frac{n-1}{2}$.
By our forthcoming bounds on $I_{S,s}$ (see Section \ref{Sect:KS}, and in particular \eqref{quickL1}),
\begin{equation}\label{L1bound}
\|I_{S,2s}(|g|^2,|g|^2)\|_{L^1(S)}^{\frac{1}{2}}\lesssim \|g\|_4^2.
\end{equation}
Next, since $S$ is strictly convex its Gauss map is injective, and hence by a change of variables followed by the isometric property of the X-ray transform,
$$\|K(u)^{\frac{1}{2}}(-\Delta_v)^{\frac{1}{4}}X_Sw\|_{L^2(TS)}\leq\|(-\Delta_v)^{\frac{1}{4}}Xw\|_2=c_n\|w\|_{L^2(\mathbb{R}^n)}.$$ Therefore, provided $S$ has everywhere nonvanishing Gaussian curvature it follows that
\begin{eqnarray*}
\begin{aligned}
\|(-\Delta)^{\frac{s}{2}}X_S((-\Delta)^{\theta}|Eg|^2)\|_{L^2(TS)}&=\|(-\Delta)^{\frac{1}{4}}X_S((-\Delta)^{\theta+\frac{s}{2}-\frac{1}{4}}|Eg|^2)\|_{L^2(TS)}\\&\lesssim \|K(u)^{1/2}(-\Delta)^{1/4}X_S((-\Delta)^{\theta+\frac{s}{2}-\frac{1}{4}}|Eg|^2)\|_{L^2(TS)}\\&\lesssim \|(-\Delta)^{\theta+\frac{s}{2}-\frac{1}{4}}|Eg|^2\|_2.
\end{aligned}
\end{eqnarray*}
Hence
$$
\|(-\Delta)^{\frac{\theta}{2}}|Eg|^2\|_2^2\lesssim\|g\|_4^2\|(-\Delta)^{\theta+\frac{s}{2}-\frac{1}{4}}|Eg|^2\|_2
$$
whenever $s<\frac{n-1}{2}$.
Setting $\frac{\theta}{2}=\theta+\frac{s}{2}-\frac{1}{4}$, or equivalently $\theta=\frac{1}{2}-s$,
it follows that
$$
\|(-\Delta)^{\frac{\theta}{2}}|Eg|^2\|_2\lesssim \|g\|_4^2
$$
whenever $\theta>1-\frac{n}{2}$. This Sobolev-extension estimate is reminiscent of the well-known Strichartz inequalities of Ozawa and Tsutsumi \cite{OT}; see \cite{BBJP} for some further contextual discussion. In particular, when $n=2$ this implies the classical restriction theorem for smooth compact planar curves of nonvanishing curvature, since the missing case $\theta=0$ is the missing (endpoint) $L^4$ estimate in that setting.
We note that curvature only plays a role in the X-ray estimate, which is structurally consistent with Stein's inequality \eqref{Steinvgen}. This implication via \eqref{SobSt} should be compared with the passage from the Kakeya maximal conjecture to the restriction conjecture implied by Stein's inequality \eqref{Steinvgen} outlined in Remark \ref{Stein philosophy}. Some related arguments in the setting of the paraboloid may be found in \cite{PV, VegaEsc, BV}.
\end{remark}
\begin{remark}[The strength of Theorem \ref{theorem:shortSMTgen}]\label{strenMT}
    The proximity of \eqref{SMTgenprelim} to \eqref{MTvgen} varies depending on the nature of the weight $w$, and evidently this relates to the tightness of the Sobolev embedding referred to in Remark \ref{stren}. For example, in the case of the sphere (or suitable portions of it -- see Theorem \ref{theorem:SMTsphere}) and for weights of the form $w(x)=\varphi(x/R)$ where $\varphi$ is a smooth bump function and $R\gg 1$, Theorem \ref{theorem:shortSMTgen} quickly leads to the inequality
    $$\frac{1}{R}\int_{B(0,R)}|\widehat{g\mathrm{d}\sigma}(x)|^2\mathrm{d}x\leq C_\varepsilon R^{\varepsilon}\|g\|_2^2$$
    for all $\varepsilon>0$ and $R\gg 1$. Up to the $\varepsilon$-loss this coincides with \eqref{MTvgen}, and is the well-known Agmon--H\"ormander inequality \cite{AH}. For weights $w$ that lack regularity one should expect the Sobolev embedding referred to in Remark \ref{stren} to be weak, and thus \eqref{SMTgenprelim} to be considerably weaker than \eqref{MTvgen}. Examples of such weights seem likely to include those that are known to be ``critical" for \eqref{MTvgen} in the sense that they have large mass globally, but small mass on any line,  such as the combinatorially-constructed weights of Cairo \cite{Cairo}, or the random weights of Carbery \cite{tube} and Mulherkar \cite{Sidd}; see also \cite{Guthtalk}.
    %In particular, Cairo's weights take the form $w=|\widehat{h}|^2$ for nonnegative functions $h$ constructed so that $\|Xw\|_\infty$, which in this case coincides with $\|\mathcal{R}h\|_2$, is suitably small. 
    %Of course $$\|X_Sw\|_{L^\infty_u\dot{H}^s_v}$ is in general much larger than $\|Xw\|_\infty$ whenever $s<\frac{n-1}{2}$
\end{remark}
\begin{remark}
    While the curvature quotient $Q(S)$ is invariant under isotropic dilations of $S$, our Sobolev--Mizohata--Takeuchi theorem (Theorem \ref{theorem:shortSMTgen}) is not. This stems from the fact that necessarily $s$ is strictly less than $\tfrac{n-1}{2}$ for the implicit constant to be finite, and manifests itself in the dependence on the diameter of $S$ in the statement of Theorem \ref{theorem:shortSMTgen}. That said, it does provide a bound that is independent of any lower bound on the curvature of $S$.
\end{remark}

\begin{comment}
\begin{remark}[Relation to the tomographic approach]
    The phase-space formulations of the Stein and Mizohata--Takeuchi problems arising from \eqref{frep} are closely related to the \textit{tomographic formulations} in \cite{BN} (presented there in the particular case of the sphere $S=\mathbb{S}^{n-1}$); see also \cite{BNS}. In this formulation the Wigner distribution $W_S(g,g)$ is replaced by the tomographic data $$K(u)(-\Delta_v)^{1/2}X_S(|\widehat{g\mathrm{d}\sigma}|^2),$$ which is also a real-valued function on the tangent bundle $TS$; here $K(u)$ denotes the Gaussian curvature of $S$ at the point $u$. This relies on the classical inversion formula for the X-ray transform, and was shown in \cite{BN} to be effective in proving Theorems \ref{Theorem:shortSobStein} and \ref{theorem:shortSMTgen} when $S$ is a circle in the plane. Specifically when $n=2$ these phase-space and tomographic formulations may be seen to coincide, allowing Wigner distributions to be constructed tomographically. We provide the details of this in Section \ref{Sect:tomographic}. 
\end{remark}
\end{comment}
\begin{remark}[Relation to the wavepacket approach]
The representation \eqref{PSrep} may be viewed as a certain ``scale-free" (and ``quadratic") version of the \textit{wavepacket decomposition} that has proved so effective in Fourier restriction theory. There an extension operator is expressed as a superposition of wavepackets adapted to tubes in $\mathbb{R}^n$, with the tubes corresponding to a discrete set of points in the tangent bundle of $S$. The distinction arises from a use of a conventional windowed Fourier transform (a linear operator) in the wavepacket decomposition, rather than a Wigner distribution -- the latter being a form of windowed Fourier transform where the window is the input function $g$ itself (a quadratic operator). We refer to \cite{CIW} and the references there for progress on the Stein and Mizohata--Takeuchi problems based on wavepacket analysis.
\end{remark}
\noindent
\textbf{Structure of the paper.} In Section \ref{Sect:para} we consider the case when $S$ is the paraboloid, motivating our perspective and results in classical quantum mechanical terms that date back to Wigner's original work. 
In Section \ref{Sect:Helm} we prove Theorems \ref{Theorem:shortSobStein} and \ref{theorem:shortSMTgen} when $S$ is the sphere, interpreting our perspective from the point of view of optical field theory. 
%While the spherical Wigner transform naturally arises here, we also observe that it suffices to work with the classical Wigner transform \eqref{Wig} in this case. 
In Section \ref{Sect:general submanifolds} we turn to the much more involved geometric analysis in the setting of general submanifolds, proving Theorems \ref{Theorem:shortSobStein} and \ref{theorem:shortSMTgen}, although deferring the necessary analysis of Jacobians, distances and bilinear fractional integrals to Sections \ref{Jacobians}, \ref{distanceestimates} and \ref{Sect:KS} respectively. In Section \ref{app} we establish the characteristic marginal properties of the geometric Wigner transforms via an analysis of the appropriate geometric maximal operators. In Section \ref{Sect:tomographic} we observe that the phase-space perspective presented here coincides with a certain tomographic perspective introduced in \cite{BN} when $n=2$, highlighting a tomographic method for constructing geometric Wigner distributions. In Section \ref{Sect:Flandrin} we illustrate the effectiveness of our basic methods by establishing a form of Flandrin's conjecture in the plane with an $\varepsilon$ loss. Finally, in Section \ref{Sect:tombounds} we pose some questions. 

\vspace{3mm}
\noindent
\textbf{Notation.} Throughout this paper, for nonnegative quantities $A,B$ we write $A\lesssim B$ if there exists a constant $c$ that is independent of $S$ such that $A\leq cB$. The independence of the implicit constant $c$ of various other parameters will be clear from the context. In particular, such constants will never depend on the input function $g$, nor the weight function $w$.
%A natural starting point is the case $S=\mathbb{S}^{n-1}$, the unit sphere in $\mathbb{R}^n$. That our general perspective functions in this setting is less surprising as $\widehat{gd\sigma}$ satisfies the stationary Schr\"odinger (or Helmholtz) equation $\Delta u+u=0$.

\vspace{3mm}
\noindent
\textbf{Acknowledgments.}
The first and fourth authors are supported by EPSRC Grant EP/W032880/1. The third author is supported by JSPS Overseas Research Fellowship and  JSPS Kakenhi grant numbers 19K03546, 19H01796 and 21K13806. The fourth author thanks the American Institute of Mathematics for supporting the \textit{AIM Fourier restriction community}, in which he was introduced to the Mizohata-Takeuchi conjecture. He also thanks the Basque Center for Applied Mathematics (BCAM) for the invitation to deliver a series of lectures on the subject of this paper and for their kind hospitality. We thank Cristina Benea, Jos\'e Ca$\tilde{\mbox{n}}$izo, Tony Carbery, Mark Dennis, Michele Ferrante, Veronique Fischer, Kerr Maxwell, S\o ren Mikkelsen, Detlef M\"uller, Camil Muscalu, Mateus Sousa, Amy Tierney and Gennady Uraltsev for a number of helpful discussions. In particular, we thank Marco Vitturi for drawing our attention to the role of the classical Wigner transform in \cite{DMV}, which served as an important source of inspiration for this work. Finally, we thank the reviewers for their many helpful comments and suggestions.
%%%%%%%%%%%%%%%%%%%%%%%%%%%%%%%%%%%%%%%%%%%
%%%%%%%%%%%%%%%%%%%%%%%%%%%%%%%%%%%%%%%%%%%%%%%

\section{The paraboloid: a quantum mechanical viewpoint}\label{Sect:para}
In the particular case when $S$ is the paraboloid, the phase space representation \eqref{PSrep} has a well-known quantum mechanical derivation going back to the original work of Wigner \cite{Wigner}. As may be expected, this involves the classical Wigner transform, and as we shall see in this section, leads to some additional insights and simplifications in our arguments. Moreover, parametrised formulations of the Stein and Mizohata--Takeuchi inequalities \eqref{Steinvgen} and \eqref{MTvgen} will emerge rather naturally from these classical considerations, permitting them some physical (or probabilistic) interpretations.

The \textit{Wigner transform} is defined (see for example \cite{folland}) for $g_1, g_2\in L^2(\mathbb{R}^d)$ by
\begin{equation}\label{Wig}
W(g_1,g_2)(x,v)=\int_{\mathbb{R}^d}g_1\left(x+\frac{y}{2}\right)\overline{g_2\left(x-\frac{y}{2}\right)}e^{-2\pi iv\cdot y}\mathrm{d}y.
\end{equation}
For a solution $u:\mathbb{R}^d\times\mathbb{R}\rightarrow\mathbb{C}$ of the Schr\"odinger equation 
%\footnote{There are some constant factors that need cleaning up in all of this - factors of 1/2 here and there.} 
$$2\pi i\frac{\partial u}{\partial t}=\Delta_{x} u$$ with initial data $u_0\in L^2(\mathbb{R}^d)$, 
it is a classical observation dating back to Wigner
%\footnote{JB: we need to check the history here, as I get the impression it isn't so simple. Maybe we also want to mention twisted convolution, as that will mean more to some harmonic analysts. Folland \cite{folland} perhaps, which ought to be a good reference for this section more generally.} 
\cite{Wigner} that
\begin{equation}\label{f-300625}
    f(x,v,t):=W(u(\cdot, t), u(\cdot, t))(x,v)
\end{equation}
satisfies the kinetic transport equation
$$
\frac{\partial f}{\partial t}=2v\cdot\nabla_x f
$$
from classical mechanics. Consequently $$f(x,v,t)=f_0(x+2tv,v),$$ where $f_0=W(u_0,u_0):\mathbb{R}^d\times\mathbb{R}^d\rightarrow\mathbb{R}$ is the \textit{Wigner distribution} of the initial data $u_0$. We note that the function $f$ may be reconciled with the corresponding $f$ in the forthcoming Sections \ref{Sect:Helm} and \ref{Sect:general submanifolds} using the Fourier invariance property (see 1.94 of \cite{folland})
\begin{equation}\label{ftid}
W(g_1,g_2)(x,v)=W(\widehat{g}_1,\widehat{g}_2)(-v,x).
\end{equation}
By the classical marginal property
\begin{equation}\label{classical marginal}
\int_{\mathbb{R}^d}W(g,g)(x,v)\mathrm{d}v=|g(x)|^2
\end{equation}
of the Wigner distribution we obtain the \textit{phase-space representation}
\begin{equation}\label{parakinetic}
|u(x,t)|^2=\int_{\mathbb{R}^d}f(x,v,t)\mathrm{d}v=\int_{\mathbb{R}^d}f_0(x+2tv,v)\mathrm{d}v=:\rho(f_0)(x,t).
\end{equation}
The operator $\rho$, which is referred to as a \textit{velocity averaging operator} in kinetic theory, is easily seen to be a certain (parametrised) adjoint space-time X-ray transform, indeed
$$
\rho^*(g)(x,v)=\int_{\mathbb{R}}g(x-2tv,t)\mathrm{d}t,
$$
which is of course an integral of the space-time function $g$ along the line through the point $(x,0)$ with direction $(-2v,1)$.
We caution that the parameter $v$, being a velocity, describes the direction of this line. This differs from elsewhere in this paper where $v$ is used as a translation (or position) parameter.

As we have indicated in the introduction, the above phase-space representation is particularly natural if one is interested in \textit{weighted $L^2$ norms} of $u$, since by duality
\begin{equation}\label{dmv identity}
\int_{\mathbb{R}^d\times\mathbb{R}}|u(x,t)|^2w(x,t)\mathrm{d}x\mathrm{d}t=\int_{\mathbb{R}^d\times\mathbb{R}^d}W(u_0,u_0)(x,v)\rho^*w(x,v)\mathrm{d}x\mathrm{d}v.
\end{equation}
We refer to \cite{DMV} where this identity was recently derived directly.
If the initial data $u_0$ is a \textit{Gaussian} then $W(u_0,u_0)$ is also a (real) Gaussian, and being nonnegative it follows that
\begin{eqnarray*}
\begin{aligned}
\int_{\mathbb{R}^d\times\mathbb{R}}|u(x,t)|^2w(x,t)\mathrm{d}x\mathrm{d}t&\leq\int_{\mathbb{R}^d}\left(\int_{\mathbb{R}^d}W(u_0,u_0)(x,v)\mathrm{d}x\right)\sup_x\rho^*w(x,v)\mathrm{d}v\\&=
\int_{\mathbb{R}^d}|\widehat{u}_0(v)|^2\sup_x\rho^*w(x,v)\mathrm{d}v,
\end{aligned}
\end{eqnarray*}
which in turn implies that
$$
\int_{\mathbb{R}^d\times\mathbb{R}}|u(x,t)|^2w(x,t)\mathrm{d}x\mathrm{d}t\leq\sup_{\substack{x\in\mathbb{R}^d\\ v\in\supp(\widehat{u}_0)}}\rho^* w(x,v)\;\|u_0\|_2^2.
$$
Here we have used the further marginal property
\begin{equation}\label{classical marginal2}
\int_{\mathbb{R}^d}W(g,g)(x,v)\mathrm{d}x=|\widehat{g}(v)|^2
\end{equation}
of the Wigner distribution, followed by Plancherel's theorem.
It is therefore reasonably natural to ask whether
\begin{equation}\label{Steinpara}
\int_{\mathbb{R}^d\times\mathbb{R}}|u(x,t)|^2w(x,t)\mathrm{d}x\mathrm{d}t\lesssim
\int_{\mathbb{R}^d}|\widehat{u}_0(v)|^2\sup_x\rho^*w(x,v)\mathrm{d}v,
\end{equation}
and thus
\begin{equation}\label{MTpara}
\int_{\mathbb{R}^d\times\mathbb{R}}|u(x,t)|^2w(x,t)\mathrm{d}x\mathrm{d}t\lesssim\sup_{\substack{x\in\mathbb{R}^d\\ v\in\supp(\widehat{u}_0)}}\rho^* w(x,v)\;\|u_0\|_2^2
\end{equation}
might hold for general $u_0$. 
As we clarify shortly in Remark \ref{changeofmeasure}, the inequalities \eqref{Steinpara} and \eqref{MTpara} are parabolic forms of the Stein \eqref{Steinvgen} and Mizohata--Takeuchi \eqref{MTvgen} inequalities and as such also fail (see Remark \ref{Rem:Cairo}). 
\begin{remark}\label{Remark:failureStrichartz} As indicated in Remark \ref{Rem:Cairo}, the recent counterexamples in \cite{Cairo} leave open the possibility that for $\widehat{u}_0$ supported in the unit ball (say),
\begin{equation}\label{Steinparaeps}
    \int_{|(x,t)|\leq R}|u(x,t)|^2w(x,t)\mathrm{d}x\mathrm{d}t\leq C_\varepsilon R^\varepsilon \int_{\mathbb{R}^d}|\widehat{u}_0(v)|^2\sup_x\rho^*w(x,v)\mathrm{d}v
\end{equation}
and
\begin{equation}\label{MTparaeps}
    \int_{|(x,t)|\leq R}|u(x,t)|^2w(x,t)\mathrm{d}x\mathrm{d}t\leq C_\varepsilon R^\varepsilon \sup_{\substack{x\in\mathbb{R}^d\\ v\in\supp(\widehat{u}_0)}}\rho^* w(x,v)\;\|u_0\|_2^2
\end{equation}
might hold for each $\varepsilon>0$ and all $R\gg 1$; in other words, \eqref{Steinpara} and \eqref{MTpara} under the assumption that $w$ is supported in the space-time ball $B(0,R)$, accepting a factor of $R^\varepsilon$ in the implicit constant on the right-hand sides for each $\varepsilon>0$. The requirement that $\widehat{u}_0$ is supported in some fixed compact set (the unit ball here) prevents scale-invariance considerations reducing \eqref{Steinparaeps} and \eqref{MTparaeps} to \eqref{Steinpara} and \eqref{MTpara} respectively. We remark that these inequalities are naturally referred to as \textit{Strichartz estimates}, being bounds on space-time norms. 
\end{remark}
%One might even venture that the constant could be 1 in \eqref{Steinpara} and \eqref{MTpara}, although it should be noted that a similar assertion has recently been shown to be false in the context of a closely related conjecture of Flandrin \cite{Duy}, \cite{Lerner} -- see Section \ref{Sect:Flandrin} for a further illustration of our methods in that context.
\begin{remark}[A quasi-probabilistic interpretation]\label{Remark:quasi}
In the phase-space formulation of quantum mechanics the Wigner distribution $W(u_0,u_0)$ is interpreted as a (quasi-) probability distribution on position-momentum space for a quantum particle, and so the inequalities \eqref{Steinpara} and \eqref{MTpara} for any given weight $w$ are the assertions that \begin{equation}\label{quasiStein}\mathbb{E}_{x,v}(\rho^*w)\lesssim\mathbb{E}_{x,v}(\|\rho^*w\|_{L^\infty_x})
\end{equation} and
\begin{equation}\label{quasiMT}\mathbb{E}_{x,v}(\rho^*w)\lesssim\|\rho^*w\|_\infty
\end{equation}
respectively; we recall that these inequalities are known to fail for general $w$ unless we make some additional localisations (see Remark \ref{Remark:failureStrichartz}). Here the expectation is taken with respect to the quasi-probability density $W(u_0,u_0)$, where of course $\|u_0\|_2=1$. Note that $\mathbb{E}_{x,v}(\|\rho^*w\|_{L^\infty_x})=\mathbb{E}_{v}(\|\rho^*w\|_{L^\infty_x})$ by the marginal property \eqref{classical marginal2}, where $\mathbb{E}_{v}$ is taken with respect to the probability density $|\widehat{u}_{0}(v)|^{2}$. 
%That the weight function $w$ is required to be nonnegative for \eqref{quasiMT}, and thus \eqref{quasiStein}, is established in \cite{BNO}. 
The forthcoming Theorems \ref{prop:SST}--\ref{prop:SMT compact} may be interpreted similarly. Evidently the subtleties in \eqref{quasiStein}, \eqref{quasiMT} and all of these inequalities arise from the fact that the Wigner distribution typically takes both positive and negative values. 
\end{remark}
\begin{remark}\label{changeofmeasure}
Although \eqref{Steinpara} is false in general (see Remark \ref{Remark:failureStrichartz}), for any given weight $w$ it may be seen as an instance of \eqref{Steinvgen} where $d=n-1$ and 
\begin{equation}\label{the paraboloid}
S=\mathbb{P}^d:=\{u=(u',u_{d+1})\in\mathbb{R}^d\times\mathbb{R}:u_{d+1}=|u'|^2\}
\end{equation}
is the paraboloid.
%{\color{red}Again, we make the additional assumption that $w$ is contained in $B(0,R)$, and a factor of $R^\varepsilon$ is included in the implicit constant. }
This is a consequence of a certain change-of-measure invariance property enjoyed by the general inequality \eqref{Steinvgen}: specifically, if $\mathrm{d}\tilde{\sigma}(u)=a(u)\mathrm{d}\sigma(u)$ for some density $a$ on $S$, then \eqref{Steinvgenloc} quickly implies that
\begin{equation}\label{Steininv}
\int_{\mathbb{R}^n}|\widehat{g\mathrm{d}\tilde{\sigma}}(x)|^2w(x)\mathrm{d}x\leq C\int_S|g(u)|^2\sup_{v\in T_uS}a(u)Xw(N(u),v)\mathrm{d}\tilde{\sigma}(u).
\end{equation}
Next we define the (affine surface) measure $\mathrm{d}\tilde{\sigma}$ on
$\mathbb{P}^d$
by
\begin{equation}\label{meas on para}
\int_S\Phi \mathrm{d}\tilde{\sigma}=\int_{\mathbb{R}^d}\Phi(u',|u'|^2)\mathrm{d}u',
\end{equation}
so that $a(u)=(1+4|u|^2)^{-1/2}$. With these choices, a scalar change of variables reveals that
$$
\sup_{v\in T_u S}a(u)Xw(N(u),v)=\sup_x\rho^*w(x,u').
$$
Finally, defining $g: S\rightarrow\mathbb{C}$ by $g(\cdot,|\cdot|^2)=\widehat{u}_0$, we have that $u(x,t)=\widehat{g\mathrm{d}\tilde{\sigma}}(x,t)$, from which \eqref{Steinpara} follows. 
The change-of-measure invariance property \eqref{Steininv} enjoyed by \eqref{Steinvgen} is not inherited by the corresponding Mizohata--Takeuchi inequality \eqref{MTvgen}, meaning that there is in principle a different Mizohata--Takeuchi inequality for each density $a$ -- namely
\begin{equation}\label{MTa}
\int_{\mathbb{R}^n}|\widehat{g\mathrm{d}\tilde{\sigma}}(x)|^2w(x)\mathrm{d}x\leq C\sup_{(u,v)\in TS}a(u)Xw(N(u),v)\|g\|^2_{L^2(\mathrm{d}\tilde{\sigma})},
\end{equation}
where again, the supremum is restricted to $u\in\supp(g)$. It is straightforward to verify that \eqref{MTpara} coincides with \eqref{MTa} with the above choice of density $a$ on the paraboloid. Similar change-of-measure arguments relate the paraboloid-carried Wigner distribution referred to in \eqref{PSrep} to the classical Wigner distribution \eqref{Wig}, reconciling \eqref{parakinetic} with \eqref{PSrep}. We clarify this in Remark \ref{examples} in Section \ref{Sect:general submanifolds}.
\end{remark}
Perhaps the most obvious difficulty in going beyond Gaussian initial data is that $W(u_0,u_0)$ is everywhere nonnegative if and only if $u_0$ is a Gaussian (this is known as Hudson's theorem, see \cite{folland} for a treatment of this and other fundamental properties of the Wigner transform), and the inequality $\|W(u_0,u_0)\|_1\lesssim\|u_0\|_2^2$ fails for general $u_0$ (see \cite{Lerner}). Of course the $L^p$ estimates that \textit{do hold} for the Wigner distribution (see \cite{LiebWigner}) yield variants of \eqref{MTpara} via H\"older's inequality, such as 
\begin{equation}\label{2MTpara}
\int_{\mathbb{R}^d\times\mathbb{R}}|u(x,t)|^2w(x,t)\mathrm{d}x\mathrm{d}t\lesssim\|\rho^* w\|_{L^2(\mathbb{R}^d\times [-1,1]^d)}\|u_0\|_2^2,
\end{equation}
as was observed in \cite{DMV} whenever $\widehat{u}_0$ is supported in the cube $[-1,1]^d$.
Here we observe that further variants arise from certain \textit{Sobolev estimates} on the Wigner transform. For example, we have the following:
\begin{theorem}\label{prop:SST}
For $s>d/2$,
\begin{equation}\label{SSpara}
\int_{\mathbb{R}^d\times\mathbb{R}}|u(x,t)|^2w(x,t)\mathrm{d}x\mathrm{d}t\leq\int_{\mathbb{R}^d}\widetilde{I}_{2s}(|\widehat{u}_0|^2,|\widehat{u}_0|^2)(v)^{1/2}\|\rho^* w(\cdot,v)\|_{H_x^s}\mathrm{d}v,
\end{equation}
where 
$$
\widetilde{I}_s(g_1,g_2)(v):=\int_{\mathbb{R}^d}\frac{g_1\left(v+\frac{\xi}{2}\right)g_2\left(v-\frac{\xi}{2}\right)}{(1+|\xi|^2)^{s/2}}\mathrm{d}\xi
$$
and $H_x^s$ denotes the usual inhomogeneous $L^2$ Sobolev space in the variable $x$.
\end{theorem}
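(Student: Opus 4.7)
The starting point is the phase-space identity \eqref{dmv identity}, namely
$$
\int_{\mathbb{R}^d\times\mathbb{R}}|u(x,t)|^2w(x,t)\,\mathrm{d}x\,\mathrm{d}t=\int_{\mathbb{R}^d}\int_{\mathbb{R}^d}W(u_0,u_0)(x,v)\,\rho^*w(x,v)\,\mathrm{d}x\,\mathrm{d}v,
$$
so it suffices to control the inner $x$-integral for each fixed $v$. The plan is to apply Plancherel in $x$, use a well-known identity for the partial Fourier transform of the Wigner distribution in the $x$-variable, apply Cauchy--Schwarz in $\xi$ against the Bessel weight $(1+|\xi|^{2})^{s/2}$, and finally integrate in $v$.

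\textbf{Key steps.} First, for fixed $v\in\mathbb{R}^{d}$, Plancherel in $x$ yields
$$
\int_{\mathbb{R}^d}W(u_0,u_0)(x,v)\,\rho^*w(x,v)\,\mathrm{d}x=\int_{\mathbb{R}^d}\widehat{W(u_0,u_0)}(\xi,v)\,\overline{\widehat{\rho^*w}(\xi,v)}\,\mathrm{d}\xi,
$$
where the Fourier transforms are taken in the $x$-variable only. A direct computation starting from \eqref{Wig}, using the change of variables $(x,y)\mapsto(a,b)=(x+y/2,x-y/2)$, gives the standard identity
$$
\widehat{W(u_0,u_0)}(\xi,v)=\widehat{u}_{0}\!\left(v+\tfrac{\xi}{2}\right)\overline{\widehat{u}_{0}\!\left(v-\tfrac{\xi}{2}\right)}.
$$
Substituting this into the previous display and splitting the weight as $1=(1+|\xi|^{2})^{-s/2}(1+|\xi|^{2})^{s/2}$, the Cauchy--Schwarz inequality in $\xi$ produces the bound
$$
\left(\int_{\mathbb{R}^d}\frac{|\widehat{u}_{0}(v+\xi/2)|^{2}\,|\widehat{u}_{0}(v-\xi/2)|^{2}}{(1+|\xi|^{2})^{s}}\,\mathrm{d}\xi\right)^{\!1/2}\!\left(\int_{\mathbb{R}^d}|\widehat{\rho^*w}(\xi,v)|^{2}(1+|\xi|^{2})^{s}\,\mathrm{d}\xi\right)^{\!1/2}.
$$
The first factor is precisely $\widetilde{I}_{2s}(|\widehat{u}_{0}|^{2},|\widehat{u}_{0}|^{2})(v)^{1/2}$ by the definition of $\widetilde{I}_{s}$, while the second factor is exactly $\|\rho^*w(\cdot,v)\|_{H^{s}_{x}}$. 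Integrating in $v$ and applying the triangle inequality under the $v$-integral completes the proof.

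\textbf{Expected obstacles.} The argument is largely bookkeeping; the main non-trivial input is the Fourier transform identity for $W(u_0,u_0)$, which is classical. The hypothesis $s>d/2$ is not used in the Cauchy--Schwarz step itself but enters implicitly to guarantee that $\widetilde{I}_{2s}$ is well defined and finite on suitable classes of data (it ensures that $(1+|\xi|^{2})^{-s}$ is integrable on $\mathbb{R}^{d}$, so the right-hand side is finite for, say, $\widehat{u}_{0}\in L^{\infty}$). A mild technical point is that one should first justify the computations for $u_{0}$ in a dense class (e.g.\ Schwartz functions), where all integrals converge absolutely and Plancherel is immediate, and then extend to general $L^{2}$ data by density using the resulting bound.
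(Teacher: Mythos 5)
Your argument is correct and is essentially the same as the paper's: the application of \eqref{dmv identity}, followed by Plancherel in $x$ and Cauchy--Schwarz against the Bessel weight, is exactly the paper's ``duality of $H^s$ and $H^{-s}$'' step, and the key identity $\widehat{W(u_0,u_0)}(\xi,v)=\widehat{u}_0(v+\xi/2)\overline{\widehat{u}_0(v-\xi/2)}$ (which you derive directly) is what the paper obtains from the Folland Fourier-invariance formula \eqref{ftid}. The only cosmetic divergence is that you compute the partial Fourier transform from scratch, while the paper cites \eqref{ftid}; the two are interchangeable.
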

\begin{theorem}\label{prop:SMT}
For $s>d/2$,
\begin{equation}\label{SMTpara}
\int_{\mathbb{R}^d\times\mathbb{R}}|u(x,t)|^2w(x,t)\mathrm{d}x\mathrm{d}t\lesssim\sup_{v\in\frac{1}{2}(\supp(\widehat{u}_0)+\supp(\widehat{u}_0))}\|\rho^* w(\cdot,v)\|_{H_x^s}\|u_0\|_2^2,
\end{equation}
where the implicit constant depends on at most $d$ and $s$.
\end{theorem}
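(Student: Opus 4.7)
The natural approach is to derive Theorem~\ref{prop:SMT} as a corollary of Theorem~\ref{prop:SST}, by pulling the supremum out of the integral appearing in~\eqref{SSpara}. First, I would observe that the bilinear integrand $\widetilde{I}_{2s}(|\widehat{u}_0|^2,|\widehat{u}_0|^2)(v)$ vanishes unless there exists $\xi$ with both $v+\xi/2$ and $v-\xi/2$ in $\supp\widehat{u}_0$; equivalently, $v = \tfrac12((v+\xi/2)+(v-\xi/2)) \in K:=\tfrac12(\supp\widehat{u}_0+\supp\widehat{u}_0)$. Hence Theorem~\ref{prop:SST} yields
$$\int_{\mathbb{R}^d\times\mathbb{R}}|u|^2 w \leq \sup_{v\in K}\|\rho^*w(\cdot,v)\|_{H^s_x}\cdot\int\widetilde{I}_{2s}(|\widehat{u}_0|^2,|\widehat{u}_0|^2)^{1/2}(v)\,\mathrm{d}v,$$
reducing matters to establishing the Sobolev--Wigner marginal bound
$$\int\widetilde{I}_{2s}(|\widehat{u}_0|^2,|\widehat{u}_0|^2)^{1/2}(v)\,\mathrm{d}v\lesssim_{d,s}\|u_0\|_2^2\qquad(s>d/2).$$

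Noting that $\mathcal{F}_x W(u_0,u_0)(\xi,v)=\widehat{u}_0(v+\xi/2)\overline{\widehat{u}_0(v-\xi/2)}$, this estimate is equivalent to the bound $\|W(u_0,u_0)\|_{L^1_v H^{-s}_x}\lesssim\|u_0\|_2^2$. The natural starting point is a Cauchy--Schwarz in $\xi$ exploiting that $(1+|\xi|^2)^{-s}\in L^1(\mathbb{R}^d)$ for $s>d/2$. The corresponding $L^2_v$-bound follows immediately from the elementary identity, obtained via the change of variables $(a,b)=(v+\xi/2,v-\xi/2)$,
$$\int\widetilde{I}_{2s}(|\widehat{u}_0|^2,|\widehat{u}_0|^2)(v)\,\mathrm{d}v=\iint\frac{|\widehat{u}_0(a)|^2|\widehat{u}_0(b)|^2}{(1+|a-b|^2)^s}\,\mathrm{d}a\,\mathrm{d}b\leq\|u_0\|_2^4,$$
since $(1+|a-b|^2)^{-s}\leq 1$.

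The main obstacle is upgrading this $L^2_v$-estimate to the $L^1_v$-bound required, without introducing a diameter-of-$K$ factor; a naive application of Cauchy--Schwarz in $v$ against $\mathbbm{1}_K$ gives a loss of $|K|^{1/2}$, which must be absent from the final constant. My plan is to instead use Cauchy--Schwarz in $v$ against the integrable weight $(1+|v|^2)^{-s}$ (again relying on $s>d/2$) and analyse $\int(1+|v|^2)^s\widetilde{I}_{2s}(|\widehat{u}_0|^2,|\widehat{u}_0|^2)(v)\,\mathrm{d}v$. After the $(a,b)$-substitution, the weight ratio $(1+|(a+b)/2|^2)^s/(1+|a-b|^2)^s$ demands careful treatment via Peetre-type inequalities together with the $a\leftrightarrow b$ symmetry, in order to reabsorb the $v$-weight against $(1+|\xi|^2)^{-s}$ without picking up an $H^s$-factor on $\widehat{u}_0$. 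Capturing this delicate cancellation --- genuinely exploiting the $s>d/2$ threshold on both sides --- is the most technical part of the proof.
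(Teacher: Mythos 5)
Your first two steps match the paper exactly: you observe that $\widetilde{I}_{2s}(|\widehat{u}_0|^2,|\widehat{u}_0|^2)(v)$ vanishes off $\tfrac12(\supp\widehat{u}_0+\supp\widehat{u}_0)$, pull the supremum out of \eqref{SSpara}, and correctly reduce matters to the bound
$$\int_{\mathbb{R}^d}\widetilde{I}_{2s}(|\widehat{u}_0|^2,|\widehat{u}_0|^2)(v)^{1/2}\,\mathrm{d}v\lesssim\|u_0\|_2^2,$$
equivalently an $L^1\times L^1\to L^{1/2}$ estimate for the bilinear operator $\widetilde{I}_\sigma$ with $\sigma=2s>d$. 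Up to this point you are on the paper's track, which at this stage simply cites Kenig--Stein and Grafakos--Kalton.

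Your plan for the remaining step does not work, and the obstruction is structural rather than merely technical. The quantity you need to bound is translation-invariant in $\widehat{u}_0$ (replacing $\widehat{u}_0$ by $\widehat{u}_0(\cdot-v_0)$ leaves both sides unchanged), but the auxiliary weight $(1+|v|^2)^{-s}$ you propose to introduce via Cauchy--Schwarz is not. After the $(a,b)=(v+\xi/2,v-\xi/2)$ substitution you are asking whether
$$\iint\frac{(1+|(a+b)/2|^2)^s}{(1+|a-b|^2)^s}\,g(a)g(b)\,\mathrm{d}a\,\mathrm{d}b\lesssim\|g\|_1^2,$$
with $g=|\widehat{u}_0|^2$, and this is simply false: take $g$ a fixed bump translated to be centred at $v_0$, so that the left side grows like $(1+|v_0|^2)^s$ while the right side stays bounded. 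Peetre's inequality applied here gives $(1+|(a+b)/2|^2)^s\lesssim(1+|a-b|^2)^s\min\{(1+|a|^2)^s,(1+|b|^2)^s\}$, so the best the ratio can be controlled by is $\min\{(1+|a|^2)^s,(1+|b|^2)^s\}$, which produces exactly the $H^{s/2}$-weight on $\widehat{u}_0$ you hoped to avoid; the $a\leftrightarrow b$ symmetry cannot remove it because it fixes $(a+b)/2$. There is no ``delicate cancellation'' to be captured---the estimate you have set up genuinely requires Sobolev regularity of $\widehat{u}_0$.

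The correct mechanism is the Kenig--Stein dyadic argument (the same technique the paper spells out in Section~\ref{Sect:KS} for the surface-carried case). One decomposes the kernel over dyadic annuli $|\xi|\sim 2^k$, so $\widetilde{I}_{2s}(g,g)\lesssim\sum_{k\ge 0}2^{-2ks}T_k(g,g)$ with $T_k(g,g)(v)=\int_{|\xi|\sim 2^k}g(v+\xi/2)g(v-\xi/2)\,\mathrm{d}\xi$, tiles $\mathbb{R}^d$ by cubes $Q$ of side $\sim 2^k$, notes that for $v\in Q$ and $|\xi|\sim 2^k$ both $v\pm\xi/2$ lie in a bounded dilate $Q^*$, and uses Cauchy--Schwarz on each cube together with $\int_Q T_k\,\mathrm{d}v\le\|g\mathbbm{1}_{Q^*}\|_1^2$ and the sub-additivity of $\|\cdot\|_{L^{1/2}}^{1/2}$ to obtain $\|T_k(g,g)\|_{L^{1/2}}^{1/2}\lesssim 2^{kd/2}\|g\|_1$. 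Summing $\sum_k 2^{-ks}2^{kd/2}$ converges precisely when $s>d/2$. This localised, scale-by-scale scheme is what replaces your global weighted Cauchy--Schwarz, and it is where the condition $s>d/2$ is genuinely used.
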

\begin{remark}\label{rmk1-dec1423}
    %Unlike the {\color{red}(localized) Stein and Mizohata--Takeuchi conjectures,} %conjectured inequalities \eqref{MTpara} and \eqref{Steinpara}, 
    As our arguments quickly reveal, Theorems \ref{prop:SST} and \ref{prop:SMT} require no positivity hypothesis on the weight $w$.
This point aside, Theorems \ref{prop:SST} and \ref{prop:SMT} may be viewed as substitutes for \eqref{Steinpara} and \eqref{MTpara}, which are false for general weights; see Remark \ref{Remark:failureStrichartz}. This is a consequence of the elementary Sobolev embedding $H^s(\mathbb{R}^d)\subset L^\infty(\mathbb{R}^d)$, which holds whenever $s>d/2$.  
%{\color{red}It is worth noting that our results here does not require the localization of the weight. }
It is natural to ask whether the stronger
\begin{equation}\label{SMTparasup}
\int_{\mathbb{R}^d\times\mathbb{R}}|u(x,t)|^2w(x,t)\mathrm{d}x\mathrm{d}t\lesssim\sup_{v\in\supp(\widehat{u}_0)}\|\rho^* w(\cdot,v)\|_{H_x^s}\|u_0\|_2^2
\end{equation}
holds, as suggested by \eqref{MTpara} for arbitrary positive weights. 
\end{remark}
\begin{proof}[Proof of Theorem \ref{prop:SST}]
By \eqref{dmv identity} and an application of the duality of $H^s$ and $H^{-s}$ we have
$$
\int_{\mathbb{R}^d\times\mathbb{R}}|u(x,t)|^2w(x,t)\mathrm{d}x\mathrm{d}t\leq\int_{\mathbb{R}^d}\|W(u_0,u_0)(\cdot, v)\|_{H_x^{-s}}\|\rho^* w(\cdot,v)\|_{H_x^s}\mathrm{d}v,
$$
and so it remains to show that
\begin{equation}\label{paranuff}
\|W(u_0,u_0)(\cdot, v)\|_{H_x^{-s}}^2=\widetilde{I}_{2s}(|\widehat{u}_0|^2,|\widehat{u}_0|^2)(v).
\end{equation}
This follows from the classical Fourier invariance property \eqref{ftid}, which implies
\begin{equation}\label{point}
\mathcal{F}_x^{-1}W(g_1,g_2)(\xi, v)=\widehat{g}_1\left(-v+\frac{\xi}{2}\right)\overline{\widehat{g}_2\left(-v-\frac{\xi}{2}\right)},
\end{equation}
where $\mathcal{F}_x$ denotes the Fourier transform in $x$. The identity \eqref{paranuff} now follows by Plancherel's theorem and the definition of the inhomogeneous Sobolev norm.
\end{proof}
\begin{proof}[Proof of Theorem \ref{prop:SMT}]
Observe first that
$\widetilde{I}_{2s}(|\widehat{u}_0|^2,|\widehat{u}_0|^2)(v)=0$ whenever $$v\not\in\frac{1}{2}(\supp(\widehat{u}_0)+\supp(\widehat{u}_0)).$$
Hence, by Theorem \ref{prop:SST},
it suffices to show that
\begin{equation}\label{110.5}
\|\widetilde{I}_s(g_1,g_2)\|_{L^{1/2}(\mathbb{R}^d)}\lesssim\|g_1\|_1\|g_2\|_1
\end{equation}
whenever $s>d$.
The operator $\widetilde{I}_s$ is a variant (with singularity only at infinity) of the bilinear fractional integral operator 
\begin{equation}\label{KSdefhom}%\label{KSdef}
\displaystyle I_s(g_1,g_2)(v):=\int_{\mathbb{R}^d}\frac{g_1\left(v+\frac{\xi}{2}\right)g_2\left(v-\frac{\xi}{2}\right)}{|\xi|^{s}}\mathrm{d}\xi
\end{equation}
treated by Kenig and Stein in \cite{KS} and Grafakos and Kalton in \cite{GK} (see also \cite{Graf} for estimates above $L^1$), and the bound \eqref{110.5} follows a brief inspection of their arguments. For similar arguments, see also Section \ref{Sect:KS}.
%\footnote{JB: Looking at Kenig--Stein, it seems that Grafakos--Kalton proved these fractional integration estimates independently at around the same time, so I've added a reference to them too...}
\end{proof}
\begin{comment}
The inequalities \eqref{MTpara} and \eqref{Steinpara} are both scale-invariant and Galilean invariant (that is, invariant under dilating and translating $u_0$ respectively). As a consequence of the scale-invariance it suffices to prove them under the additional assumption that $\widehat{u}_0$ is supported in the unit ball. 
{\color{red}ALTERNATIVE SUGGESTION: Recall that the source of motivation of Stein's conjecture comes from the relation between Kakeya and restriction conjectures. In the latter, one may assume a compact support-type assumption. In terms of our language in this Section, it says that one may assume that $\widehat{u}_0$ is supported in the unit ball.}
\end{comment}
Theorems \ref{prop:SST} and \ref{prop:SMT} cease to be natural if the initial datum $u_0$ has compact Fourier support, as they involve inhomogeneous Sobolev spaces, which respond to high frequencies of $u_0$ only. The appropriate substitutes are the following, which align with our main Theorems \ref{Theorem:shortSobStein} and \ref{theorem:shortSMTgen}:
\begin{theorem}[Parabolic Sobolev--Stein]\label{prop:SST compact}
For $s<d/2$,
\begin{equation}\label{SSparadot}
\int_{\mathbb{R}^d\times\mathbb{R}}|u(x,t)|^2w(x,t)\mathrm{d}x\mathrm{d}t\leq\int_{\mathbb{R}^d}I_{2s}(|\widehat{u}_0|^2,|\widehat{u}_0|^2)(v)^{1/2}\|\rho^* w(\cdot,v)\|_{\dot{H}_x^s}\mathrm{d}v,
\end{equation}
where 
$I_s(g_1,g_2)$  is given by \eqref{KSdefhom}
and $\dot{H}_x^s$ denotes the usual homogeneous $L^2$ Sobolev space in the variable $x$.
\end{theorem}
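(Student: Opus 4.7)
The plan is to mirror the proof of Theorem \ref{prop:SST} line for line, replacing the inhomogeneous Sobolev spaces $H^s_x, H^{-s}_x$ by their homogeneous counterparts $\dot{H}^s_x, \dot{H}^{-s}_x$, and the fractional-integral kernel $(1+|\xi|^2)^{-s/2}$ by $|\xi|^{-s}$. Starting from the phase-space identity \eqref{dmv identity}, I would apply $\dot{H}^s_x$–$\dot{H}^{-s}_x$ duality in the $x$-variable at each fixed $v$, then integrate in $v$, obtaining
\begin{equation*}
\int_{\mathbb{R}^d\times\mathbb{R}}|u(x,t)|^2 w(x,t)\,\mathrm{d}x\mathrm{d}t
\;\leq\;\int_{\mathbb{R}^d}\|W(u_0,u_0)(\cdot,v)\|_{\dot{H}^{-s}_x}\,\|\rho^*w(\cdot,v)\|_{\dot{H}^s_x}\,\mathrm{d}v.
\end{equation*}
The stated bound \eqref{SSparadot} then reduces to the homogeneous analogue of \eqref{paranuff}, namely the pointwise (in $v$) identity
\begin{equation*}
\|W(u_0,u_0)(\cdot,v)\|_{\dot{H}^{-s}_x}^{2}=I_{2s}(|\widehat{u}_0|^2,|\widehat{u}_0|^2)(v).
\end{equation*}

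To prove this identity I would invoke the explicit formula \eqref{point} for the partial Fourier transform in $x$ of the Wigner distribution, so that $|\mathcal{F}_x^{-1}W(u_0,u_0)(\xi,v)|^{2}=|\widehat{u}_0(-v+\xi/2)|^{2}|\widehat{u}_0(-v-\xi/2)|^{2}$. Inserting this into the defining integral $\int|\widehat{f}(\xi)|^2|\xi|^{-2s}\mathrm{d}\xi$ of $\|\cdot\|_{\dot{H}^{-s}}^{2}$ via Plancherel, and then symmetrising via $\xi\mapsto-\xi$ (which exploits that $I_{2s}$ has equal arguments, so that the resulting integrand is even in $v$), delivers the required identity. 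The constraint $s<d/2$ is used precisely here: it ensures both that $|\xi|^{-2s}$ is locally integrable near the origin in $\mathbb{R}^d$ (so that $I_{2s}(|\widehat{u}_0|^2,|\widehat{u}_0|^2)$ is finite almost everywhere under mild assumptions on $u_0$) and that $\dot{H}^{-s}_x$ is a genuine Hilbert space, so that the duality step is unproblematic.

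The main obstacle, such as there is one, is really just book-keeping: verifying that $W(u_0,u_0)(\cdot,v)\in\dot{H}^{-s}_x$ for (almost every) $v$ with $I_{2s}(|\widehat{u}_0|^2,|\widehat{u}_0|^2)(v)$ finite, so that the duality pairing is legitimate and not merely formal. By the formula above for its partial Fourier transform, this membership is precisely equivalent to the finiteness of the right-hand side of the identity, so the duality step is justified on the set where the upper bound in \eqref{SSparadot} is finite. No additional analytic input (such as bounds on $I_{s}$) is required for the proof itself; estimates on $I_{s}$ in the spirit of \cite{KS,GK}, already invoked in the proof of Theorem \ref{prop:SMT}, would only be needed if one wished to pass from Theorem \ref{prop:SST compact} to a Sobolev--Mizohata--Takeuchi analogue valid in the compactly supported frequency setting.
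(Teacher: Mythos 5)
Your proposal is correct and coincides with the paper's own argument: the paper explicitly states that Theorem \ref{prop:SST compact} is proved by rerunning the proof of Theorem \ref{prop:SST} with homogeneous Sobolev norms in place of inhomogeneous ones, i.e.\ via the $\dot H^s_x$--$\dot H^{-s}_x$ duality applied to \eqref{dmv identity} followed by the Plancherel computation $\|W(u_0,u_0)(\cdot,v)\|_{\dot H^{-s}_x}^2=I_{2s}(|\widehat u_0|^2,|\widehat u_0|^2)(v)$, which is exactly your route, and you correctly observe that no bounds on $I_s$ are needed here (they only enter in Theorem \ref{prop:SMT compact}). The only quibble is your appeal to an ``even in $v$'' symmetrisation via $\xi\mapsto-\xi$: that substitution leaves the integrand unchanged and does not touch the $v$-dependence, but a direct recomputation of \eqref{point} (whose right-hand side should read $\widehat g_1(v-\xi/2)\,\overline{\widehat g_2(v+\xi/2)}$) shows the identity already holds with $+v$, so no symmetrisation is needed.
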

\begin{theorem}[Parabolic Sobolev--Mizohata--Takeuchi]\label{prop:SMT compact}
For $s<d/2$,
\begin{equation}\label{SMTlocpara}
\int_{\mathbb{R}^d\times\mathbb{R}}|u(x,t)|^2w(x,t)\mathrm{d}x\mathrm{d}t\lesssim\sup_{v\in\frac{1}{2}(\supp(\widehat{u}_0)+\supp(\widehat{u}_0))}\|\rho^* w(\cdot,v)\|_{\dot{H}_x^s}\|u_0\|_2^2
\end{equation}
whenever $\supp(\widehat{u}_0)\subseteq B(0;1)$. The implicit constant depends on at most $d$ and $s$.
\end{theorem}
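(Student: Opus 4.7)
The plan is to follow the template of the proof of Theorem \ref{prop:SMT}, substituting the homogeneous Sobolev--Stein inequality of Theorem \ref{prop:SST compact} for its inhomogeneous counterpart. Starting from the bound supplied by Theorem \ref{prop:SST compact} and observing that the integrand $I_{2s}(|\widehat{u}_0|^2,|\widehat{u}_0|^2)(v)^{1/2}$ vanishes whenever $v\notin\tfrac{1}{2}(\supp(\widehat{u}_0)+\supp(\widehat{u}_0))$ (directly from the definition of $I_{2s}$), I would pull the supremum of $\|\rho^*w(\cdot,v)\|_{\dot{H}^s_x}$ out of the outer integral over this set. The theorem is thereby reduced to the $L^{1/2}$ quasi-norm estimate
\[
\|I_{2s}(g_1,g_2)\|_{L^{1/2}(\mathbb{R}^d)}\lesssim \|g_1\|_1\|g_2\|_1
\]
for non-negative $g_1,g_2$ supported in $B(0;1)$; applying this with $g_1=g_2=|\widehat{u}_0|^2$ and invoking $\||\widehat{u}_0|^2\|_1=\|u_0\|_2^2$ via Plancherel then completes the deduction.

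The essential new difficulty compared to the proof of Theorem \ref{prop:SMT} is that the homogeneous kernel $|\xi|^{-2s}$ is singular at the origin, and the displayed $L^{1/2}$ bound fails for general $L^1$ inputs without the compact support hypothesis. My strategy combines two ingredients. First, the scaling-sharp bilinear fractional integral estimate
\[
\|I_{2s}(g_1,g_2)\|_{L^q(\mathbb{R}^d)}\lesssim \|g_1\|_1\|g_2\|_1,\qquad q=\frac{d}{d+2s},
\]
valid for $0<2s<d$ by inspection of the arguments of Kenig--Stein \cite{KS} and Grafakos--Kalton \cite{GK}, exactly in parallel with the proof of Theorem \ref{prop:SMT}. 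Second, the support inclusion $\supp I_{2s}(g_1,g_2)\subseteq\tfrac{1}{2}(\supp g_1+\supp g_2)\subseteq B(0;1)$, which permits a passage from $L^q$ to $L^{1/2}$ via H\"older's inequality on a set of finite measure:
\[
\|I_{2s}(g_1,g_2)\|_{L^{1/2}}\leq |B(0;1)|^{2-1/q}\,\|I_{2s}(g_1,g_2)\|_{L^q},
\]
where the exponent $2-1/q=1-2s/d$ is strictly positive precisely because $s<d/2$. The resulting multiplicative factor is finite and depends only on $d$ and $s$, consistent with the stated dependence of the implicit constant.

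The main technical obstacle is securing the strong-type bilinear fractional integral bound at the $L^1\times L^1$ endpoint with sub-unit target exponent $q<1$, a regime in which naive interpolation typically yields only weak-type $L^{q,\infty}$ estimates; as in the inhomogeneous case this must be extracted from a careful reading of \cite{KS,GK}, and it is the one nontrivial input borrowed from the Kenig--Stein theory. Once this endpoint estimate is in place, the remainder of the argument essentially reproduces that of the proof of Theorem \ref{prop:SMT}, and it is worth noting that (as in Remark \ref{Rem:sig}) positivity of the weight $w$ plays no role at any stage.
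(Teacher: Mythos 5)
Your strategy closely mirrors the paper's: apply Theorem \ref{prop:SST compact}, observe the support vanishing of $I_{2s}(|\widehat{u}_0|^2,|\widehat{u}_0|^2)$ to pull out the supremum of the $\dot{H}^s$ norm, and reduce matters to an $L^1\times L^1\to L^{1/2}$ bound on a bilinear fractional integral of compactly supported densities. The one point that needs repair is the appeal to a \emph{strong-type} estimate $\|I_{2s}(g_1,g_2)\|_{L^q(\mathbb{R}^d)}\lesssim\|g_1\|_1\|g_2\|_1$ at the scaling-critical exponent $q=d/(d+2s)$ for the untruncated operator on all of $\mathbb{R}^d$. At this double-$L^1$ endpoint, what Kenig--Stein (and Grafakos--Kalton) actually establish is the \emph{weak-type} bound $I_{2s}\colon L^1\times L^1\to L^{q,\infty}$; the strong-type statement you invoke is not among their conclusions, and since it is a scale-invariant estimate with both inputs at $L^1$ one should not expect a strong-type version without some additional mechanism. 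You flag this as ``the one nontrivial input'' but then present it as settled, which overstates what the cited references provide.

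The fix is routine and preserves your architecture. Since $q=d/(d+2s)>\tfrac12$ precisely because $s<d/2$, the Lorentz-space embedding $L^{q,\infty}(A)\subseteq L^{1/2}(A)$, valid on any set $A$ of finite measure, yields
\[
\|I_{2s}(g_1,g_2)\|_{L^{1/2}(B(0;1))}\lesssim |B(0;1)|^{\,2-\frac{1}{q}}\,\|I_{2s}(g_1,g_2)\|_{L^{q,\infty}(\mathbb{R}^d)}\lesssim_{d,s}\|g_1\|_1\|g_2\|_1,
\]
exploiting the support inclusion $\supp I_{2s}(g_1,g_2)\subseteq B(0;1)$ that you correctly identified. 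Substituting this Lorentz--H\"older step for your ordinary H\"older step closes the gap. For comparison, the paper's own route works directly with the truncated operator (integration restricted to $\xi\in B(0;1)$, which is harmless here because $\supp\widehat{u}_0\subseteq B(0;1)$ forces $|\xi|\lesssim 1$ in the integrand anyway), and obtains the $L^1\times L^1\to L^{1/2}$ bound for that truncated operator by a dyadic decomposition in the spirit of \cite{KS} and of the paper's Theorem \ref{l:EndpointKS}; there the bounded range of dyadic scales — supplied exactly by the compact-support hypothesis — is what makes the relevant geometric series converge.
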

\begin{remark}
Theorems \ref{prop:SST compact} and \ref{prop:SMT compact} also permit signed weights.
Restricting to positive weights, Theorems \ref{prop:SST compact} and \ref{prop:SMT compact} are also easily seen to be respectively weaker than \eqref{Steinpara} and \eqref{MTpara} via a Sobolev embedding. Specifically, by the support hypothesis on $\widehat{u}_0$ we may find a spatial bump function $\Phi$ such that
$$
\int_{\mathbb{R}^d\times\mathbb{R}}|u(x,t)|^2w(x,t)\mathrm{d}x\mathrm{d}t\leq \int_{\mathbb{R}^d\times\mathbb{R}}|u(x,t)|^2\Phi*w(x,t)\mathrm{d}x\mathrm{d}t,
$$
and so it suffices to observe that for any $v\in\mathbb{R}^d$,
\begin{eqnarray*}
\begin{aligned}
\|\rho^*(\Phi*w)(\cdot,v)\|_{\infty}\lesssim \|\rho^* w(\cdot,v)\|_{\dot{H}_x^s}
\end{aligned}
\end{eqnarray*}
whenever $s<d/2$. This follows by Plancherel's identity and the Cauchy--Schwarz inequality.
\end{remark}
The proofs of Theorems \ref{prop:SST compact} and \ref{prop:SMT compact} are very similar to those of Theorems \ref{prop:SST} and \ref{prop:SMT} above, the essential difference being the use of homogeneous rather than inhomogeneous Sobolev norms, and matters are reduced to
an $L^1\times L^1\rightarrow L^{1/2}$ bound on the bilinear operator
$$
T(g_1,g_2)(v):=\int_{B(0;1)}\frac{g_1\left(v+\frac{\xi}{2}\right)g_2\left(v-\frac{\xi}{2}\right)}{|\xi|^{s}}\mathrm{d}\xi.
$$
This is a local form of the bilinear fractional integral operator $I_s$ defined in \eqref{KSdefhom}, and again the required bound follows a brief inspection of the arguments in \cite{KS}.
\begin{comment}
\begin{remark}[Discrete analogues]
    Somewhat unusually in the context of Strichartz estimates for Schr\"odinger equations, the Stein and Mizohata--Takeuchi inequalities \eqref{Steinpara} and \eqref{MTpara} may be seen to be equivalent to their discrete (or periodic) analogues. Specifically, \eqref{Steinpara} is equivalent to 
    \begin{equation*}
\int_{\mathbb{T}^{d+1}}\left|\sum_{k\in\mathbb{Z}^d}a_ke^{2\pi i(k\cdot x+|k|^2t)}\right|^2w(x,t)\mathrm{d}x\mathrm{d}t\lesssim\sum_{k\in\mathbb{Z}^d}|a_k|^2\sup_{x}\rho^*_{\mathbb{Z}^{d}}w(x,k),
\end{equation*}
where
$$\rho^*_{\mathbb{Z}^d}w(x,k):=\int_0^1 w(x-2tk,t)\mathrm{d}t$$
is the (normalised) geodesic X-ray transform on the space-time torus $\mathbb{T}^{d+1}$; here we are identifying functions on $\mathbb{T}^d$ with $1$-periodic functions on $\mathbb{R}^d$. A similar equivalence may be established at the level of the Mizohata--Takeuchi inequality. We refer to the forthcoming \cite{BNO} for details and further results.
\end{remark}
%%%%%%%%%%%%%%%%%%%%%%%%%%%%%%%%%%%%%%%%%%%
\end{comment}
\section{The sphere: an optical viewpoint}\label{Sect:Helm}
The extension operator for the sphere
$$\widehat{g\mathrm{d}\sigma}(x):=\int_{\mathbb{S}^{n-1}}e^{-2\pi ix\cdot\omega}g(\omega)\mathrm{d}\sigma(\omega)$$
is of central importance in optics, providing a description of a unit-wavelength (or monochromatic) optical wave field as a superposition of plane waves; note that $\widehat{g\mathrm{d}\sigma}$ solves the Helmholtz equation
$
\Delta u +u=0$ on $\mathbb{R}^n$.
Of particular physical significance is $|\widehat{g\mathrm{d}\sigma}|^2$, sometimes referred to as the \textit{local intensity} of the field; see for example \cite{Al}. 
\begin{comment}
In this context, it is well-known \cite{AH} that if $u$ is a solution of the Helmholtz equation satisfying
$$
\lim_{R\to \infty} \frac{1}{R}\int_{|x|<R} |u|^2\, dx<\infty,
$$
then there exists $g\in L^2(\mathbb{S}^{n-1})$ such that $u=\widehat{g\mathrm{d}\sigma}$.
\end{comment}
The Stein and Mizohata--Takeuchi inequalities \eqref{Steinvgen} and \eqref{MTvgen}, when specialised to the sphere $S=\mathbb{S}^{n-1}$, become statements about this intensity, namely
\begin{equation}\label{Steinsphere}
\int_{\mathbb{R}^n}|\widehat{g\mathrm{d}\sigma}(x)|^2w(x)\mathrm{d}x\leq C\int_{\mathbb{S}^{n-1}}|g(\omega)|^2\sup_{v\in\langle\omega\rangle^\perp}Xw(\omega,v)\mathrm{d}\sigma(\omega),
\end{equation}
and
\begin{equation}\label{MTsphere}
\int_{\mathbb{R}^n}|\widehat{g\mathrm{d}\sigma}(x)|^2w(x)\mathrm{d}x\leq C\sup_{\omega\in\supp(g)}\|Xw(\omega,\cdot)\|_{L^\infty(\langle\omega\rangle^\perp)}\|g\|_{L^2(\mathbb{S}^{n-1})}^2
\end{equation}
respectively. These conjectural inequalities are well-known for radial weights, as discussed in the introduction, although we recall that for general weights they should carry a further localisation hypothesis following the counterexamples of Cairo \cite{Cairo}; see Remark \ref{Rem:Cairo} for clarification. Both \eqref{Steinsphere} and \eqref{MTsphere} capture the expectation that the intensity $|\widehat{g\mathrm{d}\sigma}|^2$ concentrates on rays (lines), and as such connect physical optics to geometric optics. 
A good illustration of this is found in the high-frequency limiting identity
\begin{equation}\label{ah}
\limsup_{R\rightarrow\infty}R^{n-1}\int_{\mathbb{R}^{n}}|\widehat{g\mathrm{d}\sigma}(Rx)|^2w(x)\mathrm{d}x=\frac{1}{(2\pi)^{n+1}}\int_{\mathbb{S}^{n-1}}|g(\xi)|^2\left(\int_{\mathbb{R}}w(t\xi)\mathrm{d}t\right)\mathrm{d}\sigma(\xi),
\end{equation}
established (for compactly supported $w$) by Agmon and H\"ormander in \cite{AH}; see \cite{BRV}.
Accordingly \eqref{Steinsphere} and \eqref{MTsphere} call for an optical (or spherical) analogue of the quantum-mechanical (or parabolic) phase-space perspective from Section \ref{Sect:para}. Fortunately such a perspective is well-known in modern optics (see \cite{Al}) and involves the \textit{spherical Wigner transform} that we define next.
For $g_1,g_2\in L^2(\mathbb{S}^{n-1})$ let
\begin{equation}\label{SpWig}
W_{\mathbb{S}^{n-1}}(g_1,g_2)(\omega, v)=\int_{\mathbb{S}^{n-1}}g_1(\omega')\overline{g_2(R_\omega \omega')}e^{-2\pi iv\cdot(\omega'-R_\omega\omega')}J(\omega,\omega')\mathrm{d}\sigma(\omega').
\end{equation}
Here $\omega\in\mathbb{S}^{n-1}$, $v\in\langle\omega\rangle^\perp$, and for a point $\omega'\in\mathbb{S}^{n-1}$, the point $R_\omega\omega'$ is defined to be the unique $\omega''\in\mathbb{S}^{n-1}$ for which $\omega$ is the geodesic midpoint of $\omega'$ and $\omega''$; that is 
\begin{equation}\label{R-omega-sphere}
R_\omega \omega'=2(\omega\cdot\omega')\omega-\omega'.
\end{equation}
The function $J(\omega,\omega'):=2^{n-2}|\omega\cdot\omega'|^{n-2}$ (see the forthcoming Remark \ref{examples}) is chosen so that 
$$
\int_{\mathbb{S}^{n-1}}\Phi(R_\omega\omega')J(\omega,\omega')\mathrm{d}\sigma(\omega)=\int_{\mathbb{S}^{n-1}}\Phi \mathrm{d}\sigma
$$
for each $\omega'$. This expression for $J$ may be obtained by direct computation, noting that the map $\omega\mapsto \omega'':=R_\omega\omega'$ is not a bijection, it mapping each component of $\mathbb{S}^{n-1}\backslash\langle\omega'\rangle^\perp$ bijectively to $\mathbb{S}^{n-1}\backslash\{-\omega'\}$ with 
\begin{equation}\label{SpWigJ}
\mathrm{d}\sigma(\omega'')=2^{n-1}|\omega\cdot\omega'|^{n-2}\mathrm{d}\sigma(\omega).
\end{equation}
The essential features of this construction are those described in \cite{Al}; see also \cite{KL}.

Motivated by the role of the transport equation in Section \ref{Sect:para}, for $g\in L^2(\mathbb{S}^{n-1})$ we define the auxiliary function $f:\mathbb{S}
^{n-1}\times\mathbb{R}^n\rightarrow\mathbb{R}$ (not to be confused with \eqref{f-300625}) by
$$
f(\omega,x)=\int_{\mathbb{S}^{n-1}}g(\omega')\overline{g(R_\omega \omega')}e^{-2\pi ix\cdot(\omega'-R_\omega\omega')}J(\omega,\omega')\mathrm{d}\sigma(\omega'),
$$
so that $W_{\mathbb{S}^{n-1}}(g,g)$ is the restriction of $f$ to the tangent bundle $T\mathbb{S}^{n-1}:=\{(\omega,v):\omega\in\mathbb{S}^{n-1}, v\in\langle\omega\rangle^\perp\}$.
That $f$ is real-valued follows from the fact that $R_\omega\circ R_\omega=I$ for each $\omega$. Evidently $f$ satisfies the transport equation
\begin{equation}\label{helmtrans}
\omega\cdot\nabla_xf=0,
\end{equation}
meaning that $f(\omega,x)=f(\omega,x_{\langle\omega\rangle^\perp})=W_{\mathbb{S}^{n-1}}(g,g)(\omega,x_{\langle\omega\rangle^\perp})$, where $x_{\langle\omega\rangle^\perp}$ is the orthogonal projection of $x$ onto $\langle\omega\rangle^\perp$.
The functions $f$ and $W_{\mathbb{S}^{n-1}}$ have some nice features, for example we have the marginal identity
\begin{equation}\label{helmmarg1}
\int_{\mathbb{S}^{n-1}}f(\omega,x)\mathrm{d}\sigma(\omega)=|\widehat{g\mathrm{d}\sigma}(x)|^2,
\end{equation}
by Fubini's theorem and the definition of $J$.
We note in passing that we have the additional marginal property
%\footnote{This isn't quite right -- I think it should be $2^{n-1}$ -- although all of these factors should sort themselves out once we put the right constants in the Wigner distributions.}
$$
\int_{\langle\omega\rangle^\perp}W_{\mathbb{S}^{n-1}}(g,g)(\omega,v)\mathrm{d}v=\frac{1}{2}\left(|g(\omega)|^2+|g(-\omega)|^2\right),
$$
very much as in the setting of the classical Wigner distribution. This may be obtained by fixing $\omega$ and considering the contributions to $W_{\mathbb{S}^{n-1}}(g,g)$ coming from the integrals over the hemispheres $\mathbb{S}^{n-1}_{\pm}:=\{\omega':\pm\omega\cdot\omega'>0\}$, and using the fact that the mapping $\omega'\mapsto \omega'-R_\omega\omega'$ is a bijection from each of $\mathbb{S}^{n-1}_{\pm}$ to the unit ball of $\langle\omega\rangle^\perp$; see the forthcoming proof of Theorem \ref{theorem:SSTsphere} for a similar argument.

These observations lead to the desired spherical analogue of \eqref{parakinetic}:
\begin{proposition}[Spherical phase-space representation]\label{Prop:BGNOWigner}
$$|\widehat{g\mathrm{d}\sigma}|^2=X^*W_{\mathbb{S}^{n-1}}(g,g).$$
\end{proposition}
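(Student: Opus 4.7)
The plan is to reduce the claim to the already-recorded marginal identity \eqref{helmmarg1} together with an explicit formula for the adjoint X-ray transform $X^*$. The point is that both ingredients are essentially in place in the excerpt, and the proposition is a dualized restatement of the marginal property.

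First I would compute $X^*$ by duality. Using the parametrization of lines $\ell(\omega, v)$ with $\omega \in \mathbb{S}^{n-1}$ and $v \in \langle\omega\rangle^\perp$, together with the definition $Xw(\omega,v) = \int_{\mathbb{R}} w(v + t\omega)\mathrm{d}t$, the change of variable $x = v + t\omega$ (so $v = x_{\langle\omega\rangle^\perp}$ and $t = x\cdot\omega$) combined with Fubini's theorem yields
\begin{equation*}
X^* F(x) = \int_{\mathbb{S}^{n-1}} F\bigl(\omega,\, x_{\langle\omega\rangle^\perp}\bigr)\,\mathrm{d}\sigma(\omega).
\end{equation*}

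Next I would exploit the transport equation \eqref{helmtrans} for the auxiliary function $f$, which says exactly that $f$ is constant along the $\omega$-direction, so $f(\omega,x) = W_{\mathbb{S}^{n-1}}(g,g)(\omega,\, x_{\langle\omega\rangle^\perp})$. Combining this with the formula for $X^*$ gives
\begin{equation*}
X^* W_{\mathbb{S}^{n-1}}(g,g)(x) = \int_{\mathbb{S}^{n-1}} f(\omega, x)\,\mathrm{d}\sigma(\omega),
\end{equation*}
and the right-hand side is precisely $|\widehat{g\mathrm{d}\sigma}(x)|^2$ by \eqref{helmmarg1}.

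The only step that genuinely requires calculation is the marginal identity \eqref{helmmarg1} itself, which the excerpt states but does not prove in detail. The idea there is to swap the order of integration in the definition of $f$, writing $e^{-2\pi i x\cdot(\omega' - R_\omega\omega')} = e^{-2\pi i x\cdot\omega'}\cdot e^{2\pi i x\cdot R_\omega\omega'}$, and then observe that for each fixed $\omega'$ the map $\omega \mapsto R_\omega \omega'$ parametrizes $\mathbb{S}^{n-1}$ with Jacobian $J(\omega,\omega')^{-1}$; the weight $J(\omega,\omega')$ in the definition of $W_{\mathbb{S}^{n-1}}$ was introduced precisely to cancel this Jacobian. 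Performing this change of variable turns the inner $\omega$-integral into $\overline{\widehat{g\mathrm{d}\sigma}(x)}$, after which the remaining outer $\omega'$-integral produces $\widehat{g\mathrm{d}\sigma}(x)$. I do not foresee a genuine obstacle: the only nontrivial ingredient is the explicit form of $J$, which has already been identified as the reciprocal of the Jacobian of $\omega \mapsto R_\omega\omega'$.
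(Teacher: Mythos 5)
Your argument is correct and is essentially the same as the paper's: both proofs rest on the marginal identity \eqref{helmmarg1}, the transport equation \eqref{helmtrans}, and Fubini's theorem, with the only presentational difference being that you compute the explicit formula for $X^*$ up front while the paper carries out the same Fubini manipulation inline while testing against a weight $w$. Your additional sketch of how \eqref{helmmarg1} follows from the defining property of the Jacobian $J$ (which the paper asserts without detail) is also accurate.
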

\begin{proof}
By \eqref{helmmarg1}, \eqref{helmtrans} and Fubini's theorem,
\begin{eqnarray*}
\begin{aligned}
\int_{\mathbb{R}^n}|\widehat{g\mathrm{d}\sigma}(x)|^2w(x)\mathrm{d}x&=\int_{\mathbb{R}^n}\int_{\mathbb{S}^{n-1}}f(\omega,x)\mathrm{d}\sigma(\omega)w(x)\mathrm{d}x\\
&=\int_{\mathbb{S}^{n-1}}\int_{\langle\omega\rangle^\perp}f(\omega,x_{\langle\omega\rangle^\perp})\left(\int_{\langle\omega\rangle}w(x_{\langle\omega\rangle}+x_{\langle\omega\rangle^\perp})\mathrm{d}x_{\langle\omega\rangle}\right)\mathrm{d}x_{\langle\omega\rangle^\perp}\mathrm{d}\sigma(\omega)\\
&=\int_{\mathbb{S}^{n-1}}\int_{\langle\omega\rangle^\perp}W_{\mathbb{S}^{n-1}}(g,g)(\omega,v)Xw(\omega,v)\mathrm{d}v\mathrm{d}\sigma(\omega)\\
&=\int_{\mathbb{R}^n}X^*W_{\mathbb{S}^{n-1}}(g,g)(x)w(x)\mathrm{d}x
\end{aligned}
\end{eqnarray*}
for all test functions $w$.
\end{proof}
As we have already indicated, Proposition \ref{Prop:BGNOWigner} is well-known in some form in optics (at least in low dimensions) where it provides a representation of the local intensity of an optical field as a linear superposition of light rays -- a useful and explicit connection between physical and geometric optics; see Alonso \cite{Al}. 
\begin{comment}
%In the remainder of this section we explore the utility of Proposition \ref{Prop:BGNOWigner} towards the Stein and Mizohata--Takeuchi inequalities inequalities \eqref{Steinsphere} and \eqref{MTsphere}. 
As should be expected from the discussion in Section \ref{Sect:para}, Proposition \ref{Prop:BGNOWigner} does not yield \eqref{Steinsphere} and \eqref{MTsphere}, at least directly, since the spherical Wigner distribution, like the classical one, takes both positive and negative values in general.
However, 
\end{comment}
Proposition \ref{Prop:BGNOWigner} may be used to prove the following spherical versions of Theorems \ref{Theorem:shortSobStein} and \ref{theorem:shortSMTgen}:
\begin{theorem}[Spherical Sobolev--Stein]\label{theorem:SSTsphere}
For $s<\frac{n-1}{2}$, there exists a dimensional constant $c$ such that
\begin{equation}\label{SSTsphere}
\int_{\mathbb{R}^n}|\widehat{g\mathrm{d}\sigma}(x)|^2w(x)\mathrm{d}x\leq c\int_{\mathbb{S}^{n-1}}I_{\mathbb{S}^{n-1},2s}(|g|^2,|g|^2)(\omega)^{1/2}\|Xw(\omega,\cdot)\|_{\dot{H}^s(\langle\omega\rangle^\perp)}\mathrm{d}\sigma(\omega),
\end{equation}
where 
$$
I_{\mathbb{S}^{n-1},s}(g_1,g_2)(\omega):=\int_{\mathbb{S}^{n-1}}\frac{g_1(\omega')g_2(R_\omega\omega')}{|\omega'-R_\omega\omega'|^{s}}|\omega\cdot\omega'|^{n-2}\mathrm{d}\sigma(\omega').
$$
\end{theorem}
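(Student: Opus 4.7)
The plan is to begin from the dualised form of the spherical phase-space representation already established in the proof of Proposition \ref{Prop:BGNOWigner},
\begin{equation*}
\int_{\mathbb{R}^n}|\widehat{g\mathrm{d}\sigma}(x)|^2w(x)\mathrm{d}x = \int_{\mathbb{S}^{n-1}}\int_{\langle\omega\rangle^\perp}W_{\mathbb{S}^{n-1}}(g,g)(\omega,y)\,Xw(\omega,y)\,\mathrm{d}y\,\mathrm{d}\sigma(\omega),
\end{equation*}
and to control the inner integral fibrewise using the $\dot{H}^s$--$\dot{H}^{-s}$ duality on the $(n-1)$-dimensional Euclidean space $\langle\omega\rangle^\perp$ (valid precisely for $s<\tfrac{n-1}{2}$). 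Integrating the resulting product of Sobolev norms over $\omega\in\mathbb{S}^{n-1}$, the theorem reduces to the pointwise bound
\begin{equation*}
\|W_{\mathbb{S}^{n-1}}(g,g)(\omega,\cdot)\|_{\dot{H}^{-s}(\langle\omega\rangle^\perp)}^2 \lesssim I_{2s}(|g|^2,|g|^2)(\omega)
\end{equation*}
uniformly in $\omega\in\mathbb{S}^{n-1}$.

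The crucial observation for this pointwise bound is that the phase appearing in \eqref{SpWig} depends on $\omega'$ only through its orthogonal projection $\omega'_\perp$ onto $\langle\omega\rangle^\perp$, since $\omega'-R_\omega\omega' = 2\omega'_\perp$. Parametrising $\mathbb{S}^{n-1}$ by $\omega' = \eta+t\omega$ with $\eta\in\langle\omega\rangle^\perp$ and $t=\pm\sqrt{1-|\eta|^2}$, and using $\mathrm{d}\sigma(\omega') = |t|^{-1}\mathrm{d}\eta$ on each hemisphere, the spherical Wigner distribution may be recast as
\begin{equation*}
W_{\mathbb{S}^{n-1}}(g,g)(\omega,y) = \int_{|\eta|<1}H(\omega,\eta)\,e^{-4\pi iy\cdot\eta}\,\mathrm{d}\eta,
\end{equation*}
where
\begin{equation*}
H(\omega,\eta):=\sum_{\pm}\frac{g(\omega'_\pm)\overline{g(R_\omega\omega'_\pm)}J(\omega,\omega'_\pm)}{\sqrt{1-|\eta|^2}}
\end{equation*}
assembles the two hemispherical contributions. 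Hence $W_{\mathbb{S}^{n-1}}(g,g)(\omega,\cdot)$ is, up to a dilation by $2$ in the frequency variable and a dimensional constant, the inverse Fourier transform of $H(\omega,\cdot)$ on $\langle\omega\rangle^\perp$, and Plancherel's identity gives
\begin{equation*}
\|W_{\mathbb{S}^{n-1}}(g,g)(\omega,\cdot)\|_{\dot{H}^{-s}(\langle\omega\rangle^\perp)}^2 \lesssim \int_{|\eta|<1}|\eta|^{-2s}|H(\omega,\eta)|^2\,\mathrm{d}\eta.
\end{equation*}

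To conclude, I apply the elementary bound $|H|^2\lesssim |H_+|^2+|H_-|^2$ and convert back to surface measure via $\mathrm{d}\eta = |\omega\cdot\omega'|\,\mathrm{d}\sigma(\omega')$. The explicit form $J(\omega,\omega') = 2^{n-1}|\omega\cdot\omega'|^{n-2}$, together with the identities $|\omega'-R_\omega\omega'|=2|\eta|$ and $1-|\eta|^2 = |\omega\cdot\omega'|^2$, ensure that the various Jacobian factors assemble into the kernel $|\omega'-R_\omega\omega'|^{-2s}$ multiplied by the overall power $|\omega\cdot\omega'|^{2n-5}$. Since $|\omega\cdot\omega'|\leq 1$ and $2n-5\geq n-2$ for $n\geq 3$, this is dominated by $|\omega\cdot\omega'|^{n-2}$, which is precisely the weight appearing in $I_{2s}(|g|^2,|g|^2)(\omega)$.

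The principal obstacle is the careful bookkeeping of the Jacobian and Fourier-dilation factors, and verifying that the power of $|\omega\cdot\omega'|$ emerging from the Plancherel computation indeed dominates the one prescribed by the definition of $I_{2s}$. In the low-dimensional case $n=2$ the factor $(1-|\eta|^2)^{-1/2}$ in $H$ becomes singular at the equator and the above argument does not yield a convergent bound directly; in that setting one may instead invoke the tomographic description of the spherical Wigner transform established in Section \ref{Sect:tomographic} (following \cite{BN}), which coincides with the present formulation when $n=2$.
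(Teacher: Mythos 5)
Your approach is structurally the same as the paper's: both reduce via Cauchy--Schwarz and $\dot{H}^s$--$\dot{H}^{-s}$ duality to a pointwise bound on $\|W_{\mathbb{S}^{n-1}}(g,g)(\omega,\cdot)\|_{\dot{H}^{-s}(\langle\omega\rangle^\perp)}$, and then compute that norm by Plancherel after linearising the phase. Your variable $\eta=P_{\langle\omega\rangle^\perp}\omega'$ and the paper's $\xi=\omega'-R_\omega\omega'$ coincide up to the dilation $\xi=2\eta$, and after assembling Jacobians you both arrive at a factor $|\omega\cdot\omega'|^{2n-5}$ (which is, up to dimensional constants, the paper's $J^2/\widetilde{J}$). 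The one place your route diverges is that you do not perform the paper's initial cap reduction: instead of restricting $g$ to a geodesically convex cap $S$ on which $\omega\cdot\omega'\geq\tfrac12$, you integrate over the full sphere, handle the $2$-to-$1$ projection by splitting $H=H_++H_-$ over hemispheres, and invoke $|H|^2\lesssim|H_+|^2+|H_-|^2$. That is a legitimate device, but it is precisely what creates the gap.

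Without the cap reduction the estimate $|\omega\cdot\omega'|^{2n-5}\lesssim|\omega\cdot\omega'|^{n-2}$ is needed globally, and it fails near the equator when $n=2$ (there $2n-5=-1<0=n-2$), as you correctly diagnose. The fallback you propose for $n=2$ -- invoking the tomographic identity from Section \ref{Sect:tomographic} -- does not resolve this: that identity merely re-expresses $W_{\mathbb{S}^1}(g,g)$ in terms of $(-\Delta_y)^{1/2}X(|\widehat{g\mathrm{d}\sigma}|^2)$ and does not by itself produce the required $\dot{H}^{-s}$ bound. The missing idea is the cap reduction carried out at the start of the paper's proof: partition $\mathbb{S}^{n-1}$ into $O_n(1)$ geodesically convex caps, and note that if $g=\sum_j g\mathbbm{1}_{S_j}$ then the left side of \eqref{SSTsphere} is $\lesssim_n\sum_j\int|\widehat{g_j\mathrm{d}\sigma}|^2w$ while $I_{2s}(|g_j|^2,|g_j|^2)\le I_{2s}(|g|^2,|g|^2)$ pointwise, so it suffices to prove the estimate for each $g_j$. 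On a single cap one has $\omega\cdot\omega'\geq\tfrac12$, every power of $|\omega\cdot\omega'|$ is comparable to $1$, and the change of variables is $1$-to-$1$, so the Plancherel computation (your computation) closes without any dimensional restriction. Supplying this step upgrades your argument to a complete proof for all $n\geq 2$; without it, the proof as written establishes the theorem only for $n\geq 3$.
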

\begin{remark}
    The hypothesis $s<\frac{n-1}{2}$ in the statement of Theorem \ref{theorem:SSTsphere} serves only to ensure that the kernel of the fractional integral operator $I_{\mathbb{S}^{n-1},s}$ is locally integrable, giving meaning to $I_{\mathbb{S}^{n-1},s}$. The corresponding Sobolev-Mizohata--Takeuchi theorem that follows rests on the availability of suitable bounds on this fractional integral, and so involves a constant that also depends on $s$.
\end{remark}
\begin{theorem}[Spherical Sobolev--Mizohata--Takeuchi]\label{theorem:SMTsphere}
For $s<\frac{n-1}{2}$, 
\begin{equation}\label{SMTsphere}
\int_{\mathbb{R}^n}|\widehat{g\mathrm{d}\sigma}(x)|^2w(x)\mathrm{d}x\lesssim\sup_{\omega\in\supp^*(g)}\|Xw(\omega,\cdot)\|_{\dot{H}^s(\langle\omega\rangle^\perp)}\|g\|_{L^2(\mathbb{S}^{n-1})}^2,
\end{equation}
where $\supp^*(g)$ is the set of all geodesic midpoints of pairs of points from $\supp(g)$. The implicit constant depends on at most $n$ and $s$.
\end{theorem}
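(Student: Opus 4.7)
The plan is to derive Theorem \ref{theorem:SMTsphere} directly from the Sobolev--Stein estimate \eqref{SSTsphere}. The argument has two steps: first extract the supremum over $\supp^*(g)$ from the integral on the right-hand side of \eqref{SSTsphere}, and then control the remaining factor by $\|g\|_{L^2(\mathbb{S}^{n-1})}^2$ via a bilinear fractional integral bound.

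For the first step I would rely on the simple support observation that $I_{2s}(|g|^2,|g|^2)$ is supported in $\supp^*(g)$. Indeed, the integrand defining $I_{2s}(|g|^2,|g|^2)(\omega)$ involves $|g(\omega')|^2|g(R_\omega\omega')|^2$, which is nonzero only when both $\omega'$ and its geodesic reflection $R_\omega\omega'$ through $\omega$ lie in $\supp(g)$; in that case $\omega$ is by definition the geodesic midpoint of two points of $\supp(g)$. Applying \eqref{SSTsphere} and pulling the $\dot{H}^s$-norm factor out of the integral therefore yields
\begin{equation*}
\int_{\mathbb{R}^n}|\widehat{g\mathrm{d}\sigma}(x)|^2 w(x)\mathrm{d}x \lesssim \sup_{\omega\in\supp^*(g)}\|Xw(\omega,\cdot)\|_{\dot{H}^s(\langle\omega\rangle^\perp)} \int_{\mathbb{S}^{n-1}} I_{2s}(|g|^2,|g|^2)^{1/2}\mathrm{d}\sigma.
\end{equation*}

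The theorem therefore reduces to establishing
\begin{equation*}
\int_{\mathbb{S}^{n-1}} I_{2s}(|g|^2,|g|^2)^{1/2}\mathrm{d}\sigma \lesssim \|g\|_{L^2(\mathbb{S}^{n-1})}^2.
\end{equation*}
Since the left-hand side equals $\|I_{2s}(|g|^2,|g|^2)\|_{L^{1/2}(\mathbb{S}^{n-1})}^{1/2}$, this is in turn equivalent to the bilinear fractional integral estimate $\|I_{2s}(f_1,f_2)\|_{L^{1/2}(\mathbb{S}^{n-1})}\lesssim\|f_1\|_{L^1(\mathbb{S}^{n-1})}\|f_2\|_{L^1(\mathbb{S}^{n-1})}$ applied with $f_1=f_2=|g|^2$.

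The main remaining task, and the main obstacle, is to prove this $L^1\times L^1\to L^{1/2}$ bound for $I_{2s}$ when $2s<n-1$. The operator is a spherical variant of the Kenig--Stein bilinear fractional integral used in the proof of Theorem \ref{prop:SMT}; the differences are that integration is over the compact manifold $\mathbb{S}^{n-1}$ rather than Euclidean space, and there is an extra Jacobian factor $|\omega\cdot\omega'|^{n-2}$ which, thanks to the normal-cone condition \eqref{cone of normals}, is uniformly bounded above and below. After a partition of unity reducing to a cap on which $|\omega\cdot\omega'|\gtrsim 1$ and a passage to local coordinates around the midpoint $\omega$, the kernel $|\omega'-R_\omega\omega'|^{-2s}$ acquires its Euclidean counterpart, and the desired bound follows from a brief inspection of the arguments in \cite{KS}; the condition $2s<n-1$ is precisely what ensures local integrability of the kernel in the tangential directions. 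The details of this adaptation are deferred to the forthcoming Section \ref{Sect:KS}.
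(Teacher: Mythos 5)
Your argument is correct and follows essentially the same route as the paper: apply the Sobolev--Stein estimate \eqref{SSTsphere}, observe that $I_{2s}(|g|^2,|g|^2)$ vanishes outside $\supp^*(g)$ so the Sobolev norm of the X-ray transform may be extracted as a supremum, and then reduce to the $L^1\times L^1\to L^{1/2}$ bound for the spherical bilinear fractional integral, which the paper likewise defers to the surface-carried Kenig--Stein argument of Section \ref{Sect:KS} (Theorem \ref{l:EndpointKS}).
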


\begin{remark}
Theorems \ref{theorem:SSTsphere} and \ref{theorem:SMTsphere} may be seen to follow from Theorems \ref{Theorem:shortSobStein} and \ref{theorem:shortSMTgen} respectively. This involves partitioning the sphere into suitable geodesically convex patches as alluded to in the introduction, and indeed this is how our proof below begins. This elementary step appears to require the weight $w$ to be non-negative, despite non-negativity not being a requirement of either Theorem \ref{Theorem:shortSobStein} or \ref{theorem:shortSMTgen}.
\end{remark}
\begin{proof}[Proof of Theorem \ref{theorem:SSTsphere}] By partitioning $\mathbb{S}^{n-1}$ into boundedly many (depending only on $n$) geodesically convex subsets (caps), it suffices to show \eqref{SSTsphere} under the assumption that $g$ is supported in a cap $S$ satisfying $\omega\cdot\omega'\geq\tfrac{1}{2}$ for all points $\omega,\omega'\in S$ (in line with \eqref{cone of normals}). 
By Proposition \ref{Prop:BGNOWigner} and the Cauchy--Schwarz inequality it suffices to show that
\begin{equation}\label{W_Sest}
\|W_{\mathbb{S}^{n-1}}(g,g)(\omega,\cdot)\|_{\dot{H}^{-s}(\langle\omega\rangle^\perp)}^2\lesssim I_{\mathbb{S}^{n-1},2s}(|g|^2,|g|^2)(\omega),
\end{equation}
for some implicit constant depending only on $n$. Next, for fixed $\omega\in S$ we make the change of variables $\xi=\omega'-R_\omega\omega'$, which maps $S$ bijectively to a subset $U$ of $\langle\omega\rangle^\perp$. Defining $\omega':U\rightarrow S$ by $\xi=\omega'(\xi)-R_\omega\omega'(\xi)$ we have
\begin{eqnarray*}
\begin{aligned}
W_{\mathbb{S}^{n-1}}(g,g)(\omega,v)&=\int_{S}g(\omega')\overline{g(R_\omega \omega')}e^{iv\cdot(\omega'-R_\omega\omega')}J(\omega,\omega')\mathrm{d}\sigma(\omega')\\
&=\int_{U}g(\omega'(\xi))\overline{g(R_\omega \omega'(\xi))}e^{iv\cdot\xi}\frac{J(\omega,\omega'(\xi))}{\widetilde{J}(\omega,\omega'(\xi))}\mathrm{d}\xi,
\end{aligned}
\end{eqnarray*}
where $\widetilde{J}(\omega,\omega')=2^{n-1}\omega\cdot\omega'\sim 1$ is the Jacobian of the change of variables. Hence
\begin{equation}\label{Fouriersphere}
\mathcal{F}_vW_{\mathbb{S}^{n-1}}(g,g)(\omega,\xi)=g(\omega'(\xi))\overline{g(R_\omega \omega'(\xi))}\frac{J(\omega,\omega'(\xi))}{\widetilde{J}(\omega,\omega'(\xi))}\mathbbm{1}_{U}(\xi),
\end{equation}
and so by Plancherel's theorem on $\langle\omega\rangle^\perp$,
\begin{eqnarray}\label{frost}
\begin{aligned}
\|W_{\mathbb{S}^{n-1}}(g,g)(\omega,\cdot)\|_{\dot{H}^{-s}(\langle\omega\rangle^\perp)}^2&=\int_{U}\left||\xi|^{-s}g(\omega'(\xi))\overline{g(R_\omega \omega'(\xi))}\frac{J(\omega,\omega'(\xi))}{\widetilde{J}(\omega,\omega'(\xi))}\right|^2\mathrm{d}\xi\\
&=\int_{S}|\omega'-R_\omega\omega'|^{-2s}|g(\omega')|^2|g(R_\omega \omega')|^2\frac{J(\omega,\omega')^2}{\widetilde{J}(\omega,\omega')}\mathrm{d}\sigma(\omega')\\
&\lesssim\int_{S}|\omega'-R_\omega\omega'|^{-2s}|g(\omega')|^2|g(R_\omega \omega')|^2|\omega\cdot\omega'|^{n-2}\mathrm{d}\sigma(\omega')\\
&=I_{\mathbb{S}^{n-1},2s}(|g|^2,|g|^2)(\omega),
\end{aligned}
\end{eqnarray}
%$I_{S,s}(g_1,g_2):=I_{\mathbb{S}^{n-1},s}(g_1\mathbbm{1}_S,g_2\mathbbm{1}_S)$.
The inequality \eqref{W_Sest} follows.
\end{proof}
\begin{remark}
The reader may be puzzled by the retention of the specific factor $|\omega\cdot\omega'|^{n-2}$ in the third line of \eqref{frost}, and its inclusion in the definition of $I_{\mathbb{S}^{n-1},s}$. This is significant as it is (up to a constant factor) the Jacobian $J(\omega,\omega')$, which is natural as it ensures that $I_{\mathbb{S}^{n-1},s}$ is symmetric, and enjoys the appropriate Lebesgue space bounds. This feature will become clearer in Section \ref{Sect:general submanifolds} in the context of more general submanifolds $S$.
\end{remark}
\begin{proof}[Proof of Theorem \ref{theorem:SMTsphere}]
Arguing as in the proof of Theorem \ref{theorem:SSTsphere}, it suffices to establish \eqref{SMTsphere} for $g$ supported in a single cap $S$. 
Since $I_{\mathbb{S}^{n-1},2s}(|g|^2,|g|^2)(\omega)=0$ if $\omega\not\in \supp^*(g)$, 
$$
\int_{\mathbb{R}^n}|\widehat{g\mathrm{d}\sigma}(x)|^2w(x)\mathrm{d}x\lesssim\sup_{\omega\in\supp^*(g)}\|Xw(\omega,\cdot)\|_{\dot{H}^s(\langle\omega\rangle^\perp)}\|I_{\mathbb{S}^{n-1},2s}(|g|^2,|g|^2)\|_{L^{1/2}(S)}^{1/2},
$$
by Theorem \ref{theorem:SSTsphere}. 
It therefore suffices to show that
\begin{equation*}
I_{S,s}(g_1,g_2)(\omega):=I_{\mathbb{S}^{n-1},s}(g_1\mathbbm{1}_S,g_2\mathbbm{1}_S)(\omega)=\int_{S}\frac{g_1(\omega')g_2(R_\omega\omega')}{|\omega'-R_\omega\omega'|^{s}}|\omega\cdot\omega'|^{n-2}\mathrm{d}\sigma(\omega')
\end{equation*}
is bounded from $L^1\times L^1$ into $L^{1/2}$ whenever $s<n-1$. This will be established in Section \ref{Sect:KS}, where more general surface-carried bilinear fractional integral operators are estimated.
\end{proof}

\section{General submanifolds: a geometric viewpoint}\label{Sect:general submanifolds}
As we shall see, identifying a phase-space representation of $|\widehat{g\mathrm{d}\sigma}|^2$ that is explicit enough to establish Theorems \ref{Theorem:shortSobStein} and \ref{theorem:shortSMTgen} requires some careful geometric analysis, beginning with the identification of a suitable generalised Wigner distribution (or transform). We present this for general smooth submanifolds of $\mathbb{R}^n$ that are strictly convex in the sense that their shape operators $\mathrm{d}N_u$ are positive definite at all points $u\in S$.
\subsection{Surface-carried Wigner transforms}
The general procedure for constructing a suitable Wigner transform on a submanifold of Euclidean space is again well-known in optics \cite{Al}, \cite{PA}; see for example \cite{GFH} for related intrinsic constructions in quantum physics. As is pointed out in \cite{Al}, for $n\geq 3$ matters are considerably more involved as there is some choice to be exercised.

For compactly supported function $g_1,g_2\in L^2(S)$ let
\begin{equation}\label{genWig}
W_S(g_1,g_2)(u, v)=\int_{S}g_1(u')\overline{g_2(R_u u')}e^{-2\pi iv\cdot(u'-R_uu')}J(u,u')\mathrm{d}\sigma(u').
\end{equation}
Here $u\in S$, $v\in T_uS$, and we define, for $u'\neq u$, $R_u u'$ to be the unique point $u''\in S$ with $u''\not=u'$ such that
\begin{equation}\label{collision condition 1}
(u'-u'')\cdot N(u)=0
\end{equation}
and 
\begin{equation}\label{collision condition 2}
N(u)\wedge N(u')\wedge N(u'')=0.
\end{equation}
Define $R_{u}u:=u$ for all $u\in S$. Condition \eqref{collision condition 1} stipulates that $u'-u''\in T_u S$, which as we shall see, is necessary for the phase-space representation \eqref{PSrep}; see Figure \ref{Rmap}. Condition \eqref{collision condition 2}, which stipulates that $N(u), N(u'), N(u'')$ lie on a great circle, is where we have exercised some choice. This appears to be physically significant, and is at least implicitly referred to in the optics literature; see for example \cite{Al} (Page 346) in the context of the sphere. Moreover, the appropriateness of \eqref{collision condition 2} is particularly apparent when $S$ is the paraboloid, as we clarify in the forthcoming Remark \ref{examples}.
In \eqref{genWig} the function $J(u,u')$ is the reciprocal of the Jacobian of the mapping $u\mapsto R_uu'$, so that
\begin{equation}\label{defofJ-240625}
\int_S\Phi(R_uu')J(u,u')\mathrm{d}\sigma(u)=\int_{S}\Phi \mathrm{d}\sigma
\end{equation}
for each $u'\in S$. The required bijectivity here follows from the assumed geodesic convexity of $N(S)$ referred to in Section \ref{Sect:intro}. We refer to $W_S(g_1,g_2)$ as the \textit{Wigner transform on $S$}, and $W_S(g,g)$ as the \textit{Wigner distribution on $S$}. 
As we shall see shortly, the Jacobian $J$ is a bounded function on compact subsets of $S\times S$, allowing $W_S(g_1,g_2)$ to be defined as a Lebesgue integral.
\begin{figure}[ht]
\centering
\includegraphics[width=1\linewidth]{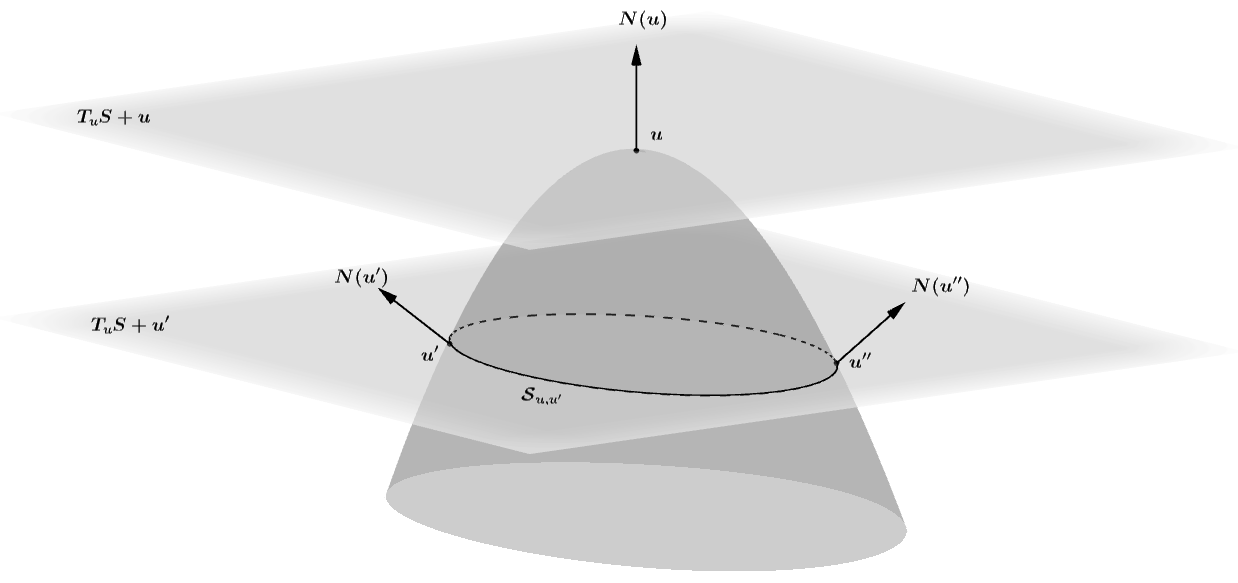}
\caption{\footnotesize A depiction of the choice of $u''$ via the conditions \eqref{collision condition 1} and \eqref{collision condition 2}.} \label{Rmap}
\end{figure}

The point $u''$ may seem rather difficult to identify at first sight, although it has a simple alternative description that is \textit{constructive}. This is shown in Figure \ref{Rmap2}, and will play an important role in our analysis.
\begin{figure}[ht]
\centering
\includegraphics[width=.65\linewidth]{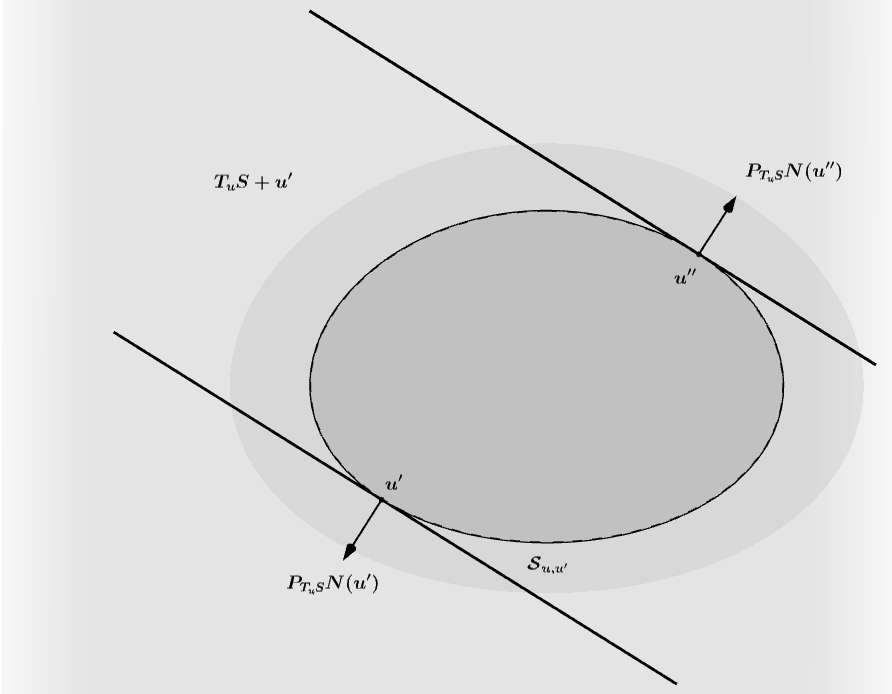}
\caption{\footnotesize The construction of $u''$ via parallel supporting hyperplanes in $T_uS+\{u'\}$.} \label{Rmap2}
\end{figure}
\begin{remark}[Existence of $u''$]\label{Rmk1-130825}
    There is a technical point that we have glossed over in the above definition of $W_S$ and Figures \ref{Rmap} and \ref{Rmap2}. For given $u,u'\in S$ our hypotheses do not guarantee the existence of such a point $u'':=R_u u'$, unless $S$ is closed (the boundary of a convex body in $\mathbb{R}^n$). One way to remedy this might be to continue $S$ to a closed submanifold, upon which $R_u u'$ may always be defined, and observe that the resulting function $W_S(g_1,g_2)$ is independent of the choice of extension since $g_2$ is supported on $S$. In any event, the integral in \eqref{genWig} should be interpreted as taken over $$\{u'\in S: (u'-u'')\cdot N(u)=0 \mbox{ and } N(u)\wedge N(u')\wedge N(u'')=0\mbox{ for some }u''\not=u'\}.$$ Naturally such domain restrictions will be apparent in our analysis of the Jacobian $J$ in Section \ref{Jacobians}.
\end{remark}
\begin{remark}[Differentiability of $u''$]\label{Remark:differentiability}
We expect that the maps $u\mapsto R_u u'$ and $u'\mapsto R_u u'$ are differentiable away from $u=u'$, and that this should follow from \eqref{collision condition 1} and \eqref{collision condition 2} by a suitable application of the implicit function theorem; see Figure \ref{Rmap2}. This smoothness is of course clear when $S$ is the sphere thanks to the explicit formula \eqref{R-omega-sphere}, and is assumed to be true of the submanifolds $S$ considered here. 
\end{remark}
\begin{remark}[Rationale for the choice of third point $u''$]\label{Remark:nondeg}
    As is pointed out in \cite{Al} and \cite{PA}, for $n\geq 3$ there are many possible ways of defining the third point $u''$ in terms of $u'$ and $u$, although for the purposes of proving Theorems \ref{Theorem:shortSobStein} and \ref{theorem:shortSMTgen} there are a number of natural requirements that significantly constrain this choice. First of all, the choice should be ``nondegenerate" in the sense that the distances $|u'-u|$ and $|u'-u''|$ should be comparable (suitably uniformly in terms of the geometry of $S$), it should be symmetric so that the resulting Wigner distribution is real-valued (and the Wigner transform is conjugate symmetric), and it should be geometrically/physically natural, so that the Jacobian $J$ may be expressed in terms of the Gauss map $N$ and its derivative $\mathrm{d}N$ (the shape operator). The forthcoming Propositions \ref{metricestimate} and \ref{jacprop} show that our choice of $u''$ has these features. As we shall see, the coplanarity condition \eqref{collision condition 2} is natural as it allows the mapping $u\mapsto R_u u'$ to be transformed to a relatively simple ``outward vector field" on the tangent space $T_{u'}S$. This involves parametrising $S$ using the Gauss map followed by stereographic projection (a composition that may also be found in the theory of minimal surfaces).
\end{remark}
It will be important for us to understand how the distances between the three points $u, u', u''$ relate to each other. This is provided by the following proposition, whose proof is deferred to Section \ref{distanceestimates}. In particular it tells us that the function $\rho(u,u'):=|u'-R_uu'|$ on $S\times S$ is a \textit{quasi-distance}, as we clarify in Section \ref{app}. 
\begin{proposition}[Distance estimates]\label{metricestimate}
     For all $u,u', u''\in S$ with $u''=R_uu'$, 
    %$$|u'-u''|\lesssim Q(\mathcal{S}_{u,u'})\cdot |u-u'|.$$
    %JB: we claim that we also have
    \begin{equation}\label{normal2plane}
|u'-u''|\lesssim Q(S)^{1/2}|u-u'|
    \end{equation}
    and
    \begin{equation}\label{easier}
    |u'-u''|\gtrsim \frac{1}{Q(S)}|u-u'|.
    \end{equation}
\end{proposition}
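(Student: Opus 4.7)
The strategy is to reduce both estimates to a question about the widths of convex level sets in the tangent plane $T_uS$. The normal-cone hypothesis \eqref{cone of normals} ensures that the orthogonal projection $\pi: S \to T_uS$ is a global diffeomorphism onto its image, so we may write $S$ as the graph $\pi^{-1}(\xi) = u + \xi + \phi(\xi)N(u)$ of a strictly convex function $\phi$ with $\phi(0)=0$, $\nabla\phi(0)=0$ and $|\nabla\phi|\le\sqrt{3}$ throughout its domain (the last from $N(u)\cdot N(u')\ge 1/2$ and the graph normal formula). Setting $\xi' = \pi(u')$ and $\xi'' = \pi(u'')$, the defining conditions \eqref{collision condition 1} and \eqref{collision condition 2} of $u'' = R_u u'$ translate respectively into
\begin{equation*}
\phi(\xi') = \phi(\xi'') \quad\text{and}\quad \nabla\phi(\xi')\parallel\nabla\phi(\xi''),
\end{equation*}
the second using the standard graph formula $N \propto N(u)-\nabla\phi$ for the unit normal. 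Strict convexity of $\phi$ forces these gradients to be antiparallel, so $\xi'$ and $\xi''$ are exactly the two points on the convex level set $\{\phi=c\}$ (with $c:=\phi(\xi')$) whose outward normals are opposite; equivalently, the points of tangency between $\{\phi=c\}$ and the two parallel supporting hyperplanes of the convex body $K_c:=\{\phi\le c\}$ perpendicular to $\nabla\phi(\xi')$, and $|\xi'-\xi''|$ is precisely the width of $K_c$ in this direction---exactly as in Figure~\ref{Rmap2}.

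The finiteness of $Q(S)$ now enters via uniform eigenvalue control on $\nabla^2\phi$: the standard relation between the shape operator $\mathrm{d}N$ and the Hessian of a graph function (with conformal factors bounded thanks to $|\nabla\phi|\lesssim 1$) converts the pointwise curvature-quotient hypothesis into $\lambda_{\min}I \le \nabla^2\phi \le \lambda_{\max}I$ on the relevant domain with $\lambda_{\max}/\lambda_{\min}\lesssim Q(S)$. Integrating twice from the origin gives
\begin{equation*}
\tfrac{1}{2}\lambda_{\min}|\xi|^2 \le \phi(\xi) \le \tfrac{1}{2}\lambda_{\max}|\xi|^2,
\end{equation*}
so that $K_c$ is trapped between concentric Euclidean balls of radii $r_{\min} := \sqrt{2c/\lambda_{\max}}$ and $r_{\max} := \sqrt{2c/\lambda_{\min}}$. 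Elementary convex geometry then yields $|\xi'|,|\xi''|\in[r_{\min},r_{\max}]$, and likewise, since $|\xi'-\xi''|$ equals the width of $K_c$ in a fixed direction, $|\xi'-\xi''|\in[2r_{\min},2r_{\max}]$. Consequently
\begin{equation*}
Q(S)^{-1/2} \;\lesssim\; \frac{|\xi'-\xi''|}{|\xi'|} \;\lesssim\; Q(S)^{1/2}.
\end{equation*}

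To close, note that $|u'-u''| = |\xi'-\xi''|$ since $u'-u''\in T_uS$, and $|u-u'|\sim|\xi'|$ with absolute constants, since $|u-u'|^2=|\xi'|^2+\phi(\xi')^2$ and the Lipschitz bound on $\nabla\phi$ gives $|\phi(\xi')|\le\sqrt{3}\,|\xi'|$. Combining, we recover \eqref{normal2plane} and in fact the stronger lower bound $|u'-u''|\gtrsim Q(S)^{-1/2}|u-u'|$, which trivially implies \eqref{easier}. The main technical point will be the extraction of the global Hessian bounds on $\phi$ from the pointwise curvature-quotient hypothesis via the graph-formula for the shape operator; once this is in place, the remainder is standard convex geometry.
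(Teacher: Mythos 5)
Your proof is correct and takes a genuinely different route from the paper's, one that is shorter and more unified. The paper treats the two inequalities by separate arguments: for \eqref{normal2plane} it parametrises $S$ over the translated hyperplane $T_uS+\{u'\}$, reduces to normal $2$-plane sections via Meusnier's theorem, and runs Taylor's theorem along the two line segments $[x,u']$ and $[x,u'']$ (where $x$ is the foot of $u$ on that hyperplane) to produce a ratio of normal curvatures; for \eqref{easier} it pivots to a completely different mechanism, bounding $\|\mathrm{d}H_\theta\|$ and $\|\mathrm{d}H^{-1}_{\tilde\theta}\|$ for the map $H=N^{-1}\circ\Phi_\omega$ built from the Gauss map and stereographic projection in Section~\ref{Jacobians}. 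Your proof instead fixes a single parametrisation over $T_uS$ (through $u$, not through $u'$), converts $Q(S)$ into two-sided Hessian bounds $L_{\min}I\leq\nabla^2\phi\leq L_{\max}I$ using the bounded conformal factors guaranteed by \eqref{cone of normals}, and then deduces both inequalities at once from the fact that the convex sublevel set $K_c$ is trapped between concentric balls. Notably, your argument yields the stronger lower bound $|u'-u''|\gtrsim Q(S)^{-1/2}|u-u'|$, improving the exponent $-1$ in \eqref{easier}, and it avoids any dependence on the machinery of Section~\ref{Jacobians}.

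One small imprecision: the sentence claiming that $|\xi'-\xi''|$ is \emph{precisely} the width of $K_c$ in the direction $\nu:=\nabla\phi(\xi')/|\nabla\phi(\xi')|$ is not accurate. The width in that direction is $\langle\xi'-\xi'',\nu\rangle$, and the chord $\xi'-\xi''$ need not be parallel to $\nu$. Fortunately both halves of the estimate survive without it: the lower bound comes from $|\xi'-\xi''|\geq\langle\xi'-\xi'',\nu\rangle\geq 2r_{\min}$ (since $B(0,r_{\min})\subseteq K_c$ and the supporting hyperplanes at $\xi'$ and $\xi''$ are perpendicular to $\nu$), while the upper bound comes directly from $\xi',\xi''\in B(0,r_{\max})$. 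You should also state explicitly that the domain of $\phi$ is star-shaped about the origin, which is the hypothesis under which the twice-integrated Hessian bound $\frac{1}{2}L_{\min}|\xi|^2\leq\phi(\xi)\leq\frac{1}{2}L_{\max}|\xi|^2$ is valid; this is standard under \eqref{cone of normals} but worth a sentence.
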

%%%%%%%%%%%%%%%
We now turn from the metric properties to the measure-theoretic properties of the map $R_u$, and a host of explicit identities satisfied by the Wigner transform $W_S$.  

To see that $W_S$ is conjugate-symmetric, which in particular implies that the Wigner distribution $W_S(g,g)$ is real-valued, already appears to require some work. For fixed $u\in S$ observe first that if $u''=R_u u'$ then $u'=R_u u''$, and so by a change of variables,
\begin{eqnarray*}
    \begin{aligned}
        W_S(g_1,g_2)(u,v)=\int_S g_1(R_u u'')\overline{g_2(u'')}e^{-2\pi iv\cdot(R_u u''-u'')}J(u,R_u u'')\Delta(u,u'')\mathrm{d}\sigma(u''),
    \end{aligned}
\end{eqnarray*}
where $\Delta(u,u'')$ is the Jacobian of the change of variables $u'=R_u u''$. It therefore remains to show that \begin{equation*}
J(u,u')\Delta(u,u'')=J(u,u''),
\end{equation*}
recalling that $J$ was defined in \eqref{defofJ-240625}. Fortunately we have explicit formulae for the Jacobians $J$ and $\Delta$ from which this quickly follows. In the following proposition we denote by $K(u)$ the Gaussian curvature of $S$ at the point $u$, recalling that $K(u)$ is the determinant of the shape operator $\mathrm{d}N_u$. Further, we denote by  $P_{W}v$ the orthogonal projection of a vector $v\in\mathbb{R}^n$ onto a subspace $W$ of $\mathbb{R}^n$.
\begin{proposition}[Jacobian identities]\label{jacprop}
For all $u,u', u''\in S$ with $u''=R_uu'$, 
\begin{equation}\label{Jformula-prop-19apr24}
    J(u,u')=\left(\frac{|N(u')\wedge N(u'')|}{|N(u)\wedge N(u')|}\right)^{n-2}\left|\frac{\langle u''-u',N(u'')\rangle}{\langle P_{T_{u''}S}N(u),(\mathrm{d}N_{u''})^{-1}(P_{T_{u''}S}N(u))\rangle}\right|\frac{K(u)}{K(u'')},
\end{equation}
  \begin{equation}\label{deltaformula-prop-19apr24}
      \Delta(u,u')=\left(\frac{|N(u)\wedge N(u'')|}{|N(u)\wedge N(u')|}\right)^{n-1}\frac{|\langle P_{T_{u'}S}N(u),(\mathrm{d}N_{u'})^{-1}(P_{T_{u'}S}N(u))\rangle|}{|\langle P_{T_{u''}S}N(u),(\mathrm{d}N_{u''})^{-1}(P_{T_{u''}S}N(u))\rangle|}\frac{K(u')}{K(u'')}
  \end{equation}
  and
 \begin{equation}\label{switch}
     J(u,u')\Delta(u,u'')=J(u,u'').
 \end{equation}
\end{proposition}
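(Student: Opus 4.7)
The plan is to establish the three assertions in the order (\ref{switch}), (\ref{Jformula-prop-19apr24}), (\ref{deltaformula-prop-19apr24}). The identity (\ref{switch}) is the most elementary and will follow from the implicit function theorem: writing the total differential of the relation $u''=R_uu'$ as $\mathrm{d}u''=A\,\mathrm{d}u+B\,\mathrm{d}u'$, one has $J(u,u')=|\det A|$ by definition, while $\Delta(u,u'')=1/|\det B|$ thanks to the involution $R_u\circ R_u=I$, which forces the Jacobian of $u''\mapsto R_uu''$ at $u''$ to be the reciprocal of that of $u'\mapsto R_uu'$ at $u'$. Solving the implicit relation for $\mathrm{d}u'$ in terms of $\mathrm{d}u$ at fixed $u''$ then gives the Jacobian of $u\mapsto R_uu''$ as $|\det(-B^{-1}A)|=J(u,u')\,\Delta(u,u'')$, which is precisely the definition of $J(u,u'')$.

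To establish (\ref{Jformula-prop-19apr24}) I would parametrise $S$ via the composition of the inverse Gauss map $N^{-1}:N(S)\to S$ with a stereographic projection of $N(S)\subset\mathbb{S}^{n-1}$ onto a suitable affine tangent plane to the sphere, as suggested in Remark \ref{Remark:nondeg}. Under this parametrisation the coplanarity constraint (\ref{collision condition 2}) becomes a collinearity condition on the stereographic image (great circles through the projection pole map to straight lines), and the map $u\mapsto R_uu'$ at fixed $u'$ reduces essentially to a radial scaling from the stereographic image of $u'$. The Jacobian then factors cleanly into three contributions: the curvature ratio $K(u)/K(u'')$ coming from the two applications of the inverse Gauss map; the $(n-2)$-th power of $|N(u')\wedge N(u'')|/|N(u)\wedge N(u')|$ from the transverse directions to the great circle, via a similar-triangles computation in the stereographic image; and the shape-operator quotient from the single remaining tangential direction, obtained by differentiating the perpendicularity condition (\ref{collision condition 1}) and using the chain rule on $N$ to express the variation of $u''$ in terms of $(\mathrm{d}N_{u''})^{-1}$ acting on $P_{T_{u''}S}N(u)$.

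Finally, (\ref{deltaformula-prop-19apr24}) is obtained by combining (\ref{Jformula-prop-19apr24}) with (\ref{switch}): since $\Delta(u,u')=J(u,u')/J(u,u'')$, one divides the two instances of the formula for $J$ obtained by swapping $u'\leftrightarrow u''$, using the elementary geometric identity
\begin{equation*}
\frac{|\langle u''-u',N(u'')\rangle|}{|\langle u'-u'',N(u')\rangle|}=\frac{|N(u)\wedge N(u'')|}{|N(u)\wedge N(u')|},
\end{equation*}
which will follow by projecting the vector $u''-u'$ onto the two-plane $\Pi$ spanned by the three normals and observing that both sides collapse to $|\sin\alpha''|/|\sin\alpha'|$, where $\alpha',\alpha''$ are the angles of $N(u')$ and $N(u'')$ relative to $N(u)$ measured within $\Pi$. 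The main obstacle I anticipate is the clean isolation of the shape-operator factor in the computation of (\ref{Jformula-prop-19apr24}): managing this contribution without parametrisation-dependent residue, and recognising it as the intrinsic quantity $\langle P_{T_{u''}S}N(u),(\mathrm{d}N_{u''})^{-1}P_{T_{u''}S}N(u)\rangle$, is what the stereographic reduction is designed to achieve.
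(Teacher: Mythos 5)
Your proposal is correct, and it differs from the paper's proof in a genuinely useful way in its treatment of the switch identity \eqref{switch}. The paper proves \eqref{Jformula-prop-19apr24} and \eqref{deltaformula-prop-19apr24} separately, each by conjugating the relevant map with the Gauss map and a stereographic projection to reduce to an outward vector field $\varphi(x)=\eta(x)x$ on a tangent space, computing $\det\mathrm{d}\varphi$ via the spectral Lemma~\ref{detphilemma}, and differentiating the perpendicularity constraint \eqref{collision condition 1} to pin down the distinguished eigenvalue; \eqref{switch} is then derived \emph{a posteriori} by dividing the two explicit formulas and invoking the coplanarity identity $|\langle u''-u',N(u')\rangle|/|\langle u''-u',N(u'')\rangle|=|N(u)\wedge N(u')|/|N(u)\wedge N(u'')|$. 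You instead prove \eqref{switch} \emph{a priori} from the implicit function theorem applied to $R_u u'=u''$: writing $\mathrm{d}u''=A\,\mathrm{d}u+B\,\mathrm{d}u'$, the involution $R_u\circ R_u=\mathrm{Id}$ gives $\Delta(u,u'')=1/|\det B|$, and solving for $\mathrm{d}u'$ at fixed $u''$ gives $J(u,u'')=|\det(B^{-1}A)|=J(u,u')\Delta(u,u'')$ directly — a cleaner and more structural argument that does not depend on the formulas at all (and, incidentally, makes explicit that \eqref{switch} is the differential form of the conjugate-symmetry of $W_S$). Your derivation of \eqref{deltaformula-prop-19apr24} from \eqref{Jformula-prop-19apr24} together with \eqref{switch} is correct and spares a second round of the stereographic computation, although it still requires the same coplanarity identity that the paper records, so the geometric content does not disappear. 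For \eqref{Jformula-prop-19apr24} itself your outline follows essentially the paper's method (the ``radial scaling'' $\varphi(x)=\eta(x)x$, the $(n-2)$-fold eigenvalue $\eta$, and the single transversal eigenvalue extracted by differentiating \eqref{collision condition 1}); the one imprecision is that the curvature ratio $K(u)/K(u'')$ arises from conjugating by the Gauss map \emph{and} its inverse (one factor $K(u)$ and one factor $K(u'')^{-1}$), not from two applications of $N^{-1}$, and the $(n-2)$-th power is the net result of $\eta^{n-2}$ combined with the stereographic Jacobians $\det(\mathrm{d}\Phi_{\omega'})_x$ and $\det(\mathrm{d}\Phi_{\omega'}^{-1})_{\omega''}$, so the bookkeeping is slightly heavier than ``similar triangles'' suggests, though the picture is right.
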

We defer the proof of Proposition \ref{jacprop} to Section \ref{Jacobians}.
\begin{remark}[Interpreting $J$]\label{interpreting J}
The expression for $J$ in Proposition \eqref{jacprop}, while seemingly rather complicated, may be understood in somewhat simple geometric terms. In particular:
\begin{enumerate}
\item[(i)] Matters are much simpler when $n=2$, where we may write
    \begin{equation*}
    \eqalign{
    \displaystyle J(u,u')&\displaystyle =\left|\frac{\langle u''-u',P_{T_uS}N(u'')\rangle}{\langle P_{T_{u''}S}N(u),(\mathrm{d}N_{u''})^{-1}(P_{T_{u''}S}N(u))\rangle}\right|\frac{K(u)}{K(u'')} \cr
    &\displaystyle = \frac{|u''-u'|\cdot |N(u)\wedge N(u'')|}{|P_{T_{u''}S}N(u)|^{2}}K(u) \cr
    &\displaystyle = \frac{|u''-u'|}{|N(u)\wedge N(u'')|}K(u). \cr
    }
\end{equation*}
Here we have used the (two-dimensional) formula
$$\langle P_{T_{u''}S}N(u),(\mathrm{d}N_{u''})^{-1}(P_{T_{u''}S}N(u))\rangle=\frac{1}{K(u'')}|P_{T_{u''}S}N(u)|^{2},$$
along with the elementary identities $|P_{T_{u''}S}N(u)|=|P_{T_{u}S}N(u'')|=|N(u)\wedge N(u'')|$.
\item[(ii)] 
The factor
    \begin{equation}\label{harmdir}\langle P_{T_{u''}S}N(u),(\mathrm{d}N_{u''})^{-1}(P_{T_{u''}S}N(u))\rangle^{-1}
    \end{equation}
    is bounded above by 
    $\langle P_{T_{u''}S}N(u),\mathrm{d}N_{u''}(P_{T_{u''}S}N(u))\rangle$
    by the harmonic-arithmetic mean inequality.
   This bound is (up to a suitable normalisation factor) the directional curvature of $S$ at the point $u''$ in the direction $P_{T_{u''}S}N(u)$. One might therefore interpret the factor  \eqref{harmdir} as a certain ``harmonic directional curvature".
   \item[(iii)] The factor $$\frac{|N(u')\wedge N(u'')|}{|N(u)\wedge N(u')|}$$ quantifies (in relative terms) the transversality of the tangent spaces to $S$ at the points $u,u', u''$, and is therefore also a manifestation of the curvature profile of $S$; see Figure \ref{Rmap}.
   \item[(iv)] The factor $\langle u''-u',N(u'')\rangle$ is different in nature as it explicitly relates to the \textit{positions} of the points $u', u''$. It is instructive to use the fact that $u'-u''\in T_uS$ to write this as
   $$\langle u''-u',P_{T_uS}N(u'')\rangle=|N(u)\wedge N(u'')||u''-u'|\left\langle \tfrac{\displaystyle u''-u'}{\displaystyle |u''-u'|},\tfrac{\displaystyle P_{T_uS}N(u'')}{\displaystyle |P_{T_uS}N(u'')|}\right\rangle.$$
   We observe that the inner product in the final expression above
   quantifies the extent to which $u''$ is displaced from the line through $u'$ in the direction $P_{T_uS}N(u'')$; see Figure \ref{Rmap2}.
   \item[(v)] The Jacobian $J$ is scale-invariant in the sense that an isotropic scaling of $S$ leaves $J$ unchanged. This is apparent from the definition of $J$, but is also manifest in the formula \eqref{Jformula-prop-19apr24}.
   \end{enumerate}
   \end{remark}
   \begin{remark}[Examples]\label{examples}
   Proposition \ref{jacprop} is easily applied to examples.
       \begin{enumerate}
       \item[(i)] If $S=\mathbb{P}^{n-1}$, the paraboloid \eqref{the paraboloid}, then a careful calculation using Proposition \ref{jacprop} reveals that
$$
J(u,u')=2^{n-1}\left(\frac{1+4|x''|^2}{1+4|x|^2}\right)^{1/2},
$$
where we are writing $u=(x,|x|^2)$, $u'=(x',|x'|^2)$, $u'':=R_uu'=(x'',|x''|^2)$.
As should be expected from our analysis in Section \ref{Sect:para}, the parabolic Wigner distribution $W_{\mathbb{P}^{n-1}}$ may be pulled back to the classical Wigner distribution via a suitable map $\Phi:\mathbb{R}^d\times\mathbb{R}^d\rightarrow T\mathbb{P}^d$; in this case $\Phi(x,v)=((x,|x|^2),P_{T_{(x,|x|^2)}\mathbb{P}^d}(v,0))$. This uses the simple geometric fact that the coplanarity condition \eqref{collision condition 2} transforms to a \textit{colinearity} condition in parameter space.  More specifically, if for a function $g:\mathbb{R}^d\rightarrow\mathbb{C}$ we let $Lg(x,|x|^2)=(1+4|x|^2)^{-\frac{1}{2}}g(x)$, and for a function $h:T\mathbb{P}^d\rightarrow\mathbb{C}$ we let $Uh(x,v)=(1+4|x|^2)^{1/2}h(\Phi(x,v))$, then
$$UW_{\mathbb{P}^d}(Lg,Lg)=W(g,g).$$ Moreover, 
$X_{\mathbb{P}^d}^*h=\rho(Uh)$, allowing one to deduce the quantum-mechanical phase-space representation \eqref{parakinetic} from the forthcoming Proposition \ref{Prop:genBGNOWigner}.
We refer to \cite{Al} (Page 353) for a similar remark.
\item[(ii)] If $S=\mathbb{S}^{n-1}$, evidently $K\equiv 1$ and $N(\omega)=\omega$, and to be consistent with Section \ref{Sect:Helm} we use $\omega$ rather than $u$ to represent a point. We may use the explicit formula \eqref{R-omega-sphere} to write
\begin{equation*}\label{part1Jsphere}
    \frac{|N(\omega')\wedge N(\omega'')|}{|N(\omega)\wedge N(\omega')|}=\frac{(1-(\omega'\cdot\omega'')^{2})^{\frac{1}{2}}}{(1-(\omega\cdot\omega')^{2})^{\frac{1}{2}}}=\frac{|P_{\langle\omega'\rangle^{\perp}}\omega''|}{|P_{\langle\omega\rangle^{\perp}}\omega'|}=2|\omega\cdot\omega'|.
\end{equation*}
On the other hand, since $\langle \omega''-\omega',N(\omega'')\rangle=\langle \omega''-\omega',P_{\langle\omega\rangle^{\perp}}\omega''\rangle$, projecting both sides of \eqref{R-omega-sphere} to $\langle\omega\rangle^{\perp}$ yields
\begin{equation*}\label{part2Jsphere}
    \left|\frac{\langle \omega''-\omega',N(\omega'')\rangle}{\langle P_{T_{\omega''}S}N(\omega),(\mathrm{d}N_{\omega''})^{-1}(P_{T_{\omega''}S}N(\omega))\rangle}\right|=\frac{|\langle \omega''-\omega',\omega'\rangle|}{|1-(\omega\cdot\omega'')^{2}|}=\frac{|1-(\omega'\cdot\omega'')|}{|1-(\omega\cdot\omega')^{2}|}=2,
\end{equation*}
since $\omega\cdot\omega'' = \omega\cdot\omega'$ and $\omega'\cdot\omega''=2(\omega\cdot\omega')^{2}-1$.
Altogether we conclude that
$$J(\omega,\omega')=2^{n-1}|\omega\cdot\omega'|^{n-2},$$
as appears in \eqref{SpWigJ}.
%; these expressions differ by a factor of two since the expression for $J$ in \eqref{SpWig} accounts for the fact that the map $\omega\mapsto R_\omega\omega'$ is two-to-one.
\begin{comment}
\item[(iii)] If $S$ is an ellipsoid... JB: I note that Alonso refers to ellipsoids in \cite{Al}, so I expect these have been computed explicitly before, at least in low dimensions. JB: This raises questions about the effect of affine transformations on the Wigner transform: if $A$ is an affine transformation, is there a simple relation between $W_{AS}$ and $W_S$? Presumably this would require a simple relation between $J_{AS}$ and $J_S$, where $J_S$ denotes the Jacobian appearing in the definition of $W_S$. I believe so.\footnote{Does anyone want to check this out?}
\end{comment}
    \end{enumerate}
\end{remark}
%\begin{job}
 %   We should make a passing remark about non-convex manifolds, such as the hyperboloid, where there is also a literature.
%\end{job}

We now come to the phase space representation of $|\widehat{g\mathrm{d}\sigma}|^2$, and we begin by defining an auxiliary function $f:S\times\mathbb{R}^n\rightarrow\mathbb{R}$ by
$$
f(u,x)=\int_{S}g(u')\overline{g(R_uu')}e^{-2\pi ix\cdot(u'-R_uu')}J(u,u')\mathrm{d}\sigma(u'),
$$
so that $W_S(g,g)$ is the restriction of $f$ to the tangent bundle $TS:=\{(u,v):u\in S, v\in T_uS\}$.
As in the spherical case, we continue to have the marginal identity
\begin{equation}\label{genmarg1}
\int_{S}f(u,x)\mathrm{d}\sigma(u)=|\widehat{g\mathrm{d}\sigma}(x)|^2
\end{equation}
by Fubini's theorem and the definition of $J$.
While we shall not need to use it, it is pertinent to also note the second marginal property
\begin{comment}
\footnote{IO: We probably want to say that $J(u,u)=\widetilde{J}(u,u)=2^{n-1}$ here.  JB: I believe you are right about this Itamar. The intuition is that both Jacobians are local quantities, and since $S$ is smooth, it looks like the flat (or spherical) case, or indeed any other smooth surface that you might mention. As such $J(u,u)=2^{n-1}$ and $\widetilde{J}(u,u)=2^{n-1}$ seem quite obvious. This seems to suggest that our geometric Wigner somehow has the ``1/2" already built into it as it has the perfect marginals on both sides.... If this is true then the pullback of the parabolic Wigner should coincide exactly with the classical Wigner! This makes sense as $(x+y/2)-(x-y/2)=y$, so that the phase of the classical Wigner is the difference of the arguments of the functions, just as we have it in the geometric setting. BTW, it is curious to see the ratio $\frac{J(u,u)}{\widetilde{J}(u,u)}$ crop up here -- it isn't just some strange artefact of our proof of our theorems...}
\end{comment}
\begin{equation}\label{margin2}
\int_{T_uS}W_S(g,g)(u,v)\mathrm{d}v=|g(u)|^2
\end{equation}
here (possibly subject to an additional regularity assumption on $S$) referred to in the introduction; we refer to Section \ref{app} for clarification of this, along with the sense in which it holds as a pointwise identity. 
Another key property is that $f$ satisfies the transport equation
\begin{equation}\label{gentrans}
N(u)\cdot\nabla_xf=0,
\end{equation}
meaning that $f(u,x)=W_S(g,g)(u,P_{T_uS}x)$, where $P_{T_uS}:\mathbb{R}^n\rightarrow T_uS$ is the orthogonal projection onto $T_uS$.

\begin{proposition}[General phase-space representation]\label{Prop:genBGNOWigner}
\begin{equation}\label{BGNOidWig}
|\widehat{g\mathrm{d}\sigma}|^2=X_S^*W_S(g,g)
\end{equation}
where $X_Sw(u,v):=Xw(N(u),v)$, the pullback of $Xw$ under the Gauss map
$$
TS\ni (u,v)\mapsto (N(u),v)\in T\mathbb{S}^{d-1}.
$$
\end{proposition}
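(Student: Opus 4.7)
The proof proposal follows the template already established in the spherical case (Proposition \ref{Prop:BGNOWigner}), now leveraging the two structural facts about $f$ already noted in the excerpt: the marginal identity \eqref{genmarg1} and the transport equation \eqref{gentrans}. The plan is to test the identity \eqref{BGNOidWig} against an arbitrary Schwartz weight $w$ and unwind both sides by Fubini.

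The first step is to confirm that the marginal identity \eqref{genmarg1} really does hold with the $J$ constructed above; this is where the definition of $J$ as the reciprocal Jacobian of $u \mapsto R_u u'$ earns its keep. Concretely, integrating $f(u,x)$ against $\mathrm{d}\sigma(u)$ and applying Fubini yields, for each $u'\in S$, the inner integral
$$
\int_S \overline{g(R_u u')} e^{2\pi i x\cdot R_u u'} J(u,u')\,\mathrm{d}\sigma(u) = \int_S \overline{g(u'')}\, e^{2\pi i x \cdot u''}\,\mathrm{d}\sigma(u''),
$$
by the defining property of $J$ applied with $\Phi(u'')=\overline{g(u'')}e^{2\pi i x\cdot u''}$. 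Multiplying by $g(u')e^{-2\pi i x\cdot u'}\mathrm{d}\sigma(u')$ and integrating in $u'$ gives $|\widehat{g\mathrm{d}\sigma}(x)|^2$.

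The second step is to invoke the transport equation \eqref{gentrans}, which is essentially a tautology given the definition of $R_u$: condition \eqref{collision condition 1} states that $u'-R_u u'\in T_u S$, so the phase $e^{-2\pi i x\cdot(u'-R_u u')}$ depends on $x$ only through $P_{T_u S} x$. Hence $f(u,x)=W_S(g,g)(u,P_{T_u S}x)$ and $N(u)\cdot\nabla_x f=0$.

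The third step is the duality computation. For a test function $w$, combining the previous two steps with Fubini and decomposing $\mathbb{R}^n$ along the orthogonal splitting $\mathbb{R}^n = T_u S\oplus \langle N(u)\rangle$, we obtain
\begin{align*}
\int_{\mathbb{R}^n}|\widehat{g\mathrm{d}\sigma}(x)|^2 w(x)\,\mathrm{d}x
&= \int_{\mathbb{R}^n}\int_S f(u,x)\,\mathrm{d}\sigma(u)\, w(x)\,\mathrm{d}x\\
&= \int_S\int_{T_uS}W_S(g,g)(u,y)\!\int_{\langle N(u)\rangle} w(y+tN(u))\,\mathrm{d}t\,\mathrm{d}y\,\mathrm{d}\sigma(u)\\
&= \int_S\int_{T_uS}W_S(g,g)(u,y)\, Xw(N(u),y)\,\mathrm{d}y\,\mathrm{d}\sigma(u)\\
&= \int_{TS}W_S(g,g)\cdot X_S w = \int_{\mathbb{R}^n} X_S^* W_S(g,g)(x)\, w(x)\,\mathrm{d}x.
\end{align*}
Since $w$ is arbitrary, \eqref{BGNOidWig} follows. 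I do not foresee a genuine obstacle: the only subtleties have been shouldered by the choice of $R_u$ (making the phase tangential) and by the choice of $J$ (making the marginal integrate correctly), both of which are by now encoded in the definitions. A minor point to address is the domain of integration in the definition of $W_S$ flagged in the remark following the definition: the identity \eqref{genmarg1} continues to hold because the integrand is supported on the subset of $S\times S$ where $R_u u'$ is defined, and $g$ is compactly supported so Fubini applies without issue.
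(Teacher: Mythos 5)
Your proposal is correct and follows essentially the same route as the paper: decompose the $x$-integral via Fubini along the orthogonal splitting $\mathbb{R}^n = T_uS\oplus\langle N(u)\rangle$, using the marginal identity \eqref{genmarg1} and the transport equation \eqref{gentrans}, and recognise the X-ray transform. The only difference is presentational — you spell out the Fubini argument behind \eqref{genmarg1} via the defining property of $J$, whereas the paper asserts it in the surrounding text; the content is identical.
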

We note that for a phase-space function $h:TS\rightarrow\mathbb{C}$ we have the explicit expression $$X_S^*h(x)=\int_{S}h(u,P_{T_uS}x)\mathrm{d}\sigma(u).$$
\begin{proof}[Proof of Proposition \ref{Prop:genBGNOWigner}]
By \eqref{genmarg1}, \eqref{gentrans} and Fubini's theorem,
\begin{eqnarray*}
\begin{aligned}
\int_{\mathbb{R}^n}|\widehat{g\mathrm{d}\sigma}(x)|^2w(x)\mathrm{d}x&=\int_{\mathbb{R}^n}\int_{S}f(u,x)\mathrm{d}\sigma(u)w(x)\mathrm{d}x\\
&=\int_{S}\int_{T_uS}f(u,v)\left(\int_{(T_uS)^\perp}w(v+z)\mathrm{d}z\right)\mathrm{d}v\mathrm{d}\sigma(u)\\
&=\int_S\int_{T_uS}W_S(g,g)(u,v)Xw(N(u),v)\mathrm{d}v\mathrm{d}\sigma(u)\\
&=\int_{\mathbb{R}^n}X_S^*W(g,g)(x)w(x)\mathrm{d}x
\end{aligned}
\end{eqnarray*}
for all test functions $w$.
\end{proof}
\begin{remark}[A polarised form]
The polarised form 
    $$
    \widehat{g_1\mathrm{d}\sigma}\:\overline{\widehat{g_2\mathrm{d}\sigma}}=X_S^*W_S(g_1,g_2)
    $$
    of \eqref{BGNOidWig} may be established similarly, and indeed may be deduced directly from \eqref{BGNOidWig}.
\end{remark}

\begin{remark}
There is a point of contact here with \cite{BNS}, where among other things it is shown that the classical Radon transform fails to distinguish $|\widehat{g\mathrm{d}\sigma}|^2$ from $X_S^*\nu$ for a large class of distributions $\nu$ on $TS$, provided a suitable transversality condition is satisfied. Perhaps unsurprisingly, $W_S(g,g)$ is easily seen to be an example of such a distribution.
\end{remark}
We are now ready to state or main theorems (Theorems \ref{Theorem:shortSobStein} and \ref{theorem:shortSMTgen}) in full.
\begin{theorem}[$L^2$ Sobolev--Stein inequality]\label{Theorem:SobStein}
Suppose that $S$ is a smooth strictly convex surface with curvature quotient $Q(S)$, and 
$s<\frac{n-1}{2}$. Then there is a dimensional constant $c$ such that
    \begin{equation}\label{SobStfull}
    \int_{\mathbb{R}^n}|\widehat{g\mathrm{d}\sigma}(x)|^2w(x)\mathrm{d}x\leq c Q(S)^{\frac{5n-8}{4}}\int_S I_{S,2s}(|g|^2,|g|^2)(u)^{1/2}\|X_Sw(u,\cdot)\|_{\dot{H}^s(T_u S)}\mathrm{d}\sigma(u),
    \end{equation}
    where 
    \begin{equation}\label{Sfractional}
I_{S,s}(g_1,g_2)(u):=\int_S\frac{g_1(u')g_2(R_u u')}{|u'-R_u u'|^s}J(u,u')\mathrm{d}\sigma(u').
    \end{equation}
    \end{theorem}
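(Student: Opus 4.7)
The plan is to mirror the spherical proof of Theorem \ref{theorem:SSTsphere}, but with a much more delicate control on the geometric Jacobians that appear. First I would invoke the phase-space representation \eqref{BGNOidWig}, so that
\[
\int_{\mathbb{R}^n}|\widehat{g\mathrm{d}\sigma}|^2 w
= \int_S \int_{T_uS} W_S(g,g)(u,y)\, X_Sw(u,y)\, \mathrm{d}y\, \mathrm{d}\sigma(u).
\]
Cauchy--Schwarz applied fiberwise in $y\in T_uS$, together with the standard duality between $\dot H^s(T_uS)$ and $\dot H^{-s}(T_uS)$, reduces the theorem to the Sobolev-type bound
\begin{equation}\label{plan:key}
\|W_S(g,g)(u,\cdot)\|_{\dot H^{-s}(T_uS)}^2
\;\lesssim\; Q(S)^{\frac{5n-8}{2}}\, I_{S,2s}(|g|^2,|g|^2)(u)
\end{equation}
uniformly in $u\in S$, exactly as in \eqref{W_Sest} in the spherical case.

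To prove \eqref{plan:key}, I would for each fixed $u\in S$ make the change of variables $\xi = u'-R_uu'$, which by the choice \eqref{collision condition 1} is a diffeomorphism from (a subset of) $S$ onto an open subset of $T_uS$; call its Jacobian $\widetilde J(u,u')$. Substituting into the definition \eqref{genWig} of $W_S$ identifies $W_S(g,g)(u,\cdot)$ as the tangential Fourier transform on $T_uS$ of
\[
\xi \;\longmapsto\; g(u'(\xi))\,\overline{g(R_uu'(\xi))}\,\frac{J(u,u'(\xi))}{\widetilde J(u,u'(\xi))}.
\]
Plancherel on $T_uS$ and unwinding the change of variables then gives
\[
\|W_S(g,g)(u,\cdot)\|_{\dot H^{-s}(T_uS)}^2
=\int_S \frac{|g(u')|^2\,|g(R_uu')|^2}{|u'-R_uu'|^{2s}}\,\frac{J(u,u')^2}{\widetilde J(u,u')}\,\mathrm{d}\sigma(u'),
\]
which, up to the ratio $J^2/\widetilde J = J\cdot(J/\widetilde J)$, is exactly $I_{S,2s}(|g|^2,|g|^2)(u)$. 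Consequently \eqref{plan:key} reduces to the pointwise inequality
\begin{equation}\label{plan:jac}
\frac{J(u,u')}{\widetilde J(u,u')} \;\lesssim\; Q(S)^{\frac{5n-8}{2}}
\qquad\text{for all }u,u'\in S.
\end{equation}

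The main obstacle is therefore \eqref{plan:jac}; everything else is soft. To attack it I would use the explicit formula \eqref{Jformula-prop-19apr24} from Proposition \ref{jacprop}, which expresses $J(u,u')$ in terms of the Gauss map, the shape operator at $u''=R_uu'$, and the geometric quantities described in Remark \ref{interpreting J}. For the denominator $\widetilde J(u,u')$, one differentiates the defining map $u'\mapsto u'-R_uu'$, obtaining a formula involving the differential $(\mathrm{d}R_u)_{u'}$, which in turn may be computed via the implicit function theorem from \eqref{collision condition 1} and \eqref{collision condition 2} (cf.\ Remark \ref{Remark:differentiability}). The ratio $J/\widetilde J$ should then, after substantial simplification, reduce to a product of (a) ratios of principal curvatures at the three points $u,u',u''$, (b) ratios of sines of the relative angles $|N(u)\wedge N(u')|$, $|N(u)\wedge N(u'')|$, $|N(u')\wedge N(u'')|$, and (c) ratios of the distances $|u-u'|$, $|u'-u''|$. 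Factors of type (a) are controlled directly by $Q(S)$; factors of type (c) are controlled by Proposition \ref{metricestimate}; and factors of type (b) will be controlled by combining Proposition \ref{metricestimate} with the quasi-conformality of the shape operator that underlies $Q(S)$. Tracking the powers carefully should yield the exponent $\tfrac{5n-8}{2}$ in \eqref{plan:jac}, and hence $Q(S)^{\frac{5n-8}{4}}$ in \eqref{SobStfull} after taking the square root coming from the $\dot H^{-s}$ norm.

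Finally, I would note that no positivity of $w$ is used anywhere in this scheme, since the argument rests only on the identity \eqref{BGNOidWig} and Plancherel, which accounts for Remark \ref{Rem:sig}. The condition $s<\tfrac{n-1}{2}$ enters only to guarantee local integrability of the kernel $|u'-R_uu'|^{-2s}J(u,u')$ on $S$, giving meaning to $I_{S,2s}$; its optimality is tied to the expected Sobolev embedding $\dot H^{s}(T_uS)\hookrightarrow L^\infty$ at the endpoint, consistent with the discussion following the statement.
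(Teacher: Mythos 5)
Your soft reduction is exactly the paper's: the phase-space representation \eqref{BGNOidWig}, fiberwise Cauchy--Schwarz with $\dot H^s$/$\dot H^{-s}$ duality, the change of variables $\xi=u'-R_uu'$, and Plancherel on $T_uS$ together reduce the theorem to the pointwise Jacobian estimate $J(u,u')/\widetilde J(u,u')\lesssim Q(S)^{\frac{5n-8}{2}}$ (the paper's \eqref{clever bit}). Up to that point you are on firm ground.

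The gap is in how you propose to prove the Jacobian bound. You suggest differentiating $u'\mapsto u'-R_uu'$ to get an expression for $\widetilde J$ in terms of $(\mathrm{d}R_u)_{u'}$ and then ``simplifying'' the ratio $J/\widetilde J$ into a product of curvature, angle, and distance ratios. That is not how the argument goes, and it is not clear such a simplification exists. What is actually needed is a genuine \emph{lower} bound on $\widetilde J$, and the form of that lower bound matters: the paper proves $\widetilde J(u,u')\geq (1+\Delta(u,u')^2)^{1/2}$, where $\Delta$ is the Jacobian of $u'\mapsto R_uu'$. This is far from a formal computation: it rests on a wedge-product expansion of $\widetilde J$ into two terms $W_1,W_2$, on the fact that the intersection subspace $\mathcal H=T_uS\cap T_{u'}S$ is invariant under $(\mathrm{d}R_u)_{u'}$ with $\det(I-M_{u,u'})\geq 1$ by positivity of the shape operators on the slice, and crucially on a sign claim --- that a certain inner product $\langle (\mathrm{d}R_u)_{u'}\dot c_{n-1}(0),\dot c_{n-1}(0)\rangle$ is negative --- which uses strict convexity and the normal-cone hypothesis \eqref{cone of normals}. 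Without this sign, the cross term $-2\langle W_1,W_2\rangle$ could destroy the lower bound.

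Moreover, even granted $\widetilde J\geq 1$, a naive application of the formula for $J$ in Proposition \ref{jacprop} produces a factor $|u''-u'|/|N(u'')-N(u)|$ that can blow up when $u''$ is near $u$. The paper avoids this by also exploiting $\widetilde J\geq\Delta(u,u')$ together with the switching identity $J(u,u')\Delta(u,u'')=J(u,u'')$, obtaining the second estimate with $|N(u')-N(u)|$ in the denominator; combining the two and using $\max\{|N(u'')-N(u)|,|N(u')-N(u)|\}\gtrsim |N(u'')-N(u')|$ converts the denominator into a quantity the mean value theorem for the Gauss map controls by $\inf_p\lambda_1(p)\,|u''-u'|$. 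This symmetrization step is essential and does not appear in your plan. Your scheme of ``tracking the powers carefully'' would not converge without these two ingredients: the precise lower bound for $\widetilde J$ with its sign claim, and the use of both bounds via the switching property to eliminate the degenerate angle.
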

    \begin{remark}
        The $S$-carried fractional integral $I_{S,s}$ is natural for a number of reasons relating to the presence of the Jacobian factor $J$. In particular, it is symmetric thanks to \eqref{switch} (a property that is analogous to the conjugate symmetry of the Wigner transform $W_S$), and as we shall see in Section \ref{Sect:KS}, its Lebesgue space bounds do not depend on any lower bound on the curvature of $S$. The restriction
$s<\frac{n-1}{2}$ ensures that the kernel of $I_{S,s}$ is locally integrable.
    \end{remark}
\begin{theorem}[$L^2$ Sobolev--Mizohata--Takeuchi inequality]\label{theorem:SMTgen}
Suppose that $S$ is a smooth strictly convex surface with curvature quotient $Q(S)$, and 
$s<\frac{n-1}{2}$. Then there exists a constant $c$, depending on at most $n$, $s$, and the diameter of $S$, such that
\begin{equation}\label{SMTgen}
\int_{\mathbb{R}^n}|\widehat{g\mathrm{d}\sigma}(x)|^2w(x)\mathrm{d}x\leq cQ(S)^{\frac{9n-12}{4}}\sup_{u\in\supp^*(g)}\|X_Sw(u,\cdot)\|_{\dot{H}^s(T_u S)}\|g\|_{L^2(S)}^2,
\end{equation}
where
$
\supp^*(g):=\{u\in S: R_u u'\in \supp(g)\;\mbox{ for some }\; u'\in \supp(g)\}$.
\end{theorem}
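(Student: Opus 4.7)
The plan is to deduce Theorem \ref{theorem:SMTgen} from Theorem \ref{Theorem:SobStein} in exactly the spirit of the passage from Theorem \ref{theorem:SSTsphere} to Theorem \ref{theorem:SMTsphere} in the spherical setting. The argument splits into three steps: apply Sobolev--Stein, exploit a support restriction to pull out the supremum over $\supp^*(g)$, and then absorb the remaining $L^1(S)$ integral of $I_{S,2s}(|g|^2,|g|^2)^{1/2}$ via a surface-carried version of the classical Kenig--Stein $L^1\times L^1\to L^{1/2}$ bound.

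First I would invoke Theorem \ref{Theorem:SobStein} to obtain
$$
\int_{\mathbb{R}^n}|\widehat{g\mathrm{d}\sigma}|^2w\leq cQ(S)^{\frac{5n-8}{4}}\int_S I_{S,2s}(|g|^2,|g|^2)(u)^{1/2}\,\|X_Sw(u,\cdot)\|_{\dot H^s(T_uS)}\,\mathrm{d}\sigma(u).
$$
Next, inspection of the defining integral \eqref{Sfractional} shows that $I_{S,2s}(|g|^2,|g|^2)(u)=0$ whenever $u\notin\supp^*(g)$: in that case, for every $u'\in S$ either $u'\notin\supp(g)$ or $R_uu'\notin\supp(g)$, so the integrand vanishes identically. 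Consequently I may pull the factor $\|X_Sw(u,\cdot)\|_{\dot H^s(T_uS)}$ outside the $u$-integral as a supremum over $\supp^*(g)$, reducing matters to estimating $\int_S I_{S,2s}(|g|^2,|g|^2)^{1/2}\,\mathrm{d}\sigma$. Using the elementary identity $\int_S F^{1/2}\,\mathrm{d}\sigma=\|F\|_{L^{1/2}(S)}^{1/2}$ (which is just the definition of the $L^{1/2}$ quasi-norm), the problem reduces to establishing
$$
\|I_{S,2s}(|g|^2,|g|^2)\|_{L^{1/2}(S)}^{1/2}\lesssim Q(S)^{n-1}\|g\|_{L^2(S)}^{2},
$$
with the implicit constant allowed to depend on $n$, $s$, and $\diam(S)$. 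This would combine with the $Q(S)^{(5n-8)/4}$ from Sobolev--Stein to yield the claimed power $Q(S)^{(5n-8)/4+(n-1)}=Q(S)^{(9n-12)/4}$.

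The hard part therefore is the third step, namely the $L^1(S)\times L^1(S)\to L^{1/2}(S)$ boundedness of the surface-carried bilinear fractional integral $I_{S,s}$ with a constant scaling like $Q(S)^{2n-2}$ (so that the square root gives the required $Q(S)^{n-1}$). This parallels the classical result of Kenig and Stein for \eqref{KSdefhom}, but in the geometric setting it must simultaneously control the singular factor $|u'-R_uu'|^{-s}$ and the Jacobian $J(u,u')$ from Proposition \ref{jacprop}, both under the weakest possible curvature assumption. Here the quasi-distance properties recorded in Proposition \ref{metricestimate} and the explicit expression for $J$ in \eqref{Jformula-prop-19apr24} allow the reduction to a euclidean-type Kenig--Stein estimate on $T_uS$, and the $\diam(S)$ dependence enters naturally through the total mass of the truncated kernel. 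I would defer this surface-carried Kenig--Stein analysis to Section \ref{Sect:KS}, as indicated in the paper.
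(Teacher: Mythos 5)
Your proposal is correct and follows essentially the same route as the paper: apply Theorem \ref{Theorem:SobStein}, use the vanishing of $I_{S,2s}(|g|^2,|g|^2)$ off $\supp^*(g)$ to pull out the supremum, identify $\int_S I_{S,2s}^{1/2}\,\mathrm{d}\sigma=\|I_{S,2s}\|_{L^{1/2}(S)}^{1/2}$, and invoke the $L^1\times L^1\to L^{1/2}$ bound with constant $Q(S)^{2(n-1)}$ (Theorem \ref{l:EndpointKS}), yielding $Q(S)^{(5n-8)/4+(n-1)}=Q(S)^{(9n-12)/4}$. The power counting and deferral of the surface-carried Kenig--Stein estimate to Section \ref{Sect:KS} also match the paper exactly.
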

\begin{remark}
We remark that $\supp(g)\subseteq\supp^*(g)$, and often this containment is strict. When $S$ is the sphere, $\supp^*(g)$ is the ``support midpoint set”,
consisting of all geodesic midpoints of pairs of points from the support of $g$. Hence $\supp^*(g)\subseteq \cvx\supp(g)$ in this case, where $\cvx$ forms the geodesic convex hull. More generally, $\supp^*(g)\subseteq N^{-1}\cvx(N(\supp(g)))$, so that 
$$
\int_{\mathbb{R}^n}|\widehat{g\mathrm{d}\sigma}(x)|^2w(x)\mathrm{d}x\leq cQ(S)^{\frac{9n-12}{4}}\sup_{\omega\in\cvx(N(\supp(g)))}\|Xw(\omega,\cdot)\|_{\dot{H}^s(T_u S)}\|g\|_{L^2(S)}^2.
$$
\end{remark}
\begin{remark}
    While we expect that the power of $Q(S)$ in the statement of Theorem \ref{Theorem:SobStein} is sharp when $n=2$, it seems unlikely that it is in higher dimensions. The power of $Q(S)$ in the statement of Theorem \ref{theorem:SMTgen} is of course larger still, incurring extra factors from the bounds on the bilinear fractional integrals $I_{S,s}$ in Section \ref{Sect:KS}.
\end{remark}
\subsection{Proof of the Sobolev--Stein inequality (Theorem \ref{Theorem:shortSobStein})} In this section we prove Theorem \ref{Theorem:shortSobStein}, or more specifically, Theorem \ref{Theorem:SobStein}.
We begin with an application of 
Proposition \ref{Prop:genBGNOWigner} and the Cauchy--Schwarz inequality to write
\begin{equation}\label{goodcorrelation}
\int_{\mathbb{R}^n}|\widehat{g\mathrm{d}\sigma}|^2w\leq \int_S \|W_S(g,g)(u,\cdot)\|_{\dot{H}^{-s}(T_u S)}\|X_Sw(u,\cdot)\|_{\dot{H}^s(T_u S)}\mathrm{d}\sigma(u)
\end{equation}
for any $s\in\mathbb{R}$.
In order to estimate the Sobolev norm of the Wigner distribution above we fix $u\in S$ and 
\begin{comment}
express $S$ as a graph over $T_uS$, viewing $T_u S$ as a subspace of $\mathbb{R}^n$; that we may do this is one of our assumptions on $S$. Accordingly we write $u'=u'(\eta)=\eta+\phi_u(\eta)$ for some (smooth) function $\phi_u: U\rightarrow (T_u S)^\perp$ and $U\subset T_u S$. By translation invariance (JB: be clear abut what we mean here) we may suppose that $\phi_u(0)=u$, so that $u'(0)=u$. The Jacobian of $\eta\mapsto u'(\eta)$ is precisely $$|T_u S\wedge (T_{u'(\eta)}S)^\perp|^{-1}=|N(u)\cdot N(u'(\eta))|^{-1},$$
as is shown in Section 3 of \cite{BNS}. Hence
\end{comment}
make the change of variables 
\begin{equation}\label{cov}
\xi=u'-R_u u'.
\end{equation}
Since $S$ is the graph of a strictly convex function, the map $u'\mapsto\xi$ is a bijection from $S$ to a subset $U$ of $T_uS$. To see this it suffices to establish injectivity, and hence we look to show that $u'-R_uu'\not=\tilde{u}'-R_u\tilde{u}'$ for $u'\neq \tilde{u}'$. We may assume that $u'-R_uu'$ and $\tilde{u}'-R_u\tilde{u}'$ are parallel, as otherwise the desired conclusion is immediate. Observe that $u'-\tilde{u}'\not\in T_uS$, as otherwise strict convexity of the level sets of $S$ (sections of $S$ by translates of $T_uS$) would force $u'=\tilde{u}'$ or $R_uu'=\tilde{u}'$; see Figure \ref{Rmap2}. Since $u'-\tilde{u}'\not\in T_uS$ and $S$ is the graph of a function, the level sets of $S$ through $u'$ and $\tilde{u}'$ respectively, when projected onto $T_uS$, are both enclosed by the supporting hyperplanes depicted in Figure \ref{Rmap2}; this may require interchanging the roles of $u'$ and $\tilde{u}'$, as we may. Since $u'-R_uu'$ and $\tilde{u}'-R_u\tilde{u}'$ are parallel it follows that $|\tilde{u}'-R_u\tilde{u}'|<|u'-R_uu'|$, and thus $u'-R_uu'\not=\tilde{u}'-R_u\tilde{u}'$. As a result of this bijectivity,
$$
\|W_{S}(g,g)(u,\cdot)\|_{H^{-s}(T_u S)}^2=\int_{T_u S}\left|\int_U g(u'(\xi))\overline{g(R_u u'(\xi))}|\xi|^{-s}e^{-2\pi iv\cdot\xi}J(u,u'(\xi))\frac{\mathrm{d}\xi}{\widetilde{J}(u,u'(\xi))}\right|^2\mathrm{d}v,
$$
where $\widetilde{J}(u,u')$ is the Jacobian of the map $u'\mapsto\xi$.
Hence by Plancherel's theorem on $T_u S$,
\begin{eqnarray*}
    \begin{aligned}
        \|W_{S}(g,g)(u,\cdot)\|_{H^{-s}(T_u S)}^2&=\int_{U}\left|g(u'(\xi))\overline{g(R_u u'(\xi))}|\xi|^{-s}\frac{J(u,u'(\xi))}{\widetilde{J}(u,u'(\xi))}\right|^2\mathrm{d}\xi\\
        &=\int_S\frac{|g(u')|^2|g(R_u u')|^2}{|u'-R_u u'|^{2s}}\frac{J(u,u')^2}{\widetilde{J}(u,u')}\mathrm{d}\sigma(u').
    \end{aligned}
\end{eqnarray*}
In order to complete the proof of
Theorem \ref{Theorem:SobStein} it therefore suffices to prove that 
\begin{equation}\label{clever bit} 
\frac{J(u,u')}{\widetilde{J}(u,u')}\lesssim Q(S)^{\frac{5n-8}{2}}
\end{equation}
with implicit constant depending only on the dimension. We do this in two steps.

\subsubsection*{Step 1: Bounding $\widetilde{J}(u,u')$} The goal here is to obtain a suitable lower bound for $\widetilde{J}(u,u')$.

\begin{proposition}\label{prop-Jtilde-09apr24} We have that
\begin{equation}
    \widetilde{J}(u,u')\geq (1+\Delta(u,u')^{2})^{\frac{1}{2}}
\end{equation}
    for all $u,u'\in S$.
\end{proposition}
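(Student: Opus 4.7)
My plan is to compute $\widetilde{J}(u,u')$ as the Jacobian of the map $\phi: S \to T_u S$ defined by $\phi(u') := u' - R_u u'$, and then bound it from below via a matrix determinant inequality.

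First, I would identify the differential. By Remark \ref{Remark:differentiability}, $R_u$ is smooth away from the diagonal, so the differential at $u'$ is the linear map $d\phi_{u'}: T_{u'}S \to T_u S$ given by
\[
d\phi_{u'}(v) = v - (dR_u)_{u'}(v),
\]
with $(dR_u)_{u'}: T_{u'}S \to T_{u''}S$ and $u'' = R_u u'$. That the image lies in $T_u S$ is automatic: differentiating \eqref{collision condition 1} yields $\langle d\phi_{u'}(v), N(u)\rangle = 0$ for every $v \in T_{u'}S$. Consequently
\[
\widetilde{J}(u,u')^{2} = \det\bigl((d\phi_{u'})^{*} d\phi_{u'}\bigr),
\]
with the adjoint taken with respect to the induced inner products from $\mathbb{R}^n$.

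Second, I would expand this Gram determinant. In orthonormal bases of $T_{u'}S$ and $T_u S$,
\[
(d\phi_{u'})^{*}\, d\phi_{u'} = I_{T_{u'}S} - P - P^{*} + (dR_u)^{*}(dR_u),
\]
where $P$ collects the cross terms built from the projections among $T_{u'}S$, $T_u S$, and $T_{u''}S$. The first summand contributes $\det I = 1$ and the last contributes $\det((dR_u)^{*}(dR_u)) = |\det(dR_u)|^{2} = \Delta(u,u')^{2}$ by the very definition of $\Delta$. I would then invoke a Minkowski-type inequality $\det(A+B) \ge \det A + \det B$ for positive semidefinite $A, B$, taking $A = I$ and $B = (dR_u)^{*}(dR_u)$, to obtain $\widetilde{J}^{2} \ge 1 + \Delta^{2}$, provided the cross terms $-(P + P^{*})$ can be absorbed.

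The main obstacle is controlling these cross terms, whose signs are not determined a priori. The geometric input will come from the strict convexity of $S$ together with the coplanarity condition \eqref{collision condition 2}, which together force $R_u$ to behave like a reflection through $u$: as $u'$ moves along $S$, the point $u'' = R_u u'$ moves in an opposing rotational sense, so that $v$ and $(dR_u)(v)$ have oppositely oriented components along the surface. Combined with the cone condition \eqref{cone of normals} bounding the tilt between the tangent spaces, this should give enough control to secure the inequality $\widetilde{J}(u,u') \ge (1 + \Delta(u,u')^{2})^{1/2}$. Executing this reflection-style analysis rigorously, and in particular verifying that the cross-term contribution has the right sign after projecting onto $T_u S$, is the technical heart of the argument.
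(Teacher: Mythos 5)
Your setup is correct --- $\widetilde{J}(u,u')$ is the Jacobian of $\xi\colon u'\mapsto u'-R_u u'$, with $\mathrm{d}\xi_{u'}=I-(\mathrm{d}R_u)_{u'}$ mapping $T_{u'}S\to T_uS$, and the Gram matrix of $\mathrm{d}\xi_{u'}$ does expand as $I-P-P^{*}+(\mathrm{d}R_u)^{*}(\mathrm{d}R_u)$ with $\det\bigl((\mathrm{d}R_u)^{*}(\mathrm{d}R_u)\bigr)=\Delta(u,u')^2$. But the mechanism you offer for the cross terms is where the argument genuinely fails, and not merely a technicality to be ``executed rigorously.'' To apply the Minkowski determinant inequality in the way you describe you would need $P+P^{*}\preceq 0$, i.e.\ $\langle v,(\mathrm{d}R_u)_{u'}v\rangle\leq 0$ for \emph{every} $v\in T_{u'}S$. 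That is false in general: what convexity and coplanarity actually deliver is that on the invariant subspace $\mathcal{H}:=T_uS\cap T_{u'}S$ the operator $-(\mathrm{d}R_u)_{u'}$ is a composition of two positive-definite shape operators of the section $\mathcal{S}_{u,u'}:=S\cap(T_uS+u')$, hence has positive \emph{eigenvalues} --- enough to control its determinant --- but a product of two symmetric positive-definite maps generically has indefinite symmetric part, so $\langle v,(\mathrm{d}R_u)_{u'}v\rangle$ can change sign. The operator inequality $\mathrm{d}\xi^{*}\mathrm{d}\xi\succeq I+(\mathrm{d}R_u)^{*}(\mathrm{d}R_u)$ you need therefore does not follow from the geometric hypotheses, and the ``reflection-style'' heuristic cannot be salvaged in the form you state it.

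The missing idea is a choice of basis adapted to $\mathcal{H}$. The coplanarity condition \eqref{collision condition 2} implies that $\mathcal{H}$ is invariant under $(\mathrm{d}R_u)_{u'}$ (because $R_u$ acts on the hyperplane section $\mathcal{S}_{u,u'}$ as the Gauss-antipodal map, which has the same tangent space $\mathcal{H}$ at $u'$ and $u''$). Taking $v_1,\dots,v_{n-2}$ an orthonormal basis of $\mathcal{H}$ and $v_{n-1}\in\mathcal{H}^{\perp}\cap T_{u'}S$ with $v_{n-1}\cdot N(u)\geq 0$, the vectors $(I-(\mathrm{d}R_u)_{u'})v_i\in\mathcal{H}$ for $i\leq n-2$ are orthogonal to $v_{n-1}$, so the Gram matrix becomes block-triangular and the entire cross term collapses to the single scalar $\langle(\mathrm{d}R_u)_{u'}v_{n-1},v_{n-1}\rangle$. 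Writing $M_{u,u'}:=(\mathrm{d}R_u)_{u'}\big|_{\mathcal{H}}$ one obtains
\[
\widetilde{J}(u,u')^2 = \det(I-M_{u,u'})^2 - 2\det(I-M_{u,u'})^2\langle(\mathrm{d}R_u)_{u'}v_{n-1},v_{n-1}\rangle + \Delta(u,u')^2\det(M_{u,u'}^{-1}-I)^2,
\]
and the proof then rests on two separate facts: $\det(I-M_{u,u'}),\det(M_{u,u'}^{-1}-I)\geq 1$, which uses only the positive-eigenvalue structure described above, and $\langle(\mathrm{d}R_u)_{u'}v_{n-1},v_{n-1}\rangle<0$, a two-dimensional angle computation in $\mathcal{H}^{\perp}$ that relies on \eqref{cone of normals}. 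The sign condition you were hoping for is verifiable only in this one direction, and it is the invariant-subspace reduction that isolates it.
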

\begin{proof} Let $u\in S$ be fixed. The Jacobian $\widetilde{J}$ of the change of variables
\begin{equation*}
    \xi(u')=u'-R_{u}u',
\end{equation*}
may be expressed as
\begin{equation}\label{firstjac}
    \widetilde{J}(u,u')=\frac{|(\mathrm{d}\xi)_{u'}(v_{1})\wedge\cdots\wedge (\mathrm{d}\xi)_{u'}(v_{n-1})|}{|v_{1}\wedge\cdots\wedge v_{n-1}|},
\end{equation}
where $v_{1},\ldots,v_{n-1}$ is a basis for $T_{u'}S$. We remark that 
$$(\mathrm{d}\xi)_{u'}(v_{1})\wedge\cdots\wedge (\mathrm{d}\xi)_{u'}(v_{n-1})\in\Lambda^{n-1}(T_{u}S)\quad\textnormal{and}\quad v_{1}\wedge\cdots\wedge v_{n-1}\in\Lambda^{n-1}(T_{u'}S),$$
and we identify the exterior algebras $\Lambda^{n-1}(T_{u'}S)$ and $\Lambda^{n-1}(T_{u}S)$ with subspaces of $\Lambda^{n-1}(\mathbb{R}^{n})$ via the natural embedding induced by the inclusions $T_{u'}S\subset\mathbb{R}^{n}$ and $T_{u}S\subset\mathbb{R}^{n}$, respectively.

It will be convenient to fix $u'$ and express \eqref{firstjac} in terms of unit velocities of trajectories along smooth curves in $S$ emanating from $u'$. In what follows $c:I\rightarrow S$ will denote the arc-length parametrisation of such a curve, where $I$ is an open interval containing $0$ such that $c(0)=u'$. If $\mathcal{C}$ denotes the set of all such mappings $c$, then evidently
\begin{equation*}
    T_{u'}S = \langle\left\{\dot{c}(0): c\in\mathcal{C}\right\}\rangle.
\end{equation*}
By the strict convexity of $S$, the $(n-1)$-dimensional spaces $T_{u'}S$ and $T_{u}S$ intersect in an $(n-2)$-dimensional subspace $\mathcal{H}$. We then pick curves $c_{1},\ldots,c_{n-2}\in\mathcal{C}$ such that
$$\mathcal{H}=\langle \dot{c}_{1}(0),\ldots,\dot{c}_{n-2}(0)\rangle,$$
and the set $\{\dot{c}_{i}(0)\}_{1\leq i\leq n-2}$ is orthonormal. To obtain an orthonormal basis for $T_{u'}S$, we simply take any other curve $c_{n-1}\in\mathcal{C}$ such that $\dot{c}_{n-1}(0)\in \mathcal{H}^{\perp}\cap T_{u'}S$. There is one more degree of freedom in choosing $c_{n-1}$, and we assume without loss of generality that $\dot{c}_{n-1}(0)\cdot N(u)\geq 0$. This gives
$$T_{u'}S=\langle \dot{c}_{1}(0),\ldots,\dot{c}_{n-2}(0),\dot{c}_{n-1}(0)\rangle.$$
Since
\begin{equation*}
    (\mathrm{d}\xi)_{u'}(\dot{c}_{i}(0))= (\xi\circ c_{i})'(0) = \dot{c}_{i}(0)- (\mathrm{d}R_{u})_{u'}(\dot{c}_{i}(0)),\quad 1\leq i\leq n-1,
\end{equation*}
then, since $|\dot{c}_{1}(0)\wedge\cdots\wedge \dot{c}_{n-1}(0)|=1$ by orthonormality of the chosen basis of $T_{u'}S$,
\begin{equation}\label{eq1-09apr24}
    \eqalign{
    \displaystyle\widetilde{J}(u,u')&\displaystyle =|(\mathrm{d}\xi)_{u'}(\dot{c}_{1}(0))\wedge\cdots\wedge (\mathrm{d}\xi)_{u'}(\dot{c}_{n-1}(0))| \cr
    &\displaystyle= |(\dot{c}_{1}(0)- (\mathrm{d}R_{u})_{u'}(\dot{c}_{1}(0)))\wedge\cdots\wedge (\dot{c}_{n-1}(0)- (\mathrm{d}R_{u})_{u'}(\dot{c}_{n-1}(0)))| \cr
    &\displaystyle= |W_{1}-W_{2}|, \cr
    }
\end{equation}
where
\begin{equation*}
    \eqalign{
    \displaystyle W_{1}&\displaystyle := (\dot{c}_{1}(0)- (\mathrm{d}R_{u})_{u'}(\dot{c}_{1}(0)))\wedge\cdots\wedge (\dot{c}_{n-2}(0)- (\mathrm{d}R_{u})_{u'}(\dot{c}_{n-2}(0)))\wedge \dot{c}_{n-1}(0),\cr
    \displaystyle W_{2}&\displaystyle := (\dot{c}_{1}(0)- (\mathrm{d}R_{u})_{u'}(\dot{c}_{1}(0)))\wedge\cdots\wedge (\dot{c}_{n-2}(0)- (\mathrm{d}R_{u})_{u'}(\dot{c}_{n-2}(0)))\wedge (\mathrm{d}R_{u})_{u'}(\dot{c}_{n-1}(0)).\cr
    }
\end{equation*}

The next claim collects a few useful facts about the action of $(\mathrm{d}R_{u})_{u'}$ on $\mathcal{H}$.

\begin{claim}\label{claim1-09apr24} The following hold:
\begin{enumerate}
    \item The subspace $\mathcal{H}=T_{u'}S\cap T_{u}S$ generated by the set of vectors $\{\dot{c}_{1}(0),\ldots,\dot{c}_{n-2}(0)\}$ is invariant under the map $(\mathrm{d}R_{u})_{u'}$. Moreover, $\left.(\mathrm{d}R_{u})_{u'}\right|_{\mathcal{H}}:\mathcal{H}\longrightarrow\mathcal{H}$ is an isomorphism. Equivalently,
    \begin{equation}\label{eq1-17apr24}
    \mathcal{H}=\langle \dot{c}_{1}(0),\ldots,\dot{c}_{n-2}(0)\rangle=\langle (\mathrm{d}R_{u})_{u'}(\dot{c}_{1}(0)),\ldots,(\mathrm{d}R_{u})_{u'}(\dot{c}_{n-2}(0))\rangle.
    \end{equation}
    
\begin{comment}
\item Let $M_{u'}:=\left.(\mathrm{d}R_{u})_{u'}\right|_{\mathcal{H}}: \mathcal{H}\longrightarrow \mathcal{H}$ denote the restriction of $(\mathrm{d}R_{u})_{u'}$ to the invariant subspace $\mathcal{H}$. Then

\begin{equation}\label{eq2-17apr24}
    \det{(M_{u'})}=(-1)^{n-2}\frac{\widetilde{K}(u')}{\widetilde{K}(u'')},
\end{equation}
where $\widetilde{K}(p)$ is the Gaussian curvature at $p$ of the $(n-2)$-dimensional convex submanifold given by $\mathcal{S}_{u,u'}:=S\cap (T_{u'}S + u')$.
\end{comment}
\item Let $M_{u,u'}:=\left.(\mathrm{d}R_{u})_{u'}\right|_{\mathcal{H}}: \mathcal{H}\longrightarrow \mathcal{H}$ denote the restriction of $(\mathrm{d}R_{u})_{u'}$ to the invariant subspace $\mathcal{H}$. Then $I-M_{u,u'}:\mathcal{H}\rightarrow\mathcal{H}$ satisfies

\begin{equation}\label{eq3-17apr24}
    \det{(I-M_{u,u'})} \geq 1.
\end{equation}
\end{enumerate}
\end{claim}
\begin{proof} Let $\omega:= N(u)$. Notice that the coplanarity condition \eqref{collision condition 2}
implies that
\begin{equation}\label{forIFT}
    v_{1}:=\frac{P_{\langle\omega\rangle^{\perp}}N(u')}{|P_{\langle\omega\rangle^{\perp}}N(u')|}=-\frac{P_{\langle\omega\rangle^{\perp}}N(u'')}{|P_{\langle\omega\rangle^{\perp}}N(u'')|}=:-v_{2}.
\end{equation}
On the other hand, $v_{1}$ and $v_{2}$ are the outward normal vectors (in $T_{u}S$) of the convex submanifold 
\begin{equation}\label{sectionsdef}
\mathcal{S}_{u,u'}:=S\cap (T_{u}S + u')
\end{equation}
at $u'$ and $u''$ respectively, hence
\begin{equation*}
    T_{u'}\mathcal{S}_{u,u'} = T_{u''}\mathcal{S}_{u,u'},
\end{equation*}
from which \eqref{eq1-17apr24} follows; see Figure \ref{Rmap2}. Observe also that on $\mathcal{S}_{u,u'}$ we have
\begin{equation}\label{footmeasuringdevice}
    R_{u}u' = \widetilde{N}^{-1}(-\widetilde{N}(u')),
\end{equation}
where $\widetilde{N}:\mathcal{S}_{u,u'}\rightarrow\mathbb{S}^{n-2}$ is the Gauss map of $\mathcal{S}_{u,u'}\subset u'+T_{u}S$. Computing derivatives, $\left.(\mathrm{d}R_{u})_{u'}\right|_{\mathcal{H}}: \mathcal{H}\longrightarrow \mathcal{H}$ satisfies
\begin{equation*}
    (\mathrm{d}R_{u})_{u'} = -\mathrm{d}\widetilde{N}^{-1}_{-\widetilde{N}(u')}\circ\mathrm{d}\widetilde{N}_{u'}.
\end{equation*}
\begin{comment}
therefore
\begin{equation*}
    \det{M_{u'}}=\det{(\left.(\mathrm{d}R_{u})_{u'}\right|_{\mathcal{H}})}=(-1)^{n-2}\frac{\widetilde{K}(u')}{\widetilde{K}(u'')},
\end{equation*}
which establishes \eqref{eq2-17apr24}. 
\end{comment}
Finally, since $\mathrm{d}\widetilde{N}^{-1}_{-\widetilde{N}(u')}$ and $\mathrm{d}\widetilde{N}_{u'}$ are positive definite (recall that our assumptions on $S$ imply positive definiteness of $\mathrm{d}N_{u}$ for all $u\in S$, hence the same holds for $\mathrm{d}\widetilde{N}_{u}$) the product %\footnote{If $A$ and $B$ are positive semidefinite matrices, then all eigenvalues of $AB$ are nonnegative.},
$\mathrm{d}\widetilde{N}^{-1}_{-\widetilde{N}(u')}\circ\mathrm{d}\widetilde{N}_{u'}$ has positive eigenvalues, therefore
\begin{equation*}
    \det{(I-M_{u,u'})}=\det{(I+\mathrm{d}\widetilde{N}^{-1}_{-\widetilde{N}(u')}\circ\mathrm{d}\widetilde{N}_{u'})}\geq 1.
\end{equation*}
\end{proof}

The next claim contains three key identities involving $W_{1}$ and $W_{2}$.

\begin{claim}\label{claim1-26may24} The following identities hold:
\begin{equation*}
    \eqalign{
    \displaystyle \langle W_{1},W_{1}\rangle &\displaystyle = \det{(I-M_{u,u'})}^{2},\cr
    \displaystyle\langle W_{1},W_{2}\rangle &=\displaystyle \det{(I-M_{u,u'})}^{2}\langle (\mathrm{d}R_{u})_{u'}(\dot{c}_{n-1}(0)),\dot{c}_{n-1}(0)\rangle \cr
    \displaystyle \langle W_{2},W_{2}\rangle&\displaystyle = \Delta(u,u')^{2}\det{(M_{u,u'}^{-1}-I)}^{2}.\cr
    }
\end{equation*} 
\end{claim}

\begin{proof} Let $\textbf{0}_{1\times (n-2)}$ be the $1\times(n-2)$ zero row and let $\textbf{X}_{u,u'}$ be the $(n-2)\times (n-2)$ matrix whose $(i,j)$ entry is given by
\begin{equation*}
    (\textbf{X}_{u,u'})_{i,j}:= \langle \dot{c}_{i}(0)- (\mathrm{d}R_{u})_{u'}(\dot{c}_{i}(0)),\dot{c}_{j}(0)- (\mathrm{d}R_{u})_{u'}(\dot{c}_{j}(0))\rangle.
\end{equation*}
Observe that
\begin{equation*}
\eqalign{
    \displaystyle\langle W_{1},W_{1}\rangle&\displaystyle=\det{\begin{pmatrix}
    \textbf{X}_{u,u'} & \textbf{0}_{1\times (n-2)}^{\top} \\
    \textbf{0}_{1\times (n-2)} & 1
  \end{pmatrix}} =\displaystyle\det{(\textbf{X}_{u,u'})}=\displaystyle \det{(I-M_{u,u'})}^{2}, \cr
  }
\end{equation*}
where we used the facts that $\mathcal{H}$ is invariant under $(\mathrm{d}R_{u})_{u'}$ (as verified in Claim \ref{claim1-09apr24}) and that $\dot{c}_{n-1}(0)$ is orthogonal to $\mathcal{H}$. Now let $\textbf{Y}_{u,u'}$ be the $(n-2)\times (n-2)$ matrix whose $(i,j)$ entry is given by
\begin{equation*}
    (\textbf{Y}_{u,u'})_{i,j}:= \langle (\mathrm{d}R_{u})_{u'}^{-1}(\dot{c}_{i}(0))-\dot{c}_{i}(0),(\mathrm{d}R_{u})_{u'}^{-1}(\dot{c}_{j}(0))-\dot{c}_{j}(0)\rangle.
\end{equation*}
Analogously,
\begin{equation*}
    \eqalign{
    \displaystyle\langle W_{2},W_{2}\rangle &\displaystyle = |(\dot{c}_{1}(0)- (\mathrm{d}R_{u})_{u'}(\dot{c}_{1}(0)))\wedge\cdots\wedge (\dot{c}_{n-2}(0)- (\mathrm{d}R_{u})_{u'}(\dot{c}_{n-2}(0)))\wedge (\mathrm{d}R_{u})_{u'}(\dot{c}_{n-1}(0))|^{2} \cr
    &\displaystyle = \Delta(u,u')^{2}\left|\left(\bigwedge_{j=1}^{n-2}(\mathrm{d}R_{u})_{u'}^{-1}[\dot{c}_{j}(0)- (\mathrm{d}R_{u})_{u'}(\dot{c}_{j}(0))]\right)\wedge \dot{c}_{n-1}(0)\right|^{2}\cr
    &\displaystyle = \Delta(u,u')^{2}\left|\left(\bigwedge_{j=1}^{n-2}[(\mathrm{d}R_{u})_{u'}^{-1}-I](\dot{c}_{j}(0))\right)\wedge \dot{c}_{n-1}(0)\right|^{2}\cr
&=\Delta(u,u')^{2}\det(\textbf{Y}_{u,u'}) \cr
  &= \Delta(u,u')^{2}\det{(M_{u,u'}^{-1}-I)}^{2}.
    }
\end{equation*}
Finally,
\begin{equation*}
    \eqalign{
    \displaystyle\langle W_{1},W_{2}\rangle &\displaystyle= \det{\begin{pmatrix}
    \textbf{X}_{u,u'} & A_{(n-2)\times 1} \\
    \textbf{0}_{1\times (n-2)} & \langle (\mathrm{d}R_{u})_{u'}(\dot{c}_{n-1}(0)),\dot{c}_{n-1}(0)\rangle
  \end{pmatrix}} \cr
  &=\displaystyle\det{(I-M_{u,u'})}^{2}\langle (\mathrm{d}R_{u})_{u'}(\dot{c}_{n-1}(0)),\dot{c}_{n-1}(0)\rangle, \cr
    }
\end{equation*}
where $A_{(n-2)\times 1}$ is a $(n-2)\times 1$ column that does not feature in the final expression.
\end{proof}

\begin{comment}
--------
The following identities allow us to conveniently rewrite $W_{1}$ and $W_{2}$:
\begin{claim} $W_{1}$ and $W_{2}$ satisfy
\begin{equation*}
    \eqalign{
    \displaystyle W_{1}&\displaystyle = \det{(I-M_{u'})}\cdot(\dot{c}_{1}(0)\wedge\cdots\wedge  \dot{c}_{n-1}(0)),\cr
    \displaystyle W_{2}&\displaystyle = (-1)^{n-2}\det{(I-M_{u'}^{-1})}\cdot( (\mathrm{d}R_{u})_{u'}(\dot{c}_{1}(0))\wedge\cdots\wedge  (\mathrm{d}R_{u})_{u'}(\dot{c}_{n-1}(0))).\cr
    }
\end{equation*} 
\end{claim}
\begin{proof} Observe that for $1\leq i\leq n-2$,
\begin{equation*}
    \dot{c}_{i}(0)- (\mathrm{d}R_{u})_{u'}(\dot{c}_{i}(0))= (I-M_{u'})(\dot{c}_{i}(0))
\end{equation*}
and
\begin{equation*}
    \dot{c}_{i}(0)- (\mathrm{d}R_{u})_{u'}(\dot{c}_{i}(0))= (M_{u'}^{-1}-I)(\mathrm{d}R_{u})_{u'}(\dot{c}_{i}(0)),
\end{equation*}
so the claim follows immediately from the definition of the determinant.
\end{proof}
\end{comment}

Expanding $|W_{1}-W_{2}|^{2}$ using the standard scalar product on the exterior algebra $\Lambda^{n-1}(\mathbb{R}^{n})$,
\begin{eqnarray}\label{ineq1-22may24}
\begin{aligned}
|W_{1}-W_{2}|^{2} &= \langle W_{1}, W_{1}\rangle - 2\langle W_{1}, W_{2}\rangle + \langle W_{2}, W_{2}\rangle \\
    &= \det{(I-M_{u,u'})}^{2} - 2\det{(I-M_{u,u'})}^{2}\langle (\mathrm{d}R_{u})_{u'}(\dot{c}_{n-1}(0)),\dot{c}_{n-1}(0)\rangle\\& + \Delta(u,u')^{2}\det{(M_{u,u'}^{-1}-I)}^{2},
    \end{aligned}
\end{eqnarray}
thanks to Claim \ref{claim1-26may24}. We continue with the following key observation:
\begin{comment}
By definition of scalar product in $\Lambda^{n-1}(\mathbb{R}^{n})$,

\begin{equation*}
\eqalign{
    \displaystyle\langle\dot{c}_{1}(0)\wedge\cdots\wedge  \dot{c}_{n-1}(0), (\mathrm{d}R_{u})_{u'}(\dot{c}_{1}(0))\wedge\cdots &\displaystyle\wedge  (\mathrm{d}R_{u})_{u'}(\dot{c}_{n-1}(0))\rangle \cr
    &\displaystyle=\det{\begin{pmatrix}
    M_{u'} & A_{(n-2)\times 1} \\
    \textbf{0}_{1\times (n-2)} & \langle (\mathrm{d}R_{u})_{u'}(\dot{c}_{n-1}(0)),\dot{c}_{n-1}(0)\rangle
  \end{pmatrix}} \cr
  &=\displaystyle\det{(M_{u'})}\cdot\langle (\mathrm{d}R_{u})_{u'}(\dot{c}_{n-1}(0)),\dot{c}_{n-1}(0)\rangle \cr
  &=\displaystyle (-1)^{n-2}\frac{\widetilde{K}(u')}{\widetilde{K}(u'')}\cdot\langle (\mathrm{d}R_{u})_{u'}(\dot{c}_{n-1}(0)),\dot{c}_{n-1}(0)\rangle, \cr
  }
\end{equation*}
by \eqref{eq2-17apr24}, where $\textbf{0}_{1\times (n-2)}$ is the $1\times(n-2)$ zero row and $A_{(n-2)\times 1}$ is an $(n-2)\times 1$ column. 
\end{comment}

\begin{claim}\label{claim1-22may24} Under \eqref{cone of normals}, it holds that
\begin{equation*}
    \langle (\mathrm{d}R_{u})_{u'}(\dot{c}_{n-1}(0)),\dot{c}_{n-1}(0)\rangle < 0.
\end{equation*}
\end{claim}

\begin{proof} Recall that $\dot{c}_{n-1}(0)\cdot N(u) > 0$ by assumption, therefore differentiating at $t=0$ the identity
$$\langle R_{u}(c_{n-1}(t)) - c_{n-1}(t), N(u)\rangle =0$$
gives $\langle (\mathrm{d}R_{u})_{u'}(\dot{c}_{n-1}(0)), N(u)\rangle >0$. Next, observe that $N(u'), N(u), N(u'')$ and $\dot{c}_{n-1}(0)$ are in $\mathcal{H}^{\perp}$, the (two-dimensional) orthogonal complement of $\mathcal{H}$ in $\mathbb{R}^{n}$. Since $N(p)\cdot N(q)\geq\frac{1}{2}$ for all $p,q\in S$ by assumption, the angles $\alpha_{1}$ (between $N(u')$ and $N(u)$) and $\alpha_{2}$ (between $N(u)$ and $N(u'')$) are such that $0<\alpha_{1}+\alpha_{2}<\frac{\pi}{2}$.
Since $N(u)\in\mathcal{H}^{\perp}$, we have by the self-adjointness of the projection operator $P_{\mathcal{H}^{\perp}}$,
$$0<\langle (\mathrm{d}R_{u})_{u'}(\dot{c}_{n-1}(0)), N(u) \rangle = \langle (\mathrm{d}R_{u})_{u'}(\dot{c}_{n-1}(0)), P_{\mathcal{H}^{\perp}}N(u)\rangle = \langle P_{\mathcal{H}^{\perp}}[(\mathrm{d}R_{u})_{u'}(\dot{c}_{n-1}(0))], N(u)\rangle ,$$
which implies that $P_{\mathcal{H}^{\perp}}[(\mathrm{d}R_{u})_{u'}(\dot{c}_{n-1}(0))]$ is in the upper-half space of $\mathcal{H}^{\perp}$ (here we are assuming without loss of generality that $N(u)=e_{2}$, the second canonical vector of $\mathcal{H}^{\perp}\cong\mathbb{R}^{2}$). On the other hand, $(\mathrm{d}R_{u})_{u'}(\dot{c}_{n-1}(0))\in T_{u''}S$, hence $\langle P_{\mathcal{H}^{\perp}}[(\mathrm{d}R_{u})_{u'}(\dot{c}_{n-1}(0))], N(u'') \rangle =0$, i.e. the angle between $N(u'')$ and $P_{\mathcal{H}^{\perp}}[(\mathrm{d}R_{u})_{u'}(\dot{c}_{n-1}(0))]$ is $\frac{\pi}{2}$. Since $\theta:=\frac{\pi}{2}-(\alpha_{1}+\alpha_{2})$ is strictly positive, the angle $\gamma:=\frac{\pi}{2}+\theta$ between $P_{\mathcal{H}^{\perp}}[(\mathrm{d}R_{u})_{u'}(\dot{c}_{n-1}(0))]$ and $\dot{c}_{n-1}(0)$ is strictly larger than $\frac{\pi}{2}$ (see Figure \ref{Hperppic}), which implies that
$$\langle P_{\mathcal{H}^{\perp}}[(\mathrm{d}R_{u})_{u'}(\dot{c}_{n-1}(0))], \dot{c}_{n-1}(0)\rangle < 0.$$
Finally, again by the self-adjointness of $P_{\mathcal{H}^{\perp}}$,
\begin{equation*}
    \eqalign{
\displaystyle \langle (\mathrm{d}R_{u})_{u'}(\dot{c}_{n-1}(0)), \dot{c}_{n-1}(0)\rangle &\displaystyle= \langle (\mathrm{d}R_{u})_{u'}(\dot{c}_{n-1}(0)), P_{\mathcal{H}^{\perp}}[\dot{c}_{n-1}(0)]\rangle \cr
&\displaystyle= \langle P_{\mathcal{H}^{\perp}}[(\mathrm{d}R_{u})_{u'}(\dot{c}_{n-1}(0))], \dot{c}_{n-1}(0)\rangle \cr
&\displaystyle < 0,\cr
    }
\end{equation*}
which concludes the proof of the claim.
\begin{figure}[ht]
\centering
\includegraphics[width=.9\linewidth]{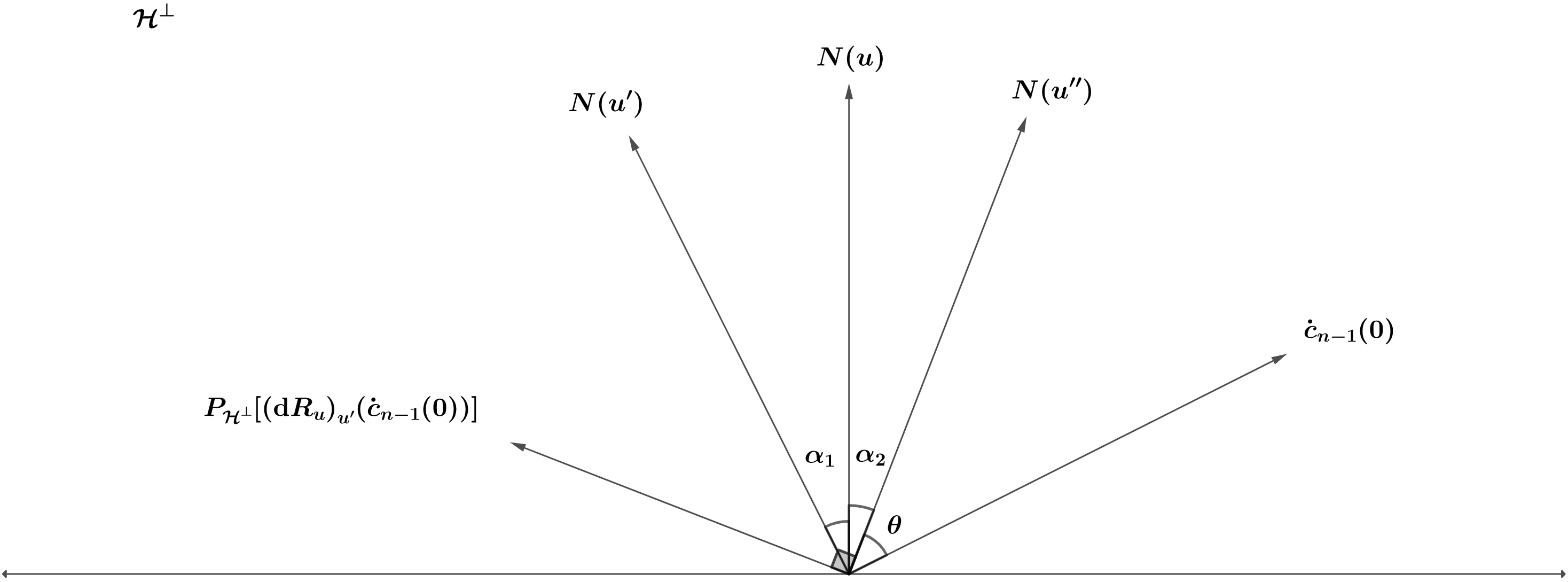}
\caption{\footnotesize A graphical representation of the proof of Claim \ref{claim1-22may24}.} \label{Hperppic}
\end{figure}
\end{proof}
\begin{comment}
By Claim \ref{claim1-22may24}, we conclude that 
\begin{equation*}
\eqalign{
    \displaystyle\langle\dot{c}_{1}(0)\wedge\cdots\wedge  \dot{c}_{n-1}(0),&\displaystyle (\mathrm{d}R_{u})_{u'}(\dot{c}_{1}(0))\wedge\cdots\wedge  (\mathrm{d}R_{u})_{u'}(\dot{c}_{n-1}(0))\rangle \cr
  &=\displaystyle (-1)^{n-1}\frac{\widetilde{K}(u')}{\widetilde{K}(u'')}|\langle (\mathrm{d}R_{u})_{u'}(\dot{c}_{n-1}(0)),\dot{c}_{n-1}(0)\rangle| \cr
  &=\displaystyle (-1)^{n-1} |\langle\dot{c}_{1}(0)\wedge\cdots\wedge  \dot{c}_{n-1}(0),(\mathrm{d}R_{u})_{u'}(\dot{c}_{1}(0))\wedge\cdots\wedge  (\mathrm{d}R_{u})_{u'}(\dot{c}_{n-1}(0))\rangle|
  }
\end{equation*}
\end{comment}
Returning to \eqref{ineq1-22may24},
\begin{eqnarray}\label{eq1-28may24}
\begin{aligned}
|W_{1}-W_{2}|^{2}
    &\displaystyle= \det{(I-M_{u,u'})}^{2} - 2\det{(I-M_{u,u'})}^{2}\langle (\mathrm{d}R_{u})_{u'}(\dot{c}_{n-1}(0)),\dot{c}_{n-1}(0)\rangle\\& + \Delta(u,u')^{2}\det{(M_{u,u'}^{-1}-I)}^{2}\\
    &\geq 1+\Delta(u,u')^{2},
\end{aligned}
\end{eqnarray}
by \eqref{eq3-17apr24} and Claim \ref{claim1-22may24}, which concludes the proof of Proposition \ref{prop-Jtilde-09apr24}.
\end{proof}

\begin{comment}

\subsubsection*{Step 2: A reduction} Here we reduce the bound on $J(u,u')/\widetilde{J}(u,u')$ to one on $\min\{J(u,u'),J(u,u'')\}$.
\begin{proposition}\label{JoverJtilde} We have that
    \begin{equation}
\frac{J(u,u')}{\widetilde{J}(u,u')} \leq \min\{J(u,u'),J(u,u'')\}
    \end{equation}
    for all $u,u'\in  S$.
\end{proposition}
\begin{proof} Fix $u\in S$ and observe that $$\xi(R_uu')=\xi(u'')=u''-R_u u''=u''-u'=-\xi(u'),$$
%\begin{equation*}
 %   u'\xmapsto{R_{u}} u''\xmapsto{\xi} u''-R_{u}(R_{u}u')=-(u'-u'')=-R_{u}u',
%\end{equation*}
and so
\begin{equation*}
    \widetilde{J}(u,u')=\Delta(u,u')\widetilde{J}(u,u'').
\end{equation*}
Switching the roles of $u'$ and $u''$,
\begin{equation}\label{DeltaswitchesJtilde}
    \widetilde{J}(u,u'')=\Delta(u,u'')\widetilde{J}(u,u').
\end{equation}
Together with \eqref{switch}, \eqref{DeltaswitchesJtilde} implies
that
\begin{equation*}
    \frac{J(u,u')}{\widetilde{J}(u,u')}=\frac{J(u,u'')}{\widetilde{J}(u,u'')}
\end{equation*}
for all $u,u'\in S$. It follows that
\begin{equation*}
    \frac{J(u,u')}{\widetilde{J}(u,u')}\leq \min\{J(u,u'),J(u,u'')\}
\end{equation*}
by two applications of Proposition \ref{prop-Jtilde-09apr24}.
\end{proof}
\end{comment}
\subsubsection*{Step 2: Bounding $J/\widetilde{J}$}
Let $\lambda_{1}(p)\leq\lambda_{2}(p)\leq\cdots\leq\lambda_{n-1}(p)$ be the eigenvalues of the shape operator $\mathrm{d}N$ at $p$. 
Since $u''-u'\in \langle N(u)\rangle^{\perp}$,
\begin{equation*}
\eqalign{
    \displaystyle\left|\frac{\langle u''-u',N(u'')\rangle}{\langle P_{T_{u''}S}N(u),(\mathrm{d}N_{u''})^{-1}(P_{T_{u''}S}N(u))\rangle}\right|&\displaystyle = \left|\frac{\langle u''-u',N(u'')-N(u)\rangle}{\langle P_{T_{u''}S}N(u),(\mathrm{d}N_{u''})^{-1}(P_{T_{u''}S}N(u))\rangle}\right| \cr
    &\leq\displaystyle \lambda_{n-1}(u'') \frac{|\langle u''-u',N(u'')-N(u)\rangle|}{|P_{T_{u''}S}N(u)|^{2}}.\cr
    }
\end{equation*}
\begin{comment}
\begin{equation*}
\eqalign{
    \displaystyle\left|\frac{\langle u''-u',\omega^{\prime}\rangle}{\langle\omega,\mathrm{d}N^{-1}_{\omega^{\prime\prime}}(\langle\omega^{\prime\prime},\omega^{\prime}\rangle\omega^{\prime\prime}-\omega^{\prime})\rangle}\right|&\displaystyle =\frac{|P_{\langle\omega''\rangle^{\perp}}\omega|}{|P_{\langle\omega''\rangle^{\perp}}\omega'|}\cdot \left|\frac{\langle u''-u',\omega^{\prime}-\omega\rangle}{\langle P_{\langle\omega''\rangle^{\perp}}\omega,\mathrm{d}N^{-1}_{\omega^{\prime\prime}}(P_{\langle\omega''\rangle^{\perp}}\omega)\rangle}\right| \cr
    &\leq\displaystyle \lambda_{\textnormal{max}}(u'') \cdot\frac{|P_{\langle\omega''\rangle^{\perp}}\omega|}{|P_{\langle\omega''\rangle^{\perp}}\omega'|}\cdot\frac{|\langle u''-u',\omega^{\prime}-\omega\rangle|}{|P_{\langle\omega''\rangle^{\perp}}\omega|^{2}}\cr
    &=\displaystyle \lambda_{\textnormal{max}}(u'') \cdot\frac{|\langle u''-u',\omega^{\prime}-\omega\rangle|}{|P_{\langle\omega''\rangle^{\perp}}\omega'|\cdot|P_{\langle\omega''\rangle^{\perp}}\omega|}
    }
\end{equation*}

On the other hand,

\begin{equation*}
    |P_{\langle\omega''\rangle^{\perp}}\omega'|\cdot|P_{\langle\omega''\rangle^{\perp}}\omega|=\frac{|\omega'-\omega''|\cdot |\omega'+\omega''|\cdot |\omega-\omega''|\cdot |\omega+\omega''|}{4}.
\end{equation*}
\end{comment}
Using the fact that $|P_{T_{u''}S}N(u)|=|N(u'')\wedge N(u)|\approx |N(u'')-N(u)|$, which follows from \eqref{cone of normals}, we have
%\footnote{Notice that we used the following: $|a\wedge b|^{2}=1-(a\cdot b)^{2}=|1-(a\cdot b)|\cdot |1+(a\cdot b)|\approx |1-(a\cdot b)|\approx |a-b|^{2}$ if $a,b$ are unit vectors satisfying $a\cdot b\geq \frac{1}{2}$.}
\begin{equation*}\label{ineq1-24may24}
\eqalign{
\displaystyle J(u,u')&\displaystyle =\left(\frac{|N(u')\wedge N(u'')|}{|N(u)\wedge N(u')|}\right)^{n-2}\left|\frac{\langle u''-u',N(u'')\rangle}{\langle P_{T_{u''}S}N(u),(\mathrm{d}N_{u''})^{-1}(P_{T_{u''}S}N(u))\rangle}\right|\frac{K(u)}{K(u'')} \cr
&\displaystyle \lesssim\left(\frac{|N(u')-N(u'')|}{|N(u)- N(u')|}\right)^{n-2}\frac{|\langle u''-u',N(u'')-N(u)\rangle|}{|P_{T_{u''}S}N(u)|^{2}}\frac{\prod_{j=1}^{n-1}\lambda_{j}(u)}{\prod_{j=1}^{n-1}\lambda_{j}(u'')} \lambda_{n-1}(u'') \cr
&\lesssim\displaystyle\left(\frac{|u'-u''|}{|u-u'|}\right)^{n-2}\left(\frac{\sup_{p}\lambda_{n-1}(p)}{\inf_{p}\lambda_{1}(p)}\right)^{n-2} \frac{|u''-u'|\cdot|N(u'')-N(u)|}{|N(u'')-N(u)|^{2}}\frac{\prod_{j=1}^{n-2}\lambda_{j}(u)}{\prod_{j=1}^{n-2}\lambda_{j}(u'')}\lambda_{n-1}(u) \cr
&\lesssim\displaystyle\left(\frac{|u'-u''|}{|u-u'|}\right)^{n-2}\frac{|u''-u'|}{|N(u'')-N(u)|} Q(S)^{2(n-2)} \sup_{p}\lambda_{n-1}(p). \cr
}
\end{equation*}
Hence by %\eqref{ineq1-24may24}, 
\eqref{normal2plane} and the fact that $\widetilde{J}(u,u')\geq 1$ (see Proposition \ref{prop-Jtilde-09apr24}),
\begin{equation}\label{take}
        \displaystyle\frac{J(u,u')}{\widetilde{J}(u,u')}\lesssim Q(S)^{\frac{5(n-2)}{2}} \frac{|u''-u'|}{|N(u'')-N(u)|}  \sup_{p}\lambda_{n-1}(p).
    \end{equation}
On the other hand, using the fact that $\widetilde{J}(u,u')\geq\Delta(u,u')$, which also follows from Proposition \ref{prop-Jtilde-09apr24},
    \begin{equation*}
    \eqalign{
    \displaystyle \frac{J(u,u')}{\widetilde{J}(u,u')}\leq\displaystyle \frac{J(u,u')}{\Delta(u,u')}=J(u,u'')&\lesssim\displaystyle\left(\frac{|u'-u''|}{|u-u''|}\right)^{n-2}\frac{|u''-u'|}{|N(u')-N(u)|} Q(S)^{2(n-2)} \sup_{p}\lambda_{n-1}(p)\cr
    &\lesssim\displaystyle Q(S)^{\frac{5(n-2)}{2}}\frac{|u''-u'|}{|N(u')-N(u)|} \sup_{p}\lambda_{n-1}(p),\cr
    }
\end{equation*}
by the distance estimate \eqref{normal2plane} and by \eqref{switch}. Consequently,
\begin{comment}
---------------
\begin{equation*}
\eqalign{
\displaystyle J(u,u')&\displaystyle =\left(\frac{|N(u')\wedge N(u'')|}{|N(u)\wedge N(u')|}\right)^{n-2}\left|\frac{\langle u''-u',N(u'')\rangle}{\langle P_{T_{u''}S}N(u),(\mathrm{d}N_{u''})^{-1}(P_{T_{u''}S}N(u))\rangle}\right|\frac{K(u)}{K(u'')} \cr
&\displaystyle \lesssim\left(\frac{|N(u')-N(u'')|}{|N(u)- N(u')|}\right)^{n-2}\frac{|\langle u''-u',N(u'')-N(u)\rangle|}{|P_{T_{u''}S}N(u)|^{2}}\frac{\prod_{j=1}^{n-1}\lambda_{j}(u)}{\prod_{j=1}^{n-1}\lambda_{j}(u'')}\cdot \lambda_{\textnormal{max}}(u'') \cr
&\lesssim\displaystyle\left(\frac{|u'-u''|}{|u-u'|}\right)^{n-2}\cdot\left(\frac{\sup_{p}\lambda_{\textnormal{max}}(p)}{\inf_{p}\lambda_{\textnormal{min}}(p)}\right)^{n-2} \cdot\frac{|u''-u'|\cdot|N(u'')-N(u)|}{|N(u'')-N(u)|^{2}}\cdot\frac{\prod_{j=1}^{n-2}\lambda_{j}(u)}{\prod_{j=1}^{n-2}\lambda_{j}(u'')}\cdot\lambda_{\textnormal{max}}(u) \cr
&\lesssim\displaystyle\frac{|u''-u'|}{|N(u'')-N(u)|}\cdot (Q(S))^{3(n-2)}\cdot \sup_{p}\lambda_{\textnormal{max}}(p), \cr
}
\end{equation*}
where we used the fact that $|P_{T_{u''}S}N(u)|\approx |N(u')-N(u'')|$, which follows from \eqref{cone of normals}. Analogously,
\begin{equation*}
    \displaystyle J(u,u'')\lesssim \frac{|u''-u'|}{|N(u')-N(u)|}\cdot (Q(S))^{3(n-2)}\cdot\sup_{p}\lambda_{\textnormal{max}}(p)
\end{equation*}
\end{comment}
\begin{equation}\label{ineq1-18apr24}
    \eqalign{
    \displaystyle\frac{J(u,u')}{\widetilde{J}(u,u')}&\displaystyle\lesssim |u''-u'| Q(S)^{\frac{5(n-2)}{2}}\frac{1}{\max\{|N(u'')-N(u)|,|N(u')-N(u)|\}}\sup_{p}\lambda_{n-1}(p)\cr
    &\displaystyle\lesssim |u''-u'| Q(S)^{\frac{5(n-2)}{2}}\frac{1}{|N(u'')-N(u)|+|N(u')-N(u)|} \sup_{p}\lambda_{n-1}(p) \cr
    &\displaystyle\lesssim \frac{|u''-u'|}{|N(u'')-N(u')|} Q(S)^{\frac{5(n-2)}{2}} \sup_{p}\lambda_{n-1}(p) \cr
     &\displaystyle\lesssim \frac{1}{\inf_{p}\lambda_{1}(p)} Q(S)^{\frac{5(n-2)}{2}} \sup_{p}\lambda_{n-1}(p)  \cr
        &\displaystyle\lesssim Q(S)^{\frac{5n-8}{2}}, \cr
    }
\end{equation}
by the mean-value inequality applied to the Gauss map $N$. This implies \eqref{clever bit}, completing the proof of Theorem \ref{Theorem:shortSobStein} (Theorem \ref{Theorem:SobStein}).

\subsection{Proof of the Sobolev--Mizohata--Takeuchi inequality (Theorem \ref{theorem:shortSMTgen})} 
In this section we prove Theorem \ref{theorem:shortSMTgen}, or more specifically, Theorem \ref{theorem:SMTgen}.
We begin by observing that
if $u\not\in\supp^*(g)$ and $u'\in S$ then either $u'\not\in\supp(g)$ or $R_u u'\not\in\supp(g)$, meaning that $I_{S,s}(|g|^2, |g|^2)(u)=0$. Consequently, by Theorem \ref{Theorem:SobStein},
$$
\int_{\mathbb{R}^n}|\widehat{g\mathrm{d}\sigma}|^2w\leq c Q(S)^{\frac{5n-8}{4}} \sup_{u\in\supp^*(g)}\|X_Sw(u,\cdot)\|_{\dot{H}^s(T_u S)}\int_S I_{S,s}(|g|^2,|g|^2)(u)^{1/2}\mathrm{d}\sigma(u),
$$
and so we are reduced to proving a suitable $L^1(S)\times L^1(S)\rightarrow L^{1/2}(S)$ estimate on the bilinear operator 
\begin{equation}\label{nuffgen}
I_{S,s}(g_1,g_2)(u):=\int_{S}\frac{g_1(u')g_2(R_u u')}{|u'-R_u u'|^{s}}J(u,u')\mathrm{d}\sigma(u')
\end{equation}
whenever $s<n-1$. This follows by a direct application of the forthcoming Theorem \ref{l:EndpointKS}.
\subsection{Improved Sobolev--Stein constants in the plane}\label{what is needed really}
Our proof of Theorem \ref{Theorem:shortSobStein} identifies $\|J/\widetilde{J}\|_\infty^{1/2}$ as the naturally occurring dilation-invariant functional on the surface $S$, rather than the power of the curvature quotient $Q(S)$ that we use to bound it. In two dimensions our expression for $J$, being relatively simple, permits the bound $\|J/\widetilde{J}\|_\infty^{1/2}\lesssim\Lambda(S)$, where $\Lambda(S)$ is defined in \eqref{rightthing}.
To see this we argue as in \eqref{ineq1-18apr24}, using Propositions \ref{jacprop} and \ref{prop-Jtilde-09apr24} to write
\begin{eqnarray*}
    \begin{aligned}
\frac{J(u,u')}{\widetilde{J}(u,u')}\leq \min\{J(u,u'),J(u,u'')\}=|u'-u''|K(u)\min\left\{\frac{1}{|N(u)\wedge N(u'')|}, \frac{1}{|N(u)\wedge N(u')|}\right\}
\lesssim \Lambda(S).
\end{aligned}
\end{eqnarray*}
The two-dimensional case of Theorem \ref{Theorem:shortSobStein} may then be strengthened to the following:
\begin{theorem}[Improved Sobolev--Stein in the plane]\label{better2} Suppose that $s<\frac{1}{2}$. There is an absolute constant $c$ such that
$$
\int_{\mathbb{R}^2}|\widehat{g\mathrm{d}\sigma}(x)|^2w(x)\mathrm{d}x\leq c\Lambda(S)\int_S I_{S,2s}(|g|^2,|g|^2)(u)^{1/2}\|X_Sw(u,\cdot)\|_{\dot{H}^s(T_u S)}\mathrm{d}\sigma(u).
$$
\end{theorem}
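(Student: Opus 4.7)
The plan is to copy the proof of Theorem \ref{Theorem:SobStein} up to and including the reduction to a pointwise estimate for the ratio $J(u,u')/\widetilde{J}(u,u')$, and then to replace the bound $J/\widetilde{J} \lesssim Q(S)^{(5n-8)/2}$ used in \eqref{ineq1-18apr24} by the sharper two-dimensional bound $J/\widetilde{J} \lesssim \Lambda(S)^2$. After applying Proposition \ref{Prop:genBGNOWigner}, the Cauchy--Schwarz duality between $\dot{H}^{-s}$ and $\dot{H}^s$, the change of variables $\xi=u'-R_u u'$ on $T_u S$, and Plancherel's theorem, one arrives at
\begin{equation*}
\|W_S(g,g)(u,\cdot)\|_{\dot{H}^{-s}(T_u S)}^2 = \int_S \frac{|g(u')|^2\,|g(R_u u')|^2}{|u'-R_u u'|^{2s}}\,\frac{J(u,u')^2}{\widetilde{J}(u,u')}\,\mathrm{d}\sigma(u'),
\end{equation*}
so that Theorem \ref{better2} is reduced to establishing the uniform estimate $J(u,u')/\widetilde{J}(u,u')\lesssim \Lambda(S)^2$.

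To obtain that pointwise bound, I would first exploit the exchange symmetry of the ratio $J/\widetilde{J}$ under $u'\leftrightarrow u'':=R_u u'$. Since $\xi(R_u u')=-\xi(u')$, a direct computation gives $\widetilde{J}(u,u')=\Delta(u,u')\widetilde{J}(u,u'')$, which combined with the switch identity \eqref{switch} forces $J(u,u')/\widetilde{J}(u,u')=J(u,u'')/\widetilde{J}(u,u'')$. Coupling this with the lower bound $\widetilde{J}(u,u')\geq 1$ provided by Proposition \ref{prop-Jtilde-09apr24} yields
\begin{equation*}
\frac{J(u,u')}{\widetilde{J}(u,u')}\leq \min\bigl\{J(u,u'),J(u,u'')\bigr\}.
\end{equation*}
Substituting the simplified two-dimensional formula $J(u,u')=|u'-u''|K(u)/|N(u)\wedge N(u'')|$ from Remark \ref{interpreting J}(i), the right-hand side becomes $|u'-u''|K(u)/\max\bigl\{|N(u)\wedge N(u')|,\,|N(u)\wedge N(u'')|\bigr\}$.

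The final step, which I expect to be the only piece of genuine input, is to convert that denominator into the wedge $|N(u')\wedge N(u'')|$ featuring in the definition \eqref{rightthing} of $\Lambda(S)$. Under the cone-of-normals hypothesis \eqref{cone of normals}, the three points $N(u),N(u'),N(u'')$ lie in a common half-circle, in which regime $|N(a)\wedge N(b)|$ is comparable to the geodesic distance between $N(a)$ and $N(b)$ on $\mathbb{S}^1$; the ordinary triangle inequality on $\mathbb{S}^1$ then delivers
\begin{equation*}
|N(u')\wedge N(u'')| \lesssim |N(u)\wedge N(u')| + |N(u)\wedge N(u'')| \lesssim \max\bigl\{|N(u)\wedge N(u')|,\,|N(u)\wedge N(u'')|\bigr\}.
\end{equation*}
Combining the two displays yields $J(u,u')/\widetilde{J}(u,u') \lesssim |u'-u''|K(u)/|N(u')\wedge N(u'')| \leq \Lambda(S)^2$, and taking square roots produces the factor $\Lambda(S)$ in place of $Q(S)^{1/2}$ that appears in the two-dimensional case of Theorem \ref{Theorem:SobStein}. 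No further modifications to the argument are required, since all the higher-dimensional difficulties encountered in proving Proposition \ref{jacprop} are absent thanks to the very clean form that $J$ takes when $n=2$.
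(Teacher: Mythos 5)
Your argument is correct and follows the same route as the paper: reduce to the pointwise estimate $J/\widetilde{J}\lesssim\Lambda(S)^2$, establish $J/\widetilde{J}\leq\min\{J(u,u'),J(u,u'')\}$ via Propositions \ref{jacprop} and \ref{prop-Jtilde-09apr24}, substitute the two-dimensional formula for $J$, and pass from $\max\{|N(u)\wedge N(u')|,|N(u)\wedge N(u'')|\}$ to $|N(u')\wedge N(u'')|$ by the triangle inequality on $\mathbb{S}^1$. Your derivation of the min bound from the exchange symmetry $J(u,u')/\widetilde{J}(u,u')=J(u,u'')/\widetilde{J}(u,u'')$ is a clean variant of the paper's double application of $\widetilde{J}\geq 1$ and $\widetilde{J}\geq\Delta$, but it lands in exactly the same place; you have also correctly identified that the chain bounds $J/\widetilde{J}$ by $\Lambda(S)^2$, with the factor $\Lambda(S)$ in the theorem emerging after taking the square root.
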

A similar, although potentially rather more complicated statement is possible in higher dimensions, and is left to the interested reader.
%%%%%%%%%%%%%%%%%%%%%%%%%%%%%%%%%%%%%%%%
%%%%%%%%%%%%%%%%%%%%%%%%%%%%%%%%%%%%%%%%%%%%%%%%%%%%%%%%%%%%%%%%%%%%%%%%%%%%%%%%%%%%%%%%%%%%%%%%%%%%%%%%%%%%%%%%%%%%%%%%%%%

\section{Estimating distances: the proof of Proposition \ref{metricestimate}}\label{distanceestimates}
We begin with \eqref{normal2plane}, and the elementary observation that if $\pi$ is 2-plane that is normal to $S$ at a point $u$, then by \eqref{cone of normals}, it must be close to normal at all points of intersection with $S$. More specifically, for $\widetilde{u}\in S$ we have $$|P_\pi N(\widetilde{u})|\geq|P_{(T_uS)^\perp}N(\widetilde{u})|=N(u)\cdot N(\widetilde{u})\geq 1/2.$$ It follows by Meusnier's theorem that for such a $\pi$, the curvature of the curve $S\cap\pi$ at a point is comparable to a normal curvature of $S$ at that same point. This allows us to transfer the curvature quotient of $S$ to such curves, and we shall appeal to this momentarily.

Now let $\pi'$ and $\pi''$ be the normal 2-planes at the point $u$ that pass through the points $u'$ and $u''$ respectively. Let $x$ be the orthogonal projection of $u$ onto the plane $T_uS+\{u'\}$, and note that $\{u,u',x\}$ and $\{u,u'',x\}$ are the vertices of right angled triangles in the 2-planes $\pi'$ and $\pi''$ respectively. Next observe that by the triangle inequality and Pythagoras' theorem, it is enough to show that 
\begin{equation}\label{please}
|x-u''|\lesssim Q(S)^{1/2}|x-u'|.
\end{equation}
To see this we write $S$ as a graph over $T_uS+\{u'\}$ as follows: let $\phi_u:T_uS+\{u'\}\rightarrow\mathbb{R}$ be such that  $x'\mapsto x'+\phi_u(x')N(u)$ is a bijective map from a subset $U\subset T_uS$ into $S$; see Figure \ref{Rmap}. That this is possible, and indeed that $\phi_u$ is uniquely defined, follows from \eqref{cone of normals} (a point that is elaborated in \cite{BNS}). Notice that 
\begin{equation}\label{preparing for Taylor}
\phi_u(u')=0, \phi_u(x)=|x-u|\;\;\mbox{ and }\;\;\nabla\phi_u(x)=0,
\end{equation}
by construction.
Assuming that $N(u)=e_n$, as we may, the graph condition \eqref{cone of normals} implies that the normal vector $(\nabla\phi_u, -1)$ lies in some fixed (proper) vertical cone, and so in particular we also have 
\begin{equation}\label{slow}
|\nabla\phi_u|\lesssim 1.
\end{equation}
We now apply Taylor's theorem on the line segment $[x,u']$, along with \eqref{preparing for Taylor}, to obtain
$$
|x-u|=\phi_u(x)-\phi_u(u')=\frac{1}{2}k'(u,u')|x-u'|^2,
$$
where $k'(u,u')$ is a quantity comparable to some normal curvature of $S$ at some point. Here we have used \eqref{slow} along with our initial observation via Meusnier's theorem. By symmetry a similar statement may be made with $u''$ in place of $u'$, from which we deduce that $$k'(u,u')|x-u'|^2=k''(u,u'')|x-u''|^2.$$ The inequality \eqref{please} now follows from the definition of $Q(S)$ and taking square roots.

Turning to \eqref{easier}, we fix $u$ and exploit the properties of the map $H:=H_{\omega}= N^{-1}\circ\Phi_{\omega}$ from Section \ref{Jacobians}.
By the mean value theorem and Claim \ref{claim1-26feb24},
\begin{equation*}
    \eqalign{
    \displaystyle |u-u'| = |H(0)-H(x')| &\displaystyle\leq \sup_{\theta}\|\mathrm{d}H_{\theta}\|\cdot |x'|
    \displaystyle\leq \sup_{\theta}\|\mathrm{d}H_{\theta}\|\cdot \frac{|(1-\widetilde{\eta}(x'))x'|}{|1-\widetilde{\eta}(x')|}
    \displaystyle=\sup_{\theta}\|\mathrm{d}H_{\theta}\|\cdot \frac{|x'-x''|}{|1-\widetilde{\eta}(x')|}, \cr
    }
\end{equation*}
where $x''$ is such that $H(x'')=u''$. Consequently,
\begin{equation*}
    \eqalign{
    \displaystyle |u-u'|  &\displaystyle\leq\sup_{\theta}\|\mathrm{d}H_{\theta}\|\cdot  \frac{|H^{-1}(H(x'))-H^{-1}(H(x''))|}{|1-\widetilde{\eta}(x')|} \cr
    &\displaystyle\leq\sup_{\theta}\|\mathrm{d}H_{\theta}\|\cdot \sup_{\widetilde{\theta}}\|\mathrm{d}H^{-1}_{\widetilde{\theta}}\|\cdot  \frac{|H(x')-H(x'')|}{|1-\widetilde{\eta}(x')|} \cr
    &\displaystyle=\sup_{\theta}\|\mathrm{d}H_{\theta}\|\cdot \sup_{\widetilde{\theta}}\|\mathrm{d}H^{-1}_{\widetilde{\theta}}\|\cdot  \frac{|u'-u''|}{|1-\widetilde{\eta}(x')|}, \cr
    }
\end{equation*}
and
therefore
\begin{equation*}
    |u'-u''|\geq \frac{|1-\widetilde{\eta}(x')|}{\sup_{\theta}\|\mathrm{d}H_{\theta}\|\cdot \sup_{\widetilde{\theta}}\|\mathrm{d}H^{-1}_{\widetilde{\theta}}\|}\cdot |u-u'|.
\end{equation*}
We also have, for a fixed $\theta$,
\begin{equation*}
    \|\mathrm{d}H_{\theta}\|\leq \|\mathrm{d}N^{-1}_{\Phi(\theta)}\|\cdot \|\mathrm{d}\Phi_{\theta}\|\leq\frac{1}{\inf_{p\in S}\lambda_1(p)}\cdot \|\mathrm{d}\Phi_{\theta}\|_{L^{\infty}_{\theta}},
\end{equation*}
where $\inf_{p\in S}\lambda_1(p)$ is the infimum over $p\in S$ of the smallest eigenvalue $\lambda_1(p)$ of the shape operator $\mathrm{d}N_{p}$. Similarly, 
\begin{equation*}
     \|\mathrm{d}H^{-1}_{\widetilde{\theta}}\|\leq  \|\mathrm{d}\Phi^{-1}_{\widetilde{\theta}}\|\cdot \|\mathrm{d}N_{\Phi(\widetilde{\theta})}\|\leq\|\mathrm{d}\Phi^{-1}_{\widetilde{\theta}}\|_{L^{\infty}_{\widetilde{\theta}}}\cdot\sup_{p}\lambda_{n-1}(p),
\end{equation*}
where $\sup_{p\in S}\lambda_{n-1}(p)$ is the supremum over $p\in S$ of the largest eigenvalue $\lambda_{n-1}(p)$.
Consequently,
\begin{equation}\label{comparability-3apr24}
    |u'-u''|\gtrsim |1-\widetilde{\eta}(x')|\cdot \frac{\inf_{p\in S}\lambda_1(p)}{\sup_{p\in S}\lambda_{n-1}(p)}\cdot |u-u'|\gtrsim \frac{1}{Q(S)}\cdot |u-u'|,
\end{equation}
since $\widetilde{\eta}<0$ by the strict convexity of $S$.

\section{Computing Jacobians: the proof of Proposition \ref{jacprop}}\label{Jacobians}
In this section we provide detailed proofs of \eqref{Jformula-prop-19apr24}, \eqref{deltaformula-prop-19apr24} and \eqref{switch}. 
The key idea is that the maps $u\mapsto R_u u'$ and $u'\mapsto R_uu'$ may be transformed into \textit{outward vector fields} on Euclidean spaces (specifically $T_{u'}S$ and $T_uS$ respectively) by conjugating them with a composition of the Gauss map and a suitable stereographic projection. The derivatives of such vector fields have only two eigenspaces, allowing the computation of their Jacobians to be reduced to the identification of just two eigenvalues, one of which has multiplicity $n-2$ (see the forthcoming Lemma \ref{detphilemma}). This is manifested in the factor raised to the power $n-2$ in the formula \eqref{Jformula-prop-19apr24} for $J$.
We begin by recalling and introducing the notation and geometric objects that will feature in our computations of $J$ and $\Delta$.
\begin{itemize}
    \item $N:S\rightarrow\mathbb{S}^{n-1}$ is the Gauss map, $\mathrm{d}N_{u}:T_{u}S\rightarrow T_{N(u)}\mathbb{S}^{n-1}$ is the shape operator (recall that $T_{u}S= T_{N(u)}\mathbb{S}^{n-1}$), and $K(u)=\det(\mathrm{d}N_{u})$ is the Gaussian curvature at $u\in S$. 
    \item The formulas of this section will be written in terms of the parameters $u$, $u'$ and $u''=R_{u}u'$, which are points on $S$. We will denote their images via the Gauss map by $\omega$, $\omega^{\prime}$ and $\omega^{\prime\prime}$, respectively.
    %\item The projection of a vector $v$ onto a subspace $W$ is denoted by $P_{W}v$.
    \item For a fixed $\omega^{\prime}\in\mathbb{S}^{n-1}$, $\Phi_{\omega^{\prime}}:\langle\omega^{\prime}\rangle^{\perp}\rightarrow\mathbb{S}^{n-1}$ denotes the \textit{inverse} of the stereographic projection map with respect to $-\omega^{\prime}$. Explicitly
\begin{equation}\label{coord1-20feb24}
    \Phi_{\omega^{\prime}}(x)=\left(\frac{2x}{1+|x|^{2}},\frac{1-|x|^{2}}{1+|x|^{2}}\right)
\end{equation}
via the identification $\mathbb{R}^n=\langle\omega'\rangle^\perp\times\langle\omega'\rangle$.
If $\omega=\Phi_{\omega^{\prime}}(x)$, it follows that 
\begin{equation}\label{xandomega-23feb24}
    x=\frac{\omega-\langle\omega,\omega^{\prime}\rangle\omega^{\prime}}{1+\langle\omega,\omega^{\prime}\rangle}.
\end{equation}
The differential $(\mathrm{d}\Phi_{\omega^{\prime}})_{x}:\langle\omega^{\prime}\rangle^{\perp}\rightarrow\langle\omega\rangle^{\perp}$ satisfies
\begin{equation}\label{diffPhi-23feb24}
    (\mathrm{d}\Phi_{\omega^{\prime}})_{x}(x)=\langle\omega,\omega^{\prime}\rangle\omega-\omega^{\prime}.
\end{equation}
The determinants of $(\mathrm{d}\Phi_{\omega^{\prime}})_{x}$ and its inverse are, respectively,
\begin{equation}\label{jacstereo1}
    \det((\mathrm{d}\Phi_{\omega^{\prime}})_{x})=\left(\frac{2}{1+|x|^{2}}\right)^{n-1}=(1+\langle\omega,\omega^{\prime}\rangle)^{n-1}
\end{equation}
and
\begin{equation}\label{jacstereo2}
    \det((\mathrm{d}\Phi_{\omega^{\prime}}^{-1})_{\omega})=\left(\frac{1}{1+\langle\omega,\omega^{\prime}\rangle}\right)^{n-1}.
\end{equation}
We refer the reader to Chapter 4 of \cite{LL} for further discussion on the properties of these maps.
%\item For a fixed $\omega^{\prime}\in\mathbb{S}^{n-1}$, define the map $\Psi_{\omega^{\prime}}:\mathbb{S}^{n-1}\rightarrow\mathbb{S}^{n-1}$ by
%\begin{equation}\label{changevar-20feb24}
%    \Psi_{\omega^{\prime}}(\omega)=N(R_{N^{-1}(\omega)}N^{-1}(\omega^{\prime})).
%\end{equation}
%Likewise, for a fixed $\omega\in\mathbb{S}^{n-1}$, $\widetilde{\Psi}_{\omega}:\mathbb{S}^{n-1}\rightarrow\mathbb{S}^{n-1}$ is given by
%\begin{equation}\label{changevartilde-26feb24}
    %\widetilde{\Psi}_{\omega}(\omega')=N(R_{N^{-1}(\omega)}N^{-1}(\omega^{\prime})).
%\end{equation}
\item For $\omega$ fixed, set 
\begin{equation*}
    H_{\omega}=N^{-1}\circ\Phi_{\omega}.
\end{equation*}
$H_{\omega}$ will play a crucial role in this section. As we shall see, it allows us to reduce the computations of $J$ and $\Delta$ to certain Euclidean analogues with simple spectral structure (outward vector fields, as discussed above and alluded to in Remark \ref{Remark:nondeg}).
\end{itemize}

We are now ready to prove \eqref{Jformula-prop-19apr24}, \eqref{deltaformula-prop-19apr24} and \eqref{switch}.
\subsection{Computing $J$}\label{subsectionJ} For fixed $\omega'$ we define the map $\Psi_{\omega^{\prime}}:N(S)\rightarrow\mathbb{S}^{n-1}$ by
\begin{equation}\label{changevar-20feb24}
    \Psi_{\omega^{\prime}}(\omega)=N(R_{N^{-1}(\omega)}N^{-1}(\omega^{\prime})).
\end{equation}
Strictly speaking the domain of  $\Psi_{\omega^{\prime}}$ depends on $\omega'$, as we allude to in Remark \ref{Rmk1-130825}. The parameter $\omega\in \mathbb{S}^{n-1}$ will be a variable and we will use $x\in\langle\omega^{\prime}\rangle^{\perp}$ to represent its preimage by the map $\Phi_{\omega^{\prime}}$. Explicitly,
\begin{equation*}
    x\xmapsto{\Phi_{\omega^{\prime}}} \omega\xmapsto{N^{-1}} u.
\end{equation*}
By \eqref{changevar-20feb24} and the definition of $J(u,u')$, along with the fact that the Gaussian curvature $K(u)$ is the determinant of the shape operator $\mathrm{d}N_u$, we have
\begin{equation}\label{jac19feb24}
    J(u,u')=\left|\det{\left(\mathrm{d}\Psi_{N(u')}(N(u))\right)}\right|\frac{K(u)}{K(u'')}.
\end{equation}
The next step is to reduce the computation of the Jacobian determinant $\det{\left(\mathrm{d}\Psi_{N(u')}(N(u))\right)}$ to one of a much simpler outward vector field $\varphi$ on the tangent space at $u'$ (see Lemma \ref{detphilemma} below). This will be achieved by combining properties of the inverse stereographic projection map $\Phi_{\omega^{\prime}}$ with the geometric condition \eqref{collision condition 2}. To this end we define the map $\varphi:\langle\omega^{\prime}\rangle^{\perp}\rightarrow\langle\omega^{\prime}\rangle^{\perp}$ by
%\footnote{We are identifying $\langle\omega^{\prime}\rangle^{\perp}\cong\mathbb{R}^{n-1}$.}
$$\varphi(x):=\Phi_{\omega^{\prime}}^{-1}\circ\Psi_{\omega^{\prime}}\circ\Phi_{\omega^{\prime}}(x).$$
\begin{claim}\label{claim1-23feb24} The vector field $\varphi:\langle\omega'\rangle^\perp\rightarrow\langle\omega'\rangle^\perp$ is given by
\begin{equation}
    \varphi(x) = \eta(x)x,
\end{equation}
where
\begin{equation}\label{etadef}
     \eta(x) = \frac{\langle x,H_{\omega^{\prime}}^{-1}(R_{H_{\omega^{\prime}}(x)}H_{\omega^{\prime}}(0))\rangle}{|x|^{2}}=\frac{\langle x,\Phi_{\omega^{\prime}}^{-1}(\omega^{\prime\prime})\rangle}{|x|^{2}}.
\end{equation}
\end{claim}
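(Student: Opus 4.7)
The plan is to show that $\varphi$ is a radial vector field on $\langle\omega'\rangle^\perp$, using the coplanarity condition \eqref{collision condition 2} together with the classical conformal property of stereographic projection. Once radiality is established, the formula for $\eta(x)$ follows by simply taking the inner product with $x$ and dividing by $|x|^2$.

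First I would note that the defining condition \eqref{collision condition 2}, $N(u)\wedge N(u')\wedge N(u'')=0$, says precisely that the three unit normals $\omega,\omega',\omega''$ lie on a common great circle $\mathcal{G}\subset\mathbb{S}^{n-1}$. Since great circles are symmetric under the antipodal map, $\mathcal{G}$ contains both $\omega'$ and $-\omega'$. Because $\Phi_{\omega'}$ is the inverse of the stereographic projection centered at $-\omega'$, and stereographic projection is a conformal map that sends any circle on $\mathbb{S}^{n-1}$ passing through the centre of projection to a straight line in the domain, the image $\Phi_{\omega'}^{-1}(\mathcal{G})$ is an affine line $\ell\subset\langle\omega'\rangle^\perp$. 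Moreover, since $\Phi_{\omega'}(0)=\omega'\in\mathcal{G}$, we have $0\in\ell$, so $\ell$ is a line through the origin.

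By construction, both $x=\Phi_{\omega'}^{-1}(\omega)$ and $\varphi(x)=\Phi_{\omega'}^{-1}(\omega'')$ belong to $\ell$, and therefore $\varphi(x)$ is a scalar multiple of $x$: there exists $\eta(x)\in\mathbb{R}$ such that $\varphi(x)=\eta(x)x$. Taking the inner product of this identity with $x$ and dividing by $|x|^2$ gives
\begin{equation*}
\eta(x)=\frac{\langle x,\varphi(x)\rangle}{|x|^2}=\frac{\langle x,\Phi_{\omega'}^{-1}(\omega'')\rangle}{|x|^2},
\end{equation*}
which coincides with the formula in \eqref{etadef} upon noting that $\Phi_{\omega'}=N\circ H_{\omega'}$, so that $H_{\omega'}^{-1}(R_{H_{\omega'}(x)}H_{\omega'}(0))=\Phi_{\omega'}^{-1}(N(R_u u'))=\Phi_{\omega'}^{-1}(\omega'')$. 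The case $x=0$ (equivalently $u=u'$, $u''=u$) is handled by the convention $R_u u=u$ and poses no issue, as the identity $\varphi(0)=0$ holds trivially.

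The main substantive step is the appeal to the conformal/circle-preserving property of stereographic projection; the rest is essentially bookkeeping. No serious obstacle is anticipated, but some care is needed in invoking the classical fact that circles on $\mathbb{S}^{n-1}$ passing through the centre of projection map to lines (and not just circles) under stereographic projection, since this is the single geometric input that turns the coplanarity condition \eqref{collision condition 2} into the radial structure of $\varphi$.
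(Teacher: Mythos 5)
Your proof is correct and takes essentially the same approach as the paper: both observe that the coplanarity condition \eqref{collision condition 2} places $\omega,\omega',\omega''$ on a great circle and deduce from this that $\varphi$ is a radial vector field, after which the formula for $\eta$ follows by taking the inner product with $x$. You supply the geometric detail — that stereographic projection from $-\omega'$ carries great circles through $\pm\omega'$ to lines through the origin — that the paper's terse ``therefore they lie on a great circle, [which] implies $\varphi(x)=\mu(x)x$'' leaves implicit.
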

\begin{proof}[Proof of Claim \ref{claim1-23feb24}] By definition of the map $R_{(\cdot)}u'$, the normals $\omega$, $\omega^{\prime}$ and $\omega^{\prime\prime}$ are coplanar, therefore they lie on a great circle. This implies that
$$\varphi(x)=\mu(x)x$$
for some $\mu(x)$, which we conclude to be equal to $\eta(x)$ by taking scalar products with $x$ on both sides of the equation above.
\end{proof}
By the chain rule,
$$\det(\mathrm{d}\varphi(x))=\det((\mathrm{d}\Phi_{\omega^{\prime}}^{-1})(\omega^{\prime\prime}))\det((\mathrm{d}\Psi_{\omega^{\prime}})(\omega))\det((\mathrm{d}\Phi_{\omega^{\prime}})(x)),$$
hence
\begin{equation*}
    \det((\mathrm{d}\Psi_{\omega^{\prime}})(\omega))=\frac{\det(\mathrm{d}\varphi(x))}{\det((\mathrm{d}\Phi_{\omega^{\prime}}^{-1})(\omega^{\prime\prime}))\det((\mathrm{d}\Phi_{\omega^{\prime}})(x))}.
\end{equation*}
This implies, by \eqref{jac19feb24},
\begin{equation}\label{almostfinaljac-20feb24}
    J(u,u')=\frac{|\det(\mathrm{d}\varphi(x))|}{|\det((\mathrm{d}\Phi_{\omega^{\prime}}^{-1})(\omega^{\prime\prime}))||\det((\mathrm{d}\Phi_{\omega^{\prime}})(x))|}\frac{K(u)}{K(u'')}.
\end{equation}

We are now in a position to invoke the following elementary lemma, whose proof is left to the reader:
\begin{lemma}[Differential structure of an outward vector field]\label{detphilemma} Let $\eta:\mathbb{R}^{d}\rightarrow\mathbb{R}$ be a $C^{1}$ function and let $\varphi:\mathbb{R}^{d}\rightarrow\mathbb{R}^{d}$ given by
\begin{equation}
    \varphi(x)=\eta(x)x.
\end{equation}
The linear map 
$$\mathrm{d}\varphi(x)=x(\nabla\eta(x))^{\top}+\eta(x) I_{d}$$
has eigenvalues $\lambda_{1}(x)=\eta(x)$ and $\lambda_{2}(x)=\langle\nabla\eta(x),x\rangle + \eta(x)$ of multiplicity $(d-1)$ and $1$, respectively. The eigenspaces associated to these eigenvalues are
\begin{equation*}
    \eqalign{
    \displaystyle E_{\lambda_{1}(x)}&\displaystyle :=\langle\nabla\eta(x)\rangle^{\perp}, \cr
    \displaystyle E_{\lambda_{2}(x)}&\displaystyle :=\langle x\rangle. \cr
    }
\end{equation*}
In particular,
    \begin{equation}\label{det-eigenvalues}
        \det(\mathrm{d}\varphi(x))=[\eta(x)]^{d-1}(\langle\nabla\eta(x),x\rangle + \eta(x)).
    \end{equation}
\end{lemma}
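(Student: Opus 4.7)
The plan is to verify the claimed expression for $\mathrm{d}\varphi(x)$ by a direct application of the product rule, then to exhibit the two eigenspaces explicitly and invoke a dimension count to conclude that they span $\mathbb{R}^d$.

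First, I would compute $\mathrm{d}\varphi(x)$ by differentiating $\varphi(x)=\eta(x)x$ using the product rule: for $v\in\mathbb{R}^d$,
\[
\mathrm{d}\varphi(x)(v)=\langle \nabla\eta(x),v\rangle\, x+\eta(x)\,v,
\]
which in matrix form is precisely $x(\nabla\eta(x))^{\top}+\eta(x)I_d$. This is the only computational step.

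Next I would identify the eigenstructure by inspection. If $v\in\langle\nabla\eta(x)\rangle^{\perp}$, then the first term vanishes and $\mathrm{d}\varphi(x)(v)=\eta(x)v$, so $\langle\nabla\eta(x)\rangle^{\perp}\subseteq E_{\eta(x)}$. On the other hand, taking $v=x$ yields
\[
\mathrm{d}\varphi(x)(x)=\langle\nabla\eta(x),x\rangle x+\eta(x)x=\bigl(\langle\nabla\eta(x),x\rangle+\eta(x)\bigr)x,
\]
so $\langle x\rangle\subseteq E_{\langle\nabla\eta(x),x\rangle+\eta(x)}$. Provided $x\notin\langle\nabla\eta(x)\rangle^{\perp}$, the subspaces $\langle\nabla\eta(x)\rangle^{\perp}$ and $\langle x\rangle$ are transverse, and together have dimension $(d-1)+1=d$, hence they span $\mathbb{R}^d$ and coincide with the full eigenspaces. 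In the degenerate case $\langle\nabla\eta(x),x\rangle=0$ the two eigenvalues coincide and $\mathrm{d}\varphi(x)=\eta(x)I_d$ on all of $\mathbb{R}^d$, consistent with the stated multiplicities.

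Finally, \eqref{det-eigenvalues} is immediate, as the determinant is the product of the eigenvalues counted with multiplicity. I do not anticipate any obstacle here, as the lemma reduces to elementary linear algebra once the product rule computation is carried out; the main point is merely to be slightly careful with the degenerate case $\langle\nabla\eta(x),x\rangle=0$, where the eigenvalues coincide but the determinant formula remains valid.
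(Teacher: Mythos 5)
The paper itself leaves the proof of this lemma to the reader, so there is no written argument to compare against; your proposal is the straightforward linear-algebraic verification the authors clearly have in mind, and it is correct where it matters. One point deserves correction, however. In the degenerate case $\langle\nabla\eta(x),x\rangle=0$ you assert that $\mathrm{d}\varphi(x)=\eta(x)I_d$, but this is false unless $x=0$ or $\nabla\eta(x)=0$: the rank-one term $x(\nabla\eta(x))^{\top}$ does not vanish merely because $x\perp\nabla\eta(x)$. In that situation $\mathrm{d}\varphi(x)$ is generally \emph{defective} — both eigenvalues equal $\eta(x)$ but the $\eta(x)$-eigenspace is only $\langle\nabla\eta(x)\rangle^{\perp}$, of dimension $d-1$, so the eigenspace assertion of the lemma as stated actually fails there. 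This does not harm the determinant identity \eqref{det-eigenvalues}, which is all the paper uses; it follows in full generality either from the matrix determinant lemma for a rank-one update, $\det\bigl(\eta(x)I_d+x(\nabla\eta(x))^{\top}\bigr)=[\eta(x)]^{d-1}\bigl(\eta(x)+\langle\nabla\eta(x),x\rangle\bigr)$, or from your generic-case computation together with continuity of polynomials in the matrix entries. So the fix is simply to replace the incorrect claim $\mathrm{d}\varphi(x)=\eta(x)I_d$ by one of these two arguments for the degenerate case.
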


The parameter $u'\in S$ is fixed in this subsection, therefore $\omega^{\prime}$ will also be fixed, and we write $H_{\omega^{\prime}}=H$ to simplify notation. Let us use \eqref{det-eigenvalues} to compute $\det(\mathrm{d}\varphi(x))$. The eigenvalue $\lambda_{1}(x)$ of $\mathrm{d}\varphi(x)$ is
\begin{equation*}
    \lambda_{1}(x)=\eta(x)=\frac{\langle x,H^{-1}(R_{H(x)}H(0))\rangle}{|x|^{2}},
\end{equation*}
hence, by \eqref{det-eigenvalues}, all there is left to do is to compute the eigenvalue $\lambda_{2}(x)$ of $\mathrm{d}\varphi(x)$. By definition of the map $R_{(\cdot)}u'$, the vector $R_{u}u'-u'$ is in the tangent space of $S$ at $u$. In short,
    \begin{equation*}
        \langle R_{u}u'-u', N(u)\rangle = 0.
    \end{equation*}
Equivalently,
  \begin{equation}\label{cond2-19feb24}
        \langle H(\eta(x)x)-H(0), N(H(x))\rangle = 0.
    \end{equation}
Differentiating both sides of \eqref{cond2-19feb24} with respect to $x$,
\begin{equation*}
    \displaystyle 0  = \mathrm{d}(N\circ H)_{x}^{\top}\left(H(\eta(x)x)-H(0)\right) + \left(x\cdot\nabla\eta(x)^{\top}+\eta(x)I_{n-1}\right)^{\top}\mathrm{d}H_{\eta(x) x}^{\top}\left(N\circ H(x)\right).
\end{equation*}
Taking scalar products on both sides with $x$ and using that $N\circ H=\Phi_{\omega'}$, we have
\begin{equation*}
    \displaystyle 0  = \langle H(\eta(x)x)-H(0),(\mathrm{d}\Phi_{\omega^{\prime}})_{x}(x)\rangle + \langle\mathrm{d}H_{\eta(x) x}^{\top}\left(\Phi_{\omega^{\prime}}(x)\right),\left(x\cdot\nabla\eta(x)^{\top}+\eta(x)I_{n-1}\right)(x)\rangle. 
\end{equation*}
By Lemma \ref{detphilemma},
$$\left(x\cdot\nabla\eta(x)^{\top}+\eta(x)I_{n-1}\right)(x)=(\langle\nabla\eta(x),x\rangle+\eta(x)) x,$$
and hence
\begin{equation*}
    \lambda_2(x)=\langle\nabla\eta(x),x\rangle+\eta(x) = -\frac{\langle H(\eta(x)x)-H(0),(\mathrm{d}\Phi_{\omega^{\prime}})_{x}(x)\rangle}{\langle\mathrm{d}H_{\eta(x) x}^{\top}\left(\Phi_{\omega^{\prime}}(x)\right),x\rangle}.
\end{equation*}
By Lemma \ref{detphilemma} again,
\begin{equation*}
    \det(\mathrm{d}\varphi(x))=-[\eta(x)]^{n-2}\frac{\langle H(\eta(x)x)-H(0),(\mathrm{d}\Phi_{\omega^{\prime}})_{x}(x)\rangle}{\langle\mathrm{d}H_{\eta(x) x}^{\top}\left(\Phi_{\omega^{\prime}}(x)\right),x\rangle}.
\end{equation*}
By \eqref{almostfinaljac-20feb24},
\begin{equation}
    J(u,u')=|\eta(x)|^{n-2}\frac{1}{|\langle\mathrm{d}H_{\eta(x) x}^{\top}\left(\Phi_{\omega^{\prime}}(x)\right),x\rangle|}\frac{|\langle H(\eta(x)x)-H(0),(\mathrm{d}\Phi_{\omega^{\prime}})_{x}(x)\rangle|}{|\det((\mathrm{d}\Phi_{\omega^{\prime}}^{-1})(\omega^{\prime\prime}))||\det((\mathrm{d}\Phi_{\omega^{\prime}})(x))|}\frac{K(u)}{K(u'')}.
\end{equation}
To proceed, we need to understand each factor in the formula above, which is the content of the next claim.
\begin{claim}\label{claim2-23feb24} The following identities hold:
    \begin{equation}\label{etaformula}
        |\eta(x)|=\left|\frac{\langle\omega,\omega^{\prime\prime}\rangle-\langle\omega,\omega^{\prime}\rangle\langle\omega^{\prime},\omega^{\prime\prime}\rangle}{(1+\langle\omega^{\prime\prime},\omega^{\prime}\rangle)(1-\langle\omega,\omega^{\prime}\rangle)}\right|;
    \end{equation}
    \begin{equation}\label{secondformula-23feb24}
        \langle H(\eta(x)x)-H(0),(\mathrm{d}\Phi_{\omega^{\prime}})_{x}(x)\rangle = -\langle u''-u',\omega'\rangle;
    \end{equation}
    \begin{equation}
        \langle\mathrm{d}H_{\eta(x) x}^{\top}\left(\Phi_{\omega^{\prime}}(x)\right),x\rangle = \frac{1}{\eta(x)}\langle\omega,\mathrm{d}N^{-1}_{\omega^{\prime\prime}}(\langle\omega^{\prime\prime},\omega^{\prime}\rangle\omega^{\prime\prime}-\omega^{\prime})\rangle.
    \end{equation}
\end{claim}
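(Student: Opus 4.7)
The plan is to verify each identity by direct computation, exploiting the explicit formulas \eqref{coord1-20feb24}--\eqref{diffPhi-23feb24} for the (inverse) stereographic projection $\Phi_{\omega'}$ together with the defining relations \eqref{collision condition 1}--\eqref{collision condition 2} for the map $R_u$.  The key fact that ties the three identities together is Claim \ref{claim1-23feb24}, which via $\varphi(x)=\eta(x)x$ forces $\Phi_{\omega'}(\eta(x)x)=\omega''$, so in particular $H(\eta(x)x)=u''$ and $H(0)=u'$.

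For the first identity, I would use \eqref{xandomega-23feb24} to write
\[
x=\frac{\omega-\langle\omega,\omega'\rangle\omega'}{1+\langle\omega,\omega'\rangle},\qquad \Phi_{\omega'}^{-1}(\omega'')=\frac{\omega''-\langle\omega'',\omega'\rangle\omega'}{1+\langle\omega'',\omega'\rangle},
\]
and compute $|x|^{2}=\tfrac{1-\langle\omega,\omega'\rangle}{1+\langle\omega,\omega'\rangle}$ along with
\[
\langle x,\Phi_{\omega'}^{-1}(\omega'')\rangle=\frac{\langle\omega,\omega''\rangle-\langle\omega,\omega'\rangle\langle\omega',\omega''\rangle}{(1+\langle\omega,\omega'\rangle)(1+\langle\omega'',\omega'\rangle)}.
\]
Dividing and taking absolute values gives \eqref{etaformula} at once.

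For the second identity, I would note $H(\eta(x)x)-H(0)=u''-u'$ and apply \eqref{diffPhi-23feb24} to write $(\mathrm{d}\Phi_{\omega'})_{x}(x)=\langle\omega,\omega'\rangle\omega-\omega'$. The tangency condition \eqref{collision condition 1}, namely $\langle u''-u',\omega\rangle=0$, then kills the first term and leaves $\pm\langle u''-u',\omega'\rangle$ (a sign that is immaterial in view of the absolute value in \eqref{Jformula-prop-19apr24}). For the third identity, I would apply the chain rule to $H=N^{-1}\circ\Phi_{\omega'}$, so that
\[
\mathrm{d}H_{\eta(x)x}=\mathrm{d}N^{-1}_{\omega''}\circ(\mathrm{d}\Phi_{\omega'})_{\eta(x)x},
\]
use linearity and \eqref{diffPhi-23feb24} (at the base point $\eta(x)x$ rather than $x$) to obtain
\[
(\mathrm{d}\Phi_{\omega'})_{\eta(x)x}(x)=\tfrac{1}{\eta(x)}\bigl(\langle\omega'',\omega'\rangle\omega''-\omega'\bigr),
\]
and finally pair with $\Phi_{\omega'}(x)=\omega$ after transposing, using self-adjointness of $\mathrm{d}N^{-1}_{\omega''}$. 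The main obstacle, such as it is, is the careful bookkeeping of stereographic coordinates in identity \eqref{etaformula}; the other two identities are quite mechanical once Claim \ref{claim1-23feb24} is in hand and one keeps track of the base point at which $(\mathrm{d}\Phi_{\omega'})$ is evaluated.
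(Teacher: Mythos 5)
Your proposal is correct and follows essentially the same route as the paper: direct computation via \eqref{etadef} and \eqref{xandomega-23feb24} for the first identity, the observations $H(\eta(x)x)-H(0)=u''-u'$ and $\langle u''-u',\omega\rangle=0$ together with \eqref{diffPhi-23feb24} for the second, and the chain rule for $H=N^{-1}\circ\Phi_{\omega'}$ for the third. Your remark that $\langle u''-u',\langle\omega,\omega'\rangle\omega-\omega'\rangle$ evaluates to $-\langle u''-u',\omega'\rangle$ rather than $+\langle u''-u',\omega'\rangle$ is a fair catch of a sign the paper elides, and you are right that it is absorbed by the absolute value in \eqref{Jformula-prop-19apr24}; one small imprecision on your end is that the final step uses only the definition of the adjoint, not any self-adjointness of $\mathrm{d}N^{-1}_{\omega''}$.
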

Let us assume Claim \ref{claim2-23feb24} for the moment and complete the proof of the proposition. By the claim, \eqref{jacstereo1} and \eqref{jacstereo2},
\begin{equation}\label{finalJ-19apr24}
    \eqalign{
     \displaystyle J(u,u')&=\displaystyle\left|\frac{\langle\omega,\omega^{\prime\prime}\rangle-\langle\omega,\omega^{\prime}\rangle\langle\omega^{\prime},\omega^{\prime\prime}\rangle}{(1+\langle\omega^{\prime\prime},\omega^{\prime}\rangle)(1-\langle\omega,\omega^{\prime}\rangle)}\right|^{n-1}\frac{|\langle u''-u',\omega'\rangle|}{|\langle\omega,\mathrm{d}N^{-1}_{\omega^{\prime\prime}}(\langle\omega^{\prime\prime},\omega^{\prime}\rangle\omega^{\prime\prime}-\omega^{\prime})\rangle|}\frac{|1+\langle\omega^{\prime\prime},\omega^{\prime}\rangle|^{n-1}}{|1+\langle\omega,\omega^{\prime}\rangle|^{n-1}}\frac{K(u)}{K(u'')} \cr
     &=\displaystyle\left|\frac{\langle\omega,\omega^{\prime\prime}\rangle-\langle\omega,\omega^{\prime}\rangle\langle\omega^{\prime},\omega^{\prime\prime}\rangle}{1-\langle\omega,\omega^{\prime}\rangle^{2}}\right|^{n-1}\frac{|\langle u''-u',\omega'\rangle|}{|\langle\omega,\mathrm{d}N^{-1}_{\omega^{\prime\prime}}(\langle\omega^{\prime\prime},\omega^{\prime}\rangle\omega^{\prime\prime}-\omega^{\prime})\rangle|}\frac{K(u)}{K(u'')}. \cr
&=\displaystyle\left(\frac{(1-\langle\omega',\omega''\rangle^{2})^{\frac{1}{2}}}{(1-\langle\omega,\omega^{\prime}\rangle^{2})^{\frac{1}{2}}}\right)^{n-1}\frac{|\langle u''-u',\omega'\rangle|}{|\langle P_{T_{u''}S}N(u),(\mathrm{d}N_{u''})^{-1}(\langle\omega^{\prime\prime},\omega^{\prime}\rangle\omega^{\prime\prime}-\omega^{\prime})\rangle|}\frac{K(u)}{K(u'')}, \cr
    }
\end{equation}
where we used the facts that $\langle \omega,v\rangle=\langle P_{T_{u''}S}N(u),v\rangle$ for every $v\in T_{u''}S$, and that three coplanar vectors $\omega,\omega'$ and $\omega''$ on the sphere satisfy
%\footnote{To verify this, notice that if the angle between $\omega'$ and $\omega''$ is $\alpha_{1}$, and the one between $\omega'$ and $\omega$ is $\alpha_{2}$, then the angle between $\omega$ and $\omega''$ is $\alpha_{1}-\alpha_{2}$ since $\omega$, $\omega'$ and $\omega''$ are coplanar. We may then write \begin{equation*}\langle\omega,\omega^{\prime\prime}\rangle-\langle\omega,\omega^{\prime}\rangle\langle\omega^{\prime},\omega^{\prime\prime}\rangle = \cos{(\alpha_{1}-\alpha_{2})}-\cos{(\alpha_{2})}\cos{(\alpha_{1})},\end{equation*} and the identity follows by elementary trigonometric arguments.}
\begin{equation*}
    \langle\omega,\omega^{\prime\prime}\rangle-\langle\omega,\omega^{\prime}\rangle\langle\omega^{\prime},\omega^{\prime\prime}\rangle = (1-\langle\omega',\omega''\rangle^{2})^{\frac{1}{2}}(1-\langle\omega,\omega'\rangle^{2})^{\frac{1}{2}}.
\end{equation*}
We exploit the coplanarity of $\omega,\omega'$ and $\omega''$ twice more. First, it implies the existence of $a,b\in\mathbb{R}$ such that
\begin{equation}\label{coplanarcond-19apr24}
    \omega''=a\omega+b\omega'.
\end{equation}
Consequently,
\begin{equation*}
    \frac{|\langle u''-u',\omega^{\prime\prime}\rangle|}{|\langle u''-u',\omega^{\prime}\rangle|}=\frac{|\langle u''-u',a\omega+b\omega'\rangle|}{|\langle u''-u',\omega^{\prime}\rangle|}=|b|,
\end{equation*}
since $u''-u'$ is perpendicular to $N(u)=\omega$. On the other hand, projecting both sides of \eqref{coplanarcond-19apr24} to $\langle\omega\rangle^{\perp}$ gives
\begin{equation*}
    P_{\langle\omega\rangle^{\perp}}\omega'' = b P_{\langle\omega\rangle^{\perp}}\omega'\Longrightarrow |b|=\frac{|P_{\langle\omega\rangle^{\perp}}\omega''|}{|P_{\langle\omega\rangle^{\perp}}\omega'|},
\end{equation*}
which in turn implies
\begin{equation}\label{claimDelta3-19apr24}
    \frac{|\langle u''-u',\omega^{\prime\prime}\rangle|}{|\langle u''-u',\omega^{\prime}\rangle|}=\frac{|P_{\langle\omega\rangle^{\perp}}\omega''|}{|P_{\langle\omega\rangle^{\perp}}\omega'|} \Longrightarrow |\langle u''-u',\omega^{\prime}\rangle|=\frac{(1-\langle\omega,\omega^{\prime}\rangle^{2})^{\frac{1}{2}}}{(1-\langle\omega,\omega''\rangle^{2})^{\frac{1}{2}}}|\langle u''-u',N(u'')\rangle|.
\end{equation}
Second, the fact that $\omega,\omega'$ and $\omega''$ are coplanar also gives us that $P_{\langle\omega''\rangle^{\perp}}\omega'$ and $P_{\langle\omega''\rangle^{\perp}}\omega$ are parallel, therefore
\begin{equation}\label{claimDelta4-19apr24}
    \langle\omega^{\prime\prime},\omega^{\prime}\rangle\omega^{\prime\prime}-\omega^{\prime}=P_{\langle\omega''\rangle^{\perp}}\omega'=\frac{|P_{\langle\omega''\rangle^{\perp}}\omega'|}{|P_{\langle\omega''\rangle^{\perp}}\omega|}P_{\langle\omega''\rangle^{\perp}}\omega =\frac{(1-\langle\omega',\omega''\rangle^{2})^{\frac{1}{2}}}{(1-\langle\omega,\omega''\rangle^{2})^{\frac{1}{2}}}P_{T_{u''}S}N(u).
\end{equation}
Likewise, or by symmetry,
\begin{equation}\label{claimDelta5-19apr24}
    \langle\omega^{\prime\prime},\omega^{\prime}\rangle\omega^{\prime}-\omega^{\prime\prime}=P_{\langle\omega'\rangle^{\perp}}\omega''=\frac{|P_{\langle\omega'\rangle^{\perp}}\omega''|}{|P_{\langle\omega'\rangle^{\perp}}\omega|}P_{\langle\omega'\rangle^{\perp}}\omega=\frac{(1-\langle\omega',\omega''\rangle^{2})^{\frac{1}{2}}}{(1-\langle\omega,\omega'\rangle^{2})^{\frac{1}{2}}}P_{T_{u'}S}N(u).
\end{equation}
Using \eqref{claimDelta3-19apr24} and \eqref{claimDelta4-19apr24} in \eqref{finalJ-19apr24} gives \eqref{Jformula-prop-19apr24}. We now move to the final part of the argument.
\begin{proof}[Proof of Claim \ref{claim2-23feb24}] By \eqref{etadef} and \eqref{xandomega-23feb24},
\begin{equation*}
\eqalign{
    \displaystyle|\eta(x)|&\displaystyle=\left|\left\langle \frac{\omega-\langle\omega,\omega^{\prime}\rangle\omega^{\prime}}{1+\langle\omega,\omega^{\prime}\rangle},\frac{\omega^{\prime\prime}-\langle\omega^{\prime\prime},\omega^{\prime}\rangle\omega^{\prime}}{1+\langle\omega^{\prime\prime},\omega^{\prime}\rangle}\right\rangle\right|\frac{|1+\langle\omega,\omega^{\prime}\rangle|^{2}}{|\omega-\langle\omega,\omega^{\prime}\rangle\omega^{\prime}|^{2}} \cr
    %&\displaystyle=\frac{|\langle\omega,\omega^{\prime\prime}\rangle-\langle\omega,\omega^{\prime}\rangle\langle\omega^{\prime},\omega^{\prime\prime}\rangle|}{|1+\langle\omega^{\prime\prime},\omega^{\prime}\rangle|}\frac{|1+\langle\omega,\omega^{\prime}\rangle|}{|1-\langle\omega,\omega^{\prime}\rangle^{2}|} \cr
    &\displaystyle= \left|\frac{\langle\omega,\omega^{\prime\prime}\rangle-\langle\omega,\omega^{\prime}\rangle\langle\omega^{\prime},\omega^{\prime\prime}\rangle}{(1+\langle\omega^{\prime\prime},\omega^{\prime}\rangle)(1-\langle\omega,\omega^{\prime}\rangle)}\right|,\cr
    }
\end{equation*}
which verifies \eqref{etaformula}. To establish \eqref{secondformula-23feb24}, we simply observe that $H(\eta(x)x)-H(0)=u''-u'$, and this together with \eqref{diffPhi-23feb24} implies that
\begin{equation*}
     \langle H(\eta(x)x)-H(0),(\mathrm{d}\Phi_{\omega^{\prime}})_{x}(x)\rangle = \langle u''-u',\langle\omega,\omega^{\prime}\rangle\omega-\omega^{\prime}\rangle = -\langle u''-u',\omega'\rangle,
\end{equation*}
since $u''-u'$ is perpendicular to $\omega$ by definition of $u''$. Finally, notice that $\Phi_{\omega^{\prime}}(\eta(x)x)=\omega^{\prime\prime}$ and that a direct computation gives
\begin{equation}\label{diffPhieta-23feb24}
    (\mathrm{d}\Phi_{\omega^{\prime}})_{\eta(x)x}(x)=\frac{1}{\eta(x)}\left(\langle\omega^{\prime\prime},\omega^{\prime}\rangle\omega^{\prime\prime}-\omega^{\prime}\right).
\end{equation}
Therefore by definition of $H$, the chain rule and \eqref{diffPhieta-23feb24}, we have
\begin{equation*}
    \eqalign{
    \displaystyle \langle\mathrm{d}H_{\eta(x) x}^{\top}\left(\Phi_{\omega^{\prime}}(x)\right),x\rangle &\displaystyle = \langle\omega,\mathrm{d}H_{\eta(x) x}(x)\rangle \cr
    &\displaystyle= \langle\omega,\mathrm{d}N^{-1}_{\Phi_{\omega^{\prime}}(\eta(x)x)}\circ(\mathrm{d}\Phi_{\omega^{\prime}})_{\eta(x)x}(x)\rangle\cr
     &\displaystyle= \frac{1}{\eta(x)}\langle\omega,\mathrm{d}N^{-1}_{\omega^{\prime\prime}}(\langle\omega^{\prime\prime},\omega^{\prime}\rangle\omega^{\prime\prime}-\omega^{\prime})\rangle,\cr
    }
\end{equation*}
which concludes the proof of Claim \ref{claim2-23feb24}.
\end{proof}

\subsection{Computing $\Delta$} Arguing as in Section \ref{subsectionJ}, for fixed $\omega$ we define the map $\widetilde{\Psi}_{\omega}:N(S)\rightarrow\mathbb{S}^{n-1}$ by
\begin{equation}\label{changevartilde-26feb24}
    \widetilde{\Psi}_{\omega}(\omega')=N(R_{N^{-1}(\omega)}N^{-1}(\omega^{\prime})).
\end{equation}
Recalling from Section \ref{Sect:general submanifolds} that $\Delta(u,u'')$ is the Jacobian of the change of variables $u'=R_u u''$, it follows that
\begin{equation}\label{Delta26feb24}
    \Delta(u,u')=\left|\det{\left(\mathrm{d}\widetilde{\Psi}_{N(u)}(N(u'))\right)}\right|\frac{K(u')}{K(u'')}.
\end{equation}
Recall that $\omega^{\prime}\in \mathbb{S}^{n-1}$ is a variable now. We will use $x'\in\langle\omega\rangle^{\perp}$ to represent its preimage by the map $\Phi_{\omega}$:
\begin{equation*}
    x'\xmapsto{\Phi_{\omega}} \omega^{\prime}\xmapsto{N^{-1}} u'.
\end{equation*}
Once more we reduce the computation of $\det{\left(\mathrm{d}\widetilde{\Psi}_{N(u)}(N(u'))\right)}$ to an application of Lemma \ref{detphilemma}. Define $\widetilde{\varphi}:\langle\omega\rangle^{\perp}\rightarrow\langle\omega\rangle^{\perp}$ by
$$\widetilde{\varphi}(x'):=\Phi_{\omega}^{-1}\circ\widetilde{\Psi}_{\omega}\circ\Phi_{\omega}(x').$$
\begin{claim}\label{claim1-26feb24} $\widetilde{\varphi}$ is given by
\begin{equation}
    \widetilde{\varphi}(x') = \widetilde{\eta}(x')x',
\end{equation}
where
\begin{equation}\label{etatildedef}
     \widetilde{\eta}(x') = \frac{\langle x',H_{\omega}^{-1}(R_{H_{\omega}(0)}H_{\omega}(x'))\rangle}{|x'|^{2}}=\frac{\langle x',\Phi_{\omega}^{-1}(\omega^{\prime\prime})\rangle}{|x'|^{2}}.
\end{equation}
\end{claim}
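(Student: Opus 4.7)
The plan is to follow the same strategy as in the proof of Claim \ref{claim1-23feb24}, with the roles of $\omega$ and $\omega'$ interchanged: here $\omega$ is fixed (and so the stereographic coordinates are taken with respect to $-\omega$), while $\omega'$ (and correspondingly $\omega''$) is the variable.

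First I would invoke the defining geometric condition \eqref{collision condition 2}, which says that $\omega$, $\omega'$ and $\omega''$ are coplanar and hence lie on a common great circle $\Gamma$ of $\mathbb{S}^{n-1}$. Since $\omega\in\Gamma$ and every great circle is invariant under the antipodal map, the antipode $-\omega$ also belongs to $\Gamma$. This is the key geometric input, and is exactly the analogue of what was used in Claim \ref{claim1-23feb24}: there the great circle passed through $\pm\omega'$, now it passes through $\pm\omega$.

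Next I would appeal to the classical fact that the inverse stereographic projection $\Phi_\omega:\langle\omega\rangle^\perp\to\mathbb{S}^{n-1}$ with respect to $-\omega$ sends straight lines through the origin in $\langle\omega\rangle^\perp$ bijectively onto great circles of $\mathbb{S}^{n-1}$ passing through $\omega$ and $-\omega$. Applying $\Phi_\omega^{-1}$ to $\Gamma\setminus\{-\omega\}$ thus yields a line through the origin in $\langle\omega\rangle^\perp$ containing both $x'=\Phi_\omega^{-1}(\omega')$ and $\Phi_\omega^{-1}(\omega'')=\widetilde{\varphi}(x')$. Consequently, there exists a scalar $\mu(x')$ such that $\widetilde{\varphi}(x')=\mu(x')\,x'$.

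Finally, taking the inner product of this identity with $x'$ and dividing by $|x'|^2$ I would obtain
$$\mu(x')=\frac{\langle x',\Phi_\omega^{-1}(\omega'')\rangle}{|x'|^2}=\widetilde{\eta}(x'),$$
which matches the formula \eqref{etatildedef}. This completes the argument. I do not anticipate any genuine obstacle here: the only nontrivial ingredient is the standard circle-preserving property of stereographic projection, and the argument is essentially a mirror image of the one already carried out for the map $\varphi$ in Section \ref{subsectionJ}.
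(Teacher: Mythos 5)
Your proof is correct and follows essentially the same route as the paper, which simply notes that the argument is analogous to that of Claim~\ref{claim1-23feb24}. You have usefully made explicit the geometric ingredient --- that the inverse stereographic projection $\Phi_\omega$ carries lines through the origin in $\langle\omega\rangle^\perp$ bijectively onto great circles through $\pm\omega$ --- which the paper's proof of Claim~\ref{claim1-23feb24} leaves implicit when asserting that coplanarity forces $\varphi(x)=\mu(x)x$.
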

The proof of Claim \ref{claim1-26feb24} is similar to the one of Claim \ref{claim1-23feb24}. By the chain rule,
$$\det(\mathrm{d}\widetilde{\varphi}(x'))=\det((\mathrm{d}\Phi_{\omega}^{-1})(\omega^{\prime\prime}))\det((\mathrm{d}\Psi_{\omega})(\omega^{\prime}))\det((\mathrm{d}\Phi_{\omega})(x')),$$
hence
\begin{equation*}
    \det((\mathrm{d}\widetilde{\Psi}_{\omega})(\omega^{\prime}))=\frac{\det(\mathrm{d}\widetilde{\varphi}(x'))}{\det((\mathrm{d}\Phi_{\omega}^{-1})(\omega^{\prime\prime}))\det((\mathrm{d}\Phi_{\omega})(x'))}.
\end{equation*}
This implies, by \eqref{Delta26feb24}, that
\begin{equation}\label{almostfinaljac-26feb24}
    \Delta(u,u')=\frac{|\det(\mathrm{d}\widetilde{\varphi}(x'))|}{|\det((\mathrm{d}\Phi_{\omega}^{-1})(\omega^{\prime\prime}))||\det((\mathrm{d}\Phi_{\omega})(x'))|}\frac{K(u')}{K(u'')}.
\end{equation}
The parameter $u\in S$ is fixed in this subsection (and therefore so is $\omega\in\mathbb{S}^{n-1}$), so we lighten notation by writing $H_{\omega}=H$. We may again compute $\det(\mathrm{d}\widetilde{\varphi}(x'))$ using Lemma \ref{detphilemma}. The eigenvalue $\widetilde{\lambda}_{1}(x')$ of $\mathrm{d}\widetilde{\varphi}(x')$ is
\begin{equation*}
    \widetilde{\lambda}_{1}(x')=\widetilde{\eta}(x')=\frac{\langle x',H^{-1}(R_{H(0)}H(x'))\rangle}{|x'|^{2}},
\end{equation*}
hence we just have to compute the eigenvalue $\widetilde{\lambda}_{2}(x')$ of $\mathrm{d}\widetilde{\varphi}(x')$ and use \eqref{det-eigenvalues}. Recall that
    \begin{equation*}
        \langle R_{u}u'-u', N(u)\rangle = 0.
    \end{equation*}
Equivalently,
  \begin{equation}\label{cond2-26feb24}
        \langle H(\widetilde{\eta}(x')x')-H(x'), N(H(0))\rangle = 0.
    \end{equation}
Differentiating both sides of \eqref{cond2-26feb24} with respect to $x'$ and taking scalar products with $x'$ as well gives
\begin{equation*}
    \langle (\mathrm{d}H_{\widetilde{\varphi}(x')}\circ\mathrm{d}\varphi_{x'})^{\top}(\Phi_{\omega}(0)),x'\rangle = \langle\omega,\mathrm{d}H_{x'}(x')\rangle,
\end{equation*}
which in turn implies that
\begin{equation*}
    \langle (\mathrm{d}H_{\widetilde{\varphi}(x')})^{\top}(\omega),(\mathrm{d}\widetilde{\varphi}_{x'})(x')\rangle =\langle(\mathrm{d}H_{x'})^{\top}\omega,(x')\rangle= \langle\omega,\mathrm{d}H_{x'}(x')\rangle.
\end{equation*}
Using the fact that $x'$ is an eigenvector of $\mathrm{d}\widetilde{\varphi}(x')$ with eigenvalue $\widetilde{\lambda}_{2}(x')$ and that $H=N^{-1}\circ\Phi_{\omega}$ yields
\begin{equation*}
\eqalign{
    \displaystyle\widetilde{\lambda}_{2}(x')&\displaystyle =\frac{\langle\omega,\mathrm{d}N^{-1}_{\omega'}\circ(\mathrm{d}\Phi_{\omega})_{x'}(x')\rangle}{\langle\omega,\mathrm{d}N^{-1}_{\omega''}\circ(\mathrm{d}\Phi_{\omega})_{\widetilde{\eta}(x')x'}(x')\rangle} 
    =\displaystyle \widetilde{\eta}(x')\frac{\langle\omega,\mathrm{d}N^{-1}_{\omega'}(\langle\omega,\omega'\rangle\omega'-\omega)\rangle}{\langle\omega,\mathrm{d}N^{-1}_{\omega''}(\langle\omega'',\omega\rangle\omega''-\omega)\rangle}. \cr
    }
\end{equation*}
By Lemma \ref{detphilemma} once more,
\begin{equation*}
    \det(\mathrm{d}\widetilde{\varphi}(x'))=[\widetilde{\eta}(x')]^{n-1}\frac{\langle\omega,\mathrm{d}N^{-1}_{\omega'}(\langle\omega,\omega'\rangle\omega'-\omega)\rangle}{\langle\omega,\mathrm{d}N^{-1}_{\omega''}(\langle\omega'',\omega\rangle\omega''-\omega)\rangle}.
\end{equation*}
By \eqref{almostfinaljac-26feb24},
\begin{equation*}
    \Delta(u,u')=\frac{1}{|\det((\mathrm{d}\Phi_{\omega}^{-1})(\omega^{\prime\prime}))||\det((\mathrm{d}\Phi_{\omega})(x'))|}|\widetilde{\eta}(x')|^{n-1}\frac{|\langle\omega,\mathrm{d}N^{-1}_{\omega'}(\langle\omega,\omega'\rangle\omega'-\omega)\rangle|}{|\langle\omega,\mathrm{d}N^{-1}_{\omega''}(\langle\omega'',\omega\rangle\omega''-\omega)\rangle|}\frac{K(u')}{K(u'')}.
\end{equation*}
By \eqref{etatildedef},
\begin{equation}\label{etatildeformula}
\eqalign{
    \displaystyle|\widetilde{\eta}(x')|&\displaystyle=\left|\left\langle \frac{\omega'-\langle\omega,\omega^{\prime}\rangle\omega}{1+\langle\omega,\omega^{\prime}\rangle},\frac{\omega^{\prime\prime}-\langle\omega^{\prime\prime},\omega\rangle\omega}{1+\langle\omega^{\prime\prime},\omega\rangle}\right\rangle\right|\frac{|1+\langle\omega,\omega^{\prime}\rangle|^{2}}{|\omega'-\langle\omega,\omega^{\prime}\rangle\omega|^{2}} \cr
    %&\displaystyle=\frac{|\langle\omega',\omega^{\prime\prime}\rangle-\langle\omega,\omega^{\prime}\rangle\langle\omega,\omega^{\prime\prime}\rangle|}{|1+\langle\omega^{\prime\prime},\omega\rangle|}\frac{|1+\langle\omega,\omega^{\prime}\rangle|}{|1-\langle\omega,\omega^{\prime}\rangle^{2}|} \cr
    &\displaystyle= \left|\frac{\langle\omega',\omega^{\prime\prime}\rangle-\langle\omega,\omega^{\prime}\rangle\langle\omega,\omega^{\prime\prime}\rangle}{(1+\langle\omega^{\prime\prime},\omega\rangle)(1-\langle\omega,\omega^{\prime}\rangle)}\right|.\cr
    }
\end{equation}
By \eqref{jacstereo1}, \eqref{jacstereo2}, and \eqref{etatildeformula},
\begin{equation*}
    \eqalign{
\displaystyle\Delta(u,u')&=\displaystyle \left|\frac{\langle\omega',\omega^{\prime\prime}\rangle-\langle\omega,\omega^{\prime}\rangle\langle\omega,\omega^{\prime\prime}\rangle}{(1+\langle\omega^{\prime\prime},\omega\rangle)\cdot(1-\langle\omega,\omega^{\prime}\rangle)}\right|^{n-1}\left|\frac{1+\langle\omega'',\omega\rangle}{1+\langle\omega',\omega\rangle}\right|^{n-1}\frac{|\langle\omega,\mathrm{d}N^{-1}_{\omega'}(\langle\omega,\omega'\rangle\omega'-\omega)\rangle|}{|\langle\omega,\mathrm{d}N^{-1}_{\omega''}(\langle\omega'',\omega\rangle\omega''-\omega)\rangle|}\frac{K(u')}{K(u'')} \cr
%&\displaystyle= \left|\frac{\langle\omega',\omega^{\prime\prime}\rangle-\langle\omega,\omega^{\prime}\rangle\langle\omega,\omega^{\prime\prime}\rangle}{1-\langle\omega,\omega^{\prime}\rangle^{2}}\right|^{n-1}\frac{|\langle\omega,\mathrm{d}N^{-1}_{\omega'}(\langle\omega,\omega'\rangle\omega'-\omega)\rangle|}{|\langle\omega,\mathrm{d}N^{-1}_{\omega''}(\langle\omega'',\omega\rangle\omega''-\omega)\rangle|}\frac{K(u')}{K(u'')} \cr
&\displaystyle= \left(\frac{|N(u)\wedge N(u'')|}{|N(u)\wedge N(u')|}\right)^{n-1}\frac{|\langle P_{T_{u'}S}N(u),(\mathrm{d}N_{u'})^{-1}(P_{T_{u'}S}N(u))\rangle|}{|\langle P_{T_{u''}S}N(u),(\mathrm{d}N_{u''})^{-1}(P_{T_{u''}S}N(u))\rangle|}\frac{K(u')}{K(u'')}, \cr
    }
\end{equation*}
by \eqref{claimDelta4-19apr24}, \eqref{claimDelta5-19apr24} and by similar geometric considerations to those in Section \ref{subsectionJ}. This establishes \eqref{deltaformula-prop-19apr24}.
\subsection{Relating $J$ and $\Delta$} Here we establish \eqref{switch}, the ``switching property" of $\Delta$.
By \eqref{Jformula-prop-19apr24} we have
\begin{equation*}\label{ineqDelta1-28feb24}
    \eqalign{
\displaystyle\frac{J(u,u'')}{J(u,u')}
%&=\displaystyle \left(\frac{|N(u')\wedge N(u'')|}{|N(u)\wedge N(u'')|}\right)^{n-2}\left|\frac{\langle u''-u',N(u')\rangle}{\langle P_{T_{u'}S}N(u),(\mathrm{d}N_{u'})^{-1}(P_{T_{u'}S}N(u))\rangle}\right|\frac{K(u)}{K(u')}\cr
%&\displaystyle\qquad\times\left(\frac{|N(u)\wedge N(u')|}{|N(u')\wedge N(u'')|}\right)^{n-2}\left|\frac{\langle P_{T_{u''}S}N(u),(\mathrm{d}N_{u''})^{-1}(P_{T_{u''}S}N(u))\rangle}{\langle u''-u',N(u'')\rangle}\right|\frac{K(u'')}{K(u)} \cr
&\displaystyle=\left(\frac{|N(u)\wedge N(u')|}{|N(u)\wedge N(u'')|}\right)^{n-2}\left|\frac{\langle P_{T_{u''}S}N(u),(\mathrm{d}N_{u''})^{-1}(P_{T_{u''}S}N(u))\rangle}{\langle P_{T_{u'}S}N(u),(\mathrm{d}N_{u'})^{-1}(P_{T_{u'}S}N(u))\rangle}\right|\frac{|\langle u''-u',N(u')\rangle|}{|\langle u''-u',N(u'')\rangle|}\frac{K(u'')}{K(u')}.\cr
    }
\end{equation*}
Using the coplanarity condition \eqref{collision condition 2}, an elementary argument similar to that leading to \eqref{claimDelta3-19apr24} reveals that
\begin{equation*}
    \frac{|\langle u''-u',N(u')\rangle|}{|\langle u''-u',N(u'')\rangle|}=\frac{|P_{\langle\omega\rangle^{\perp}}N(u')|}{|P_{\langle\omega\rangle^{\perp}}N(u'')|} =\frac{(1-\langle\omega,\omega^{\prime}\rangle^{2})^{\frac{1}{2}}}{(1-\langle\omega,\omega''\rangle^{2})^{\frac{1}{2}}}=\frac{|N(u)\wedge N(u')|}{|N(u)\wedge N(u'')|},
\end{equation*}
from which \eqref{switch} follows.
%which together with \eqref{ineqDelta1-28feb24} implies that
%\begin{equation*}\displaystyle\frac{J(u,u'')}{J(u,u')}= \left(\frac{|N(u)\wedge N(u')|}{|N(u)\wedge N(u'')|}\right)^{n-1}\left|\frac{\langle P_{T_{u''}S}N(u),(\mathrm{d}N_{u''})^{-1}(P_{T_{u''}S}N(u))\rangle}{\langle P_{T_{u'}S}N(u),(\mathrm{d}N_{u'})^{-1}(P_{T_{u'}S}N(u))\rangle}\right|\frac{K(u'')}{K(u')}=\Delta(u,u''),\end{equation*}
%which establishes \eqref{switch}.

%%%%%%%%%%%%%%%%%%%%%%%%%%%%%%%%%%%%%%%%%%%%%%%%%%%%%%%%%%%%%%%%%%%%%%%%%%%%%%%%%%%

\section{Surface-carried fractional integrals}\label{Sect:KS}
In this section we establish Lebesgue space bounds on the bilinear fractional integrals
$$
I_{S,s} (g_1,g_2)(u) := \int_S \frac{g_1(u') g_2( R_uu' )}{ |u' - R_uu'|^{s} } J(u,u') \mathrm{d}\sigma(u')
$$
arising in Section \ref{Sect:general submanifolds}.
\begin{remark}[Relation to classical fractional integral operators]
This is a surface-carried variant of the bilinear fractional integral operator 
$$
I_s(f_1,f_2)(x):=\int_{\mathbb{R}^d}\frac{f_1\left(x+\frac{y}{2}\right)f_2\left(x-\frac{y}{2}\right)}{|y|^s}dy
$$
that naturally arises when $S$ is the paraboloid (see Section \ref{Sect:para}), and has been studied by several authors; we refer to \cite{Graf} and \cite{KS}. 
%As may be expected, when $S$ is the paraboloid $I_{S,s}$ may\footnote{I haven't actually checked this.} be pulled back to $I_s$ using the procedure outlined in Remark \ref{examples}.
\end{remark}
%After a suitable rotation $S$ may be expressed as the graph of some smooth ($C^2$) strictly convex function $\phi$ on some convex subset $K\subset \mathbb{R}^{n-1}$ of diameter $\sim 1$. 
%The convexity of the domain of $\phi$ follows from the assumption that the normal set $N(S)$ is a geodesically convex subset of the sphere. 
As indicated in Section \ref{Sect:general submanifolds}, the presence of the factor $J$ in the kernel implies that this operator is symmetric -- that is, $I_{S,s}(g_1,g_2)=I_{S,s}(g_2,g_1)$. It is also natural for geometric reasons, allowing for bounds that are independent of any lower bounds on the curvature of $S$. For example, we have
\begin{eqnarray}\label{quickL1}
    \begin{aligned}
\|I_{S,s}(f_1,f_2)\|_1&=\int_S\int_S \frac{f_1(u')f_2(R_u u')}{|u'-R_u u'|^s}J(u,u')\mathrm{d}\sigma(u)\mathrm{d}\sigma(u')\\&=\int_S\int_S\frac{f_1(u')f_2(u'')}{|u'-u''|^s}\mathrm{d}\sigma(u')\mathrm{d}\sigma(u'')\\
&\leq C_s\|f_1\|_2\|f_2\|_2,
    \end{aligned}
\end{eqnarray}
where $$C_s:=\sup_{u\in S}\int_S\frac{\mathrm{d}\sigma(u')}{|u-u'|^s}.$$ Evidently $C_s$ does not depend on any lower bound on the curvature of $S$.
More generally we have the following:
\begin{theorem}\label{l:EndpointKS}
Let $0<s<n-1$, $q\in [\frac12,1]$, and $S$ be as above. 
%Suppose that 
%$$
%s < 
%\begin{cases}
%    (n-1) ( \frac1q -1 )\quad &{\rm if}\quad q\in [\frac12,1),\\
%    n-1\quad &{\rm if} \quad q\in (0,\frac12). 
%\end{cases}
%$$
Then 
$$
\|I_{S,s}(g_1,g_2)\|_{L^q(S)} \lesssim Q(S)^{2(n-1)}\|g_1\|_{L^{2q}(S)} \|g_2\|_{L^{2q}(S)}, 
$$
where the implicit constant depends on $n,s,q$ and the diameter of $S$.
\end{theorem}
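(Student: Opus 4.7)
My plan is to establish the two endpoints $q=1$ (i.e.\ $L^2(S) \times L^2(S) \to L^1(S)$) and $q=1/2$ (i.e.\ $L^1(S) \times L^1(S) \to L^{1/2}(S)$) separately, and then conclude by multilinear Marcinkiewicz interpolation, which covers the full range $q \in [\tfrac12, 1]$. The $q=1$ endpoint is essentially already contained in \eqref{quickL1}: swapping the order of integration using the defining property of $J$ recasts $\|I_{S,s}(g_1,g_2)\|_1$ as a double integral over $S \times S$ of $g_1(u')g_2(u'')|u'-u''|^{-s}$, to which Cauchy--Schwarz and the local integrability of $|u-u'|^{-s}$ for $s<n-1$ apply; no $Q(S)$ factor enters here, and the constant depends only on $n,s$ and the diameter of $S$.

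For the endpoint $q=1/2$, the key geometric ingredients will be: (i) a uniform pointwise bound $J(u,u') \lesssim Q(S)^{2(n-1)}$, obtained by inserting the distance estimates of Proposition \ref{metricestimate} and the curvature quotient bounds built into $Q(S)$ into the explicit formula \eqref{Jformula-prop-19apr24}; and (ii) the distance equivalence $|u' - R_u u'| \sim |u-u'|$ up to $Q(S)$-factors (also from Proposition \ref{metricestimate}). Together these reduce matters to bounding the operator
$$T(g_1,g_2)(u) := \int_S \frac{g_1(u')\,g_2(R_u u')}{|u-u'|^s}\,\mathrm{d}\sigma(u')$$
in $L^{1/2}(S)$, at the cost of an additional factor of $Q(S)^{2(n-1)+s}$. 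I will bound $T$ via Cauchy--Schwarz as $T(g_1,g_2)(u)^2 \leq T_s^\ast g_1^2(u) \cdot T_s^\ast (g_2 \circ R_u)^2(u)$, where $T_s^\ast g(u) := \int_S g(u') |u-u'|^{-s}\,\mathrm{d}\sigma(u')$ is the standard fractional integral on $S$. Changing variables $u' \mapsto u'' = R_u u'$ in the second factor turns $T_s^\ast(g_2 \circ R_u)^2(u)$ into $T_s^\ast g_2^2(u)$ up to the Jacobian factor $\Delta(u,u')^{-1}$, which is controlled by $Q(S)^{2(n-1)}$ via the switching identity \eqref{switch} combined with the $J$-bound (these together imply $\Delta(u,u')\Delta(u,u'')=1$, whence $\Delta^{\pm 1} \lesssim Q(S)^{2(n-1)}$). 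The standard boundedness of $T_s^\ast$ on $L^p(S)$ for $s<n-1$ (valid on compact $S$, with constant depending on the diameter) then closes the argument, yielding the desired $L^1 \times L^1 \to L^{1/2}$ bound with $Q(S)$-dependence absorbed into a single power $Q(S)^{2(n-1)}$ up to constants depending only on $n, s$, and the diameter.

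The main obstacle will be the careful bookkeeping of $Q(S)$-factors, particularly the pointwise bound $J \lesssim Q(S)^{2(n-1)}$. This requires extracting the curvature ratio $\sup_p \lambda_{n-1}(p)/\inf_p \lambda_1(p) \leq Q(S)$ from the formula \eqref{Jformula-prop-19apr24}, and using the distance estimates \eqref{normal2plane} and \eqref{easier} to control the ``long-range'' geometric factors (such as $|u'-u''|/|N(u'')-N(u)|$) appearing there; the matching bound on $\Delta^{-1}$ additionally relies on the involutive structure of $R_u$ encoded in \eqref{switch}. Once the geometry is safely absorbed into the single power $Q(S)^{2(n-1)}$, the analytic part of the argument is standard.
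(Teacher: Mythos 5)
Your plan of establishing the two endpoints $q=1$ and $q=\tfrac12$ separately and then interpolating is a genuinely different route from the paper, which instead runs a dyadic cap decomposition (in the style of Kenig--Stein) directly across the whole range $q\in[\tfrac12,1]$. Your $q=1$ endpoint is correct and is essentially \eqref{quickL1}. But your $q=\tfrac12$ argument has a real gap, and the source of it is precisely the place where you diverge from the paper: you try to control $J$ and $\Delta$ pointwise, whereas the paper's proof never uses a pointwise bound on $J$ at all --- it exploits the exact measure-theoretic identity
$$
\int_S\int_S f(u)\,g(R_u u')\,J(u,u')\,\mathrm{d}\sigma(u)\,\mathrm{d}\sigma(u') = \|f\|_{L^1(S)}\|g\|_{L^1(S)},
$$
which comes straight from the definition of $J$ and makes the Jacobian disappear exactly. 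This is what allows the paper to achieve the constant $Q(S)^{2(n-1)}$ with the only $Q(S)$-losses coming from (i) the cap dilation $|u-u'|\lesssim Q(S)|u'-R_uu'|$ and (ii) the $Q(S)^{n-1}$ overlap of the dilated caps.

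Your pointwise bound $J(u,u')\lesssim Q(S)^{2(n-1)}$ is not justified by anything in the paper, and if you chase the estimates in the paper's Step~2 (the bound $J \lesssim Q(S)^{\frac{5(n-2)}{2}}\,\frac{|u''-u'|}{|N(u'')-N(u)|}\sup_p\lambda_{n-1}(p)$ from \eqref{take}, combined with Proposition~\ref{metricestimate} and the mean-value theorem for the Gauss map), what comes out is a power on the order of $Q(S)^{\frac{5n-7}{2}}$, which exceeds $2(n-1)$ as soon as $n>3$. Worse, your bound on $\Delta^{\pm 1}$ requires a \emph{lower} bound on $J$ (since \eqref{switch} gives $\Delta(u,u')=J(u,u')/J(u,u'')$), which you never establish; a pointwise upper bound on $J$ alone says nothing about $\Delta^{-1}$. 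Finally, the change of variables $u'\mapsto u''=R_u u'$ in the second Cauchy--Schwarz factor leaves you with $|u-R_u u''|^{-s}=|u-u'|^{-s}$ rather than $|u-u''|^{-s}$; repairing this requires the comparability $|u-u'|\sim|u-u''|$, which costs further $Q(S)^{3/2}$-type factors via Proposition~\ref{metricestimate}. Each of these slippages adds to the $Q(S)^{2(n-1)+s}$ you already concede from replacing $|u'-R_uu'|$ by $|u-u'|$, so even if all the individual bounds were true your final power of $Q(S)$ would be well above the stated $2(n-1)$. The fix is to abandon pointwise control of $J$ and $\Delta$ and instead integrate $J$ out via the identity above, which is exactly what the paper's dyadic decomposition is structured to do.
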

In order to prove Theorem \ref{l:EndpointKS} we adapt the argument of Kenig and Stein \cite{KS} from the Euclidean setting. 
\begin{proof}[Proof of Theorem \ref{l:EndpointKS}] For each dyadic scale $\lambda\lesssim\diam(S)$ we decompose $S$ into a collection $\Theta_\lambda$ of $\lambda$-caps $\theta$, noting that $|\theta| \sim \lambda^{n-1}$ for such a cap. 
    Performing a dyadic decomposition and using the embedding $\ell^q \subset \ell^1$, for $q\le 1$, we have that (recall that $u''=R_{u}u'$)
    \begin{align*}
        \int_S I_{S,s}(g_1,g_2)^q \mathrm{d}\sigma(u) 
        &\lesssim 
        \sum_{0<\lambda \lesssim\diam(S)} \lambda^{-qs} \int_S \Bigg( \int_{u'\in S: |u'-u''|\sim \lambda}g_1(u') g_2(u'' ) J(u,u') \mathrm{d}\sigma(u')  \Bigg)^{q} \mathrm{d}\sigma(u). 
    \end{align*}
    Next, we fix an arbitrary dyadic scale $\lambda$ and decompose 
    \begin{align*}
        \int_S \Bigg( \int_{u'\in S: |u'-u''|\sim \lambda}&g_1(u') g_2( u'' ) J(u,u') \mathrm{d}\sigma(u')  \Bigg)^q \mathrm{d}\sigma(u)\\
        &= 
        \sum_{\theta\in\Theta_\lambda} \int_\theta \Bigg( \int_{u'\in S: |u'-u''|\sim \lambda}g_1(u') g_2( u'' ) J(u,u') \mathrm{d}\sigma(u')  \Bigg)^q \mathrm{d}\sigma(u)\\
        &\lesssim 
        \lambda^{(n-1)(1-q)} 
        \sum_{\theta\in\Theta_\lambda}\Bigg( \int_\theta  \int_{u'\in S: |u'-u''|\sim \lambda}g_1(u') g_2( u'' ) J(u,u') \mathrm{d}\sigma(u')   \mathrm{d}\sigma(u)\Bigg)^q. 
    \end{align*}
    Here we used that $0<q\le 1$ once more. 
    Recall that $|u-u'|\lesssim Q(S) |u' - u''|$ for all $u,u'\in S$ by Proposition \ref{metricestimate}. 
    Thus if $u \in \theta\in\Theta_\lambda$ and $|u' - u''| \sim \lambda$, then $|u-u'|\lesssim Q(S)\lambda$ which means that $u' \in \theta^*$, where $\theta^*$ is an $O(Q(S))$ dilate of $\theta$. Similarly, $u'' \in \theta^*$. 
    Consequently,
    \begin{align*}
        \int_S \Bigg( \int_{u'\in S: |u'-u''|\sim \lambda}&g_1(u') g_2( u'' ) J(u,u') \mathrm{d}\sigma(u')  \Bigg)^q \mathrm{d}\sigma(u)\\
        &\lesssim 
        \lambda^{(n-1)(1-q)} 
        \sum_{\theta\in\Theta_\lambda}\Bigg( \int_S  \int_{S} g_1\mathbbm{1}_{\theta^*}(u') g_2\mathbbm{1}_{\theta^*}( u'' ) J(u,u') \mathrm{d}\sigma(u')   \mathrm{d}\sigma(u)\Bigg)^q\\
        &= 
        \lambda^{(n-1)(1-q)}
        \sum_{\theta\in\Theta_\lambda}\|g_1 \mathbbm{1}_{\theta^*}\|_{L^1(S)}^q \|g_2 \mathbbm{1}_{\theta^*}\|_{L^1(S)}^q \\
        &\lesssim 
        \lambda^{(n-1)(1-q)}
        \Bigg( \sum_{\theta\in\Theta_\lambda}\|g_1 \mathbbm{1}_{\theta^*}\|_{L^1(S)}^{2q} \Bigg)^{\frac12 }\Bigg( \sum_{\theta\in\Theta_\lambda}\|g_2 \mathbbm{1}_{\theta^*}\|_{L^1(S)}^{2q} \Bigg)^{\frac12 }\\
        %&\lesssim 
        %\lambda^{(n-1)(1-q)}
        %\big( \sum_{\theta:\lambda-cap} \big( \|g_1 \mathbbm{1}_{\theta^*}\|_{L^p(S)}|\theta^*|^{1-\frac1p} \big)^{2q} \big)^{\frac12 }\big( \sum_{\theta:\lambda-cap} \big( \|g_2 \mathbbm{1}_{\theta^*}\|_{L^p(S)} \theta^*|^{1-\frac1p}\big)^{2q} \big)^{\frac12 }\\
        &\lesssim 
        \lambda^{(n-1)(1-q)} (Q(S)\lambda)^{(n-1)\frac{2q}{p'}}
        \Bigg( \sum_{\theta\in\Theta_\lambda}  \|g_1 \mathbbm{1}_{\theta^*}\|_{L^p(S)}^{2q} \Bigg)^{\frac12 }\Bigg( \sum_{\theta\in\Theta_\lambda} \|g_2 \mathbbm{1}_{\theta^*}\|_{L^p(S)}^{2q} \Bigg)^{\frac12 },
    \end{align*}
    for $p\ge1$, where we have used that $$\int_S\int_S f(u)g(u'') J(u,u') \mathrm{d}\sigma(u)\mathrm{d}\sigma(u') = \|f\|_{L^1(S)}\|g\|_{L^1(S)}.$$ 
    Since $q\ge\frac12$ and $p=2q$, we obtain that 
    \begin{equation*}
    \eqalign{
        \displaystyle\int_S \Bigg( \int_{u'\in S: |u'-u''|\sim \lambda}&g_1(u') g_2( u'' ) J(u,u') \mathrm{d}\sigma(u')  \Bigg)^q \mathrm{d}\sigma(u) \cr
        &\displaystyle\lesssim 
        \lambda^{(n-1)(1-q)} (Q(S)\lambda)^{(n-1)\frac{2q}{p'}}
        Q(S)^{n-1}\|g_1 \|_{L^p(S)}^{q} \|g_2 \|_{L^p(S)}^{q} \cr      
   &\displaystyle=\lambda^{(n-1)(1-q)+(n-1)\frac{2q}{p'}} Q(S)^{2q(n-1)}\|g_1 \|_{L^p(S)}^{q} \|g_2 \|_{L^p(S)}^{q}, \cr
        }
    \end{equation*}
    since the set of dilated caps $\{\theta^{\ast}:\theta\in\Theta_{\lambda}\}$ covers $S$ with a $Q(S)^{n-1}$ overlap factor. The geometric series converges as long as $-qs + (n-1)( 1- q + \frac{2q}{p'} )>0$. Since $p=2q$, this is equivalent to $s<n-1$.  % $- \frac{p}2s + (n-1)(1 - \frac{p}2 + \frac{p}{p'})>0$ i.e. $s< 2(n-1)( \frac1p - \frac12 + \frac1{p'} )$ i.e. $s<n-1$. 
\end{proof}

\if0 
\subsection{Proof of Lemma \ref{metric estimate}: take 1(Shohei's arguments)}
\begin{lemma}
    Let $K\subset \mathbb{R}^2$ be a (smooth) strict convex body and $\mathcal{C}:=\partial K$ whose arc-length is denoted by $L$. 
    Take any $x_0\in \mathcal{C}$ and let $\gamma:[0,L] \to \mathbb{R}^2 $ be an arc-length parametrization of $\mathcal{C}$ staring from $x_0=\gamma(0)$ i.e. $|\gamma'|=1$. 
    Let $l_H \in (0,L)$ to be an unique ``antipodal point" of $x_0$ in the sense that $\gamma'(l_H) = - \gamma'(0)$ (Half an hour). 
    Then 
    $$
    |\gamma(l_H) - \gamma(0)| \ge c \|\kappa\|_\infty^{-1},
    $$
    for $\kappa:=|\gamma''|$ and some $c>0$ depending on the \textit{turning number of $\mathcal{C}$ at the 45-degree}\footnote{{\color{red}This is an only ambiguous point.} The meaning of \textit{turning number of $\mathcal{C}$ at the 45-degree} is as follows. We take $l_{+10}\in (0,l_H)$ such that $ \langle \gamma'(l_{+10}), \gamma'(0)\rangle = \frac14$. So the normal vector $N(\gamma(l_{+10}))$ has been rotated for 45-degree. 
    Then the turning number is defined as $\int_0^{l_{+10}}\kappa(s)\, ds$. As this describes a total changes of the direction along $\mathcal{C}$ up to $l_{+10}$, I expect it is bounded from below uniformly. But I do not find any proof or reference yet.}.

    In particular, 
    \begin{equation}\label{e:Feb19}
    \forall x\in K,\; |\gamma(l_H) - \gamma(0)| \ge c \big( \|\kappa\|_\infty D \big)^{-1} |x - x_0|, 
    \end{equation}
    where $D:= \max_{x_1,x_2\in \partial K}|x_1-x_2|$. 
\end{lemma}

\begin{proof}
    It would be worth to point out that $l_H$ may be characterized as a point of the maximizing a ``directional distance" (footsize measure machine). 
    What does it mean? 
    For an arbitrary fixed point $x_0\in\mathcal{C}$, let define 
    $$
    d_{x_0}(x):= \langle x-x_0, -N_{\mathcal{C}}(x_0)\rangle,
    $$
    which could be referred as a distance between $x_0$ and $x$ in direction $-N_{\mathcal{C}}(x_0)$. 
    Because of the strict convexity of $\mathcal{C}$, we have that $d_{x_0}(x)>0$ for all $x \in \mathcal{C}\setminus\{0\}$ (Consider the picture!, I attach a hand-written picture; see the file ``Diamete-Curvature-relation.pdf"). 
    Using this notions, we may characterize $l_H$ as an unique point\footnote{In fact, by considering a picture, a function $l\mapsto d_{\gamma(0)}(\gamma(l))$ is monotone increasing up to some point. And after the point it is monotone decreasing. This unique point is the $l_H$.} satisfying 
    $$
    \max_{l\in [0,L]} d_{ \gamma(0) }( \gamma(l) ) = d_{\gamma(0)}(\gamma(l_H)). 
    $$
    one good thing about this quantity is that it bounds the actual distance $|\gamma(0) - \gamma(l_H)|$ from below: 
    \begin{align*}
    |\gamma(l_H) - \gamma(0)|^2
    &= 
    \langle \gamma(l_H)-\gamma(0), -N(\gamma(0)) \rangle^2 + \langle \gamma(l_H)-\gamma(0), -N(\gamma(0))^\perp \rangle^2\\
    &\ge
    \langle \gamma(l_H)-\gamma(0), -N(\gamma(0)) \rangle^2\\
    &= 
    d_{\gamma(0)}(\gamma(l_H))^2.
    \end{align*}
    So it suffices to give a lower bound of $d_{\gamma(0)}(\gamma(l_H))$. 
    With this in mind, we take another unique\footnote{By the way, there is another point $l_{-10} \in (l_H,L)$ satisfying $\langle \gamma'(l_{+10}), \gamma'(0)\rangle =\frac14$. But we don't care.} point $l_{+10} \in (0, l_H)$ (10 min past) so that 
    $$
    \langle \gamma'(l_{+10}), \gamma'(0)\rangle =\frac14. 
    $$
    Since the map $l \to d_{\gamma(0)}(\gamma(l))$ is monotone increasing on $[0,l_H]$, we have that $d_{\gamma(0)}(\gamma(l_H)) \ge d_{\gamma(0)}(\gamma(l_{+10}))$ and so it suffices to give a lower bound of $d_{\gamma(0)}(\gamma(l_{+10}))$. 
    With this in mind, we first note that $\langle N(\gamma(l_{+10})), N(\gamma(0))\rangle =\frac14$ since $N(\gamma(l)) = \gamma'(l)^\perp$. Moreover, by considering the picture, we also have that 
    $$
    l\in [0,l_{+10}]\;\Rightarrow\; \langle \gamma'(l), \gamma'(0)\rangle = \langle N(\gamma(l)), N(\gamma(0))\rangle \ge \frac14. 
    $$
    By using this property, together with $\gamma''(l) = -\kappa(l) N(\gamma(l))$ where $\kappa(l)>0$ is a curvature, we see that 
    \begin{align*}
        d_{\gamma(0)}(\gamma(l_{+10}))
        &= 
        \langle \gamma(l_{+10}) - \gamma(0), -N(\gamma(0)) \rangle \\
        &= 
        \big\langle \int_0^{l_{+10}} \gamma'(s)\, ds, -N(\gamma(0))  \big\rangle \\
        &= 
        \int_0^{l_{+10}} \int_0^s \langle  \gamma''(u), -N(\gamma(0))\rangle\, duds \\
        &= 
        \int_0^{l_{+10}} \int_0^s \kappa(u) \langle  N(\gamma(u)), N(\gamma(0))\rangle\, duds \\
        &\ge 
        \frac14 
         \int_0^{l_{+10}} (\int_u^{l_{+10}}\, ds) \kappa(u) \, du \\
        &\ge 
        \frac14 
        \int_0^{l_{+10}/2} (l_{+10} -u )  \kappa(u) \, du \\ 
        &\ge 
        \frac{1}8 l_{+10} \int_0^{l_{+10}/2} \kappa(u) \, du. 
    \end{align*}
    Here ${\rm Turn}_\mathcal{C}( l ) := \int_0^l \kappa(s)\, ds$ is known as a \textit{turning number}.
    {\color{red}I expect ${\rm Turn}_{\mathcal{C}}(l_{+10}/2) \ge c$ for some global constant because the normal vector $N(\gamma(l_{+10}))$ has been rotated at least $45$-degree. But I do not know how to prove it. Let us assume this for now and proceed the argument.}
    Then we obtained $ d_{\gamma(0)}(\gamma(l_{+10})) \ge c l_{+10} $. 
    To conclude the proof, we give a lower bound of $l_{+10}$. 
    By recalling the definition of $l_{+10}$, we see that 
    \begin{align*}
        \frac14
        &= 
        \langle \gamma'(l_{+10}),\gamma'(0)\rangle \\
        &= 
        \int_0^{l_{+10}} \langle \gamma''(s), \gamma'(0)\rangle\, ds + 1 \\
        &=
        1- \int_0^{l_{+10}} \kappa(s) \langle N(\gamma(s)), \gamma'(0)\rangle\, ds \\
        &\ge 
        1 - \|\kappa\|_\infty l_{+10},
    \end{align*}
    which reveals that 
    $$
    l_{+10} \ge \frac34 \| \kappa\|_\infty^{-1}. 
    $$
    Altogether we conclude that 
    $$
    |\gamma(l_H) - \gamma(0)|\ge c \|\kappa\|_\infty^{-1}. 
    $$

\end{proof}

\if0 
\begin{lemma}
    Let $K\subset \mathbb{R}^2$ be a (smooth) convex body and $\mathcal{C}:=\partial K$. 
    Take any $x_0 \in \mathcal{C}$ and let $A(x_0) \in \partial K$ to be an unique point so that $N(A(x_0)) = - N(x_0)$, where $N(x_0)$ is a normal vector of $\mathcal{C}$ at $x_0$. 
    Then 
    \begin{equation}\label{e:Feb19}
    \forall x\in K,\; C(\|\kappa\|_\infty,D)|x-x_0|\le  |x_0-A(x_0)|, 
    \end{equation}
    where $\kappa(x)>0$ denotes a curvature of $\mathcal{C}$ at $x$ and $D:= \max_{x_1,x_2\in \partial K}|x_1-x_2|$. 
\end{lemma}

\begin{proof}
    Simplification: 
    Denote the arclength of $\mathcal{C}$ by $L>0$. 
    We may parametrize $\mathcal{C}$ by its arclength parametrization: 
    $$
    \exists \gamma:[0,L]\to \mathbb{R}^2:\; \mathcal{C}=\{ \gamma(l): l\in [0,l] \},\; |\gamma'|=1.
    $$
    Without loss of generality we may assume $x_0 = \gamma(0)$. 
    There is an unique $l_* \in (0,L)$ so that $\gamma(l_*) = A(x_0)$. 
    In these terminology, the right-hand side is bounded below by 
    $$
    |x_0-A(x_0)| = |\gamma(0) - \gamma(l_*)|
    \ge C( \|\kappa\|_\infty ) l_*. 
    $$
    Since $|x-x_0|\le D$, it suffices to show that 
    $$
    l_* \ge C( \|\kappa\|_\infty ). 
    $$
    Indeed, this would show that $l_* \ge C( \|\kappa\|_\infty ) D^{-1} |x-x_0|$. 
    What is an equation that determines $l_*$? 
    For instance it is determined by the solution to 
    $$
    \langle \gamma'(l_*), \gamma'(0) \rangle = -1. 
    $$
    In view of 
    $$
    \gamma'(l_*) = \int_0^{l_*} \gamma''(l)\, dl + \gamma'(0), 
    $$
    the equation may be written as 
    $$
    \int_0^{l_*} \langle \gamma''(l), \gamma'(0)\rangle\, dl = -2,
    $$
    since $|\gamma'|=1$. 
    It is well-known that $\gamma''(l) = -\kappa(l) N(\gamma(l))$ for the arclength-parametrization ({\color{red}IS THE SIGN CORRECT?}). 
    Hence, 
    $$
    \int_0^{l_*} \kappa(l) \langle N(\gamma(l)), \gamma'(0)\rangle\, dl = 2. 
    $$
    For the left-hand side, we know from Cauchy-Schwarz and $|\gamma'|=1$ that 
    $$
    \int_0^{l_*} \kappa(l) \langle N(\gamma(l)), \gamma'(0)\rangle\, dl
    \le \|\kappa\|_\infty l_*,
    $$
    which yields that $l_* \ge 2 \|\kappa\|_\infty^{-1}$.

    %On the other hand, the curvature controls\footnote{For this, one may appeal to the inequality 
    %$$
    %L^2\le 2\pi \int_0^L \kappa(l)^{-1}\, dl,
    %$$ see \textit{Curvature and isoperimetric inequality} by Cuf\'{i}--Revent\'{o}s--Rodor\'{i}guez for instance. 
    %In fact, this inequality in particular implies that $ L^2\le 2\pi \|\kappa^{-1}\|_\infty L $ i.e. $L\le 2\pi \|\kappa^{-1}\|_\infty$. 
    %} $L$ as $L \le 2\pi \| \kappa^{-1}\|_\infty$. 
    %This concludes that 
    %$$
    %L\le \pi \big( \| \kappa\|_\infty \|\kappa^{-1}\|_\infty \big)^{-1}l_*. 
    %$$
\end{proof}
\fi 

Using this lemma, let us conclude the proof of $|u-u'|\lesssim |u'-R_uu'|$. 
Recall notations: $\omega := N(u)$ and $\omega':= N(u')$ are fixed. 
We may pretend as if $\omega = e_n$. 
Consider the hyperplane $\pi_{u,u'}:= \{ x\in \mathbb{R}^n: x\cdot \omega = u'\cdot \omega \}$. This yields a convex curve $\mathcal{C}_{u,u'} := S \cap \pi_{u,u'}$ {\color{red}CAUTION: THIS IS ONLY WHEN $n=3$! In general, this is going to be a $n-2$-dimensional convex surface}. 
Then $R_uu'$ is an unique point such that 
$$
R_uu' \in \mathcal{C}_{u,u'},\; \omega \wedge \omega' \wedge N(R_uu') =0.
$$
Denote the orthogonal projection onto $\pi_{u,u'}$ by $\rm{Proj}_{u,u'}$. Then we notice that 
$
{\rm Proj}_{u,u'}(\omega')
$ is also normal\footnote{To see this claim, let us denote the unique (up to sign) tangent vector of $\mathcal{C}_{u,u'}$ at $u'$ by $\mathbf{t}(u') \in \pi_{u,u'}$. 
We then notice that $(\mathbf{t}(u'),0) \in T_{u'}S$.  On the other hand, $\omega' = ({\rm Proj}_{u,u'}(\omega'), \omega'\cdot \omega)$ and hence 
$$
\langle {\rm Proj}_{u,u'}(\omega'), \mathbf{t}(u')\rangle = \langle \omega', (\mathbf{t}(u'),0) \rangle = 0.
$$
This means $
{\rm Proj}_{u,u'}(\omega')
$ is a normal vector. } to the curve $\mathcal{C}_{u,u'}$ at the point $u'$. 
Similarly, ${\rm Proj}_{u,u'}( N(R_uu') )$ is a normal vector of $\mathcal{C}_{u,u'}$ at $R_uu'$. 
Because of these and the colinearlity assumption, it follows that 
$$
{\rm Proj}_{u,u'}( N(R_uu') ) \parallel {\rm Proj}_{u,u'}( N(u') ). 
$$
We are now in the position to apply the Lemma above i.e. \eqref{e:Feb19} by realizing $x_0 = {\rm Proj}_{u,u'}(u')$ and $x = {\rm Proj}_{u,u'}(u)$ to conclude 
$$
|u-u'| \le C(\|\kappa\|_\infty)| {\rm Proj}_{u,u'}(u') - {\rm Proj}_{u,u'}(u) | \le c \|\kappa(\mathcal{C}_{u,u'})\|_\infty D(\mathcal{C}_{u,u'}) |u'-R_uu'|.  
$$
\fi
%%%%%%%%%%%%%%%%%%%%%%%%%%%%%%%%%%%%%%%%%%%%%%%%%%%%%%%%%%%%%%%%%%%%%%%%%%%%%%%%

%%%%%%%%%%%%%%%%%%%%%%%%%%%%%%%%%%%%%%%%%%%%%%%%%%%%%%%%
\section{Surface-carried maximal operators}\label{app}
Recall from Section \ref{Sect:general submanifolds} that the geometric Wigner distribution $W_S(g,g)$ possesses the marginal properties \eqref{genmarg1} and \eqref{margin2}. In the (superficially) more general polarised form these are the identities
\begin{equation}\label{genmarg1pol}
\int_{S}W_S(g_1,g_2)(u,P_{T_uS}x)\mathrm{d}\sigma(u)=\widehat{g_1\mathrm{d}\sigma}(x)\overline{\widehat{g_2\mathrm{d}\sigma}(x)}
\end{equation}
and
\begin{equation}\label{margin2pol}
\int_{T_uS}W_S(g_1,g_2)(u,v)\mathrm{d}v=g_1(u)\overline{g_2(u)}
\end{equation}
respectively.
While \eqref{genmarg1pol} is an elementary consequence of Fubini's theorem and the definition of the Jacobian $J$, the property \eqref{margin2pol} appears to be a little more delicate in general. In particular, for $g_1,g_2$ merely in $L^2$, the integral in identity \eqref{margin2pol} should be interpreted as a suitable pointwise limit -- see the forthcoming Proposition \ref{whatwemean}. As may be expected, a maximal analogue of the bilinear fractional integral operator $I_{S,s}$ of Section \ref{Sect:KS} naturally arises in our analysis.
For locally integrable functions $f_1,f_2:S\rightarrow \mathbb{R}_+$ and $0<\delta<1$ we define the ``averaging" operator
$$
A_{S,\delta}(f_1,f_2)(u)=\delta^{-(n-1)}\int_{|u'-R_uu'|<\delta}f_1(u')f_2(R_uu')J(u,u')\mathrm{d}\sigma(u'),
$$
and maximal operator
$$
M_S(f_1,f_2)(u)=\sup_{0<\delta<1}A_{S,\delta}(f_1,f_2)(u).
$$
\begin{remark}[Relation to classical maximal operators]
The operator $M_S$ is a surface-carried variant of the classical bi(sub)-linear Hardy--Littlewood maximal operator
$$
M(f_1,f_2)(x)=\sup_{\delta>0}\frac{1}{|B(0,\delta)|}\int_{B(0,\delta)}f_1\left(x+\frac{y}{2}\right)f_2\left(x-\frac{y}{2}\right)\mathrm{d}y
$$
on a Euclidean space.  %When $S$ is the paraboloid $M_S$ may\footnote{I haven't actually checked this. Equally, I expect that $I_{S,s}$ may be pulled back to the Kenig--Stein operators in a similar way! Worth checking.} be pulled back to $M$ using the procedure outlined in Remark \ref{examples}.
\end{remark}
We shall need the following estimate:
\begin{theorem}\label{maximal theorem}
    If $S$ is smooth, strictly convex and has finite curvature quotient $Q(S)$, then
    \begin{equation}\label{maximal inequality}
    M_{S}:L^2(S)\times L^2(S)\rightarrow L^{1,\infty}(S).
    \end{equation}
\end{theorem}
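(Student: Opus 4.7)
The plan is to reduce $M_S$ to the classical Hardy--Littlewood maximal operator on $S$ via a Cauchy--Schwarz / symmetrization argument, and then to conclude by standard weak-type inequalities.

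First, I would apply the Cauchy--Schwarz inequality to the averaging operator itself, viewing $J(u,u')\mathrm{d}\sigma(u')$ as the underlying measure:
\begin{equation*}
A_{S,\delta}(f_1,f_2)(u) \leq \left(\delta^{-(n-1)}\!\!\int_{|u'-R_uu'|<\delta}\!\! f_1(u')^2\,J(u,u')\,\mathrm{d}\sigma(u')\right)^{\!\frac{1}{2}}\!\!\left(\delta^{-(n-1)}\!\!\int_{|u'-R_uu'|<\delta}\!\! f_2(R_uu')^2\,J(u,u')\,\mathrm{d}\sigma(u')\right)^{\!\frac{1}{2}}.
\end{equation*}
The second factor should then be symmetrized via the change of variables $v=R_u u'$. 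Because $R_u$ is an involution on $S$, the set $\{|u'-R_uu'|<\delta\}$ is invariant. The Jacobian switching identity \eqref{switch} together with a short calculation (noting that $\Delta(u,R_u v)=1/\Delta(u,v)$ and $J(u,R_u v)=J(u,v)/\Delta(u,v)$) yields the clean relation $J(u,u')\,\mathrm{d}\sigma(u')=J(u,v)\,\mathrm{d}\sigma(v)$ under this substitution. Consequently the second factor equals the first with $f_1^2$ replaced by $f_2^2$. Taking the supremum over $\delta$, one arrives at the pointwise bound
\begin{equation*}
M_S(f_1,f_2)(u) \leq \widetilde{M}(f_1^2)(u)^{1/2}\,\widetilde{M}(f_2^2)(u)^{1/2},\quad \widetilde{M}(f)(u):=\sup_{0<\delta<1}\delta^{-(n-1)}\!\!\int_{|u'-R_uu'|<\delta}\!\! f(u')\,J(u,u')\,\mathrm{d}\sigma(u').
\end{equation*}

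Next, I would dominate $\widetilde{M}$ by the classical Hardy--Littlewood maximal operator $M_{HL}$ on $S$. The distance estimate \eqref{easier} of Proposition \ref{metricestimate} gives $\{u':|u'-R_uu'|<\delta\}\subseteq\{u':|u-u'|\lesssim Q(S)\delta\}$. Combined with a uniform upper bound on $J$ on $S\times S$ (which follows from the explicit formula \eqref{Jformula-prop-19apr24}, our estimate \eqref{ineq1-18apr24} on $J/\widetilde{J}$, and the easy lower bound $\widetilde{J}\geq 1$), one concludes that $\widetilde{M}(f)(u)\lesssim_S Q(S)^{C(n)} M_{HL}(f)(u)$ for every nonnegative locally integrable $f$.

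Finally, the weak-type $(1,1)$ boundedness of $M_{HL}$ (which holds on $S$ by a standard Vitali covering argument) gives $\|M_{HL}(f_i^2)^{1/2}\|_{L^{2,\infty}(S)}\lesssim \|f_i\|_{L^2(S)}$, and the classical H\"older inequality for weak Lebesgue spaces,
\begin{equation*}
\|FG\|_{L^{1,\infty}}\leq 2\|F\|_{L^{2,\infty}}\|G\|_{L^{2,\infty}},
\end{equation*}
(proved by optimising the trivial splitting $\{|FG|>\lambda\}\subseteq\{|F|>\lambda/t\}\cup\{|G|>t\}$ over $t>0$) allows us to conclude $\|M_S(f_1,f_2)\|_{L^{1,\infty}(S)}\lesssim_S\|f_1\|_{L^2(S)}\|f_2\|_{L^2(S)}$, as desired. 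The main obstacle I anticipate is the symmetrization step: it is essential that the Jacobian $J$ be built into the definition of $M_S$ in precisely the right way for the switching identity \eqref{switch} to produce the clean pointwise domination by a product of square roots of single-variable maximal averages; without the conjugate-symmetry of $W_S$ that $J$ encodes, this reduction would not be available.
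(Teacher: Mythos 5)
Your proposal is correct, and the first half (Cauchy--Schwarz followed by the symmetrization via the change of variables $v=R_uu'$ and the Jacobian switching identity \eqref{switch} to arrive at $M_S(f_1,f_2)\leq M_S^1(f_1^2)^{1/2}M_S^1(f_2^2)^{1/2}$) is exactly the paper's argument. Where you diverge is in the treatment of the sublinear maximal operator $M_S^1=\widetilde{M}$. The paper sets up the machinery of quasi-metric measure spaces: it verifies that $\rho(u,u'):=|u'-R_uu'|$ is a quasi-distance (using both inequalities of Proposition \ref{metricestimate}), checks that surface measure is doubling with respect to the $\rho$-balls (using $\widetilde{J}\geq 1$ for the upper bound $|B_\delta(u)|\leq\delta^{n-1}$ and the distance estimates for the lower bound), and then invokes the abstract Hardy--Littlewood theorem of Stein's book. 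You instead observe that \eqref{easier} gives the one-sided containment $\{\rho(u,u')<\delta\}\subseteq\{|u-u'|\lesssim Q(S)\delta\}$, so that, after absorbing a uniform upper bound on $J$ into the constants, $M_S^1f\lesssim Q(S)^{C(n)}M_{HL}f$ pointwise, where $M_{HL}$ is the ordinary Hardy--Littlewood maximal operator on $S$ (whose weak-type $(1,1)$ is classical for a smooth compact hypersurface). This is a shade more elementary --- you only need one direction of the distance estimate and you avoid the quasi-metric doubling verification --- and it makes the dependence on $Q(S)$ slightly more transparent. Your final step via the weak H\"older inequality $\|FG\|_{L^{1,\infty}}\leq 2\|F\|_{L^{2,\infty}}\|G\|_{L^{2,\infty}}$ is just a packaged form of the paper's optimisation over the splitting parameter $\varepsilon$. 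One small point of care: the uniform upper bound on $J$ does not follow from the estimate $J/\widetilde{J}\lesssim Q(S)^{(5n-8)/2}$ together with the lower bound $\widetilde{J}\geq 1$, since these give $J\lesssim Q(S)^{\cdots}\widetilde{J}$ without controlling $\widetilde{J}$ from above; the correct source is the intermediate bound on $J$ itself established en route to \eqref{take}, combined with Proposition \ref{metricestimate} and the mean value theorem applied to the Gauss map, exactly as the paper does inline. With that citation repaired, the argument is sound.
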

\begin{proof}
We begin by using the Cauchy--Schwarz inequality to write
\begin{eqnarray*}
    \begin{aligned}
A_{S,\delta}(f_1,f_2)(u)&\leq\left(\delta^{-(n-1)}\int_{|u'-R_uu'|<\delta}f_1(u')^2J(u,u')\mathrm{d}\sigma(u')\right)^{1/2}\\&\;\;\;\;\;\;\times\left(\delta^{-(n-1)}\int_{|u'-R_uu'|<\delta}f_2(R_uu')^2J(u,u')\mathrm{d}\sigma(u')\right)^{1/2}.
\end{aligned}
\end{eqnarray*}
Making the change of variables $R_uu'=u''$ in the second factor above, using Proposition \ref{jacprop}, and the fact that $R_uu''=u'$, we see that
\begin{eqnarray*}\begin{aligned}
\int_{|u'-R_uu'|<\delta}f_2(R_uu')^2J(u,u')\mathrm{d}\sigma(u')&=\int_{|u''-R_uu''|<\delta}f_2(u'')^2J(u,u')\Delta(u,u'')\mathrm{d}\sigma(u'')\\&=\int_{|u''-R_uu''|<\delta}f_2(u'')^2J(u,u'')\mathrm{d}\sigma(u'')\\&=\int_{|u'-R_uu'|<\delta}f_2(u')^2J(u,u')\mathrm{d}\sigma(u').
\end{aligned}
\end{eqnarray*}
Thus, $$M_S(f_1,f_2)(u)\leq M_S^1(f_1^2)(u)^{1/2}M_S^1(f_2^2)(u)^{1/2},$$
where $$M_S^1(f)(u):=\sup_{0<\delta<1}\delta^{-(n-1)}\int_{|u'-R_uu'|<\delta}f(u')J(u,u')\mathrm{d}\sigma(u').$$
Hence 
\begin{eqnarray*}
\begin{aligned}
    \lambda\left|\left\{u\in S:M_S(f_1,f_2)(u)>\lambda\right\}\right|&\leq\lambda\left|\left\{u\in S:M_S^1(f_1^2)(u)M_S^1(f_2^2)(u)>\lambda^2\right\}\right|\\&\leq \lambda\left|\left\{u\in S:M_S^1(f_1^2)(u)>\varepsilon\lambda\right\}\right|\\
    &+\lambda\left|\left\{u\in S:M_S^1(f_2^2)(u)>\varepsilon^{-1}\lambda\right\}\right|
    \end{aligned}
    \end{eqnarray*}
for all $\varepsilon>0$. We claim that the sublinear operator $M_S^1$ is of weak-type (1,1), and assuming this momentarily we have
\begin{eqnarray*}
\begin{aligned}
    \lambda\left|\left\{u\in S:M_S(f_1,f_2)(u)>\lambda\right\}\right|&\lesssim \varepsilon^{-1}\|f_1\|_2^2+\varepsilon\|f_2\|_2^2
\end{aligned}
\end{eqnarray*}
uniformly in $\varepsilon$. Optimising in $\varepsilon$ now yields the claimed weak-type bound on the bi-sublinear operator $M_S$. A similar argument in a Euclidean context may be found in \cite{Graf}.

It remains to establish that $M_S^1:L^1(S)\rightarrow L^{1,\infty}$, and we do this by applying the well-known abstract form of the classical Hardy--Littlewood maximal theorem presented in \cite{Stein}. To this end we let $B_\delta(u)=\{u'\in S: \rho(u,u')<\delta\}$, the ball in $S$ centred at $u$ with respect to the function $\rho(u,u'):=|u'-R_uu'|$. By Proposition \ref{metricestimate} it follows that $\rho$ is a quasi-distance, as defined in \cite{Stein} (Page 10). Specifically, we may quickly verify that (i) $\rho(x,y)=0\iff x=y$, (ii) $\rho(x,y)\leq c\rho(y,x)$, and (iii) $\rho(x,y)\leq c(\rho(x,z)+\rho(y,z))$, for some positive constant $c$ depending on $Q(S)$.
By the change of variables \eqref{cov} and an application of Proposition \ref{prop-Jtilde-09apr24},
\begin{equation}\label{ballmeasure}
|B_\delta(u)|=\int_{|\xi|\leq \delta}\widetilde{J}(u,u'(\xi))^{-1}\mathrm{d}\xi\leq \delta^{n-1},
\end{equation}
so that
$$M_S^1f(u)\leq\sup_{0<\delta<1}\frac{1}{|B_\delta(u)|}\int_{B_\delta(u)}f(u')J(u,u')\mathrm{d}\sigma(u').$$ 
Arguing as in the proof of \eqref{take}, we have
\begin{equation*}
        \displaystyle J(u,u')\lesssim Q(S)^{\frac{5(n-2)}{2}} \frac{|u''-u'|}{|N(u'')-N(u)|}  \sup_{p}\lambda_{n-1}(p),
    \end{equation*}
which by a further use of Proposition \ref{metricestimate} and the mean value theorem applied to the Gauss map, shows that
$J(u,u')$ is, up to a dimensional constant, bounded from above by a power of $Q(S)$. Consequently,
$$M_S^1f(u)\lesssim \sup_{0<\delta<1}\frac{1}{|B_\delta(u)|}\int_{B_\delta(u)}f(u')\mathrm{d}\sigma(u'),$$
where the implicit constant is permitted to depend on $Q(S)$.
It remains to show that the surface measure on $S$ is doubling with respect to the family of balls $B_\delta(u)$, as we may then apply the abstract Hardy--Littlewood maximal theorem of \cite{Stein} (see Page 37). By \eqref{ballmeasure} it suffices to show that 
$|B_\delta(u)|\geq cQ(S)^{n-1}\delta^{n-1}$, for some dimensional constant $c$. 
However, this follows from Proposition \ref{metricestimate} since
$$
B_\delta(u)\supseteq\{u'\in S: |u'-u|\lesssim Q(S)\delta\}.
$$
\end{proof}
\begin{remark}[$L^p$ estimates for $M_S$]
A minor modification of the arguments in the proof of Theorem \ref{maximal theorem} (a use of H\"older's inequality in place of the Cauchy--Schwarz inequality) shows that $M_S:L^{p_1}(S)\times L^{p_2}(S)\rightarrow L^q(S)$
whenever $p_1, p_2, q>1$ and $\tfrac{1}{p_1}+\tfrac{1}{p_2}=\tfrac{1}{q}$. Implicitly, and as in the statement of Theorem \ref{maximal theorem}, the bounds here depend on the dimension and $Q(S)$.
\end{remark}
Equipped with the above maximal theorem we may now clarify the marginal property \eqref{margin2pol}.
While we expect that \eqref{margin2pol} (suitably interpreted) holds for all of the submanifolds $S$ that we consider in this paper, our approach seems to require the additional assumption that 
\begin{equation}\label{sillylimit}
    \lim_{u'\rightarrow u}(\mathrm{d}R_u)_{u'}\;\;\mbox{ exists.}
\end{equation}
We note that \eqref{sillylimit} requires some interpretation since for each $u'\not=u$, the map $(\mathrm{d}R_u)_{u'}:T_{u'}S\rightarrow T_{u''}S$, and the limit should be interpreted as a linear transformation of $T_u S$. One way to do this is to parametrise $S$ by $T_uS$, upon which the map $R_u$ may be parametrised by a map $y_u$ on the fixed domain $T_uS$. We clarify this technical point in the arguments that follow. The local statement \eqref{sillylimit} appears to be an extremely mild assumption. It is straightforward to verify for parabolic $S$, and since a smooth strictly convex surface is locally parabolic (by Taylor's theorem), one might reasonably expect it to be verifiable in general. 
%\footnote{We have to explain what this means. For the argument to work, it means that \textbf{the parametrised operator} $\widetilde{R}(x')=(y(x'),\phi(x'))$ satisfies $\lim_{x'\rightarrow 0}\mathrm{d}y(x')(v)=-v$ for all $v\in T_{u}S$. Observe that $y:T_{u}S\rightarrow T_{u}S$. This must be equivalent to the following concept of convergence: $\lim_{u'\rightarrow u}(\mathrm{d}R_u)_{u'}=T_{0}$ if, for every fixed $v\in T_{u}S$ and all sequences $u_{n}'\rightarrow u$ and $v_{n}\rightarrow v$ with $v_{n}\in T_{u_{n}'}S$ it holds that $\lim_{u_{n}'\rightarrow u}(\mathrm{d}R_u)_{u'_{n}}(v_{n})=T_{0}(v)$, now seen as a limit in the ambient space $\mathbb{R}^{n}$. No? Check the introduction of this paper: \url{https://www.ams.org/journals/tran/1970-148-01/S0002-9947-1970-0256210-5/S0002-9947-1970-0256210-5.pdf}} 
\begin{proposition}\label{whatwemean} Let $S$ be smooth and strictly convex.
Suppose $\chi$ is a Schwartz function on $T_uS$ with $\chi(0)=1$, and $\chi_r(v)=\chi(v/r)$ for each $r>0$. Then for compactly supported $g_1,g_2\in L^2(S)$,
$$
\int_{T_uS}W_S(g_1,g_2)(u,v)\chi_r(v)\mathrm{d}v\rightarrow g_1(u)\overline{g_2(u)}
$$
as $r\rightarrow\infty$ for almost every $u\in S$. Moreover, if $g_1,g_2$ are continuous then this convergence holds at all points $u$.
\end{proposition}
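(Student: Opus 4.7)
The plan is to Fourier-invert and apply a change of variables to express the integral as a convolution against $\widehat{\chi}$, then to combine a pointwise limit computation at the diagonal $u'=u$ with a bi-sublinear maximal inequality. To set things up, by Fubini's theorem (justified by the compact support of $g_1,g_2$ and the Schwartz property of $\chi_r$) followed by the change of variables $\xi=u'-R_uu'$ on $S$ (whose Jacobian is $\widetilde{J}(u,u')$) and the rescaling $\xi=\eta/r$, we rewrite
$$T_r(g_1,g_2)(u):=\int_{T_uS}W_S(g_1,g_2)(u,y)\chi_r(y)\mathrm{d}y=\int_{T_uS}g_1(u'(\eta/r))\overline{g_2(R_uu'(\eta/r))}\frac{J(u,u'(\eta/r))}{\widetilde{J}(u,u'(\eta/r))}\widehat{\chi}(\eta)\mathrm{d}\eta,$$
where $\widehat{\chi}$ denotes the Fourier transform of $\chi$ and satisfies $\int_{T_uS}\widehat{\chi}(\eta)\mathrm{d}\eta=\chi(0)=1$.

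The crux of the argument is the pointwise identity
$$\lim_{u'\to u}\frac{J(u,u')}{\widetilde{J}(u,u')}=1. \qquad(\star)$$
To establish $(\star)$ we parametrise $S$ locally over $T_uS$ as the graph of a function $\phi$ with $\phi(0)=0$, $\nabla\phi(0)=0$, and positive definite Hessian. If $u'=(x,\phi(x))$ and $u''=R_uu'=(x'',\phi(x''))$, the defining conditions \eqref{collision condition 1} and \eqref{collision condition 2} translate respectively into $\phi(x)=\phi(x'')$ and the parallelism of $\nabla\phi(x)$ with $-\nabla\phi(x'')$, which forces $x''=-x+O(|x|^{2})$. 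The hypothesis \eqref{sillylimit} then pins down $\lim_{u'\to u}(\mathrm{d}R_u)_{u'}=-\mathrm{Id}$, so that $\mathrm{d}\xi=\mathrm{Id}-(\mathrm{d}R_u)_{u'}\to 2\,\mathrm{Id}$ and hence $\widetilde{J}(u,u')\to 2^{n-1}$; a parallel expansion of the explicit formula for $J$ in Proposition \ref{jacprop} yields $J(u,u')\to 2^{n-1}$ as well, giving $(\star)$. For continuous compactly supported $g_1,g_2$, $(\star)$ combined with the boundedness of $J/\widetilde{J}$ (as in \eqref{clever bit}) and the Schwartz decay of $\widehat{\chi}$ permits passage to the limit under the integral sign by dominated convergence, delivering $T_r(g_1,g_2)(u)\to g_1(u)\overline{g_2(u)}$ at every $u\in S$.

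For general $g_1,g_2\in L^2(S)$ with compact support, the remaining ingredient is a bi-sublinear maximal bound. With $\delta=1/r$, the Schwartz decay of $\widehat{\chi}$ combined with a dyadic decomposition of $S$ by the quasi-distance $|u'-R_uu'|$ yields
$$|T_r(g_1,g_2)(u)|\lesssim\sum_{k\geq 0}2^{-k(N-n+1)}A_{S,2^{k+1}\delta}(|g_1|,|g_2|)(u)\lesssim M_S(|g_1|,|g_2|)(u)$$
for any $N>n-1$, so that $\sup_r|T_r(g_1,g_2)|$ inherits the weak-type $L^2\times L^2\to L^{1,\infty}(S)$ bound of Theorem \ref{maximal theorem}. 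A standard density argument then closes the proof: given $\varepsilon>0$, choose continuous compactly supported $h_1,h_2$ with $\|g_i-h_i\|_2<\varepsilon$ and use the bilinearity of $T_r$ to write
$$T_r(g_1,g_2)-g_1\overline{g_2}=\bigl(T_r(h_1,h_2)-h_1\overline{h_2}\bigr)+T_r(g_1-h_1,g_2)+T_r(h_1,g_2-h_2)-(g_1-h_1)\overline{g_2}-h_1(\overline{g_2-h_2}).$$
The first term tends to $0$ pointwise by the continuous case, while the weak-type maximal control and elementary $L^2\times L^2\to L^{1,\infty}$ bounds on the remaining terms force $|\{u\in S:\limsup_r|T_r(g_1,g_2)(u)-g_1(u)\overline{g_2(u)}|>\lambda\}|\lesssim\lambda^{-1}\varepsilon(\|g_1\|_2+\|g_2\|_2)$ for every $\lambda>0$, and letting $\varepsilon\to 0$ yields the a.e.\ statement. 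The hard part will be $(\star)$: although the intuitive picture is that $R_u$ behaves locally as antipodal reflection through $u$, extracting this from the geometric definition of $R_u$ and separately controlling the limits of $J$ and $\widetilde{J}$ requires a controlled second-order expansion on the diagonal where hypothesis \eqref{sillylimit} becomes indispensable.
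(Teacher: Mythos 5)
Your proposal follows essentially the same route as the paper's own proof: Fourier inversion plus the change of variables $\xi=u'-R_uu'$ to obtain an integral against $\widehat\chi$, a pointwise limit on the diagonal, domination by the bi-sublinear maximal operator $M_S$, and a standard density argument closing the $L^2$ case. The main thing to flag is your formulation of $(\star)$ as an \emph{unconstrained} limit $\lim_{u'\to u}J(u,u')/\widetilde{J}(u,u')=1$. The paper's Lemma~\ref{limJ} only establishes the unconstrained limit for $\widetilde J$; for $J$ it proves the \emph{radial} limit $\lim_{u'\to u,\ u'-u''\in\langle\omega\rangle}J(u,u')=2^{n-1}$ with the direction $\omega$ held fixed, because the expansion of the $(\mathrm{d}N_{u''})^{-1}$ factor involves the unit vector $(N(u')-N(u''))/|N(u')-N(u'')|$, whose limit exists along rays but is direction-dependent. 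After your rescaling $\xi=\eta/r$, the integrand at fixed $\eta$ approaches the diagonal radially along $\eta/|\eta|$, so only the radial limit is actually needed for the dominated convergence step -- your argument is salvaged, but as written $(\star)$ overclaims what is proved and what is used. Two smaller imprecisions: the expansion $x''=-x+O(|x|^2)$ is more than hypothesis~\eqref{sillylimit} supplies (you only get $x''=-x+o(|x|)$, which is all you need); and the substantial content of $(\star)$ is not a couple of sentences but is the entire Lemma~\ref{limJ}, whose proof occupies the bulk of Section~\ref{app} -- you correctly identify this as the hard part, but the Taylor-expansion machinery behind \eqref{limitingmidpoint0} and \eqref{limitingmidpoint}, and the reduction to a ratio of quadratic forms in $(\mathrm{d}N)_u^{-1}$ and $(\mathrm{d}N)_{u'}^{-1}$, still need to be carried out.
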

Before we turn to the proof of Proposition \ref{whatwemean}, we state a lemma whose (somewhat technical) proof we leave to the end of the section.
\begin{lemma}\label{limJ}
If the limit \eqref{sillylimit} exists then for each $u\in S$,
$$\lim_{u'\rightarrow u}\widetilde{J}(u,u')=2^{n-1}$$
and
%\footnote{We probably need some uniformity in $\omega$ here, no?... IO: maybe you have a specific part of the argument in mind (that would require uniformity), but I would think that compactness of the sphere would be enough... JB: I think we need uniformity to pass the limit through an integral, as I'm about to clarify I hope... IO: I must be missing something very fundamental... why can't we simply apply the dominated convergence theorem? The limit of the integrand converges pointwise (which is what we prove), then given that $J/\widetilde{J}$ is bounded by a power of $Q(S)$, we have that the integrand is bounded by $\lesssim_{Q(S)}\widehat{\chi}(s\omega)$, which is integrable in $ds$ (and obviously on the sphere as well, due to compactness), no? Isn't that enough to apply the DCT? JB: I think you're right!}
$$\lim_{\substack{u'\rightarrow u\\u'-u''\in\langle\omega\rangle}}J(u,u')=2^{n-1}$$
for each $\omega\in T_uS\backslash\{0\}$.
\end{lemma}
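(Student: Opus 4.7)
The plan is to reduce both limits to a single underlying fact: under the hypothesis \eqref{sillylimit}, the limiting differential $(\mathrm{d}R_u)_u := \lim_{u'\to u}(\mathrm{d}R_u)_{u'}$ exists as an endomorphism of $T_uS$ and equals $-I$. This reflects the intuition that a strictly convex surface is locally parabolic, and that $R_u$ acts as tangential reflection through $u$ in the parabolic case (cf.\ Remark \ref{examples}). Once this is in hand, both Jacobian limits follow by direct computation.

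To establish $(\mathrm{d}R_u)_u = -I$, I would parametrize $S$ locally near $u$ as a graph over $T_uS \cong \mathbb{R}^{n-1}$: $S = \{(x,\psi(x))\}$ with $u=(0,0)$, $\psi(0)=0$, $\nabla\psi(0)=0$, and $H := D^2\psi(0)$ positive definite by strict convexity. For $u'=(x',\psi(x'))$ and $u''=R_uu'=(x''(x'),\psi(x''(x')))$, conditions \eqref{collision condition 1}--\eqref{collision condition 2} translate to $\psi(x')=\psi(x'')$ and $\nabla\psi(x') \parallel \nabla\psi(x'')$ (the latter since the tangential parts of $N(u')$ and $N(u'')$ are proportional to $-\nabla\psi(x')$ and $-\nabla\psi(x'')$). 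Taylor expanding, the second condition gives $Hx' = \mu(x') Hx'' + O(|x'|^2)$ for a scalar $\mu$, and the first then forces $\mu(x')^2 \to 1$; selecting the non-trivial branch ($u'' \neq u'$) yields $x'' = -x' + o(|x'|)$. Applying this asymptotic to any smooth curve $u'(t) = (t\omega,\psi(t\omega))$ through $u$ with tangent $\omega$ shows that $\frac{d}{dt}(R_u u'(t))|_{t=0} = -\omega$, so the limiting differential coincides with $-I$ on every direction, giving $(\mathrm{d}R_u)_u = -I$.

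With this identification, both Jacobian limits follow. For $\widetilde{J}(u,u')$, the map $\xi : u' \mapsto u' - R_uu'$ sends $S$ into $T_uS$ and, in the parametrization above, corresponds to $x' \mapsto x' - x''(x')$ with differential $I - (-I) = 2I$ at $x'=0$. Since $\nabla\psi(0)=0$, the surface measure on $S$ agrees with Lebesgue measure on $T_uS$ at $u$, so $\widetilde{J}(u,u') \to |\det(2I)| = 2^{n-1}$. For $J(u,u')$, I would apply the formula in Proposition \ref{jacprop} and compute each factor's limit using $x''=-x'+o(|x'|)$: the curvature ratio $K(u)/K(u'')\to 1$; the normal-wedge ratio $|N(u')\wedge N(u'')| / |N(u)\wedge N(u')|$ behaves like $|H(x'-x'')|/|Hx'|\to 2$, contributing $2^{n-2}$ upon raising to the $(n-2)$th power; and the middle factor $|\langle u''-u', N(u'')\rangle| / |\langle P_{T_{u''}S}N(u), (\mathrm{d}N_{u''})^{-1}P_{T_{u''}S}N(u)\rangle|$ reduces, after substituting leading-order expansions, to the quotient of quadratic forms $2|x'^THx'|/|x'^THx'| = 2$. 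The product $2^{n-2}\cdot 2 \cdot 1 = 2^{n-1}$ yields the claimed limit. The direction hypothesis $u'-u'' \in \langle\omega\rangle$ ensures $x'$ approaches $0$ along a fixed line, guaranteeing that each of these ratios converges rather than oscillating with the direction of approach, although the limiting value is itself independent of $\omega$.

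The main obstacle is the first step: the Taylor analysis alone produces only the asymptotic $x''=-x'+o(|x'|)$, and does not \emph{a priori} imply that $x' \mapsto x''(x')$ is differentiable at $x'=0$ (only that it possesses Gateaux derivatives along every direction). The hypothesis \eqref{sillylimit} is invoked precisely to ensure the existence of a limiting differential at $u'=u$, which together with the asymptotic above identifies it as $-I$; the remainder of the argument is then routine.
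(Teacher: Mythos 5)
Your proposal follows the same overall strategy as the paper: parametrize $S$ locally as a graph over $T_uS$, identify the limiting differential of the reflection map as $-I$, and then compute each Jacobian limit by Taylor expansion. However, your route to $\mathrm{d}y_0=-I$ (writing $x''=y(x')$) differs meaningfully from the paper's. You match the explicit Taylor asymptotic $x''=-x'+o(|x'|)$ against the Fr\'echet derivative supplied by \eqref{sillylimit}; the paper instead picks, for each fixed $v$, a sequence $x_k'\to 0$ along which $\nabla\phi(y(x_k'))/|\nabla\phi(y(x_k'))|=v$, differentiates the constraint $\phi(y(x'))=\phi(x')$, passes to the limit using \eqref{sillylimit} to obtain $\mathrm{d}y_0^\top v=-Lv$ for a positive scalar $L$, and then kills $L$ by exploiting the involutive structure $y\circ y = \mathrm{id}$ (so $\mathrm{d}y_0^2=I$ and its eigenvalues are $\pm 1$). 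Your Taylor approach is more computational and arguably more transparent, but it requires you to track the uniformity of the remainder over directions; the paper's eigenvalue argument sidesteps this, needing only convergence along a well-chosen sequence. Your own final paragraph correctly identifies this as the delicate point, and the resolution you sketch (Gateaux derivative in every direction plus \eqref{sillylimit} together pin down the limit as $-I$) is sound, if compressed.

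Two smaller remarks. First, the justification "The direction hypothesis $u'-u''\in\langle\omega\rangle$ ensures $x'$ approaches $0$ along a fixed line" is not quite right: since $x'-x''=2x'+o(|x'|)$, the constraint forces $x'$ to approach $0$ only \emph{asymptotically} parallel to $\omega$, not along the line $\langle\omega\rangle$ itself. This does not damage your leading-order computations (the quotient of quadratic forms in the middle factor converges to $2$ anyway), but it is worth noting that the paper handles this point by reducing the middle factor, via the midpoint asymptotics $u'+u''-2u=o(|u-u'|)$ and $N(u')+N(u'')-2N(u)=o(|u-u'|)$, to the convergence of $(N(u')-N(u''))/|N(u')-N(u'')|$, which is where the constraint enters cleanly. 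Second, for the $\widetilde{J}$ limit, what is actually needed is the convergence of the differential $I-\mathrm{d}y_{x'}$ at points $x'\neq 0$, not merely its value at $x'=0$; this is precisely why \eqref{sillylimit}, rather than just differentiability of $y$ at the origin, is the hypothesis of the lemma, and your argument should be read as deducing both from the combination of \eqref{sillylimit} and the Taylor asymptotic.
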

\begin{proof}[Proof of Proposition \ref{whatwemean}]
We begin by writing
\begin{eqnarray}\label{looking for maximal}
    \begin{aligned}
        \int_{T_uS}W_S(g_1,g_2)(u,v)\chi_r(v)\mathrm{d}v&=\int_{T_uS}\int_S g_1(u')\overline{g_2(R_uu')}e^{-2\pi i v\cdot(u'-R_uu')}J(u,u')\mathrm{d}\sigma(u')\chi_r(v)\mathrm{d}v\\
        &=\int_S g_1(u')\overline{g_2(R_uu')}\widehat{\chi}_r(u'-R_uu')J(u,u')\mathrm{d}\sigma(u')\\
        &=:\mathcal{A}_{S,r}(g_1,g_2)(u).
    \end{aligned}
\end{eqnarray}
Since $\widehat{\chi}$ is a bump function, it follows that 
$\mathcal{M}_S(g_1,g_2)(u)\lesssim M_S(|g_1|,|g_2|)(u)$ where
$$
\mathcal{M}_S(g_1,g_2)(u):=\sup_{r>1}|\mathcal{A}_{S,r}(g_1,g_2)(u)|.
$$
Consequently 
\begin{equation}\label{nicemax}
\mathcal{M}_S:L^2(S)\times L^2(S)\rightarrow L^{1,\infty}(S),
\end{equation}
by Theorem \ref{maximal theorem}.
Proposition \ref{whatwemean} requires us to show that
\begin{equation}\label{billeb}
\mathcal{A}_{S,r}(g_1,g_2)(u)\rightarrow g_1(u)\overline{g_2(u)}\;\mbox{ for almost every }\;u\in S.
\end{equation}
The first step, which uses a minor variant of a standard argument in the setting of sublinear maximal operators (see for example \cite{SW}), is to use the maximal estimate \eqref{nicemax} to reduce to the case of continuous $g_1,g_2$. We leave this classical exercise to the reader.
Suppose now that $g_1,g_2$ are continuous functions. It suffices to show that
\begin{equation}\label{11}
\mathcal{A}_{S,r}(1,1)(u):=\int_S \widehat{\chi}_r(u'-R_uu')J(u,u')\mathrm{d}\sigma(u')\rightarrow 1.
\end{equation}
Invoking the change of variables \eqref{cov} and using polar coordinates in $T_uS$ we have
\begin{eqnarray*}
    \begin{aligned}
\mathcal{A}_{S,r}(1,1)(u)&=\int_{T_uS}\widehat{\chi}_r(\xi)\frac{J(u,u'(\xi))}{\widetilde{J}(u,u'(\xi))}\mathrm{d}\xi\\
&=\int_0^\infty \int_{\mathbb{S}^{n-2}(T_uS)}r^{n-1}\widehat{\chi}(rt\omega)\frac{J(u,u'(t\omega))}{\widetilde{J}(u,u'(t\omega))}\mathrm{d}\sigma(\omega)t^{n-2}\mathrm{d}t\\
&=\int_0^\infty \int_{\mathbb{S}^{n-2}(T_uS)}\widehat{\chi}(s\omega)\frac{J(u,u'(r^{-1}s\omega))}{\widetilde{J}(u,u'(r^{-1}s\omega))}\mathrm{d}\sigma(\omega)\mathrm{d}s,
        \end{aligned}
        \end{eqnarray*}
        where $\mathbb{S}^{n-2}(T_uS)$ denotes the unit sphere in $T_uS$.
The limit \eqref{11} now follows by Lemma \ref{limJ} since $u'(r^{-1}s\omega)\rightarrow u$ as $r\rightarrow \infty$, while $u'(r^{-1}s\omega)-R_uu'(r^{-1}s\omega)=r^{-1}s\omega\in \langle\omega\rangle$.
\end{proof}
It remains to prove Lemma \ref{limJ}.
\begin{proof}[Proof of Lemma \ref{limJ}]
We begin by clarifying the hypothesis \eqref{sillylimit}, and showing that this limit must actually equal $-I$, where $I$ denotes the identity on $T_uS$. This reflects a crucial ``limiting symmetry" of the configuration of points $u,u',u''$ as $u'\rightarrow u$.
By translation and rotation invariance we may suppose that $u=0$ and $S=\{(x',\phi(x')):x'\in X\}$, for some smooth real-valued function $\phi$ on a subset $X$ of $T_uS$ satisfying $\nabla\phi(0)=0$ and $\hess(\phi)(x')>_{pd}0$ for all $x'$. The map $R:=R_u$ then takes the form $R(x',\phi(x'))=(x'',\phi(x''))$, for some unique $x''\in T_uS$ satisfying 
\begin{equation}\label{cond1-6june24}
    \phi(x'')=\phi(x')
\end{equation}
and
\begin{equation}\label{cond2-6june24}
    \frac{\nabla\phi(x'')}{|\nabla\phi(x'')|}=-\frac{\nabla\phi(x')}{|\nabla\phi(x')|}.
\end{equation} 
Observe that \eqref{cond1-6june24} follows by \eqref{collision condition 1} and \eqref{cond2-6june24} is a consequence of \eqref{footmeasuringdevice}. Writing $x''=y(x')$ allows us to interpret \eqref{sillylimit} as the existence of the limit $\mathrm{d}y_0:=\lim_{x'\rightarrow 0}\mathrm{d}y_{x'}:T_{u}S\rightarrow T_{u}S$.
In order to show that $\mathrm{d}y_0=-I$, we fix $v\in T_{u}S$ and let $x_{k}'\rightarrow 0$ be a sequence in $T_{u}S$ satisfying
$$\frac{\nabla\phi(y(x_{k}'))}{|\nabla\phi(y(x_{k}'))|}=v$$
for all $k$. This sequence exists as the Gauss maps $\widetilde{N}$ of the sections $\mathcal{S}_{u,u'}$ (see \eqref{sectionsdef}) are bijections. Differentiating \eqref{cond1-6june24} at the points of this sequence, we have
\begin{equation*}
    \mathrm{d}y(x_{k}')^{\top}(\nabla\phi(y(x_{k}')))=\nabla\phi(x').
\end{equation*}
Using \eqref{cond2-6june24},
\begin{equation*} \mathrm{d}y(x_{k}')^{\top}\left(\frac{\nabla\phi(x_{k}')}{|\nabla\phi(x_{k}')|}\right)=-\frac{|\nabla\phi(x_{k}')|}{|\nabla\phi(y(x_{k}'))|}\frac{\nabla\phi(x_{k}')}{|\nabla\phi(x_{k}')|},
\end{equation*}
which implies
\begin{equation}\label{eq1-6june24} \mathrm{d}y(x_{k}')^{\top}(v)=-\frac{|\nabla\phi(x_{k}')|}{|\nabla\phi(y(x_{k}'))|}v
\end{equation}
for all $k\in\mathbb{N}$. By the mean-value inequality,
\begin{equation*}
    \frac{|\nabla\phi(x_{k}')|}{|\nabla\phi(y(x_{k}'))|}\leq\frac{\sup\|\hess\phi\|_{\infty}}{\inf\|\hess\phi\|_{\infty}}\frac{|x_{k}'|}{|y(x_{k}')|}\leq \frac{\sup\|\hess\phi\|_{\infty}}{\inf\|\hess\phi\|_{\infty}}\frac{1}{\|\mathrm{d}y(c_{k})\|_{\infty}}
\end{equation*}
for some $c_{k}$ with $c_{k}\rightarrow 0$. On the other hand, $y(y(x_{k}'))=x_{k}'$ (recall that $R_{u}(R_{u}u')=u'$), hence  $\mathrm{d}y(y(x_{k}'))\circ\mathrm{d}y(x_{k}')=I$, which gives $\mathrm{d}y_0^{2}=I$, therefore $\|\mathrm{d}y(c_{k})\|_{\infty}$ does not approach $0$ and the sequence
$$\frac{|\nabla\phi(x_{k}')|}{|\nabla\phi(y(x_{k}'))|}$$
is bounded. By passing to a subsequence and by taking limits, we conclude from \eqref{eq1-6june24} that
\begin{equation*}
    \mathrm{d}y_0^{\top}(v)=-Lv
\end{equation*}
for some positive real number $L$ and for all $v\in T_{u}S$. On the other hand, since $\mathrm{d}y_0^{2}=I$, the only possible eigenvalues of $\mathrm{d}y_0$ are $\pm 1$, hence $\mathrm{d}y_0=-I$. Finally, 
taking the limit as $u'\rightarrow u$ in the first identity of \eqref{eq1-28may24} gives
\begin{equation}
    \lim_{u'\rightarrow u}\widetilde{J}(u,u')=2^{n-1}.
\end{equation}

Turning to the limiting identity for $J$, we first establish some bounds relating to the limiting arrangements of the points $u, u', u''$ and their normals $N(u), N(u'), N(u'')$, beginning with 
\begin{equation}\label{limitingmidpoint0}
    u'+u''-2u=o(|u-u'|).
\end{equation}
To see this (recalling that we are supposing $u=0$) observe that $u'+u''=(x'+y(x'), 2\phi(x'))$, and since $\phi(x')=O(|x'|^2)$, it remains to show that $h(x'):=x'+y(x')=o(|x'|)$. By the mean value theorem it suffices to observe that
$\mathrm{d}h_{x'}=I+\mathrm{d}y_{x'}=o(1)$ as $x'\rightarrow 0$, since $\mathrm{d}y_{x'}\rightarrow -I$.
A similar, albeit lengthier argument reveals that %\footnote{This probably needs proving via the parametrisation SN: I gave a parametrisation proof in Subsection8.1.}
\begin{equation}\label{limitingmidpoint}
N(u')+N(u'')-2N(u)=o(|u-u'|).
\end{equation}
Recalling the formula for $J(u,u')$, we observe first that the factor
$$
\frac{|N(u')\wedge N(u'')|}{|N(u')\wedge N(u)|}=\frac{2|N(u')\wedge N(u)|+o(|u'-u|)}{|N(u')\wedge N(u)|}\rightarrow 2
$$
as $u'\rightarrow u$. Here we are also using \eqref{cone of normals}, which tells us that $|N(u')\wedge N(u)|\sim |u'-u|$.
It remains to show that for each unit vector $\omega\in T_uS$,
\begin{equation}\label{lastthing}
\left|\frac{\langle u''-u',N(u'')\rangle}{\langle P_{T_{u''}S}N(u),(\mathrm{d}N_{u''})^{-1}(P_{T_{u''}S}N(u))\rangle}\right|\rightarrow 2
\end{equation}
as $u'\rightarrow u$ with $u'-u''\in\langle\omega\rangle$.
\begin{comment}
I think this will also follow from \eqref{limitingmidpoint} via the limit
\begin{equation}\label{limitingmidpoint2}
N(u'')-N(u')-2P_{T_uS}N(u'')=o(|u-u'|),
\end{equation}
or some variant of it.
To see \eqref{limitingmidpoint2} we use \eqref{limitingmidpoint} to write
\begin{eqnarray*}
\begin{aligned}
N(u'')-N(u')-2P_{T_uS}N(u'')&=N(u'')-N(u')-2(N(u'')-\langle N(u''),N(u)\rangle N(u))\\
&=(1-\langle N(u''), N(u)\rangle)N(u)+o(|u-u'|)\\&=o(|u-u'|).
\end{aligned}
\end{eqnarray*}
In case they are useful, we also have 
$$
P_{T_{u'}S}N(u)+P_{T_{u''}S}N(u)=o(|u-u'|)
$$
and
$$
P_{T_uS}N(u')+P_{T_uS}N(u'')=o(|u-u'|),
$$
by arguing similarly.
\end{proof}

\subsection{Note for my memory}
\end{comment}
Noting that $\langle u''-u', N(u'')\rangle=\langle u''-u', P_{T_uS} N(u'')\rangle$,
by \eqref{limitingmidpoint0} 
%\footnote{Why? $->$ This is a consequence of last week but I did not follow as I was doing another approach. So I don't know why. IO: isn't this covered by \eqref{limitingmidpoint0}? -$>$ SN: I think so. More precisely, we also need to use $|P_{T_uS} N(u'')| \sim |P_{T_{u''}S} N(u)| \sim |u-u'|$. }
we are reduced to showing that
$$
\lim_{\substack{u'\to u\\u'-u''\in\langle\omega\rangle}} 
\frac{ \langle u''-u, P_{T_uS} N(u'') \rangle }{ \langle P_{T_{u''}S} N(u), (\mathrm{d}N_{u''})^{-1} P_{T_{u''}S} N(u) \rangle } =1.
$$
By symmetry, we may replace $u''$ by $u'$ here, so that the objective is to show that 
\begin{equation}\label{e:Goal6/9}
\lim_{\substack{u'\to u\\u'-u''\in\langle\omega\rangle}}
\frac{ \langle u'-u, P_{T_uS} N(u') \rangle }{ \langle P_{T_{u'}S} N(u), (\mathrm{d}N_{u'})^{-1} P_{T_{u'}S} N(u) \rangle } =1.
\end{equation}
To this end we Taylor expand $N(u')$ about $0$ via the parametrisation $u'=(x',\phi(x')) =: \Phi(x')$ to obtain
\begin{align*}
N(u') 
&= N\circ \Phi(x') = N\circ \Phi (0) + \mathrm{d}(N\circ \Phi)_0 x' + O(|x'|^2)\\
&= 
N(u) + 
(\mathrm{d}N)_u\circ (\mathrm{d}\Phi)_0 x' + O(|x'|^2) \\
&= 
N(u) + 
(\mathrm{d}N)_u x' + O(|x'|^2), 
\end{align*}
where we have used that %\footnote{Here, precisely speaking, $(\mathrm{d}\Phi)_{x'}$ is a linear map from $T_uS = \mathbb{R}^{n-1}$ to $T_{u'}S$, rather than a linear map from $\mathbb{R}^n$ to itself. 
%However, one may clearly see that the restriction of the $n\times n$ matrix to $\mathbb{R}^{n-1}$: $ \begin{pmatrix} {\rm id}_{\mathbb{R}^{n-1}} & \mathbf{0} \\ 
%\nabla_{n-1} \phi(x') & 0\end{pmatrix} \big|_{\mathbb{R}^{n-1}}
%$
%is a linear map from $T_uS$ to $T_{u'}S$. 
%Indeed, for any $(v',0) \in T_uS =\mathbb{R}^{n-1}$, 
%$$
%\begin{pmatrix} {\rm id}_{\mathbb{R}^{n-1}} & \mathbf{0} \\ 
%\nabla_{n-1} \phi(x') & 0\end{pmatrix} (v',0) 
%= 
%(v', \langle \nabla_{n-1} \phi(x'), v'\rangle \in T_{u'}S. 
%$$
%} 
$(\mathrm{d}\Phi)_{x'} = \begin{pmatrix} {\rm id}_{\mathbb{R}^{n-1}} & \mathbf{0} \\ 
\nabla_{n-1} \phi(x') & 0\end{pmatrix}$ and $\nabla\phi(0)=0$. 
Thus, in view of the fact that $|x'|= O(|u'-u|)$ we have 
$$
x' = (\mathrm{d}N)_u^{-1} \big( N(u') - N(u) + O(|u'-u|^2) \big). 
$$
The numerator of \eqref{e:Goal6/9} now becomes
\begin{align*}
    \langle u'-u, P_{T_uS} N(u') \rangle
    &=
    \langle P_{T_uS}(u'-u), P_{T_uS} N(u') \rangle \\
    &= 
    \langle x', P_{T_uS} N(u') \rangle \\
    &= 
    \big\langle 
    (\mathrm{d}N)_u^{-1} \big( N(u') - N(u) + O(|u'-u|^2) \big), 
    P_{T_uS} N(u')
    \big\rangle. 
\end{align*}
Note that%\footnote{Indeed, 
%$$
%P_{T_uS} N(u')
%=
%N(u') - \langle N(u'),N(u)\rangle N(u) 
%=
%N(u') - N(u) 
%+
%|N(u') - N(u)|^2 N(u), 
%$$
%and $|N(u') - N(u)|^2 \le \| \mathrm{d}N \|_\infty^2 |u'-u|^2 $. 
%} 
$$
P_{T_uS} N(u') = N(u')-N(u) + O(|u'-u|^2),
$$
and so
$$
\langle u'-u, P_{T_uS} N(u') \rangle
    = 
    \big\langle 
    (\mathrm{d}N)_u^{-1} \big( N(u') - N(u) + O(|u'-u|^2) \big), 
    \big( N(u') - N(u) + O(|u'-u|^2) \big) 
    \big\rangle. 
$$
This is now similar to the denominator of \eqref{e:Goal6/9}. 
In fact, 
$$
P_{T_{u'}S} N(u) = - (N(u')-N(u)) + O(|u'-u|^2),
$$
and so
\begin{align*}
    &\frac{ \langle u'-u, P_{T_uS} N(u') \rangle }{ \langle P_{T_{u'}S} N(u), (\mathrm{d}N_{u'})^{-1} P_{T_{u'}S} N(u) \rangle }\\
    &\;\;\;\;\;\;\;\;= 
    \frac{ \big\langle 
    (\mathrm{d}N)_u^{-1} \big( N(u') - N(u) + O(|u'-u|^2) \big), 
    \big( N(u') - N(u) + O(|u'-u|^2) \big) 
    \big\rangle }{ \big\langle 
    (\mathrm{d}N)_{u'}^{-1} \big( N(u') - N(u) + O(|u'-u|^2) \big), 
    \big( N(u') - N(u) + O(|u'-u|^2) \big) 
    \big\rangle }. %\\
    %&= 
    %\frac{ \bigg\langle 
    %(dN)_u^{-1} \big( \widehat{N(u') - N(u)} + O(|u'-u|) \big), 
    %\big( \widehat{N(u') - N(u)} + O(|u'-u|) \big) 
    %\bigg\rangle }{ \bigg\langle 
    %(dN)_{u'}^{-1} \big( \widehat{N(u') - N(u)} + O(|u'-u|) \big), 
    %\big( \widehat{N(u') - N(u)} + O(|u'-u|) \big) 
    %\bigg\rangle } \\
    %&\to 1\quad {\rm as}\quad u'\to u:\; u' \in \mathcal{C}(u,\omega). 
\end{align*}
Further, from \eqref{limitingmidpoint} we have 
$$
N(u') - N(u)
= 
\frac12( N(u') - N(u'') ) + o(|u-u'|), 
$$
and hence,
\begin{align*}
    &\frac{ \langle u'-u, P_{T_uS} N(u') \rangle }{ \langle P_{T_{u'}S} N(u), (\mathrm{d}N_{u'})^{-1} P_{T_{u'}S} N(u) \rangle }\\
    &\;\;\;\;\;\;\;\;= 
    \frac{ \big\langle 
    (\mathrm{d}N)_u^{-1} \big( N(u') - N(u'') + o(|u'-u|) \big), 
    \big( N(u') - N(u'') + o(|u'-u|) \big) 
    \big\rangle }{ \big\langle 
    (\mathrm{d}N)_{u'}^{-1} \big( N(u') - N(u'') + o(|u'-u|) \big), 
    \big( N(u') - N(u'') + o(|u'-u|) \big) 
    \big\rangle }.
\end{align*}
Consequently, if $$\lim_{\substack{u'\to u\\ u'-u''\in\langle\omega\rangle}} \frac{N(u') - N(u'')}{|N(u') - N(u'')|}$$ exists, then  \eqref{e:Goal6/9} follows. 
Here we have also appealed to the fact that%\footnote{IO: Why is $|(\mathrm{d}N)_u(u'-u'') + O(|u-u'|^2)| \gtrsim |u-u'|$ true? Are we using the triangle inequality here? It's not clear to me... Yes, this is only for u' sufficiently close to u... IO: I see, so would it go like this? $|(\mathrm{d}N)_u(u'-u'') + O(|u-u'|^2)|=|u-u'|(|(\mathrm{d}N)_u(\omega_{u',u''}) + \frac{O(|u-u'|^2)}{|u'-u''|}|)$, where $\omega_{u,u'}$ is a unit vector and the term in parentheses is bounded below for $u'$ sufficiently close to $u''$ since $\frac{O(|u-u'|^2)}{|u'-u''|}|$ goes to $0$ and $(\mathrm{d}N)_u(\omega_{u',u''})$ remains bounded as $\omega_{u',u''}$ is always on the sphere?}   %\footnote{What do we mean by $\geq O(...)$? IO: I only had a quick think about this, but we can avoid writing $\geq O(...)$ just by rephrasing a few things, no? We only need to show that "$o(|u'-u|)/|N(u')-N(u'')|$ goes to $0$", isn't?}
$$
|N(u')-N(u'')| = |(\mathrm{d}N)_u(u'-u'') + O(|u-u'|^2)| %\ge \inf_{u\in S,i=1,\ldots, n-1} \lambda_{i} ( (\mathrm{d}N)_u ) |u'-u''| = 
\gtrsim |u-u'|.
$$
Arguing similarly using Taylor's theorem, we also have 
$$
N(u'')-N(u) = (\mathrm{d}N)_u x'' + O(|x''|^2),  
$$
from which it follows that 
$$
N(u') - N(u'') 
= 
(\mathrm{d}N)_u x' 
-(\mathrm{d}N)_u x'' + O(|x'|^2) + O(|x''|^2) 
=
(\mathrm{d}N)_u ( u'-u'' ) + O(|u-u'|^2), 
$$
and so
$$
    \frac{N(u') - N(u'')}{|N(u') - N(u'')|}
    =   \frac{(\mathrm{d}N)_u ( u'-u'' ) + O(|u-u'|^2)}{|(\mathrm{d}N)_u ( u'-u'' )| + O(|u-u'|^2)}
    =\frac{(\mathrm{d}N)_u ( \omega ) + O(|u-u'|)}{|(\mathrm{d}N)_u ( \omega )| + O(|u-u'|)},
$$
which converges (to $(\mathrm{d}N)_u\omega/|(\mathrm{d}N)_u\omega|$) as
as $u'\rightarrow u$ with $u'-u''\in\langle\omega\rangle$, as required.
\end{proof}

\if0 
\subsection{Justification of \eqref{limitingmidpoint}...: To be removed}
Recall notations: 
\begin{itemize}
    \item 
    Fix $u =0 \in S$ and parametrise the nbh of $S$ as $\{ (x',\phi(x')): x'\}$, where $x'$ runs over some small nbh of $0_{T_uS}$, and $\phi(0) = 0$ and $\nabla \phi(0) = 0$. 
    \item 
    In this parametrisation, $R_uu'$ is described by the map $y:T_uS \to T_uS$ so that $ R_u u' = R_u( x',\phi(x')) = \Phi( y(x')) $, where $\Phi(x'):= (x',\phi(x'))$. Indeed, it holds that $x'' = y(x')$ in a previous notations.
    \item 
    It would be worth mentioning that 
    $$
    N(u') = N(\Phi(x')) = \frac1{ \sqrt{1+|\nabla\phi(x')|^2} } ( \nabla \phi(x'), -1 ),
    $$
    and thus 
    $$
    T_{u'}S = \{ z \in \mathbb{R}^n: \langle z, N( \Phi(x' ) \rangle =0 \} 
    = 
    \{ (z', \langle \nabla \phi(x'), z' \rangle): z' \in \mathbb{R}^{n-1}\}. 
    $$
    In particular, you can regard $T_{u'}S$ is just a subspace of $\mathbb{R}^n$. 
    \item 
    Moreover, from the above expression of $T_{u'}S $, one may see that 
    $$
    ({\rm d}\Phi)_{x'} = \begin{pmatrix}
        I_{n-1} & 0 \\
        \nabla \phi(x') & 0 
    \end{pmatrix}
    $$
    is the projection onto $T_{u'}S$; i.e. $({\rm d}\Phi)_{x'} z \in T_{u'}S$ for all $z\in \mathbb{R}^n$, and $({\rm d}\Phi)_{x'} z = z$ for all $z\in T_{u'}S$. 
\end{itemize}

Below, we argue with $y(x')$ rather than $R_uu'$. 
In this subsection, we further assume that
\begin{equation}\label{e:Nondeg-6/19}
    \phi \in C^3_{\rm loc}(T_uS),\quad 
    y \in C^2_{\rm loc}(T_uS)
    %\sup_{i=1,\ldots,n-1} \sup_{ z' \in T_uS \setminus\{0\}: |z'| \le 1 } \big\| \nabla^2 y_i (z') \big\|_{\rm Op} <\infty, 
\end{equation}
Precisely, the first condition means that $\phi$ is $C^3$ on some small nbh of $0_{T_uS}$. 
Second condition similarly means that $y_i$ is $C^2$ on some small nbf of $0_{T_uS}$, where I denote $y(x') = (y_1(x'),\ldots,y_{n-1}(x'))$. 
%Of course, if one likes, one may assume the existence of $\lim_{u'\to u} (\mathrm{d} R_u)_{u'}$, and thus the existence of $\lim_{x'\to0} (\mathrn{d}y)_{x'} =: (\mathrm{d}y)_0$. If one does this, then it suffices to assume that $y_i$ is $C^2$ on some nbh of $0_{T_uS}$ in order to ensure the second condition of \eqref{e:Nondeg-6/19}. 

\begin{remark}
    As we saw above, we at least confirmed that $(dy)_{x'}$ is continuous on $T_uS$. 
    So the assumption \eqref{e:Nondeg-6/19} requires a bit more regularity on $y$. 
    If one likes one would be able to weaken the regularity assumption further, but I am not going to do that. 
\end{remark}

\subsubsection{Quantitative version of $\lim_{x'\to 0} (\mathrm{d}y)_{x'} = - I_{T_uS}$}
Fix arbitrary $x' \in T_uS$, $|x'|\le 1$ and we shall prove that 
\begin{equation}\label{e:QuantitativeLimit}
    \big| (\mathrm{d}y)_{t_0x'} v - (\mathrm{d}y)_{t_1x'} v  \big| \le (n-1)^\frac12\sup_i \sup_{ z' \neq 0 } \big\| \nabla^2 y_i(z') \big\|_{\rm Op} |t_0-t_1| |x'|,\quad \forall v\in \mathbb{S}^{n-2} \subset T_uS.
\end{equation}
In particular, from $(dy)_0 = - I_{T_uS}$, 
\begin{equation}\label{e:QuantitativeLimit2}
    \big| (\mathrm{d}y)_{x'} v + v \big| \le (n-1)^\frac12 \sup_i \sup_{ z' \neq 0 } \big\| \nabla^2 y_i(z') \big\|_{\rm Op} |x'|,\quad \forall v\in \mathbb{S}^{n-2} \subset T_uS.
\end{equation}
Note that the constant is finite thanks to the assumption \eqref{e:Nondeg-6/19}.
\begin{proof}[Proof of \eqref{e:QuantitativeLimit}]
\begin{align*}
\big| (\mathrm{d}y)_{t_0x'} v - (\mathrm{d}y)_{t_1x'} v  \big|
&= 
\bigg| 
\begin{pmatrix}
    \nabla y_1(t_0x') \\
    \vdots \\
    \nabla y_{n-1} (t_0x')
\end{pmatrix} v
-
\begin{pmatrix}
    \nabla y_1(t_1x') \\
    \vdots \\
    \nabla y_{n-1} (t_1x')
\end{pmatrix} v, 
\bigg|
\end{align*}
where precisely speaking I mean $\nabla y_i$ as the adjoint of it, but let me allow to be lazy a bit. 
Then 
\begin{align*}
\big| (\mathrm{d}y)_{t_0x'} v - (\mathrm{d}y)_{t_1x'} v  \big|
&= 
\bigg| 
\begin{pmatrix}
    \int_{t_0}^{t_1} \frac{d}{dt} \big(\nabla y_1(t x') \big) \, {\rm d}t \\
    \vdots \\
    \int_{t_0}^{t_1} \frac{d}{dt} \big(\nabla y_{n-1}(t x') \big) \, {\rm d}t
\end{pmatrix} v
\bigg| \\
&= 
\bigg|
\int_{t_0}^{t_1} 
\begin{pmatrix}
     \big[\nabla^2 y_1(t x') \big] x'  \\
    \vdots \\
    \big[\nabla^2 y_{n-1}(t x') \big] x'
\end{pmatrix} v
\, {\rm d}t
\bigg|. 
\end{align*}
Thus, it is enough to use CS ineq and $ | \big[\nabla^2 y_i(t x') \big] x' |\le \sup_{z'} \| \nabla^2 y_i(z') \|_{\rm Op} |x'| $. 
\end{proof}

\subsubsection{Proof of \eqref{limitingmidpoint}}
{\color{red} Added: The following argument does not need the second condition of \eqref{e:Nondeg-6/19}. What we really need is only $|({\rm d} y)_{x'} v + v| = o(1)$. In this case, \eqref{e:Goal6/19} is replaced by 
$$
    \big|{\rm d} \big( N\circ \Phi(x') + N\circ \Phi( y(x') ) -2N\circ \Phi(0) \big) v \big|
    =o(1),\quad \forall v \in \mathbb{S}^{n-2} \subset T_uS. 
$$}
Since $|u-u'| \sim |x'|$, it suffices to show that 
$$
|N\circ \Phi(x') + N\circ \Phi( y(x') ) -2N\circ \Phi(0)| \lesssim |x'|^2.  
$$
This may be reduced to showing that 
\begin{equation}\label{e:Goal6/19}
    \big|{\rm d} \big( N\circ \Phi(x') + N\circ \Phi( y(x') ) -2N\circ \Phi(0) \big) v \big|
    \lesssim |x'|,\quad \forall v \in \mathbb{S}^{n-2} \subset T_uS. 
\end{equation} 
Here, I regard $N\circ \Phi$ as a map from $T_uS \subset\mathbb{R}^n \to \mathbb{R}^n$, and thus the derivative ${\rm d} ( N\circ \Phi )$ is just $(n-1)\times n$ matrix; see \eqref{e:ExplicitDeri} for more details. 
Let us prove \eqref{e:Goal6/19} with some implicit constant depending on $ \sup_i \sup_{z' \neq0} \|\nabla^2 y_i (z') \|_{\rm Op} <\infty $. 

\begin{align*}
    &\big|{\rm d} \big( N\circ \Phi(x') + N\circ \Phi( y(x') ) -2N\circ \Phi(0) \big) v \big| \\
    &= 
    \big(
    {\rm d} (N\circ \Phi)_{x'} 
    + 
    {\rm d} (N\circ \Phi)_{x''} \circ ( {\rm d} y)_{x'}
    \big)
    v\\
    &= 
    \big|
    {\rm d} (N\circ \Phi)_{x'} [v] 
    - 
    {\rm d} (N\circ \Phi)_{x''} [v] 
    + 
    {\rm d} (N\circ \Phi)_{x''} 
    \big[ 
    v + ({\rm d} y)_{x'} v 
    \big]
    \big|\\
    &\le 
    \big|
    {\rm d} (N\circ \Phi)_{x'} [v] 
    - 
    {\rm d} (N\circ \Phi)_{x''} [v] 
    \big|
    +
    \big|
    {\rm d} (N\circ \Phi)_{x''} 
    \big[ 
    v + ({\rm d} y)_{x'} v 
    \big]
    \big|
\end{align*}
The second term is ok from \eqref{e:QuantitativeLimit2} as 
$$
\big|
    {\rm d} (N\circ \Phi)_{x''} 
    \big[ 
    v + ({\rm d} y)_{x'} v 
    \big]
    \big|
    \lesssim |x'|. 
$$
In order to control the first term, we need detailed information of $d(N\circ \Phi)_{x'}$; see the next subsubsection. 
In particular, by denoting $\mu(x'):= 1+|\nabla \phi(x')|^2$  one may see from \eqref{e:ExplicitDeri} that 
\begin{align*}
    &\big|
    {\rm d} (N\circ \Phi)_{x'} [v] 
    - 
    {\rm d} (N\circ \Phi)_{x''} [v] 
    \big|\\
    & =
    \bigg| 
    \frac1{ \mu(x')^{3/2} } 
    \begin{pmatrix}
            \big\langle v, \big[\nabla^2 \phi(x') \big] 
            \bigg( 
            \partial_1 \phi(x') \nabla \phi(x')  
            - 
            \mu(x')  e_1 \bigg) \big\rangle  \\
            \vdots \\
            \big\langle v, \big[\nabla^2 \phi(x') \big] 
            \bigg( 
            \partial_{n-1} \phi(x') \nabla \phi(x')  
            - 
            \mu(x')  e_{n-1} \bigg) \big\rangle  \\
            - 
            \big\langle v, \big[\nabla^2 \phi(x') \big] \nabla \phi(x')  \big\rangle 
    \end{pmatrix}\\
    & \qquad \qquad 
    - 
    \frac1{ \mu(x'')^{3/2} } 
    \begin{pmatrix}
            \big\langle v, \big[\nabla^2 \phi(x'') \big] 
            \bigg( 
            \partial_1 \phi(x'') \nabla \phi(x'')  
            - 
            \mu(x'')  e_1 \bigg) \big\rangle  \\
            \vdots \\
            \big\langle v, \big[\nabla^2 \phi(x'') \big] 
            \bigg( 
            \partial_{n-1} \phi(x'') \nabla \phi(x'')  
            - 
            \mu(x'')  e_{n-1} \bigg) \big\rangle  \\
            - 
            \big\langle v, \big[\nabla^2 \phi(x'') \big] \nabla \phi(x'')  \big\rangle 
    \end{pmatrix}
    \bigg|\\
    &\le 
    \bigg| 
    \frac1{ \mu(x')^{3/2} } 
    \begin{pmatrix}
            \big\langle v, \big[\nabla^2 \phi(x') \big] 
            \bigg( 
            \partial_1 \phi(x') \nabla \phi(x')  
            - 
            \mu(x')  e_1 \bigg) \big\rangle
            -
            \big\langle v, \big[\nabla^2 \phi(x'') \big] 
            \bigg( 
            \partial_1 \phi(x'') \nabla \phi(x'')  
            - 
            \mu(x'')  e_1 \bigg) \big\rangle\\
            \vdots \\
            \big\langle v, \big[\nabla^2 \phi(x') \big] 
            \bigg( 
            \partial_{n-1} \phi(x') \nabla \phi(x')  
            - 
            \mu(x')  e_{n-1} \bigg) \big\rangle
            -
            \big\langle v, \big[\nabla^2 \phi(x'') \big] 
            \bigg( 
            \partial_{n-1} \phi(x'') \nabla \phi(x'')  
            - 
            \mu(x'')  e_{n-1} \bigg) \big\rangle\\
            - 
            \big\langle v, \big[\nabla^2 \phi(x') \big] \nabla \phi(x')  \big\rangle 
            +
            \big\langle v, \big[\nabla^2 \phi(x'') \big] \nabla \phi(x'')  \big\rangle 
    \end{pmatrix}
    \bigg| \\
    & \qquad \qquad 
    +
    \big|\frac1{ \mu(x')^{3/2} } - \frac1{ \mu(x'')^{3/2} } \big|
    \bigg|
    \begin{pmatrix}
            \big\langle v, \big[\nabla^2 \phi(x'') \big] 
            \bigg( 
            \partial_1 \phi(x'') \nabla \phi(x'')  
            - 
            \mu(x'')  e_1 \bigg) \big\rangle  \\
            \vdots \\
            \big\langle v, \big[\nabla^2 \phi(x'') \big] 
            \bigg( 
            \partial_{n-1} \phi(x'') \nabla \phi(x'')  
            - 
            \mu(x'')  e_{n-1} \bigg) \big\rangle  \\
            - 
            \big\langle v, \big[\nabla^2 \phi(x'') \big] \nabla \phi(x'')  \big\rangle 
    \end{pmatrix}
    \bigg|.
\end{align*}
From this expression, and the assumption that $\phi$ is $C^3$, it is clear that this whole expression is $\lesssim |x'-x''| \sim |x'|$.

\subsubsection{Expression of $d(N\circ \Phi)_{x'}$}
We here claim the following: 
\begin{equation}\label{e:ExplicitDeri}
    {\rm d}( N\circ \Phi )_{x'}
    = 
    - (1+|\nabla\phi(x')|^2)^{-3/2}  
    \begin{pmatrix}
            \big[\nabla^2 \phi(x') \big] 
            \bigg( 
            \partial_1 \phi(x') \nabla \phi(x')  
            - 
            (1+|\nabla\phi(x')|^2)  e_1 \bigg) \\
            \vdots \\
            \big[\nabla^2 \phi(x') \big] 
            \bigg( 
            \partial_{n-1} \phi(x') \nabla \phi(x')  
            - 
            (1+|\nabla\phi(x')|^2)  e_{n-1} \bigg)  \\
            - 
            \big[\nabla^2 \phi(x') \big] \nabla \phi(x')   
    \end{pmatrix}. 
\end{equation}
This is $(n-1)\times n$ matrix. 
Moreover, it is indeed a linear map from $T_uS$ to $T_{N(u')} \mathbb{S}^{n-1}$; i.e. one may check that 
$ \langle N(u'), d( N\circ \Phi )_{x'} v\rangle =0$ for $v \in T_uS$.

\begin{proof}[Proof of \eqref{e:ExplicitDeri}]
    Recall that 
    $$
    N\circ \Phi(x')
    = 
    ( 1+|\nabla\phi(x')|^2 )^{-\frac12} ( \nabla \phi(x'), -1 ). 
    $$
    So 
    \begin{align*}
        {\rm d}( N\circ \Phi )_{x'} 
        &= 
        \begin{pmatrix}
            \nabla \big[ ( 1+|\nabla\phi(x')|^2 )^{-\frac12} \partial_1 \phi(x') \big] \\
            \vdots \\
            \nabla \big[ ( 1+|\nabla\phi(x')|^2 )^{-\frac12} \partial_{n-1} \phi(x') \big] \\
            \nabla \big[ -( 1+|\nabla\phi(x')|^2 )^{-\frac12}  \big] 
        \end{pmatrix}\\
        &= 
        \begin{pmatrix}
            - (1+|\nabla\phi(x')|^2)^{-3/2}  
            \bigg(
            \partial_1 \phi(x') 
            \big[\nabla^2 \phi(x') \big] \nabla \phi(x')    
            - 
            (1+|\nabla\phi(x')|^2) \big[\nabla^2 \phi(x') \big] e_1 
            \bigg)\\
            \vdots \\
            - (1+|\nabla\phi(x')|^2)^{-3/2}  
            \bigg(
            \partial_{n-1} \phi(x') 
            \big[\nabla^2 \phi(x') \big] \nabla \phi(x')    
            - 
            (1+|\nabla\phi(x')|^2) \big[\nabla^2 \phi(x') \big] e_{n-1} 
            \bigg)\\
             (1+|\nabla\phi(x')|^2)^{-3/2} 
             \big[\nabla^2 \phi(x') \big] \nabla \phi(x')   
        \end{pmatrix}\\
        &= 
        - (1+|\nabla\phi(x')|^2)^{-3/2}  
        \begin{pmatrix}
            \partial_1 \phi(x') 
            \big[\nabla^2 \phi(x') \big] \nabla \phi(x')    
            - 
            (1+|\nabla\phi(x')|^2) \big[\nabla^2 \phi(x') \big] e_1 \\
            \vdots \\
            \partial_{n-1} \phi(x') 
            \big[\nabla^2 \phi(x') \big] \nabla \phi(x')    
            - 
            (1+|\nabla\phi(x')|^2) \big[\nabla^2 \phi(x') \big] e_{n-1} \\
            - 
            \big[\nabla^2 \phi(x') \big] \nabla \phi(x')   
        \end{pmatrix}\\
        &= 
        - (1+|\nabla\phi(x')|^2)^{-3/2}  
        \begin{pmatrix}
            \big[\nabla^2 \phi(x') \big] 
            \bigg( 
            \partial_1 \phi(x') \nabla \phi(x')  
            - 
            (1+|\nabla\phi(x')|^2)  e_1 \bigg) \\
            \vdots \\
            \big[\nabla^2 \phi(x') \big] 
            \bigg( 
            \partial_{n-1} \phi(x') \nabla \phi(x')  
            - 
            (1+|\nabla\phi(x')|^2)  e_{n-1} \bigg)  \\
            - 
            \big[\nabla^2 \phi(x') \big] \nabla \phi(x')   
        \end{pmatrix}. 
    \end{align*}
\end{proof}

Here I used\footnote{ In fact, we know that 
\begin{align*}
    &\partial_i \big[ ( 1+|\nabla\phi(x')|^2 )^{-\frac12} \partial_j \phi(x') \big]\\
    &= 
    - \frac12 (1+|\nabla\phi(x')|^2)^{-3/2} \partial_i \big( \sum_{l=1}^{n-1} |\partial_l \phi(x')|^2 \big) \partial_j \phi(x') 
    + 
    (1+|\nabla\phi(x')|^2)^{-1/2} \partial_{ij} \phi(x') \\
    &= 
    - 2\frac12 (1+|\nabla\phi(x')|^2)^{-3/2} \big( \sum_{l=1}^{n-1} \partial_l \phi(x')\partial_{il} \phi(x') \big) \partial_j \phi(x') 
    + 
    (1+|\nabla\phi(x')|^2)^{-1/2} \partial_{ij} \phi(x') \\
    &= 
    -  (1+|\nabla\phi(x')|^2)^{-3/2} \big\langle e_i,  \big[\nabla^2 \phi(x') \big] \nabla \phi(x')   \big\rangle \partial_j \phi(x') 
    + 
    (1+|\nabla\phi(x')|^2)^{-1/2} \langle e_i, \big[\nabla^2 \phi(x') \big] e_j \rangle 
\end{align*}
}
\begin{align*}
    &\nabla \big[ ( 1+|\nabla\phi(x')|^2 )^{-\frac12} \partial_j \phi(x') \big]\\
    &= 
    -  (1+|\nabla\phi(x')|^2)^{-3/2}  \partial_j \phi(x')   \big[\nabla^2 \phi(x') \big] \nabla \phi(x')   
    + 
    (1+|\nabla\phi(x')|^2)^{-1/2}  \big[\nabla^2 \phi(x') \big] e_j  \\
    &= 
    - (1+|\nabla\phi(x')|^2)^{-3/2}  
    \bigg(
    \partial_j \phi(x') 
    \big[\nabla^2 \phi(x') \big] \nabla \phi(x')    
    - 
    (1+|\nabla\phi(x')|^2) \big[\nabla^2 \phi(x') \big] e_j 
    \bigg)
\end{align*}
and 
\begin{align*}
    &\nabla \big[ ( 1+|\nabla\phi(x')|^2 )^{-\frac12} \partial_j \phi(x') \big]
    = 
    - (1+|\nabla\phi(x')|^2)^{-3/2}  
    \big[\nabla^2 \phi(x') \big] \nabla \phi(x').    
\end{align*}
\fi 
%%%%%%%%%%%%%%%%%%%%%%%%%%%%%%%%%%%%%%%%%%%%%%%%%%%%%%%%%%%%%%%%%%%%

%%%%%%%%%%%%%%%%%%%%%%%%%%%%%%%%%%%%%%%%%%%%%%%%%%%%%%%%%%%%%%%%%
%%%%%%%%%%%%%%%%%%%%%%%%%%%%%%%%%%%%%%%
\section{Tomographic constructions}\label{Sect:tomographic}
In this section we show that the explicit geometric Wigner distributions from Section \ref{Sect:general submanifolds} may be constructed \textit{tomographically} from the corresponding extension operators, at least when $n=2$. This is motivated by the tomographic approach to weighted extension inequalities developed in \cite{BN, BNS}.
For the submanifolds $S$ considered in Section \ref{Sect:general submanifolds}, we saw that the natural tomographic transform is the $S$-parametrised X-ray transform $X_Sw(u,v):=Xw(N(u),v)$. Here $X$ denotes the standard X-ray transform and $N$ the Gauss map of $S$. We remark that if the Gauss map is bijective, such as when $S$ is strictly convex and closed, the operator $X_S$ is easily seen to inherit the inversion formula
$$
c_nX_S^*K(-\Delta_v)^{1/2}X_S\psi=\psi
$$
from the classical inversion formula for $X$; here $\psi$ is a suitably well-behaved function and $K(u)$ is the Gaussian curvature of $S$ at a point $u$ (acting multiplicatively). This suggests the following:
\begin{proposition}\label{tomrep} Let $\Phi$ be a smooth bump function on $\mathbb{R}^2$ such that $\Phi(0)=1$, and let $\Phi_\lambda(x)=\Phi(x/\lambda)$ for each $\lambda>0$.
If $S$ is a strictly convex smooth curve in the plane then
$$
\lim_{\lambda\rightarrow\infty}K(u)(-\Delta_v)^{1/2}X_S(\Phi_\lambda|\widehat{g\mathrm{d}\sigma}|^2)(u,v)=W_S(g,g)(u,v)
$$
for all compactly-supported smooth functions $g$ on $S$.
\end{proposition}
\begin{remark}[Phase-space tomographic methods in optics]\label{phasetom}
    This spatial tomographic construction, which in the particular case of the circle is somewhat implicit in \cite{BNS}, appears to be quite different from the \textit{phase-space} tomographic constructions of Wigner distributions that have proved effective in optics. There it is observed that the phase-space X-ray transform applied to the Wigner distribution (referred to as the Radon--Wigner transform) identifies its marginal distributions in all directions, and that these marginals are natural hybrids of the coordinate marginals, involving the fractional Fourier transform.
    %\footnote{This is in some sense the reverse of the perspective taken in Section 8 of \cite{BT}, where marginal estimates are used to control tomographic transforms.} 
    The Wigner distribution is then (re)constructed by an application of the classical (left) inverse X-ray transform; see for example \cite{Bert, Al}.
\end{remark}
\begin{remark}
The cut-off $\Phi_\lambda$ is included in the statement of Proposition \ref{tomrep} as $X_S(|\widehat{g\mathrm{d}\sigma}|^2)$ is not in general defined for $g\in L^2(S)$ (unless there is a suitable transversality property satisfied -- see \cite{BNS}). This may already be seen when $S=\mathbb{S}^1$ and $g\equiv 1$, as then $|\widehat{g\mathrm{d}\sigma}(x)|^2$ is comparable to $(1+|x|)^{-1}$ on sufficiently large portions of $\mathbb{R}^2$. 
\begin{comment}
    However, as we shall see in the proof of Proposition \ref{tomrep}, the \textit{composition} $(\Delta_v)^{1/2}X_S(|\widehat{g\mathrm{d}\sigma}|^2)$ may be seen to be well defined (thanks to the strict convexity of $S$) via the defining property
\begin{equation}\label{domainextension}
\mathcal{F}_v((-\Delta_v)^{1/2}X_Sf)(u,\xi)=|\xi|\widehat{f}(\xi);\;\;\;\xi\in T_uS.
\end{equation}
\end{comment}
\begin{comment}
For $n\geq 3$, this subtlety disappears as $|\widehat{g\mathrm{d}\sigma}|^2\in L^2(\mathbb{R}^n)$ by the Stein--Tomas restriction theorem provided $K$ is nonvanishing. However, the identity in Proposition \ref{tomrep} fails to hold, and so the tomographic data $K(u)(-\Delta_v)^{1/2}X_S(|\widehat{g\mathrm{d}\sigma}|^2)$ gives rise to an \textit{alternative} representation of $|\widehat{g\mathrm{d}\sigma}|^2$ via $X_S^*$. As will become apparent in the proof of Proposition \ref{tomrep}, this tomographic data may be interpreted as an ``average" over all possible Wigner distributions of $g$ -- see Remark \ref{tomav}.
\end{comment}
\end{remark}
\begin{proof}[Proof of Proposition \ref{tomrep}]
    A routine (distributional) argument, using the well-known fact that $$\mathcal{F}_v(Xf)(\omega,\xi)=\widehat{f}(\xi),\;\;\xi\in\langle\omega\rangle^\perp,$$
    %\footnote{JB: Here's a (hopefully) small job -- we should flesh this out a bit. After all, we are using the strict convexity of $S$ here: for example, if $S$ contains a piece of hyperplane, and if $u$ is point in this piece of hyperplane, then $(u'-u'')\cdot N(u)$ vanishes completely whenever $u',u''$ both belong to the piece of hyperplane (a set of positive measure in $S\times S$).} 
    reveals that
    \begin{eqnarray}\label{tomrep1}
    \begin{aligned} 
        (-\Delta_v)^{1/2}X_S( \Phi_\lambda|\widehat{g\mathrm{d}\sigma}|^2)(u,v)&
        %=\int_{T_u S}e^{2\pi i\xi\cdot v}\mathcal{F}_v((-\Delta_v)^{1/2}X_S(|\widehat{g\mathrm{d}\sigma}|^2))(u,\xi)\mathrm{d}\xi\\&
        =\int_{T_u S}e^{2\pi i\xi\cdot v}|\xi|\widehat{\Phi}_\lambda\ast (g\mathrm{d}\sigma)*(\widetilde{g\mathrm{d}\sigma})(\xi)\mathrm{d}\xi.
      \end{aligned}
    \end{eqnarray}
    In order to take the limit as $\lambda\to \infty$ it suffices, by the dominated convergence theorem, to show that 
    \begin{equation}\label{uniformbound}
\sup_{\lambda\geq 1}|\xi| |\widehat{\Phi}_\lambda|\ast (g\mathrm{d}\sigma)*(\widetilde{g\mathrm{d}\sigma})(\xi)\lesssim(1+|\xi|)^{-N}
 \end{equation}
for some sufficiently large $N\in\mathbb{N}$.
    This may be seen by first appealing to the strict convexity of $S$, along with the assumed properties of $g$, to show that $(g\mathrm{d}\sigma)*(\widetilde{g\mathrm{d}\sigma})(\xi)\lesssim |\xi|^{-1}\mathbbm{1}_B(\xi)$ for some ball $B\subset\mathbb{R}^2$; see \cite[Section 2]{DO} for the appropriate detailed computations. The estimate \eqref{uniformbound} then follows using the rapid decay of $\widehat{\Phi}$. Taking this limit, it follows that
    \begin{align*}
    \lim_{\lambda\to\infty}(-\Delta_v)^{1/2}&X_S( \Phi_\lambda|\widehat{g\mathrm{d}\sigma}|^2)(u,v)
        %=\int_{T_u S}e^{2\pi i\xi\cdot v}\mathcal{F}_v((-\Delta_v)^{1/2}X_S(|\widehat{g\mathrm{d}\sigma}|^2))(u,\xi)\mathrm{d}\xi\\&
        \\&=\int_{T_u S}e^{2\pi i\xi\cdot v}|\xi| (g\mathrm{d}\sigma)*(\widetilde{g\mathrm{d}\sigma})(\xi)\mathrm{d}\xi
    \\&=\int_S\int_Sg(u')\overline{g(u'')}e^{2\pi i(u'-u'')\cdot v}|u'-u''|\delta((u'-u'')\cdot N(u))\mathrm{d}\sigma(u'')\mathrm{d}\sigma(u').
    \end{align*}
    Now, for fixed $u,u'$ the function
    $u''\mapsto (u'-u'')\cdot N(u)$ vanishes if and only if either $u''=u'$ or $u''=R_u u'$, as defined in Section \ref{Sect:general submanifolds}, and so it remains to establish the formula 
    \begin{equation}\label{mass}
    \int_S|u'-u''|\delta((u'-u'')\cdot N(u))\mathrm{d}\sigma(u'')=\frac{|u'-R_u u'|}{|N(u)\wedge N(R_u u')|}
    \end{equation}
    whenever $u'\not= u$; see Remark \ref{interpreting J}.
    Making the change of variables $u'''=u''-R_u u'$ (we stress that $u''$ is the variable of integration in \eqref{mass} rather than a simplified notation for $R_{u}u'$), and using $\mathcal{H}^1$ to denote 1-dimensional Hausdorff measure in the plane, we have that
    \begin{eqnarray*}\begin{aligned}
    \int_S|u'-u''|\delta((u'-u'')&\cdot N(u))\mathrm{d}\sigma(u'')\\&=\int_{S-\{R_u u'\}}|u'-R_u u'-u'''|\delta(u'''\cdot N(u))\mathrm{d}\mathcal{H}^1(u''')\\
    &=|u'-R_u u'|\lim_{\varepsilon\rightarrow 0}\frac{1}{2\varepsilon}\mathcal{H}^1\left(\{u'''\in (S-\{R_u u'\}):|u'''\cdot N(u)|<\varepsilon\}\right),
    \end{aligned}
    \end{eqnarray*}
    from which \eqref{mass} follows from the smoothness of $S$ by elementary geometric considerations.
\end{proof}

\begin{remark}[Stein's inequality as a lower bound on the X-ray transform]
Stein's inequality \eqref{Steinvgen} may of course be interpreted as a certain \textit{lower bound} on the X-ray transform $X_S$. Here we make some contextual remarks relating to this in the setting of the paraboloid, where the corresponding inequality \eqref{Steinpara} takes the form 
    \begin{eqnarray}\label{dada}
    \begin{aligned}
\int_{\mathbb{R}^d\times\mathbb{R}}|u(x,t)|^2w(x,t)\mathrm{d}x\mathrm{d}t\lesssim
\int_{\mathbb{R}^d}\|\rho^*w(\cdot,v)\|_{L^\infty(\mathbb{R}^d)}|\widehat{u}_0(v)|^2\mathrm{d}v,
\end{aligned}
\end{eqnarray}
recalling the caveat in Remark \ref{Remark:failureStrichartz}.
Somewhat similar-looking lower bounds may be obtained from the \textit{adjoint Loomis--Whitney inequality} introduced in \cite{BT}. Arguing as in \cite[Section 8]{BT} it follows that
\begin{equation}\label{bt}
C(|\widehat{u}_0|^2)\|w\|_{L^p_{x,t}}\leq\left(\int_{\mathbb{R}^d}\|\rho^*w(\cdot,v)\|_{L^q(\mathbb{R}^d)}^r|\widehat{u}_0(v)|^2\mathrm{d}v\right)^{1/r}
\end{equation}
whenever $w\geq 0$, $0<p,q\leq 1$, $r>0$ and $\tfrac{1}{d+1}\left(\tfrac{1}{q}-1\right)=\tfrac{1}{d}\left(\tfrac{1}{p}-1\right)$. Here
$$
C(|\widehat{u}_0|^2):=\left(\int_{(\mathbb{R}^d)^{d+1}}\left|\det\left(\begin{array}{ccc}
1& \cdots  & 1\\
2v_1 &
\cdots & 2v_{d+1}\\
\end{array}\right)\right|^{\frac{(d+1)r}{d}\left(\frac{1}{p}-1\right)}|\widehat{u}_0(v_1)|^2\cdots|\widehat{u}_0(v_{d+1})|^2\mathrm{d}v\right)^{\frac{1}{(d+1)r}}.
$$
Of course \eqref{bt}, while superficially similar, is numerologically very different from \eqref{dada}, and also phenomenologically: $L^p$ norms below $L^1$ reflect spread rather than concentration. In particular, raising  
\eqref{bt} to the $r$th power, setting $r=q$ and taking a limit as $p\rightarrow 0$ one obtains
\begin{eqnarray}\label{vis stein}
\begin{aligned}
\Biggl(\int_{(\mathbb{R}^d)^{d+1}}\left|\det\left(\begin{array}{ccc}
1& \cdots  & 1\\
2v_1 &
\cdots & 2v_{d+1}\\
\end{array}\right)\right|&|\widehat{u}_0(v_1)|^2\cdots|\widehat{u}_0(v_{d+1})|^2\mathrm{d}v\Biggr)^{\frac{1}{d+1}}|\supp w|^{\frac{d}{d+1}}\\&\leq \int_{\mathbb{R}^d}|\supp \rho^*w(\cdot,v)||\widehat{u}_0(v)|^2\mathrm{d}v.
\end{aligned}
\end{eqnarray}
It was observed in \cite{BI} (see also \cite{BNS}) that the left-hand side of \eqref{vis stein} (and the expression $C(|\widehat{u}_0|^2)$ in general) has a space-time formulation in terms of $u$, emphasising further the parallels with \eqref{dada}. 
\begin{comment}
For example, raising \eqref{bt} to the $r$th power, setting $r=q$ and taking a limit as $p\rightarrow 0$ one obtains
\begin{equation}\label{vis stein}
\left(\int\left|\bigwedge_{j=1}^{d+1} \nabla u(x_j,t_j)\right|^2\mathrm{d}\lambda(x,t)\right)^{1/(d+1)}|\supp w|^{\frac{d}{d+1}}\leq \int_{\mathbb{R}^d}|\supp \rho^*w(\cdot,v)||\widehat{u}_0(v)|^2\mathrm{d}v.
\end{equation}
Here $\mathrm{d}\lambda$ denotes Lebesgue measure on the subspace $(x_1,t_1)+\cdots+(x_{d+1},t_{d+1})=0$ of $(\mathbb{R}^{d+1})^{d+1}$. 
\end{comment}
The factor $|\supp\rho^*w(\cdot,v)|$ is a measure of the ``visibility" of $w$ in the space-time direction $(-2v,1)$, making \eqref{vis stein} a certain visibility version of \eqref{dada}.
Similar remarks may be made for more general surfaces $S$ and are left to the interested reader.
\end{remark}
%%%%%%%%%%%%%%%%%%%%%%%%%%%%%%%%%%%%%%%%%%%%%%%%%%%%

\section{Applications to a variant of Flandrin's conjecture}\label{Sect:Flandrin}
The phase-space integral formula \eqref{dmv identity} exposes a formal similarity between the parabolic Mizohata--Takeuchi inequality \eqref{MTpara} (or its local substitute \eqref{MTparaeps}) %\eqref{MTpara} (see also \cite{DMV}), namely
%\begin{equation}\label{PSpara}
%\int_{\mathbb{R}^d\times\mathbb{R}^d}W(u_0,u_0)(x,v)\rho^*w(x,v)\mathrm{d}x\mathrm{d}v\lesssim \sup_{\substack{x\in\mathbb{R}^d\\ v\in\supp(\widehat{u}_0)}}\rho^* w(x,v)\;\|u_0\|_2^2,
%\end{equation}
and a variant of a conjecture of Flandrin \cite{Flan} from time-frequency analysis. This conjecture, which was formulated in \cite{Duy}, states that
\begin{equation}\label{flandrin}
\iint_{K}W(u_0,u_0)(x,v)\mathrm{d}x\mathrm{d}v\lesssim\|u_0\|_2^2
\end{equation}
uniformly over all convex subsets $K$ of $\mathbb{R}^d\times\mathbb{R}^d$. This is a weakened form of the original conjecture that was made with constant $1$, following a recent counterexample in \cite{Duy}; we refer to \cite{Lerner} for further discussion, along with a number of supporting results.

In this section we show that the basic methods of this paper are effective towards \eqref{flandrin} by establishing a version of it in the plane involving an arbitrarily small loss in terms of the Lebesgue measure of $K$. We then show how  \eqref{flandrin} implies the parabolic Mizohata--Takeuchi inequality \eqref{MTpara} for a special class of weights.
\begin{theorem}\label{thm:flan} 
For each $\varepsilon>0$ there exists a constant $C_\varepsilon<\infty$ such that
\begin{equation}\label{Flandrinloss}
\iint_K W(u_0,u_0)(x,v)\mathrm{d}x\mathrm{d}v\leq C_\varepsilon |K|^{\varepsilon}\|u_0\|_2^2
\end{equation}
for all convex subsets $K$ of $\mathbb{R}^2$.
\end{theorem}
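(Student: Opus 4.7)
The plan is to mimic the proof of Theorem \ref{prop:SMT compact} applied to the weight $\mathbf{1}_K$, after first using the metaplectic covariance of the Wigner distribution to reduce to the case in which $K$ is comparable to a disc of radius $\sim|K|^{1/2}$, so that every one-dimensional slice of $K$ has length at most $\sim|K|^{1/2}$.

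First I would observe that \eqref{Flandrinloss} is invariant under the symplectic group $\mathrm{Sp}(2,\mathbb{R})=SL(2,\mathbb{R})$. Indeed, for any $A\in SL(2,\mathbb{R})$ the associated metaplectic operator $\mu(A)$ acts unitarily on $L^2(\mathbb{R})$ and satisfies the intertwining property $W(\mu(A)u_0,\mu(A)u_0)(X)=W(u_0,u_0)(A^{-1}X)$ for $X\in\mathbb{R}^2$, so that
$$
\iint_K W(\mu(A)u_0,\mu(A)u_0)=\iint_{A^{-1}K}W(u_0,u_0),
$$
with $|A^{-1}K|=|K|$ and $\|\mu(A)u_0\|_2=\|u_0\|_2$. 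By John's theorem for planar convex bodies, every convex $K$ contains an ellipse $E$ with $K\subset 2E$; taking $A\in SL(2,\mathbb{R})$ that sends $E$ to a disc of the same area reduces matters to the case where $K$ is sandwiched between two concentric discs of radius $r\sim|K|^{1/2}$. In particular, the slices $K_v:=\{x\in\mathbb{R}:(x,v)\in K\}$ satisfy $\sup_v|K_v|\lesssim|K|^{1/2}$.

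Next I would argue by Sobolev duality in the spirit of Theorem \ref{prop:SST compact}. Writing $\iint_K W(u_0,u_0)=\int\langle W(u_0,u_0)(\cdot,v),\mathbf{1}_{K_v}\rangle_{L^2_x}\,dv$ and pairing $\dot H^{-s}(\mathbb{R})$ against $\dot H^s(\mathbb{R})$, for any $0<s<1/2$,
$$
\iint_K W(u_0,u_0)\leq\int \|W(u_0,u_0)(\cdot,v)\|_{\dot H^{-s}(\mathbb{R})}\,\|\mathbf{1}_{K_v}\|_{\dot H^s(\mathbb{R})}\,dv.
$$
A direct Plancherel calculation gives $\|\mathbf{1}_{[a,a+L]}\|_{\dot H^s(\mathbb{R})}^2\sim L^{1-2s}$, and the same Plancherel computation that produced \eqref{paranuff}, now applied in its homogeneous form, yields
$$
\|W(u_0,u_0)(\cdot,v)\|_{\dot H^{-s}(\mathbb{R})}^2 = I_{2s}(|\widehat{u}_0|^2,|\widehat{u}_0|^2)(-v),
$$
with $I_{2s}$ as in \eqref{KSdefhom}. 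Pulling $\sup_v|K_v|^{1/2-s}$ outside the $v$-integral,
$$
\iint_K W(u_0,u_0)\lesssim \sup_v|K_v|^{\frac12-s}\int_{\mathbb{R}} I_{2s}(|\widehat{u}_0|^2,|\widehat{u}_0|^2)(v)^{1/2}\,dv.
$$

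Finally I would invoke the Kenig--Stein bilinear fractional integral bound $I_{2s}:L^1\times L^1\to L^{1/2}(\mathbb{R})$, valid since $d=1$ and $2s<1$, to deduce
$$
\int_{\mathbb{R}} I_{2s}(|\widehat{u}_0|^2,|\widehat{u}_0|^2)(v)^{1/2}\,dv=\|I_{2s}(|\widehat{u}_0|^2,|\widehat{u}_0|^2)\|_{L^{1/2}(\mathbb{R})}^{1/2}\lesssim\|\widehat{u}_0\|_2^2=\|u_0\|_2^2.
$$
Combining this with $\sup_v|K_v|\lesssim|K|^{1/2}$ from the symplectic reduction,
$$
\iint_K W(u_0,u_0)\lesssim |K|^{\frac{1-2s}{4}}\|u_0\|_2^2,
$$
and choosing $s=\tfrac12-2\varepsilon$ (for $\varepsilon\in(0,\tfrac14)$) produces the claimed exponent whenever $|K|\geq1$. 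For $|K|\leq1$ one concludes by the trivial bound $\iint_K W(u_0,u_0)\leq|K|\|W(u_0,u_0)\|_\infty\leq|K|\|u_0\|_2^2\leq|K|^\varepsilon\|u_0\|_2^2$. The main obstacle I anticipate is stating and justifying the symplectic reduction cleanly in terms of the metaplectic representation of $SL(2,\mathbb{R})$; once this is in place, the remainder is a direct application of the Sobolev--Kenig--Stein machinery already set up in Section \ref{Sect:para}.
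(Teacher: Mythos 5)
Your strategy (Sobolev duality in the $x$-slice, the computation $\|\mathbbm{1}_{[a,b]}\|_{\dot H^s}^2\sim(b-a)^{1-2s}$, the identity $\|W(u_0,u_0)(\cdot,v)\|_{\dot H^{-s}}^2=I_{2s}(|\widehat u_0|^2,|\widehat u_0|^2)$, and control of $I_{2s}$ via Kenig--Stein) is essentially the paper's; the symplectic normalization replaces the paper's appeal to Brunn's theorem for controlling the geometry of $K$, which is a perfectly legitimate alternative and is correct as you state it.

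However, there is a genuine gap at the final step. The estimate you invoke, $I_{2s}:L^1\times L^1\to L^{1/2}(\mathbb{R})$ for $2s<1$, is false. By scaling, $I_{\alpha}(f_\lambda,f_\lambda)=\lambda^{\alpha-1}I_\alpha(f,f)(\lambda\cdot)$ with $f_\lambda=f(\lambda\cdot)$, so
\begin{equation*}
\frac{\|I_{\alpha}(f_\lambda,f_\lambda)\|_{L^{1/2}(\mathbb{R})}}{\|f_\lambda\|_{L^1}^2}=\lambda^{\alpha-1}\,\frac{\|I_{\alpha}(f,f)\|_{L^{1/2}(\mathbb{R})}}{\|f\|_{L^1}^2},
\end{equation*}
which blows up as $\lambda\to 0$ whenever $\alpha<1$. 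The scaling-critical target for $L^1\times L^1$ is $L^{q}$ with $\tfrac1q=1+2s$, which is strictly larger than $\tfrac12$ for $s<\tfrac12$, and even at that exponent Kenig--Stein gives only the \emph{weak}-type bound $L^1\times L^1\to L^{q,\infty}$ (strong type fails when one of the inputs is at $L^1$). Consequently the inequality $\int_{\mathbb{R}}I_{2s}(|\widehat u_0|^2,|\widehat u_0|^2)^{1/2}\,dv\lesssim\|u_0\|_2^2$ is not available, and your argument does not close.

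The fix is exactly what the paper does (and it transplants cleanly into your symplectically-normalized picture): restrict the $v$-integral to $\pi_2(K)$, apply the weak-type Kenig--Stein estimate at the scaling-correct Lorentz exponent $L^{q_\varepsilon,\infty}$ with $q_\varepsilon=\tfrac{1}{1-\varepsilon}$, and pair it with $\mathbbm{1}_{\pi_2(K)}$ via the Lorentz--H\"older inequality $\|fg\|_1\le\|f\|_{L^{q,\infty}}\|g\|_{L^{q',1}}$, which contributes a factor $|\pi_2(K)|^{1/q_\varepsilon'}=|\pi_2(K)|^\varepsilon$. Combined with your slice bound $\sup_v|K_v|^{(1-2s)/2}\lesssim|K|^{\varepsilon/2\cdot\text{(const)}}$ (or, as in the paper, directly with $\operatorname{diam}_1(K)$ and Brunn's theorem), this produces the claimed $|K|^\varepsilon$ loss.
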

\begin{proof}
Arguing as in Section \ref{Sect:para}, and indeed Sections \ref{Sect:Helm} and \ref{Sect:general submanifolds}, by the Cauchy--Schwarz inequality and the duality of the homogeneous Sobolev spaces $\dot{H}^s$ and $\dot{H}^{-s}$, we have 
\begin{equation}\label{halfway}
\iint_K W(u_0,u_0)(x,v)\mathrm{d}x\mathrm{d}v\leq \int_{\pi_2(K)}\|W(u_0,u_0)(\cdot,v)\|_{\dot{H}^{-s}_x}\|\mathbbm{1}_K(\cdot,v)\|_{\dot{H}^s_x}\mathrm{d}v,
\end{equation}
for each $s<\frac{1}{2}$, where $\pi_2(K)\subseteq\mathbb{R}$  is the projection of $K$ onto the $v$-axis. We now compute both of these Sobolev norms explicitly.

To compute the $\dot{H}^s_x$ norm, we fix $v$ and observe that by the convexity of $K$, $$\mathbbm{1}_K(\cdot,v)=\mathbbm{1}_{[a,b]}$$ almost everywhere for some real numbers $a, b$. Since $$|\widehat{\mathbbm{1}}_{[a,b]}(\xi)|=\left|\frac{\sin(\pi(b-a)\xi)}{\pi\xi}\right|,$$
\begin{eqnarray*}
    \begin{aligned}
\|\mathbbm{1}_K(\cdot,v)\|_{\dot{H}^s_x}^2=\int_{\mathbb{R}}|\xi|^{2s}\left(\frac{\sin(\pi(b-a)\xi)}{\pi\xi}\right)^2\mathrm{d}\xi=(b-a)^{1-2s}\int_{\mathbb{R}}|\xi|^{2s}\left(\frac{\sin(\pi\xi)}{\pi\xi}\right)^2\mathrm{d}\xi\leq c_s\diam_1(K)^{1-2s},
    \end{aligned}
\end{eqnarray*}
with finite constant $c_s$ since $s<\frac{1}{2}$. Here $\diam_1(K)$ is the diameter of $K$ in the first coordinate direction.

To compute the $\dot{H}^{-s}_x$ norm we argue as in Section \ref{Sect:para}, and indeed Sections \ref{Sect:Helm} and \ref{Sect:general submanifolds}, to write
$$
\|W(u_0,u_0)(\cdot, v)\|_{\dot{H}^{-s}_x}=I_{2s}(|\widehat{u}_0|^2,|\widehat{u}_0|^2)(v)^{1/2},
$$
where 
$
I_s
$
is given by \eqref{KSdefhom}.
We estimate this term further by applying the weak-type estimate 
\begin{equation}\label{e:KSCopy}
    \| I_{s}(g,g)\|_{L^{q,\infty}(\mathbb{R})} 
    \lesssim 
    \| g\|_{L^1(\mathbb{R})}^2,
\end{equation}
from \cite{KS} (see also \cite{GK}), which holds whenever $s\in (0,1)$ and $\frac1q = 1+s$.
In particular, given $\varepsilon >0$ and writing $s_\varepsilon = \frac{1}2-\varepsilon$, we have
$$
\| I_{2s_\varepsilon} (g,g)^{1/2} \|_{L^{q_\varepsilon,\infty}(\mathbb{R})} 
= 
\| I_{2s_\varepsilon} (g,g) \|_{ L^{q_\varepsilon /2 , \infty} (\mathbb{R}) }^{1/2}
\le C_\varepsilon 
\|g\|_1,\quad {q_\varepsilon}:= \frac{1}{1 - \varepsilon}.
$$
With this in mind, we apply the Lorentz--H\"older inequality in \eqref{halfway} to write
\begin{align*}
    \iint_K W(u_0,u_0)(x,v) \mathrm{d}x\mathrm{d}v 
    &\leq
    \big\|  
    I_{2s_\varepsilon} ( |\widehat{u}_0|^2, |\widehat{u}_0|^2 )^{1/2} 
    \big\|_{L^{q_\varepsilon,\infty}(\mathbb{R})} 
    \big\| \| \mathbbm{1}_K(x,v) \|_{\dot{H}^{s_\varepsilon}_x} \big\|_{ L^{q_\varepsilon',1}( \pi_2(K) ) }, 
\end{align*}
where $\pi_2(K)$ is the projection of $K$ onto the $v$-axis.
Consequently,
\begin{align*}
    \iint_K W(u_0,u_0)(x,v) \mathrm{d}x\mathrm{d}v 
    &\le 
    C_\varepsilon
    \| |\widehat{u}_0|^2 \|_1 
    \big\| \| \mathbbm{1}_K(x,v) \|_{\dot{H}^{s_\varepsilon}_x} \big\|_{ L^{q_\varepsilon',1}( \pi_2(K) ) }
    \\
    %&= 
    %C_\varepsilon
    %\| |\widehat{u_0}|^2 \|_1 
    %\big\| \| \mathbbm{1}_K(x,v) \|_{\dot{H}^{s_\varepsilon}_x} \mathbbm{1}_{I_K}\big\|_{ L^{q_\varepsilon',1}  }\\
    &\le 
    %C_\varepsilon
    %\| |\widehat{u_0}|^2 \|_1 
    %\big\| \| \mathbbm{1}_K(x,v) \|_{\dot{H}^{s_\varepsilon}_x} \big\|_{ L^{q_\varepsilon'+\delta , q_\varepsilon'+\delta }  } 
    %\big\| \mathbbm{1}_{I_K}\big\|_{ L^{ r_\varepsilon , (q_\varepsilon'+\delta)'}  },\quad 
    %\frac{1}{ q_\varepsilon' } = \frac1{ q_\varepsilon'+\delta } + %\frac1{r_\varepsilon} \\
    %&= 
    %C_\varepsilon
    %\| |\widehat{u_0}|^2 \|_1 
    %\big\| \| \mathbbm{1}_K(x,v) \|_{\dot{H}^{s_\varepsilon}_x} \big\|_{ %L^{q_\varepsilon'+\delta }(I_K)  } 
    %|I_K|^{\frac1{r_\varepsilon}} \\
    %&\le 
    %C_\varepsilon
    %\| |\widehat{u_0}|^2 \|_1 
    %\big\| \| \mathbbm{1}_K(x,v) \|_{\dot{H}^{s_\varepsilon}_x} \big\|_{ %L^{\infty }} |I_K|^{\frac1{ q_\varepsilon'+\delta }}
    %|I_K|^{\frac1{r_\varepsilon}} \\
    %&=
    C_\varepsilon
    \| |\widehat{u}_0|^2 \|_1 
    \big\| \| \mathbbm{1}_K(x,v) \|_{\dot{H}^{s_\varepsilon}_x} \big\|_{ L^{\infty }} 
    |\pi_2(K)|^{\frac1{q_\varepsilon'}} \\
    &\le 
    C_\varepsilon c_{s_\varepsilon}^{\frac{1}{2}}\diam_1(K)^{\frac{1-2s_\varepsilon}{2}}|\pi_2(K)|^{\frac1{q_\varepsilon'}} \| u_0\|_2^2. 
\end{align*}
It remains to observe that 
$$
\frac{1-2s_\varepsilon}{2} = \frac1{q_\varepsilon'} = \varepsilon, 
$$
and appeal to the fact that $\diam_1(K)$ is comparable to the average diameter $|K|/|\pi_2(K)|$ uniformly over all convex bodies $K$ by an application of Brunn's theorem.
\end{proof}
\begin{remark}[Higher dimensions]
Our proof of Theorem \ref{thm:flan} does not extend to higher dimensions, at least readily. This may already be seen if $K$ is the Euclidean unit ball in $\mathbb{R}^{2d}$, since its $d$-dimensional sections, also being Euclidean balls, fail to belong to $\dot{H}^s$ whenever $s\geq 1/2$; see \cite{Stein}. Evidently, a routine extension of our argument would require such control for all $s<d/2$. For further discussion of Sobolev norms of indicator functions we refer to \cite{Faraco}.
\end{remark}
\begin{remark}[Inequalities of Flandrin type for surface-carried Wigner distributions]
Our proof of Theorem \ref{thm:flan} reveals that the convexity hypothesis on $K$ may be weakened to the requirement that the sections $\{x\in\mathbb{R}:(x,v)\in K\}$ are intervals for each $v\in\mathbb{R}$, provided we replace the measure of $K$ with the diameter of $K$ in \eqref{Flandrinloss}. As such our argument should extend to Flandrin-type inequalities of the form
$$
\iint_K W_S(g,g)\lesssim \|g\|_{L^2(S)}^2
$$
for the surface-carried Wigner distributions $W_S$ of Section \ref{Sect:general submanifolds}, on the assumption that $K\subseteq TS$ is such that $\{v\in T_uS:(u,v)\in K\}$ is an interval for each $u\in S$. This would require a weak-type addition to Theorem \ref{l:EndpointKS}, analogous to Theorem 1(b) in \cite{KS}, and would introduce some dependence on the curvature quotient $Q(S)$.
\end{remark}
We conclude this section by establishing a simple direct connection between the parabolic Mizohata--Takeuchi inequality \eqref{MTpara} 
and the Flandrin-type inequality \eqref{flandrin}, although with one caveat: that the support condition on the right hand side of the parabolic Mizohata--Takeuchi inequality 
is dropped. %This leads to the weaker ``undirected" Mizohata--Takeuchi conjecture
%\begin{equation}\label{PSparaundirected}
%\int_{\mathbb{R}^d\times\mathbb{R}}|u(x,t)|^2w(x,t)\mathrm{d}x\mathrm{d}t\lesssim \|\rho^*w\|_\infty\|u_0\|_2^2.
%\end{equation}
\begin{proposition}\label{equivflan}
    If the Flandrin-type conjecture \eqref{flandrin} is true then the undirected Mizohata--Takeuchi inequality
    \begin{equation}\label{PSparaundirected}
\int_{\mathbb{R}^d\times\mathbb{R}}|u(x,t)|^2w(x,t)\mathrm{d}x\mathrm{d}t\lesssim \|\rho^*w\|_\infty\|u_0\|_2^2
\end{equation}
holds for space-time weight functions $w$ that are concave in the spatial variable.
\end{proposition}
\begin{proof}
    We begin by observing that if $w$ is a concave function in the spatial variable then $\rho^*w$ is a concave function. This is immediate since whenever $(x_\lambda,v_\lambda)=\lambda(x_1,v_1)+(1-\lambda)(x_2,v_2)\in\mathbb{R}^d\times\mathbb{R}^d$,
    \begin{eqnarray*}
    \begin{aligned}
\rho^*w(x_\lambda,v_\lambda)&=\int_{\mathbb{R}}w(\lambda(x_1-2tv_1)+(1-\lambda)(x_2-2tv_2),t)\mathrm{d}t\\&\geq 
\int_{\mathbb{R}}\left(\lambda w(x_1-2tv_1,t)+(1-\lambda)w(x_2-2tv_2,t)\right)\mathrm{d}t\\
&=
\lambda\rho^*w(x_1,v_1)+(1-\lambda)\rho^*w(x_2,v_2)
\end{aligned}
\end{eqnarray*}
for all $0<\lambda<1$.
Applying the layer-cake representation,
    \begin{equation}\label{representation}\rho^*w(x,v)=\int_0^{\|\rho^*w\|_\infty} \mathbbm{1}_{K(s)}(x,v)\mathrm{d}s,
    \end{equation}
    where 
    $K(s)=\{(x,v)\in\mathbb{R}^d\times\mathbb{R}^d: \rho^*w(x,v)\geq s\}$, it follows from the convexity of $K(s)$ for each $s$, Fubini's theorem, and the conjectural inequality \eqref{flandrin} that 
    \begin{eqnarray*}
        \begin{aligned}
        \int_{\mathbb{R}^d\times\mathbb{R}}|u(x,t)|^2w(x,t)\mathrm{d}x\mathrm{d}t=&
    \int_{\mathbb{R}^d\times\mathbb{R}^d}W(u_0,u_0)(x,v)\rho^*w(x,v)\mathrm{d}x\mathrm{d}v\\&=\int_0^{\|\rho^*w\|_\infty}\left(\iint_{K(s)}W(u_0,u_0)(x,v)\mathrm{d}x\mathrm{d}v\right)\mathrm{d}s\\
    &\lesssim\|\rho^* w\|_\infty\|u_0\|_2^2.
    \end{aligned}
    \end{eqnarray*}
\end{proof}
\begin{remark}
If instead of applying the conjectural \eqref{flandrin} one applies the established \eqref{Flandrinloss} in the proof of Proposition \ref{equivflan}, an application of Chebyshev's inequality reveals that
 \begin{eqnarray*}
        \begin{aligned}
        \int_{\mathbb{R}^d\times\mathbb{R}}|u(x,t)|^2w(x,t)\mathrm{d}x\mathrm{d}t&\leq C_\varepsilon\|u_0\|_2^2\int_0^{\|\rho^*w\|_\infty}|K(s)|^\varepsilon\mathrm{d}s
    \leq \frac{C_\varepsilon}{1-p\varepsilon} \|\rho^*w\|_p^{p\varepsilon}\|\rho^*w\|_\infty^{1-p\varepsilon}\|u_0\|_2^2
    \end{aligned}
    \end{eqnarray*}
   for $0<p<1/\varepsilon$. This might be interpreted as a certain $\varepsilon$-loss form of \eqref{PSparaundirected}. We thank one of the reviewers for suggesting such an observation.
    \end{remark}
\begin{remark}[Connections with maximally modulated singular integrals]
Our proof of Theorem \ref{thm:flan} hints at a connection between the Flandrin-type conjecture \eqref{flandrin} and another natural question in modern harmonic analysis. Specifically, for subsets $K$ of $\mathbb{R}\times\mathbb{R}$ whose vertical sections are intervals (and hence for convex $K$), a routine calculation reveals that
\begin{equation}\label{nicered}
\iint_K W(u_0,u_0)\lesssim\|H_*(u_0,u_0)\|_{L^1(\mathbb{R})},
\end{equation}
where $$
H_*(f_1,f_2)(x):=\sup_{\lambda\in\mathbb{R}}\left|\int_{\mathbb{R}}f_1\left(x+\frac{y}{2}\right)f_2\left(x-\frac{y}{2}\right)e^{i\lambda y}\frac{\mathrm{d}y}{y}\right|
$$
is the \textit{maximally-modulated bilinear Hilbert transform}. The Flandrin-type conjecture \eqref{flandrin} would therefore follow from the bound
\begin{equation}\label{maxmodconj}
\|H_*(f_1,f_2)\|_{L^1(\mathbb{R})}\lesssim\|f_1\|_{L^2(\mathbb{R})}\|f_2\|_{L^2(\mathbb{R})}.
\end{equation} 
The operator $H_*$ is a natural (bi-sublinear) analogue of the classical Carleson maximal operator.
Tools from time-frequency analysis have proved very effective in the study of various related maximally-modulated singular integral operators (such as in \cite{MTT1} and \cite{LM}) following the celebrated work of Lacey and Thiele \cite{LT1,LT2} on the boundedness properties of the bilinear Hilbert transform. However, as far as we are aware no nontrivial bounds for the operator $H_{*}$ are known. We note that the bound \eqref{maxmodconj} was established for certain ``non-resonant perturbations" of $H_*$ in \cite{benea}.
\begin{comment}
We remark that the inequality \eqref{nicered} only requires that the horizontal (or vertical, thanks to the Fourier-invariance property of the Wigner transform) sections of $K$ are intervals. Using the full convexity hypothesis on $K$ it is easily seen that \eqref{flandrin} is a consequence of the following weaker form of \eqref{maxmodconj}:
\begin{conjecture}\label{conj:bil} Suppose that $I\subseteq \mathbb{R}$ is an interval and $\varphi:I\rightarrow\mathbb{R}$ is convex.
Then the operator 
$$
T_\varphi(f_1,f_2)(x):=\int_{\mathbb{R}}f_1\left(x+\frac{y}{2}\right)f_2\left(x-\frac{y}{2}\right)e^{i\varphi(x)y}\frac{dy}{y}
$$
is bounded from $L^2(\mathbb{R})\times L^2(\mathbb{R})$ to $L^1(I)$ with a bound that may be taken to be independent of $\varphi$.
\end{conjecture}
\end{comment}
\end{remark}
\section{Questions}\label{Sect:tombounds}
Here we collect a number of questions, some concrete and some more speculative.
\begin{comment}
\begin{question}[A tomographic refined Stein--Tomas inequality]
Does\footnote{Do we want to publicise this? We have worked hard on this question, and if we could solve it then arguably the associated subsection could be a separate paper.} the Stein--Tomas restriction theorem in three dimensions admit the improvement
$$
\|\widehat{gd\sigma}\|_{L^4(\mathbb{R}^3)}^4\lesssim \|X(|\widehat{gd\sigma}|^2)\|_\infty\|g\|_2^2\;\;?
$$
Here $S$ is smooth, compact and has everywhere nonvanishing Gaussian curvature. This would imply the Stein--Tomas theorem by Theorem \ref{tomest3}, and would be implied by $\MT(S)<\infty$ via \eqref{silly rep}.
\end{question}
\end{comment}
\begin{comment}
\begin{question}[Bilinear restriction theorems from Mizohata--Takeuchi]
    Recall from Remark \ref{MTstrength} that the finiteness of $\MT(S)$ for suitable $S$ may be seen to imply both the (endpoint) $L^2\rightarrow L^4$ linear restriction theorem (the Stein--Tomas restriction theorem) and the (endpoint) $L^2\times L^2\times L^2\rightarrow L^1$ trilinear restriction conjecture in $\mathbb{R}^3$. Does it similarly imply the (endpoint) $L^2\times L^2\rightarrow L^{5/3}$ \textit{bilinear} restriction conjecture 
    in $\mathbb{R}^3$? See \cite{Tao2003}.
\end{question}
\end{comment}
\begin{question}[Strengthening the parabolic Sobolev--Mizohata--Takeuchi inequality]
For nonnegative weights $w$, can one strengthen \eqref{SMTlocpara} to
$$\int_{\mathbb{R}^d\times\mathbb{R}}|u(x,t)|^2w(x,t)\mathrm{d}x\mathrm{d}t\lesssim\sup_{v\in\supp(\widehat{u}_0)}\|\rho^* w(\cdot,v)\|_{\dot{H}_x^s}\|u_0\|_2^2,$$ as suggested by \eqref{MTpara}? %See \cite{BNO}\footnote{Should we drop this line?} for a discussion of the role of positivity in relation to the support of $\widehat{u}_0$.
\end{question}
%\begin{question}[Dilation-invariant curvature functionals]\label{Question lambda}
%Is there a simple higher-dimensional analogue of Theorem \ref{better2} involving a natural curvature functional $\Lambda(S)$ generalising \eqref{rightthing}? Is there a similar Mizohata--Takeuchi analogue that also permits points of vanishing curvature? 
%\end{question}
\begin{comment}
\begin{question}[Stereographic projections and bilinear fractional integrals]
Do stereographic projections link the euclidean and spherical bilinear fractional integral operators arising in Sections \ref{Sect:para} and \ref{Sect:Helm}? Such a phenomenon is well-known in the context of linear fractional integral operators; see \cite{LL}. 
\end{question}
\begin{question}[A phase-space approach to radial weights]
    Is there a direct proof of the phase-space formulation of the radially-weighted spherical Mizohata--Takeuchi inequality,
    \begin{equation}\label{MTspherePS}
\int_{T\mathbb{S}^{n-1}}W_{\mathbb{S}^{n-1}}(g,g)Xw\lesssim\sup_{\omega\in\supp(g)}\|Xw(\omega,\cdot)\|_{L^\infty(\langle\omega\rangle^\perp)}\|g\|_{L^2(\mathbb{S}^{n-1})}^2,
\end{equation}
of \cite{BRV} and \cite{CS}?\footnote{I haven't found one. Should we remove this question?}
\end{question}
\end{comment}
%\begin{question}[Universal quasi-distance constants] May the powers of $Q(S)$ that feature in Proposition \ref{metricestimate} be reduced, or even removed?
%How do the $L^p\rightarrow L^q$ bounds on the bilinear fractional integrals $I_{S,s}$ depend on the geometry of $S$? \end{question}
\begin{question}[Tomographic constructions of Wigner distributions in higher dimensions]
In Section \ref{Sect:tomographic} we saw that geometric Wigner distributions may be constructed tomographically from $|\widehat{g\mathrm{d}\sigma}|^2$ when $n=2$ using the X-ray transform. Might there be a similar tomographic construction of a Wigner distribution that functions in all dimensions, perhaps involving the Radon transform? 
\end{question}
\begin{question}[Fractional Stein and Mizohata--Takeuchi inequalities]
Are there interesting fractional forms of \eqref{Steinpara} or \eqref{MTpara} suggested by considering an oblique phase-space marginal of the Wigner distribution in place of \eqref{classical marginal}? See Remark \ref{phasetom} on phase-space tomography. 
    \end{question}
    \begin{comment}
\begin{question}[Other regularity estimates on $W_S$]
JB: Do we want to add a remark about Sobolev norms of Wigner distributions more generally, or perhaps pose a concrete question?
\end{question}

\begin{question}[Flandrin's conjecture in the plane via oscillatory singular integrals]
Is Conjecture \ref{conj:bil} accessible?
\end{question}
\end{comment}
\begin{question}[A Flandrin-type inequality with an $\varepsilon$-loss in higher dimensions]
May the statement of Theorem \ref{thm:flan} be extended to dimensions $d>1$?
\end{question}
%\begin{question}[Geometric formulations of Flandrin's conjecture] Is there a natural geometric formulation of Flandrin's conjecture involving the surface-carried Wigner distributions of Section \ref{Sect:general submanifolds}?\end{question}
\begin{comment}
\begin{question}[Curvature quotients of hyperplane sections]
    If $S$ is a smooth strictly convex codimension-1 submanifold of $\mathbb{R}^n$, and $\pi$ is a hyperplane that meets $S$ transversely, then is there a constant $c_n$, depending on at most the dimension, such that $Q(S\cap\pi)\leq c_nQ(S)$, or possibly with a power of $Q(S)$?
\end{question}
\end{comment}
%%%%%%%%%%%%%%%%%%%%%%%%%%%%%%%%%%%%%%


\begin{thebibliography}{MMMMM}
\bibitem{AH} S. Agmon, L. H\"ormander, \textit{Asymptotic properties of solutions in differential equations with simple characteristics}, J. Anal. Math. \textbf{30} (1976), 1--38.


\bibitem{Alf} L. V. Ahlfors, \textit{Lectures on quasiconformal mappings}, second edition, American Mathematical Society University Lecture Series
Volume 38, (2006).

\bibitem{Al} M. A. Alonso, \textit{Wigner functions in optics: describing beams as ray bundles and pulses as particle ensembles}, Advances in Optics and Photonics Vol. 3, Issue 4, 272--365 (2011).

%\bibitem{AE} S Twareque Ali, M. Engliš, \textit{Wigner transform and pseudodifferential operators on symmetric spaces of non-compact type}, Journal of Physics A: Mathematical and Theoretical, Volume 44, Number 21 (2011).

\bibitem{BBC} J. A. Barcel\'{o}, J. Bennett, A. Carbery, \textit{A note on localised weighted estimates for the extension operator}, J. Aust. Math. Soc. \textbf{84} (2008), 289--299.

%\bibitem{BBCRV} J. A. Barcel\'o, J. Bennett, A. Carbery, A. Ruiz, M. Vilela, \textit{Some special solutions to the Schr\"odinger equation},


\bibitem{BRV} J. A. Barcel\'{o}, A. Ruiz, L. Vega, \textit{Weighted estimates for the Helmholtz equation and consequences}, Journal of Functional Analysis, Vol. \textbf{150} (1997), 356--382.

%\bibitem{Ba} F. Barthe,  \textit{On a reverse form of the Brascamp--Lieb inequality}, Invent. Math. \textbf{134} (1998), 355--361.
%\bibitem{Bast} M. J. Bastiaans, \textit{The Wigner distribution function applied to optical signals and systems}, Opt. Commun. \textbf{25} (1978), 26-–30.

\bibitem{Beltran} D. Beltran, \textit{A Fefferman--Stein inequality for the Carleson operator}, Rev. Mat. Iberoamericana \textbf{34} (2018), 221--244.

\bibitem{BBen} D. Beltran, J. Bennett, \textit{Subdyadic square functions and applications to weighted harmonic analysis},
Adv. Math. \textbf{307} (2017),  72--99.

\bibitem{BV} D. Beltran, L. Vega \textit{Bilinear identities involving the $k$-plane transform and Fourier extension operators}, Proc. Roy. Soc. Edinburgh Sect. A \textbf{150} (2020),  3349--3377.

\bibitem{benea} C. Benea, F. Bernicot, V. Lie, M. Vitturi, \textit{The non-resonant bilinear Hilbert--Carleson operator}, Adv. Math. 458 (2024) 109939, 136 pp.

%\bibitem{BEsc} J. Bennett, \textit{Aspects of multilinear harmonic analysis related to transversality}, Harmonic analysis and partial differential equations, 1--28, Contemp. Math., 612, Amer. Math. Soc., Providence, RI, 2014.

%\bibitem{BB} J. Bennett, N. Bez, \textit{Higher order transversality in harmonic analysis}, RIMS K\^oky\^uroku Bessatsu B88 (2021), 75--103.

%\bibitem{BBFGI} J. Bennett, N. Bez, T. C. Flock, S. Guti\'errez, M. Iliopoulou, \textit{A sharp $k$-plane Strichartz estimate for the Schr\"odinger equation}, Trans. Amer. Math. Soc. \textbf{370} (2018), 5617--5633.

%\bibitem{BBFL} J. Bennett, N. Bez, T. C. Flock, S. Lee, \textit{Stability of the Brascamp--Lieb constant and applications to harmonic analysis}, Amer. J. Math. \textbf{140} (2018), 543--569.

%\bibitem{BBI} J. Bennett, N. Bez, M. Iliopoulou, 

\bibitem{BBJP} J. Bennett, N. Bez, C. Jeavons, N. Pattakos, \textit{On sharp bilinear Strichartz estimates of Ozawa--Tsutsumi type}, J. Math. Soc. Japan \textbf{69, 2} (2017), 1--18.



\bibitem{BCSV} J. Bennett, A. Carbery, F. Soria, A. Vargas, \textit{A Stein conjecture for the circle}, Math. Ann. \textbf{336} (2006), 671--695.

\bibitem{BCT} J. Bennett, A. Carbery, T. Tao, \textit{On the multilinear restriction and Kakeya conjectures}, Acta Math. \textbf{196} (2006), 261--302.

\bibitem{BI} J. Bennett, M. Iliopoulou, \textit{A multilinear extension identity on $\mathbb{R}^n$}, Math. Res. Lett. \textbf{25} (2018), 1089--1108.

\bibitem{BN} J. Bennett, S. Nakamura, \textit{Tomography bounds for the Fourier extension operator and applications}, Math. Ann. \textbf{380} (2021),  
119--159.

%\bibitem{BNO} J. Bennett, S. Nakamura, I. Oliveira, \textit{Weighted Strichartz estimates for the Schr\"odinger equation: Euclidean versus periodic}, in preparation.
\bibitem{BNS} J. Bennett, S. Nakamura, S. Shiraki, \textit{Tomographic Fourier extension identities for submanifolds of $\mathbb{R}^n$}, Sel. Math. New Ser. 30, 80 (2024).

\bibitem{BT} J. Bennett, T. Tao, \textit{Adjoint Brascamp--Lieb inequalities}, Proc. Lond. Math. Soc. \textbf{129} (2024), e12633.

\bibitem{Bert} J. Bertrand, P. Bertrand, \textit{A tomographic approach to Wigner's function}, Found. Phys. \textbf{17}, 397--405 (1987).

\bibitem{Cairo} H. M. Cairo, \textit{A counterexample to the Mizohata--Takeuchi conjecture}, arXiv:2502.06137.

\bibitem{tube} A. Carbery, \textit{Large sets with limited tube occupancy}, J. Lond. Math. Soc. \textbf{79} (2009), 529--543.

\bibitem{CHV} A. Carbery, T. Hanninen, S. Valdimarsson, \textit{Disentanglement, Multilinear Duality and Factorisation for non-positive operators}, Analysis and PDE, \textbf{16(2)} (2023), 511–543.

\bibitem{CIW} A. Carbery, M. Iliopoulou, H. Wang, \textit{Some sharp inequalities of Mizohata--Takeuchi-type}, to appear in Rev. Mat. Iberoam.
%\bibitem{Gtalk} L. Guth, 

\bibitem{CRV} A. Carbery, E. Romera, F. Soria, \textit{Radial weights and mixed norm estimates for the disc multiplier}, J. Funct. Anal. \textbf{109}, 52--75 (1992).
\bibitem{CS} A. Carbery, F. Soria, \textit{Pointwise Fourier inversion and localisation in $\mathbb{R}^n$},  J. Fourier Anal. Appl.
\textbf{3} (special issue), (1997), 847-–858.

%\bibitem{CPA} S. Cho, J. C. Petruccelli, M. A. Alonso, \textit{Wigner functions for paraxial and nonparaxial fields}, Journal of Modern Optics \textbf{56}, No. 17, 1843--1852 (2009).

%\bibitem{CK} D. Colton, R. Kress, Inverse Acoustic and Electromagnetic Scattering Theory, Applied Mathematical Sciences Volume 93, Second Edition, Springer 1998.

\bibitem{DMV} S. Dendrinos, A. Mustata, M. Vitturi, \textit{A restricted 2-plane transform related to Fourier Restriction for surfaces of codimension 2}, Analysis and PDE, \textbf{18(2)}  (2025), 475–526.

\bibitem{Duy} B. Delourme, T. Duyckaerts, N. Lerner, \textit{On Integrals Over a Convex Set of the Wigner Distribution}, J. Four. Anal. Appl. \textbf{26} (2020).

\bibitem{Faraco} D. Faraco, K. Rogers, \textit{The Sobolev norm of characteristic functions with applications to the Calder\'on inverse problem}, Q. J. Math. \textbf{64} (2013), 133--147.

%\bibitem{michele thesis} M. Ferrante, Different perspectives on the Mizohata--Takeuchi conjecture, University of Birmingham PhD thesis 2024.

\bibitem{Flan} P. Flandrin, \textit{Maximum signal energy concentration in a time-frequency domain}, Proc. IEEE Int. Conf.
Acoust. \textbf{4} (1988), 2176–-2179.

\bibitem{folland} G. B. Folland, Harmonic Analysis in Phase Space, Annals of Mathematics Studies Volume 122, 
1989. 

%\bibitem{FSS} W. D. Furlan, C. Soriano, G, Saavedra, \textit{Opto-digital tomographic reconstruction of the Wigner distribution function of complex fields}, Applied Optics, Vol. 47, No. 22 (2008).

\bibitem{GFH} C. Gneiting, T. Fischer, K. Hornberger, \textit{Quantum phase-space representation for curved configuration spaces},
Phys. Rev. A 88, 062117 – Published 30 December 2013; Erratum Phys. Rev. A 106, 069904 (2022).

%\bibitem{GP} B. Gonska, A. Padrol, \textit{Neighborly inscribed polytopes and Delaunay triangulations}, Adv. Geom. \textbf{16 (3)} (2016), 349-–360.

\bibitem{Graf} L. Grafakos, \textit{On multilinear fractional integrals}, 
Stud. Math. \textbf{102} (1992), 49--56.

\bibitem{GK} L. Grafakos, N. Kalton, \textit{Some remarks on multilinear maps and interpolation}, Math. Ann. \textbf{319} (2001), 151--180.

\bibitem{Guthtalk} L. Guth, \textit{An enemy scenario in restriction theory}. Joint talk for AIM Research Community ``Fourier restriction conjecture and related problems" and HAPPY network (2022);  https://www.youtube.com/watch?v=x-DET83UjFg.
\bibitem{KS} C. Kenig, E. M. Stein, \textit{Multilinear estimates and fractional integration}, Math. Res. Lett. \textbf{6} (1999), 1--15.

\bibitem{KL} K. Kowalski and
K. Ławniczak, \textit{Wigner function for the quantum mechanics on a sphere}, Ann. Phys. \textbf{457} (2023).

\bibitem{LT1} M. Lacey and C. Thiele. \textit{$L^p$ estimates on the bilinear Hilbert transform for $2 < p < \infty$}. Ann. of Math., vol. 146, pp. 693-724, [1997].

\bibitem{LT2} M. Lacey and C. Thiele. \textit{On Calderon's conjecture}. Ann. of Math., vol. 149, pp. 475-496, [1999].

\bibitem{Lerner} N. Lerner, \textit{Integrating the Wigner distribution on subsets of the phase space}, Mem. Eur. Math. Soc. \textbf{12} (2024).

\bibitem{LiebWigner} E. Lieb, \textit{Integral bounds for radar ambiguity functions and Wigner
distributions}, J. Math. Phys. \textbf{31}, 594-–599 (1990).


\bibitem{LL} E. H. Lieb, M. Loss, Analysis, Graduate Studies in Mathematics \textbf{14}, American Mathematical Society 2001.

\bibitem{Sidd} S. Mulherkar, \textit{Random Constructions for Sharp Estimates of Mizohata--Takeuchi Type}, arXiv:2506.05624.


\bibitem{LM} X. Li and C. Muscalu. \textit{Generalizations of the Carleson-Hunt theorem I. The classical singularity case}. Am. J. Math. (ISSN 0002-9327) 129 (4) (2007) 983–1018.

\bibitem{MTT1} C. Muscalu, T. Tao, and C. Thiele. \textit{The Bi-Carleson operator}. Geom. Funct. Anal., 16(1):230-277, 2006.

%\bibitem{MT} S. Mizohata, \textit{On the Cauchy Problem}, Notes and Reports in Mathematics, Science and Engineering, \textbf{3}, Academic Press, San Diego, CA, 1985.

\bibitem{DO} D. Oliveira e Silva, \textit{Extremizers for Fourier restriction inequalities: Convex arcs},  JAMA \textbf{124} (2014), 337--385.

\bibitem{OT} T. Ozawa, Y. Tsutsumi, \textit{Space-time estimates for null gauge forms and nonlinear
Schr\"odinger equations}, Differential Integral Equations \textbf{11} (1998), 201-–222.

\bibitem{PA} J.C. Petruccelli, M.A. Alonso, \textit{The Wigner function in optics}, The Optics Encyclopedia (Wiley VCH, 2015).

\bibitem{PV} F. Planchon, L. Vega, \textit{Bilinear virial identities and applications}, Ann. Scient. Ec. Norm. Sup., \textbf{42} (2009), 263--292.




%\bibitem{Hud} F. Soto, P. Claverie, \textit{When is the Wigner function of multidimensional systems nonnegative?}, J. Math. Phys. \textbf{24} (1983), 97–100.

%\bibitem{ST} R. Schmied, P. Treutlein, \textit{Tomographic reconstruction of the Wigner function on the Bloch sphere}, New J. Phys. \textbf{13} (2011).



\bibitem{S}  E. M. Stein, \textit{Some problems in harmonic analysis},
Proc. Sympos. Pure Math., Williamstown, Mass., (1978), 3--20.

\bibitem{Stein} E. M. Stein, Harmonic Analysis: Real-Variable Methods, Orthogonality, and Oscillatory Integrals, Princeton University Press 1993.

\bibitem{SW} E. M. Stein, G. Weiss, An Introduction to Fourier Analysis on Euclidean Spaces, Princeton University Press 1971.

\bibitem{Stovall} B. Stovall, \textit{Waves, Spheres, and Tubes.
A Selection of Fourier Restriction Problems,
Methods, and Applications}, Not. Amer. Math. Soc., \textbf{66} (2019), 1013--1022.

\bibitem{Tao2003} T. Tao, \textit{A sharp bilinear restriction estimate for paraboloids},  Geom. Funct. Anal. \textbf{13} (2003), 1359-–1384.

\bibitem{VegaEsc} L. Vega, \textit{Bilinear virial identities and oscillatory integrals}, Harmonic analysis and partial differential equations, 219--232, Contemp. Math., 505, Amer. Math. Soc., Providence, RI, 2010.

\bibitem{Wigner} E. Wigner, \textit{On the Quantum Correction For Thermodynamic Equilibrium},
Phys. Rev. \textbf{40}, 749--759 (1932).
%\bibitem{Zhang} R. Zhang, \textit{The endpoint perturbed Brascamp--Lieb inequalities with examples}, Anal. PDE. \textbf{11} (2018), 555--581.

\end{thebibliography}
\end{document}